\documentclass[11pt]{article}
\newcounter{defcounter}

\usepackage{hyperref}
\usepackage{enumerate, amsthm}
\usepackage{mathrsfs}
\usepackage{rotating}
\usepackage{cancel}
\usepackage{xcolor}
\usepackage[margin=1 in]{geometry}
\usepackage[margin={1.5cm, 1.5cm},font=small, labelsep=endash]{caption}
\usepackage{mathtools}
\usepackage{tikz}
\usetikzlibrary{patterns}
\usepackage{tikz-dimline}

      \usepackage{amssymb}
      \usepackage{amsmath}
      
   %
      \numberwithin{equation}{section}
      \theoremstyle{plain}
      \newtheorem{theorem}{Theorem}[section]
      \newtheorem{lemma}[theorem]{Lemma}
      
      \newtheorem{proposition}[theorem]{Proposition}
      \newtheorem{observation}[theorem]{Observation}
      \theoremstyle{definition}

      \theoremstyle{remark}
      \newtheorem{remark}[theorem]{Remark}
      \newtheorem{claim}[theorem]{Claim}


      \newcommand{\R}{{\mathbb R}}
      \newcommand{\E}{\mathbb E}
      \newcommand{\e}{\mathrm{e}}
      \renewcommand{\P}{\mathbb P}
      
      \newcommand{\var}{\mathrm{Var}}
      \newcommand{\cov}{\mathrm{Cov}}
      \newcommand{\cross}{\textrm{cross}}
      
      \newcommand{\up}{\textrm{up}}
      \newcommand{\down}{\textrm{down}}
      \newcommand{\Z}{\mathbb{Z}}
      
      \newcommand{\N}{\mathbb{N}}
      \newcommand{\A}{\mathcal{A}}
      \newcommand{\B}{\mathcal{B}}
      \newcommand{\T}{\mathcal{T}}

      \newcommand{\inte}{\mathrm{int}}
      \newcommand{\Hl}{\mathbb{HL}}
      \newcommand{\Vl}{\mathbb{VL}}

      \newcommand{\Taylor}{\mathrm{Taylor}}
      
      \newcommand{\quadr}{\mathrm{Quad}}
      \newcommand{\resid}{\mathrm{Resid}}

      \newcommand{\gram}{\mathrm{Gram}}
      
      \newcommand{\increment}{\mathrm{Increment}}

      \newcommand{\midd}{\mathrm{mid}}
      \newcommand{\ancest}{\mathrm{ancest}}
      \newcommand{\descend}{\mathrm{Descend}}
      \newcommand{\vertical}{\mathrm{vertical}}
      \newcommand{\gain}{\mathrm{Gain}}
      \newcommand{\side}{\mathrm{side}}
      \newcommand{\principal}{\mathrm{principal}}
      \newcommand{\abs}{\mathrm{Absolute}}
      \newcommand{\join}{\mathrm{join}}
      
      \newenvironment{hypothesis}{ 
      \addtocounter{equation}{-1}
      \refstepcounter{defcounter}
      
      \begin{eqnarray}}
      {\end{eqnarray}}
      
      \makeatletter
      \def\@setcopyright{}
      \def\serieslogo@{}
      \makeatother
      
 \begin{document}
\title{Liouville first passage percolation: the weight exponent is strictly less than 1 at high temperatures}

\author{ Jian Ding\thanks{Partially supported by NSF grant DMS-1455049 and Alfred Sloan fellowship.} \\ University of Chicago \and Subhajit Goswami\footnotemark[1]  \\
University of Chicago
}
\date{}

\maketitle

\begin{abstract}
Let $\{\eta_{N, v}: v\in V_N\}$ be a discrete Gaussian
free field in a two-dimensional box $V_N$ of side length
$N$ with Dirichlet boundary conditions. We study the Liouville first
passage percolation, i.e., the shortest path metric where each vertex is given a weight of $e^{\gamma \eta_{N, v}}$
for some $\gamma>0$. We show that for sufficiently small but fixed
$\gamma>0$, the expected Liouville FPP distance between any pair of vertices is $O(N^{1-\gamma^2/10^3})$.
\end{abstract}

\maketitle

\section{Introduction}

In this paper we study Liouville first-passage percolation (which was explicitly mentioned in \cite{Benjamini}); i.e., first-passage
percolation on the exponential of the planar discrete Gaussian free field (GFF).
Given a two-dimensional box $V_N$ of side length $N$, the discrete Gaussian free field $\{\eta_{N, v}: v\in V_N\}$ with Dirichlet
boundary conditions is a mean-zero
Gaussian process such that 
$$\eta_{N, v}=0
\mbox{ for all } v\in \partial V_N\,, \mbox{ and }\E \eta_{N, x} \eta_{N, y}=G_{V_N}(x,y)
\mbox{ for all } x,y\in V_N\,,$$ where $G_{V_N}(x,y)$
is the Green's function of simple random walk on $V_N$. For a fixed inverse-temperature parameter $\gamma>0$, we define the \emph{Liouville first-passage percolation} (Liouville FPP) metric $D_N(\cdot, \cdot)$
on $V_N$ by
\begin{equation}\label{eq-LFPP-def}
D_{N, \gamma}(x_{1},x_{2})=\min_{\pi}\sum_{x\in\pi}e^{\gamma \eta_{N, x}},
\end{equation}
where $\pi$ ranges over all paths in $V_N$ connecting
$x_{1}$ and $x_{2}$. 
\begin{theorem}
\label{theo:main}
There exists $C_\gamma > 0$ and a small, positive absolute constant $\gamma_0$ such that for all $\gamma \leq \gamma_0$, we have 
$$\max_{x, y\in V_N}\E D_{N, \gamma}(x, y) \leq C_\gamma N^{1 - \gamma^2/10^3}\,.$$
\end{theorem}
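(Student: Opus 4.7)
The strategy is a multi-scale recursion. Set $L_N := \max_{x,y \in V_N} \E D_{N,\gamma}(x,y)$. The straight-line path already yields $L_N \le C N^{1+\gamma^2/\pi}$, via $\E e^{\gamma \eta_{N,v}} = e^{\gamma^2 \var(\eta_{N,v})/2}$ with $\var(\eta_{N,v}) = (2/\pi) \log N + O(1)$; this is worse than the target, so any proof must exploit the flexibility of minimizing over paths. The plan is to establish a recursive inequality
\[
L_{KN} \le (K - c_0 \gamma^2)\, L_N \cdot e^{c_1 \gamma^2}
\]
for a suitable absolute integer $K \ge 2$ and absolute constants $c_0, c_1 > 0$. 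Iterating from a constant base scale yields $L_N \le C_\gamma N^{\log_K(K - c_0\gamma^2) + c_1\gamma^2/\log K}$, and choosing $K$ moderately large but absolute delivers the claimed exponent $1 - \gamma^2/10^3$ after collecting the multiplicative losses.

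To obtain the recursion, I would use the Markovian decomposition of the DGFF associated with partitioning $V_{KN}$ into $K^2$ subboxes $\{B_{i,j}\}$ of side $N$: write $\eta_{KN} = \eta^{\coarse} + \eta^{\text{fine}}$, where $\eta^{\text{fine}}|_{B_{i,j}}$ are $K^2$ independent copies of $\eta_N$ with Dirichlet boundary, and $\eta^{\coarse}$ is the harmonic extension in each $B_{i,j}$ of the trace of $\eta_{KN}$ on the internal grid, independent of $\eta^{\text{fine}}$. Within each subbox, $\eta^{\coarse}$ is close to a Gaussian weight $X_{i,j}$ up to harmonic fluctuations; the family $\{X_{i,j}\}$ behaves like a log-correlated Gaussian field on the $K \times K$ coarse grid with $\var X_{i,j} \asymp \log K$. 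Any path from $x$ to $y$ in $V_{KN}$ decomposes into sub-paths inside successive subboxes; conditioning on $\eta^{\coarse}$ and using independence of the fine parts, the tower property together with control on harmonic fluctuations (absorbed into an $e^{O(\gamma^2)}$ factor) yields
\[
\E D_{KN,\gamma}(x,y) \;\le\; e^{O(\gamma^2)}\, L_N \cdot \E \min_{P} \sum_{(i,j)\in P} e^{\gamma X_{i,j}},
\]
where $P$ ranges over nearest-neighbor paths in the coarse grid linking the subboxes containing $x,y$. The problem reduces to coarse-grained LFPP on the $K \times K$ grid.

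The crux is to show the per-scale gain
\[
\E \min_P \sum_{(i,j) \in P} e^{\gamma X_{i,j}} \;\le\; K - c_0 \gamma^2,
\]
and this is the main obstacle. Since $\var(X_{i,j}) \asymp \log K$, any single coarse path contributes expected weight $\sim K \cdot e^{\gamma^2 \log K / 2} > K$, so the required gain must come entirely from taking a minimum over many roughly disjoint coarse paths. The strategy is to use Gaussian anti-concentration: from a family of $\Theta(K)$ nearly-disjoint coarse paths, with positive probability at least one has cumulative log-weight $\Theta(\log K)$ below its mean, translating into a linear-in-$\gamma^2$ saving below $K$ that beats the variance inflation $K^{O(\gamma^2)}$. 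Making this precise and uniform in the endpoints $x,y$, while carefully tracking the logarithmic losses from the harmonic-extension fluctuations inside subboxes and from the correlations in the coarse field, is the technical heart of the argument and is where the constant $10^3$ ultimately comes from. Once the per-scale inequality is secured, iteration from a constant base scale closes the proof.
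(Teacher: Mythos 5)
Your high-level plan (Markov decomposition of the GFF, multi-scale recursion $L_{KN}\lesssim (K-c_0\gamma^2)L_N$, iterate) is indeed the skeleton of the paper's argument, and you correctly identify the per-scale gain as the crux. However, the concrete implementation you sketch has gaps that the paper explicitly identifies in Section~1.2 as obstacles and works hard to circumvent; the square-subbox, anti-concentration route you propose would not close.

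First, the claim that "$\eta^{\coarse}$ is close to a Gaussian weight $X_{i,j}$ up to harmonic fluctuations ... absorbed into an $e^{O(\gamma^2)}$ factor" fails near subbox boundaries. For $v$ at distance $d$ from $\partial B_{i,j}$, the coarse field has variance $\approx \tfrac{2}{\pi}\log(KN/d)$, which is $\Theta(\log N)$, not $\Theta(\log K)$, once $d=O(1)$. Any fine path must enter and exit through these boundaries, so this is not a lower-order correction. In the study of the maximum of the GFF this is handled by discarding a thin boundary layer, but a connected crossing cannot avoid the boundary; this is precisely why the paper uses a non-nested rectangular decomposition, carefully arranged so that the fields whose harmonic averages are taken stay away from rectangle boundaries.

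Second, the inequality $\E D_{KN,\gamma}(x,y)\le e^{O(\gamma^2)}L_N\cdot\E\min_P\sum_{(i,j)\in P}e^{\gamma X_{i,j}}$ is not valid: it tacitly assumes that optimal fine paths in adjacent subboxes of the coarse path meet, which they do not. Linking them costs extra weight (the "vertical gadgets" in the paper), and this cost is not $e^{O(\gamma^2)}$-small. This is not a technicality: without the linking cost, the expected minimum over $\Theta(K)$ coarse paths saves a term $\gamma\,\E\min_P\sum_P X_{i,j}$ which is linear in $\gamma$, and iterating would yield an exponent $1-\Omega_K(\gamma)$ -- strictly better than what the paper obtains, and implausible in light of the non-universality discussion around [DZ15]. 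The gadget cost is exactly the mechanism that forces the per-scale gain down to order $\gamma^2$, and the balance between switching gain and gadget cost is the entire content of Section~3 (regularized total variation of Brownian motion with inhomogeneous penalty, Theorem~\ref{thm-total-variation}) and Section~4.3--4.5 of the paper. Your "Gaussian anti-concentration over $\Theta(K)$ nearly-disjoint paths" step is stated too vaguely to be checked, and in any case it ignores the cost of realizing a coarse path at the fine scale.

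Third, the geometry is materially different. You take $K$ to be a fixed absolute integer; the paper takes rectangles of aspect ratio $\Gamma\sim 1/\gamma^2$ (with $\Gamma/\beta$ switching intervals) precisely because a switch between the top and bottom layer is only profitable when $\gamma\times(\text{coarse fluctuation})$ beats the gadget cost, which requires the horizontal extent to grow as $\gamma\to 0$. A fixed $K\times K$ coarse grid does not have this structure, and the paper argues (both heuristically in Section~1.2 and quantitatively in Section~4.4) that the rectangular, $\gamma$-dependent geometry is what makes the covariance of the GFF usable. In summary: your plan captures the multi-scale/Markov-decomposition framework but omits the two ingredients that make the paper's argument work -- the boundary-aware rectangular decomposition and the gadget-penalized switching optimization -- and the per-scale estimate at its heart is asserted rather than established.
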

\begin{remark}
\label{remark:main_theorem}
Theorem~\ref{theo:main} applies if we consider $x, y \in V_{\delta N}$ for any fixed $0 < \delta < 1$ and restrict $\pi$ to be a path within $V_{\delta N}$ in \eqref{eq-LFPP-def}. 
\end{remark}

Theorem~\ref{theo:main} is mostly related to our previous work \cite{DG15} where we proved a similar result when the underlying Gaussian field is a branching random walk (BRW). Also, combined with \cite{DZ15}, Theorem~\ref{theo:main} shows that the strong universality for the  first passage percolation exponent does not hold among the family of log-correlated Gaussian fields. That is to say, the weight exponents may differ for different families of log-correlated Gaussian fields.

\subsection{Backgrounds and related works}
Much effort has been devoted to understanding classical first-passage
percolation (FPP), with independent and identically distributed edge/vertex
weights. We refer the reader to \cite{ADH15,GK12} and their references
for reviews of the literature on this subject. We argue that FPP with
strongly-correlated weights is also a rich and interesting subject,
involving questions both analogous to and divergent from those asked
in the classical case. Since the Gaussian free field is in some sense
the canonical strongly-correlated random medium, we see strong motivation
to study Liouville FPP.

More specifically, Liouville FPP is thought to play a key role in
understanding the random metric associated with the Liouville quantum
gravity (LQG) \cite{P81,DS11,RV14}.  We remark that the random metric of LQG is a major open problem, even just to make rigorous sense of it (we refer to \cite{RV16} for a rather up-to-date review). In a recent series of works of Miller and Sheffield, much understanding has been obtained (more on the continuum set up) in the special case of $\gamma = \sqrt{8/3}$\footnote{We learned from R{\'e}mi Rhodes and Vincent Vargas that, according to \cite{Watabiki93}, the physically appropriate approximation for the $\gamma$-LQG metric should involve $\min_\pi \sum_{v\in \pi} e^{\frac{\gamma}{d_H(\gamma)} \eta_{N, v}}$, i.e., the parameter in the exponential of GFF is $\gamma/d_H(\gamma)$ instead of $\gamma$. Here $d_H(\gamma)$ is the unknown Hausdorff dimension which is predicted to be $1+\frac{\gamma^2}{4} + \sqrt{(1+ \frac{\gamma^2}{4})^2+ \gamma^2}$. We chose not to emphasize this in the main text, since mathematically, as far as our main result is concerned, this is merely a change of parameter.}; see \cite{MS15, MS15b} and references therein. Our approach is different, in the sense that we aim to understand the random metric of LQG via approximations of natural discrete metrics. In the physics literature \cite{Watabiki93, ANRBW98, AB14}, precise predictions were made on closely related metric exponents, and our Theorem~\ref{theo:main} is consistent with these predictions. 
In addition, we note that one may need to tweak the definition of the Liouville FPP in order to obtain a discrete approximation leading to an invariant scaling limit.  However, we feel that in the level of precision of the present article, it is likely that the fundamental mathematical structures (and thus obstacles) are common for all the candidate discrete approximations.

Furthermore, we expect that Liouville FPP metric is related to the heat kernel estimate for Liouville Brownian motion (LBM), for which the mathematical construction (of the diffusion) was provided in \cite{GRV13, B14} and the heat kernel was constructed in \cite{GRV14}. The LBM is closely related to the geometry of LQG; in \cite{FM09, BGRV14} the KPZ formula was derived from Liouville heat kernel. In \cite{MRVZ14} some nontrivial bounds for LBM heat kernel were established. A very interesting direction is to compute the heat kernel of LBM with high precision. It is plausible that understanding the Liouville FPP metric is of crucial importance in computing the LBM heat kernel.

Finally, in a very recent work \cite{DD16}, it was shown that at high temperatures the appropriately normalized Liouville FPP converges subsequentially in the Gromov-Hausdorff sense to a random metric on the unit square, where all the (conjecturally unique) limiting metrics are homeomorphic to the Euclidean metric. We remark that the proof method in the current paper bears little similarity to that in \cite{DD16}.

\subsection{New challenges for the GFF provided a proof for BRW}\label{sec:new-challenge}

Our proof strategy naturally inherits that of \cite{DG15} which proved an analogue of Theorem~\ref{theo:main} in the context of BRW, and we encourage the reader to flip through \cite{DG15} and in particular \cite[Section 1.2]{DG15} which contains a prototype of the multi-scale analysis carried out in the current paper. In what follows, we emphasize the \emph{substantial} new challenges in the case of GFFs. To make our point, we note that the maximum for branching Brownian motion and BRW (with Gaussian increments) were well-understood (see, e.g., \cite{Bramson78, Bachmann00}) much before a good understanding for the maximum of GFF \cite{BDG01, BZ10,  BDZ14} --- even though there is \emph{universality} among log-correlated Gaussian fields \cite{Madaule15, DRZ15} for the behavior of the maximum. For the FPP problem, we know from \cite{DZ15}  that there is no universality for the weight exponent  and in particularly the weight exponent can be arbitrarily close to 1 if we allow to tune the covariance structure of the field up to a large additive constant. Therefore, in order to prove Theorem~\ref{theo:main}, one has to take into account the very subtle covariance structure of the GFF, rather than simply treat it as an instance of log-correlated Gaussian fields. From a technical point of view, \cite{DZ15} implies that Gaussian comparison theorems such as Slepian's lemma \cite{slepian62} and Sudakov-Fernique inequality \cite{F2} are not expected to be available for the FPP problem --- but the comparison theorems allowed to approximate GFF by a more tractable field in the study of the maximum, which was crucial to \cite{BZ10, BDZ14}. 

In light of the preceding discussion, when carrying out the multi-scale analysis for the Liouville FPP problem, we did not see an alternative rather than precisely decompose the GFF into many scales. There are a number of such decompositions available, and the one using Markov field property of the GFF turns out (as least as it seems to us at the moment) the way easiest to work with. Such decomposition was used extensively in the study of GFFs, for instance in \cite{BDZ14}. In what follows, we will elaborate a number of new subtleties (in review of \cite{DG15}) that have to be taken into account for the Liouville FPP problem. First of all, the Markov field property was used in \cite{BDZ14} to decompose the GFF into a sum of \emph{coarse} and \emph{fine} fields where the fine field is then approximated by the modified branching random walk. In our context, we have to use Markov field property to decompose the field into order of $\log N$ many scales and we are not aware of any legitimate approximation (due to non-universality as of \cite{DZ15}). Second of all, usually squared boxes are employed in the Markov field decomposition as done in \cite{BDZ14}, but in our context in order to fully harness the covariances of the GFF in each scale in the horizontal direction (this corresponds to our optimization strategy which is to switch between light crossings in the top and bottom layers in order to construct a light crossing in a bigger scale) we need to do the Markov field decomposition using \emph{rectangles}. Third of all, each scale  after decomposing the GFF is roughly a collection of harmonic averages on the boundary of boxes/rectangles, and in order for the variances in all scales to be of order 1 it is important not to take harmonic averages for points close to the boundary. In order to address this, in \cite{BDZ14} the authors simply threw away a small fraction of the box and showed \emph{a priori} that its effect is negligible. In the context of FPP problem since we need to construct light crossings that is a connected path, we cannot afford to simply throw away a fraction of the box. As a result, the rectangles employed in our decomposition are not nested. Moreover, when inductively constructing light crossings in big scales from small scales, we need to let the scale to grow as a power of $(2 + \delta)$ (other than a more conventional power of 2) where the ``$\delta$'' is due to the need of filling in the gaps near the boundary of the two big rectangles; see Figure \ref{fig:Tiling}. All of these incur technical challenges. In Section~\ref{sec:prelim}, we lay out the foundation for the multi-scale analysis, which in particular includes the decomposition of the GFF, and a number of variance/covariance estimates for various Gaussian processes that arise from such decomposition. 

Another source of main challenges is the fact that the field in each scale is a smoothly varying field, rather than a constant over a box as in the case of BRW. As a result, in order for an effective optimization in the induction procedure, we will have to know the geometry of the already constructed crossings (in the previous scale) in a resolution that is much more refined then the current scale. This is in some sense equivalent to access the realization of the switching strategy in previous many scales. Among others, this incur an issue of correlation between switching strategies in different scales. In order to address the issue, we introduce two types of strategies in the inductive constructions for light crossings, where Strategy I only serves to decorrelate the switching strategy in different blocks of scales (where each block consists of a large constant order of scales).  The inductive construction and analysis is carried out in Section~\ref{sec:the_main}.

Finally, a crucial ingredient in \cite{DG15} is the asymptotics for the regularized total variation of Brownian motion as shown in \cite{Dunlap}. This arose because when constructing crossing in the current scale, each switching requires some vertical gadget to connect the top and bottom layer of the crossings (in previous scale) and each such vertical gadget has a cost. In \cite{DG15}, it suffices to simply consider the expected cost for each vertical gadget where in particular we average over the height difference between the crossings in the top and bottom. In order for an efficient optimization in the present paper, we have to take into account the actual height difference in the optimization, and this lead to a problem on regularized total variation for Brownian motion with \emph{inhomogeneous} penalties (as opposed to homogeneous penalty as in \cite{Dunlap}). We are, in fact, not able to compute the asymptotic value in the inhomogeneous case.  Instead, we prove a lower bound that is sufficient for our purpose, based on a combination of an idea of \cite{Dunlap} and a delicate application of renewal theory \cite{Stone65}. This is incorporated in Section~\ref{sec:total_variation}. 

\subsection{A heuristic outline of proof}
\label{subsec:heuristic}
In this subsection, we provide a heuristic calculation which on one hand ignores a large part of the subtleties discussed in Subsection~\ref{sec:new-challenge} but on the other hand contains the mathematical indication that the weight exponent for Liouville FPP is strictly less than 1. We will not be completely precise in what follows.

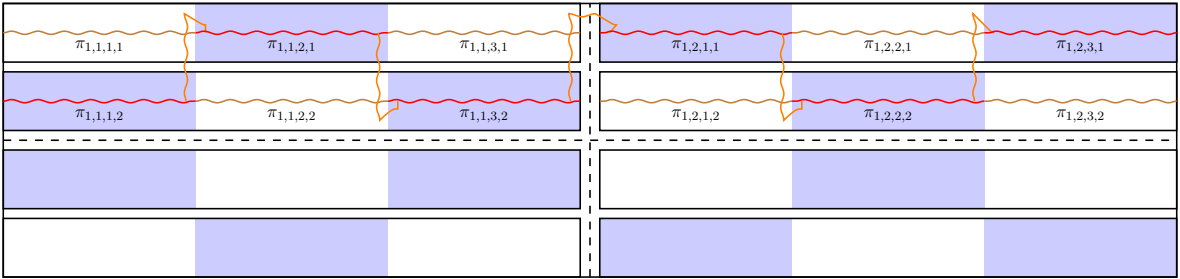
\begin{figure}[!htb]
\centering
\begin{tikzpicture}[semithick, scale = 2.6]
\fill [blue!20]	(-3, -0.3) rectangle (-2.016, 0);
\fill [blue!20]	(-2.016, 0.05) rectangle (-1.033, 0.35);
\fill [blue!20]	(-1.033, -0.3) rectangle (-0.05, 0);

\fill [blue!20]	(3, 0.05) rectangle (2.016, 0.35);
\fill [blue!20]	(2.016, -0.3) rectangle (1.033, 0);
\fill [blue!20]	(1.033, 0.05) rectangle (0.05, 0.35);

\draw (-3, -0.3) rectangle (-0.05, 0);
\draw (-3, 0.05) rectangle (-0.05, 0.35);
\draw (3, -0.3) rectangle (0.05, 0);
\draw (3, 0.05) rectangle (0.05, 0.35);

\fill [blue!20]	(-3, -0.7) rectangle (-2.016, -0.4);
\fill [blue!20]	(-2.016, -1.05) rectangle (-1.033, -0.75);
\fill [blue!20]	(-1.033, -0.7) rectangle (-0.05, -0.4);

\fill [blue!20]	(3, -1.05) rectangle (2.016, -0.75);
\fill [blue!20]	(2.016, -0.7) rectangle (1.033, -0.4);
\fill [blue!20]	(1.033, -1.05) rectangle (0.05, -0.75);
\draw (-3, -1.05) rectangle (-0.05, -0.75);
\draw (-3, -0.7) rectangle (-0.05, -0.4);
\draw (3, -1.05) rectangle (0.05, -0.75);
\draw (3, -0.7) rectangle (0.05, -0.4);

\draw  (-3, -1.05) rectangle (3, 0.35);

\draw [dashed] (-3, -0.35) -- (3, -0.35);
\draw [dashed] (0, -1.05) -- (0, 0.35);

\draw [brown, style={decorate,decoration={snake,amplitude = 0.6}}] (-3, 0.2) -- (-2.016, 0.2);
\node [scale = 0.6, below] at (-2.508, 0.19) {$\pi_{1, 1, 1, 1}$};
\draw [red, style={decorate,decoration={snake,amplitude = 0.6}}] (-3, -0.15) -- (-2.016, -0.15);
\node [scale = 0.6, below] at (-2.508, -0.16) {$\pi_{1, 1, 1, 2}$};

\draw [orange,style={decorate,decoration={snake,amplitude = 0.6}}] (-2.066, -0.15) -- (-2.066, 0.24) -- (-1.966, 0.24) -- (-1.966, 0.2);

\draw [red, style={decorate,decoration={snake,amplitude = 0.6}}] (-2.016, 0.2) -- (-1.033, 0.2);
\node [scale = 0.6, below] at (-1.525, 0.19) {$\pi_{1, 1, 2, 1}$};
\draw [brown, style={decorate,decoration={snake,amplitude = 0.6}}] (-2.016, -0.15) -- (-1.033, -0.15);
\node [scale = 0.6, below] at (-1.525, -0.16) {$\pi_{1, 1, 2, 2}$};

\draw [orange,style={decorate,decoration={snake,amplitude = 0.6}}] (-1.083, 0.2) -- (-1.083, -0.19) -- (-0.983, -0.19) -- (-0.983, -0.15);

\draw [brown, style={decorate,decoration={snake,amplitude = 0.6}}] (-1.033, 0.2) -- (-0.05, 0.2);
\node [scale = 0.6, below] at (-0.5415, 0.19) {$\pi_{1, 1, 3, 1}$};
\draw [red, style={decorate,decoration={snake,amplitude = 0.6}}] (-1.033, -0.15) -- (-0.05, -0.15);
\node [scale = 0.6, below] at (-0.5415, -0.16) {$\pi_{1, 1, 3, 2}$};

\draw [orange,style={decorate,decoration={snake,amplitude = 0.6}}] (-0.1, -0.15) -- (-0.1, 0.24) -- (0.1, 0.24) -- (0.1, 0.2);

\draw [red, style={decorate,decoration={snake,amplitude = 0.6}}] (1.033, 0.2) -- (0.05, 0.2);
\node [scale = 0.6, below] at (0.5415, 0.19) {$\pi_{1, 2, 1, 1}$};
\draw [brown, style={decorate,decoration={snake,amplitude = 0.6}}] (1.033, -0.15) -- (0.05, -0.15);
\node [scale = 0.6, below] at (0.5415, -0.16) {$\pi_{1, 2, 1, 2}$};

\draw [orange,style={decorate,decoration={snake,amplitude = 0.6}}] (0.983, 0.2) -- (0.983, -0.19) -- (1.083, -0.19) -- (1.083, -0.15);

\draw [brown, style={decorate,decoration={snake,amplitude = 0.6}}] (2.016, 0.2) -- (1.033, 0.2);
\node [scale = 0.6, below] at (1.525, 0.19) {$\pi_{1, 2, 2, 1}$};
\draw [red, style={decorate,decoration={snake,amplitude = 0.6}}] (2.016, -0.15) -- (1.033, -0.15);
\node [scale = 0.6, below] at (1.525, -0.16) {$\pi_{1, 2, 2, 2}$};

\draw [orange,style={decorate,decoration={snake,amplitude = 0.6}}] (1.966, -0.15) -- (1.966, 0.24) -- (1.966, 0.24) -- (2.066, 0.2);

\draw [red, style={decorate,decoration={snake,amplitude = 0.6}}] (3, 0.2) -- (2.016, 0.2);
\node [scale = 0.6, below] at (2.508, 0.19) {$\pi_{1, 2, 3, 1}$};
\draw [brown,style={decorate,decoration={snake,amplitude = 0.6}}] (3, -0.15) -- (2.016, -0.15);
\node [scale = 0.6, below] at (2.508, -0.16) {$\pi_{1, 2, 3, 2}$};

\end{tikzpicture}
\caption{{\bf Construction of a crossing through $V_{2N, \Gamma, 1}$.} The broken lines demarcate $V_{2N, \Gamma; i, j}$'s, starting from $V_{2N, \Gamma; 1, 1}$ at the upper-left corner. Each $V_{2N, \Gamma; i, j}$ consists of two rows, the top and the bottom one corresponds to $V_{2N, \Gamma; i, j, 1}$ and $V_{2N, \Gamma; i, j, 2}$ 
respectively. Each $V_{2N, \Gamma; i, j, k}$ is further subdivided into rectangles $V_{2N, \Gamma; i, j, j', k}$'s for $j' \in [3]$ in this figure. $\pi_{1, j, j', k}$. The red lines indicate the segments $\pi_{1, j, j', k}$'s that have been selected by $\A_{n + 1}$. The orange lines indicate the vertical gadgets that join these segments into a crossing for $V_{2N, \Gamma, 1}$.}
\label{fig:short_note}
\end{figure}
Let $\Gamma \approx \alpha / \gamma^2$ for some $1/\gamma \gg \alpha \gg 1$ and $V_{N, \Gamma} = ([0, \Gamma N - 1] 
\times [0, 2N - 1]) \cap \Z^2$ for $N = 2^n$. Thus $V_{N, \Gamma}$ consists of two $\Gamma N \times N$ rectangles placed on top of each other, say $V_{N, \Gamma, 1}$ (the top one) and $V_{N, \Gamma, 2}$ (the bottom one). 
Similarly $V_{2N, \Gamma}$ can be sub-divided into 4 copies of $V_{N, \Gamma}$ (see 
Figure~\ref{fig:short_note}). Call them $V_{2N, 
\Gamma; i, j}$ ($i, j \in [2]$) in the usual order. Each $V_{2N, \Gamma; i, j}$ contains two $\Gamma N \times N$ rectangles, denoted as $V_{N, \Gamma; i, j, 1}$ and $V_{N, \Gamma; i, j, 2}$. Define 
$$\eta_{2N, v} = (\eta_{2N, v} - \E(\eta_{2N, v} | \eta_{2N, \partial V_{2N, \Gamma; i, j}})) + \E(\eta_{2N, v} | \eta_{2N, \partial V_{2N, \Gamma; i, j}}) = \eta_{2N,i,j v} + X_{2N,i,j v}\,.$$
By Markov field property of GFF, the field $\{\eta_{2N,i,j,.}\}$ is a GFF on $V_{N, \Gamma; i, j}$ with Dirichlet boundary condition and is 
independent with $\{X_{2N,i,j, .}\}$. We refer to these two fields respectively as the fine and coarse fields on 
$V_{2N, \Gamma; i, j}$. In order to study the growth of weight for the crossings (i.e. paths connecting two facing boundaries) from scale $n$ to $n+1$, we will employ an algorithm $\A_{n'}$ for every $n' \in [n]$ that builds an ``economic'' crossing through each of $V_{N', \Gamma, 1}$ and $V_{N', \Gamma, 2}$ ($N' = 
2^{n'}$). These algorithms are of inductive nature. Below we give a (incomplete) description of $\A_{n'}$ based on algorithms in previous scales. We use $\A_{n' - 1}$ to build crossings through each $V_{N', \Gamma; i, j, k}$ where we take $\{\eta_{N',i,j,.}\}$ as 
the underlying field. Next we partition the horizontal range of $V_{N', \Gamma; i, j}$ into intervals of length $\beta N'/2$ where we choose $\beta$ such that $1/\gamma \gg \beta \gg 1$. Each such interval, say $I_{j, j'}$ ($j' \leq \Gamma/\beta$), defines a sub-rectangle 
$V_{N', \Gamma; i, j, k}$ of $V_{N', \Gamma; i, j}$. Denote by $\pi_{i, j, j', k}$ the portion of the crossing through $V_{N', \Gamma; i, j, k}$ that lies within $V_{N', \Gamma; i, j, 
j', k}$ (see Figure~\ref{fig:short_note}). For each $(j, j')$, we may opt for either $\pi_{i, j, j', 1}$ or 
$\pi_{i, j, j', 2}$ to move across $I_{j, j'}$. These choices correspond to a sequence of $\{1, 2\}$ valued random variables $\{k_{i, j, j'}\}_{(j, j') \in [2] \times [\Gamma/\beta]}$ (the \emph{switching strategy}) for each $i \in [2]$. Whenever we make a switch at $I_{j, j'}$ (i.e., $k_{i, j, j'} \neq k_{i, j, j' + 1}$), we link $\pi_{i, j, j', 1}$ and $\pi_{i, j, j', 2}$ using a 
``vertical gadget''. We can construct this gadget as a crossing through an appropriately placed, vertically aligned copy of $V_{2^{n''}, \Gamma}$ for some $n'' \leq n' - \log_2\Gamma$ and use $\A_{n''}$ with respect to the corresponding fine field. These operations give us a 
crossing through $V_{N', \Gamma; i}$. Our construction should also ensure that the two crossings are identically distributed with respect to the reflection of $V_{N', \Gamma, i}$.

Let us now focus on $V_{2N, \Gamma}$ and $V_{2N, \Gamma; 
1}$. Clearly, the expected weight of crossing should expand by a factor \emph{close} to 2 (compared to the previous scale). But we are evaluating expectations with respect to $\{\eta_{2N, .}\}$ and hence there is the effect of an additional factor $\e^{\gamma X_{2N, 1, j, 
\cdot}}$. In fact, the analysis of this effect is the 
central issue, which we elaborate in what follows. It is plausible that the total weight of gadgets that have been used between the scales $n - 100\log \Gamma$ and $n$ to build the crossing through $V_{2N, \Gamma; 1, j, 
k}$ is negligible compared to its total weight. Thus, we can ignore these gadgets and only consider the remaining points in the segments $\pi_{1, j, j', k}$'s when analyzing the effect of $\e^{\gamma X_{2N, 1, j, 
\cdot}}$. Denote these new segments as $\tilde \pi_{1, 
j, j', k}$. Since $\gamma$ is small, we have a legitimate approximation $\e^{\gamma X_{2N, 1, j, v}} \approx 1 + \gamma X_{2N, 1, j, v} + 
\tfrac{\gamma^2}{2}\E X_{2N, 1, j, v}^2$. If we choose our segments in a symmetric fashion for all the scales, it is not hard to show that (as in Lemma~\ref{lem:Brownian_scaling}) 
\begin{equation}\label{eq-expansion}
\tfrac{\gamma^2}{2}\E X_{2N, 1, j, \cdot}^2 \approx \tfrac{2}{\pi}\tfrac{\gamma^2}{2}\log 2 = 
\tfrac{\gamma^2}{\pi}\log 2 \approx 0.22\gamma^2\,,
\end{equation}
which amounts to the increment of the weight. The decrement of the weight will come 
from the random variables $X_{2N, 1, j .}$, together with our judicious switching strategy (which naturally would favor smaller random variables when choosing layers). We can imagine that the crossings we built are rather smooth in the coarse resolution. Combined with the smoothness of the field $\{X_{2N, 1, j, .}\}$ (see Lemma~\ref{lem:field_smoothness}), it yields the following approximate expression for the expected total weight of the segments with respect to $\{\eta_{2N, .}\}$:
\begin{equation}
\label{eq:short_note1}
\gamma\frac{d_{n, j'}}{\beta N}\E \Big( \sum_{(j, j') \in [2] \times [\Gamma / \beta]} \sum_{v \in I_{j, j'} \times \{\nu_{j, j', k_{1, j, j'}}\}}X_{2N, 1, j, v} \Big)\,,
\end{equation}
where $\nu_{j, j', k}$ is the (approximate) common height of $\pi_{1, j, j', k}$ on $I_{j, j'}$ and $d_{n, j'}$ is the expected weight of $\tilde \pi_{1, j, j', k}$ with 
respect to the fine field on $V_{2N, \Gamma; 1, j}$. It 
is not hard to imagine that $d_{n, j'}$'s should be roughly equal for all $j'$'s. Then the expression in \eqref{eq:short_note1} evaluates to (approximately)
\begin{equation}
\label{eq:short_note2}
\gamma\frac{d_n}{\Gamma N}\E \Big( \sum_{(j, j') \in [2] \times [\Gamma / \beta]} \sum_{v \in I_{j, j'} \times \{\nu_{j, j', k_{1, j, j'}}\}}X_{2N, 1, j, v} \Big)\,,
\end{equation}
where $d_n$ is the expected weight of crossing through 
$V_{N, \Gamma, i}$. Note that each $X_{2N, 1, j, v}$ is a harmonic average of $\{\eta_{2N, w}: w\in \partial V_{2N, \Gamma; 1, j}\}$. Since $\beta$ and $\Gamma$ are large, we can effectively assume that the harmonic measure is supported on $I_{j, j'} \times \{2N\}$. In addition, we can assume that the sub-rectangle $V_{2N, \Gamma; 1, j, j'} = V_{2N, \Gamma; 1, j, j', 1} \cup V_{2N, \Gamma; 1, j, j', 2}$ is effectively an infinite strip from (a random walk started at) a ``typical'' $v$ in $I_{j, j'} \times {\nu_{j, j', k}}$. 
Thus the probability that a simple random walk starting from a typical vertex $v$ exits $V_{2N, \Gamma; 1, j}$ through $I_{j, j'} \times \{2N\}$ is approximately 
$\tfrac{2N - 1 - \nu_{j, j', k}}{2N - 1}$. Changing perspective, we see that the segment $I_{j, j'} \times \{\nu_{j, j', k}\}$ ``looks similar'' from any typical $w 
\in I_{j, j'} \times \{2N\}$. Hence the sum of coefficients of $\eta_{2N, w}$ (as obtained from all $v \in I_{j, j'} \times \{\nu_{j, j', k}\}$) for any typical $w$ is approximately $\tfrac{2N - 1 - \nu_{j, j', k}}{2N 
- 1}$. Thus, \eqref{eq:short_note2} can be further approximated by
\begin{equation}
\label{eq:short_note3}
\gamma\frac{d_n}{\Gamma N}\E \Big( \sum_{(j, j') \in [2] \times [\Gamma / \beta]} \frac{2N - 1 - \nu_{j, j', k_{1, j, j'}}}{2N - 1}\sum_{v \in I_{j, j'} \times \{2N\}}\eta_{2N, w} \Big)\,.
\end{equation}
Since the gadget joining $\pi_{1, j, j', 1}$ and $\pi_{1, j, j', 2}$ is constructed as a crossing through a rectangle whose longer dimension is $\nu_{j, j', 2} - \nu_{j, j', 1}$, its expected weight (conditioned on the heights $\nu_{j, j', k}$) is bounded approximately by 
$(\nu_{j, j', 2} - \nu_{j, j', 1})\tfrac{d_n}{\Gamma N}$. 
Therefore our net expected decrement from switchings is approximated by 
\begin{equation}
\label{eq:short_note4}
\E \Big(\gamma\frac{d_n}{\Gamma N}\E \Big( \sum_{(j, j') \in [2] \times [\Gamma / \beta]} \frac{2N - 1 - \nu_{j, j', k_{1, j, j'}}}{2N - 1}\sum_{v \in I_{j, j'} \times \{2N\}}\eta_{2N, w} \Big)+ \frac{d_n}{\Gamma N}\sum_{J'}(\nu_{j, j', 2} - \nu_{j, j', 1})\Big)\,,
\end{equation}
where $J'$ is the collection of pairs $(j, j')$ corresponding to the switching locations.
Now, one can show (see Lemma~\ref{lem:general_covar}) that $\var(\sum_{v \in I_{j, j'} \times \{2N\}}\eta_{2N, w}) \approx $ $4\beta N^2$ and that $\{\sum_{v \in I_{j, j'} \times \{2N\}}\eta_{2N, w}\}_{j, j'}$  is 
weakly-correlated. Thus, it is legitimate to replace $\{\sum_{v \in I_{j, j'} \times \{2N\}}\eta_{2N, w}\}_{j, j'}$ with $\{2\sqrt{\beta} NZ_{j, j'}\}_{j, j'}$ where $Z_{j, j'}$ are i.i.d.\ standard Gaussians 
independent with $\nu_{j, j', k}$'s. Combining with the observation that,
$$\E \frac{2N - 1 - \nu_{j, j', k_{1, j, j'}}}{2N - 1} Z_{j, j'} = \frac{1}{2}\E (-1)^{k_{1, j, j'} + 1}\frac{\nu_{j, j', 1} - \nu_{j, j', 2}}{2N - 1}Z_{j, j'}\,,$$
we can then approximate \eqref{eq:short_note4} by
\begin{equation}
\label{eq:short_note5}
\E \Big( \gamma\frac{d_n\sqrt{\beta}}{\Gamma} \sum_{(j, j') \in [2] \times [\Gamma / \beta]} (-1)^{k_{1, j, j'} + 1}\frac{\nu_{j, j', 1} - \nu_{j, j', 2}}{2N - 1}Z_{j, j'} + \frac{d_n}{\Gamma N}\sum_{J'}(\nu_{j, j', 2} - \nu_{j, j', 1})\Big)\,.
\end{equation}
By Theorem~\ref{thm-total-variation}, there exists a switching strategy such that the (conditional) expectation of the expression inside the parentheses in \eqref{eq:short_note5} is (roughly) at most
\begin{equation}\label{eq:short_note6}
\frac{-d_n^2\beta\gamma^2}{\Gamma^2}\sum_{(j, j') \in [2] \times [\Gamma / \beta]}\Big( \frac{\nu_{j, j', 1} - \nu_{j, j', 2}}{2N - 1}\Big)^2\frac{1}{\frac{d_n}{\Gamma N} (\nu_{j, j', 1} - \nu_{j, j', 2})} \approx \frac{-d_n\beta\gamma^2}{2\Gamma}\sum_{(j, j') \in [2] \times [\Gamma / \beta]}\frac{\nu_{j, j', 1} - \nu_{j, j', 2}}{2N - 1}\,.
\end{equation}
Since our switching strategies are symmetric at every scale, we have $\E \tfrac{\nu_{j, j', 1} - \nu_{j, j', 2}}{2N - 1} \approx 
1/2$. Therefore, the expectation of the right hand side in \eqref{eq:short_note6} is close to 
$-0.5d_n\gamma^2$. Combined with \eqref{eq-expansion}, we get 
\begin{equation*}
d_{n+1, \gamma} \precsim d_{n, \gamma} (2 + 0.44 \gamma^2 - 0.5\gamma^2) \leq d_{n, \gamma} (2 - 0.06\gamma^2)\,.
\end{equation*}
This implies that the weight exponent for Liouville FPP is strictly less than 1.

\subsection{Notation convention}
Any \emph{fixed} number in this paper will be implicitly assumed to be independent of $\gamma$ or any other 
variable. 
Let $\delta \ll 1$ be a fixed positive number whose exact value is to be decided and let $\alpha = 
\delta^{-1/4}$. Choose $\Gamma$ as the smallest (integral) power of $2 + \delta$ that is $\geq \alpha / 
\gamma^2$. Thus $\Gamma = (2 + \delta)^{m_\Gamma}$ for some positive integer $m_\Gamma$ and $\alpha \leq \Gamma \gamma^2 < (2 + \delta)\alpha$. We denote the number $(2 + \delta)^m$ as $a_m$ where $m \in \Z$. For a subset $S$ of $\R^d$, let $\lfloor S \rfloor$ denote the set $S \cap \Z^d$. If $S = [1, \ell]$ for some $\ell \in \N$, 
then we denote it simply as $[\ell]$. A \emph{$\R$-interval} is the usual interval considered as 
a subset of the real line. An \emph{integer interval} or simply an interval is the set $\lfloor [\ell, r] 
\rfloor$ where $\ell, r \in \Z$. The right and left endpoints of an interval $I$ are denoted as $r_I$ and 
$p_I$ respectively. The \emph{length} of $I$ is the difference $r_I - p_I$. We refer to the vertices of $\Z^2$ (when considered as a graph) as \emph{points}. If $z \in \Z^2$, then $z_x$ and $z_y$ respectively denote the horizontal and vertical coordinates of $z$. For $\nu \in \Z$ the lines $y = \nu$ and $x = \nu$ are denoted by $\Hl_{\nu}$ and $\Vl_{\nu}$ respectively. 
For any $A \subseteq \Z^2$, we also use $A$ to denote the corresponding induced subgraph of $\Z^2$. \emph{Interior} of a subset $A$ of $\Z^2$, denoted as $\inte(A)$, is defined as the set of all points in $A$ 
whose neighborhood is also contained in $A$. The \emph{boundary} of $A$, denoted as $\partial A$, is the set of all points in $A \setminus \inte(A)$ that have at 
least one neighbor in $\inte(A)$. All the rectangles in this paper will be assumed to have sides parallel to the 
coordinate axes. The left, right, top and bottom boundaries of a rectangle $R$ are denoted as $\partial_{\mathrm{left}}R,\partial_{\mathrm{right}}R,\partial_{\up}R$ and $\partial_{\down}R$ respectively. 
We call the horizontal range of a rectangle as its \emph{base} and the vertical range as its \emph{span}. 
Thus the base and span of a rectangle are both intervals 
in $\Z$. A \emph{left-right crossing} or simply a \emph{crossing} of a rectangle $R$ is a connected subset $A$ of $R$ that intersects both $\partial_{\mathrm{left}}R$ and 
$\partial_{\mathrm{right}}R$. Similarly we can define 
\emph{up-down crossing}. For purely technical purpose we allow $A$'s to be multisets in which case the corresponding sets are required to be connecting sets. 
However we still call them crossings. If $\{X_v\}_{v \in A}$ is a stochastic process indexed by $A \subseteq \Z^2$ and $B \subseteq A$, then $X_B$ denotes the 
collection of random variables $\{X_v\}_{v \in B}$. For (nonnegative) functions $F(.)$ and $G(.)$ we write $F = O(G)$ (or $\Omega(G)$) if there exists an absolute constant $C > 0$ such that $F \leq C G$ (respectively 
$\geq C G$) everywhere in the domain. If the constant $C$ depends on variables $x_1, x_2, \ldots, x_n$, we modify these notations as $O_{x_1,x_2, \ldots, x_n}(G)$ 
and $\Omega_{x_1,x_2, \ldots, x_n}(G)$ respectively. In a similar vein we write $F = o_{x_0 \to c;x_1,\ldots,x_n}(1)$ if $F$ is a $\R$-valued function with arguments $x_0, x_1, \ldots, x_{n'}$ for some $n' \geq n$ and $\lim_{x_0 \to c \in \overline{\mathbb{R}}}\sup_{x_{n+1}, \ldots, x_{n'}}|F(x_0, x_1, \ldots, x_{n'})| = 0$ for any given 
values of the variables $x_1, x_2, \ldots, x_n$. We denote by $o_{x_0 \to c;x_1,\ldots,x_n}(G)$ any function $F$ that satisfies $F = o_{x_0 \to c;x_1,\ldots,x_n}(1)G$.

\subsection{Acknowledgement}

We thank Marek Biskup, Hugo Duminil-Copin, Steve Lalley,  Elchanan Mossel, R{\'e}mi Rhodes, Vincent Vargas, Ofer Zeitouni for helpful discussions. We also thank Alexander Dunlap for many useful discussions and help with the figures in this paper.

\section{Preliminaries}\label{sec:prelim}
This section is devoted to foundational results that are needed for our multi-scale analysis carried out in Section~\ref{sec:the_main}.
\subsection{A self-similar partition of an interval in $\R$}
\label{subsec:self_similar}
Let $\delta > 0$ be chosen such that $a_m = 1 / \delta$ 
for a fixed integer $m \gg 1$. Evidently such a number is unique. Now for $\ell \in \Z, k \in \N$ and $x \in \R$, consider the $\R$-interval $\mathcal I_{\ell, k, x} = x + [0, k a_\ell]$ (so for any given $\ell$ the intervals $\mathcal I_{\ell, k, x}$'s are translates of 
each other). We can see from the definition of 
$\delta$ that $\mathcal I_{\ell, k, x}$ is a union of 
three contiguous intervals with disjoint interiors whose lengths are $k a_{\ell - 1}, k a_{\ell - m - 1}$ and $k 
a_{\ell - 1}$ respectively from left to right. See Figure~\ref{fig:partition} for an illustration. Thus we get a partition of $\mathcal I_{\ell, k, x}$ into three subintervals each of which is a translate of $\mathcal 
I_{\ell', k, 0}$ for some $\ell'$. Hence we can partition each of these subintervals in a similar 
fashion. Suppose we apply this procedure to the subintervals we obtain in each step as long as 
their lengths are bigger than $k a_{\ell - d}$ where $d \in \N$. Denote the resulting partition of $\mathcal I_{\ell, k, 
x}$ by $\mathscr P_{\ell, k, x; d}$. It is not difficult to see that $|\mathscr P_{\ell, k, x; d}| \leq (2 + 
\delta)^{d + m}$. If we discard the ``middle'' segments at each stage of the partitioning, we get a different collection of intervals called $\mathscr P_{\ell, k, x; 
d, \principal}$. Notice that each ($\R$-) interval in $\mathscr P_{\ell, k, x; d, \principal}$ has the same 
length $ka_{\ell - d}$. We denote the union of these intervals as $\mathcal I_{\ell, k, x; d, \principal}$.

\begin{figure}[!htb]
\centering
\begin{tikzpicture}[scale = 3.5]
\draw [red] (0, 0) -- (0.5625, 0);
\draw       (0.5625, 0) -- (0.9375, 0);
\draw [red] (0.9375, 0) -- (1.5, 0);
\draw       (1.5, 0) -- (2.5, 0);
\draw [red] (2.5, 0) -- (3.0625, 0);
\draw       (3.0625, 0) -- (3.4375, 0);
\draw [red] (3.4375, 0) -- (4, 0);

\draw (1.5,-0.06) -- (1.5, 0.06);
\draw (2.5,-0.06) -- (2.5, 0.06);
\draw (0.5625,-0.04) -- (0.5625, 0.04);
\draw (0.9375,-0.04) -- (0.9375, 0.04);
\draw (1.875,-0.04) -- (1.875, 0.04);
\draw (2.125,-0.04) -- (2.125, 0.04);
\draw (3.0625,-0.04) -- (3.0625, 0.04);
\draw (3.4375,-0.04) -- (3.4375, 0.04);

\draw [<->, thin] (0.01, 0.15) -- (0.5525, 0.15);
\node [scale = 0.55, above] at (0.28125, 0.15) {$ ka_{\ell - 2}$};

\draw [<->, thin] (0.5725, 0.15) -- (0.9275, 0.15);
\node [scale = 0.55, above] at (0.75, 0.15) {$k a_{\ell - m - 2}$};

\draw [<->, thin] (0.9475, 0.15) -- (1.49, 0.15);
\node [scale = 0.55, above] at (1.21875, 0.15) {$k a_{\ell - 2}$};

\draw [<->, thin] (0.01, 0.3) -- (1.49, 0.3);
\node [scale = 0.7, above] at (0.75, 0.3) {$k a_{\ell - 1}$};

\draw [<->, thin] (1.51, 0.15) -- (1.865, 0.15);
\node [scale = 0.55, above] at (1.6875, 0.15) {$k a_{\ell - m - 2}$};

\draw [<->, thin] (1.885, 0.15) -- (2.115, 0.15);
\node [scale = 0.55, above] at (2, 0.15) {$k a_{\ell - 2m - 2}$};

\draw [<->, thin] (2.135, 0.15) -- (2.49, 0.15);
\node [scale = 0.55, above] at (2.3125, 0.15) {$k a_{\ell - m - 2}$};

\draw [<->, thin] (1.51, 0.3) -- (2.49, 0.3);
\node [scale = 0.7, above] at (2, 0.3) {$k a_{\ell - m - 1}$};

\draw [<->, thin] (2.51, 0.15) -- (3.0525, 0.15);
\node [scale = 0.55, above] at (2.78125, 0.15) {$k a_{\ell - 2}$};

\draw [<->, thin] (3.0725, 0.15) -- (3.4275, 0.15);
\node [scale = 0.55, above] at (3.25, 0.15) {$k a_{\ell - m - 2}$};

\draw [<->, thin] (3.4475, 0.15) -- (3.99, 0.15);
\node [scale = 0.55, above] at (3.71875, 0.15) {$k a_{\ell - 2}$};

\draw [<->, thin] (2.51, 0.3) -- (3.99, 0.3);
\node [scale = 0.7, above] at (3.25, 0.3) {$k a_{\ell - 1}$};

\draw [<->, thin] (0.01, 0.45) -- (3.99, 0.45);
\node [scale = 0.8, above] at (2, 0.45) {$k a_\ell$};
\end{tikzpicture}
\caption{{\bf Nesting of the intervals $\mathcal I_{\ell', k, x}$'s for three successive levels.} The leftmost point of the interval is $0$. The subintervals colored in red lie in $\mathscr P_{\ell, k, 0; 2, \principal}$.}
\label{fig:partition}
\end{figure}
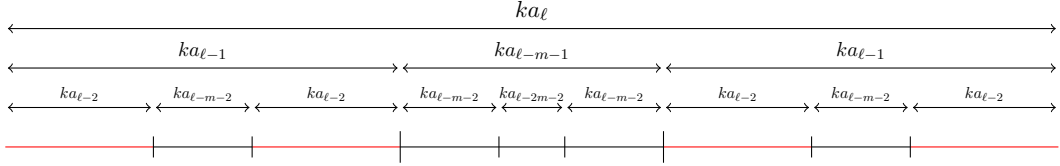
We can use this partitioning scheme to obtain a \emph{self-similar covering} $\mathscr C_{\ell, 
k, x; d}$ of $\lfloor \mathcal I_{\ell, k, x} \rfloor$. 
This covering will be defined in a recursive manner 
starting with $\mathscr C_{\ell, k, x; 1}$. In order to 
avoid cumbersome notations let us assume $x = 0$. As already described in the previous paragraph, $\mathscr P_{\ell, k, 0; 1}$ consists of three subintervals namely $\mathcal I_{\ell-1, k, 0}$, $\mathcal I_{\ell- m -1, k, ka_{\ell - 1}}$ and $\mathcal I_{\ell-1, k, k(a_{\ell-1} + a_{\ell - m - 1})}$, aligned from left to right. 
We first include the intervals $\lfloor \mathcal I_{\ell-1, k, 0}\rfloor$ and $\lfloor \mathcal I_{\ell - m - 1, k, \lceil k a_{\ell-1} \rceil}\rfloor$ in $\mathscr C_{\ell, k, 0; 1}$. 
As to $\mathcal I_{\ell-1, k, k(a_{\ell-1} + a_{\ell - m - 1})}$, notice that there is a unique integer $p$ in the set $\{\lfloor k(a_{\ell-1} + a_{\ell - m - 1})\rfloor, \lceil k(a_{\ell-1} + a_{\ell - m - 1})\rceil\}$ such that the right endpoint of $\mathcal I_{\ell - 1, k, p}$ lies in the interval $[\lfloor ka_n \rfloor ,$ $\lfloor ka_n \rfloor 
+ 1)$. We finish the construction of $\mathscr C_{\ell, k, 0; 1}$ by including $\lfloor \mathcal I_{\ell - 1, k, p} \rfloor$. 
Now suppose that we have defined $\mathscr C_{\ell, k, 0; d-1}$ for some $d \geq 2$ and that each interval in $\mathscr C_{\ell, k, 0; d-1}$ is a copy of $\lfloor \mathcal I_{\ell - d', k, 0} \rfloor$ 
for some $d - 1 \leq d' \leq d + m - 1$. If $d' \geq d$ for such a interval, we simply include it in $\mathscr 
C_{\ell, k, 0; d}$. Otherwise if $d' = d - 1$, we apply the same procedure to the corresponding interval as we did to $\lfloor \mathcal I_{\ell, k, 0} \rfloor$ in the first step and include the new intervals so obtained in 
$\mathscr C_{\ell, k, 0; d}$. If at each stage of the construction described above, we discard the intervals corresponding to the middle segments, we would end up with a particular sub-collection of $\mathscr C_{\ell, k, x; d}$ called $\mathscr C_{\ell, k, x; d, \principal}$. 
Notice that each interval in $\mathscr C_{\ell, k, x; d, \principal}$ has cardinality $\lfloor k a_{\ell - d}\rfloor 
+ 1$.




\subsection{A hierarchical representation of discrete GFF on a family of rectangles}\label{subsec:GFF_represent}
Denote by $\tilde{V}_\ell^\Gamma$ the rectangle $\big([-\lfloor \Gamma a_{\ell - m - 1}\rfloor, \lfloor \Gamma a_\ell \rfloor + \lfloor \Gamma a_{\ell - m - 1}\rfloor] \times \lfloor [-\lfloor a_{\ell - m}\rfloor, \lfloor a_{\ell+1} \rfloor + \lfloor a_{\ell - m}\rfloor]\big) \cap \Z^2$ and by $\tilde{V}_\ell^{\Gamma, z}$ the translation of $\tilde{V}_\ell^\Gamma$ by a point $z \in 
\Z^2$. Let $\{\eta_{n, v}: v\in \tilde{V}_n^\Gamma\}$ be a discrete GFF on $\tilde{V}_n^\Gamma$ with Dirichlet boundary condition.

We can define a multilevel scheme for placing nested rectangles inside $\tilde{V}_n^\Gamma$ using the coverings $\mathscr C_{n, \Gamma, x; r}$'s. 
Figure~\ref{fig:Tiling} gives an illustration for the 
very top level, that is, level $n$. In this figure we have placed four copies of $\tilde{V}_{n-1}^\Gamma$ (called $\tilde V_{n; i, j}^\Gamma$, $i, j \in [2]$) inside $\tilde{V}_{n}^\Gamma$ each of which contains two copies of the rectangle $\lfloor \mathcal I_{n-1, \Gamma, 0} \rfloor \times \lfloor \mathcal I_{n-1, 1, 0}\rfloor$. 
All the placements are carried out in symmetric fashion. 
More precisely lower left corner vertex of any copy of $\lfloor \mathcal I_{n-1, \Gamma, 0} \rfloor \times \lfloor \mathcal I_{n-1, 1, 0}\rfloor$ has the form $(q_\Gamma, q_1)$ where $\lfloor\mathcal I_{n-1,1, q_1}\rfloor$ and $\lfloor\mathcal I_{n - 1,\Gamma, q_\Gamma}\rfloor$ are intervals in $\mathscr C_{n+1,1,0;2}$ and $\mathscr C_{n,\Gamma,0;1}$ 
respectively. For each such $q_\Gamma$ (there are 2 of them) we have two copies of $\tilde{V}_{n-1}^\Gamma$ namely $\tilde{V}_{n-1}^{\Gamma, (\lceil q_\Gamma \rceil, 0)}$ and $\tilde{V}_{n-1}^{\Gamma, (\lceil 
q_\Gamma \rceil, p )}$. Here $p$ is the left endpoint of the rightmost interval in $\mathscr C_{n+1, 1, 0; 1}$.
Also for each interval $\lfloor \mathcal I_{n - m, 1, p} \rfloor$ in $\mathscr C_{n, 1, 0;m, \principal}$ or $\mathscr C_{n, 1, \lceil r_{n+1, 1} - a_{n}\rceil;m, \principal}$, we have placed the rectangle $\tilde{V}_{n - m - 1}^{\Gamma, z}$ with $z_x = \lceil \Gamma (a_{n - 
m - 1})\rceil$ and $z_y = \lceil p \rceil$. We repeat the same placement procedure inside all these rectangles. 
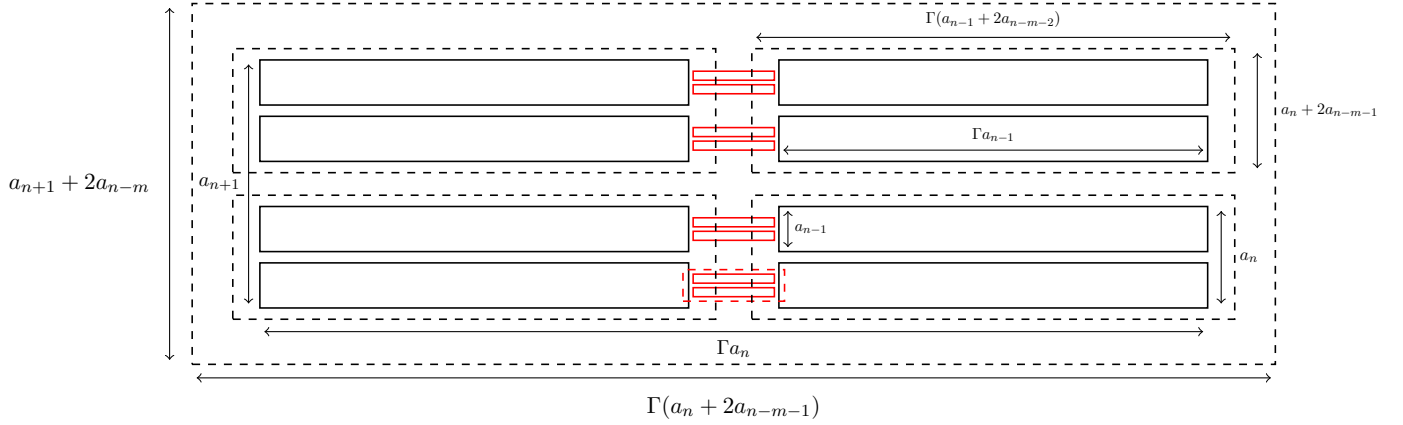
\begin{figure}[!htb]
\centering
\begin{tikzpicture}[semithick, scale = 3]
\draw (-2.1, -0.55) rectangle (-0.2, -0.35);
\draw  (0.2, -0.55) rectangle (2.1, -0.35);

\draw  (-2.1, -0.3) rectangle (-0.2, -0.1);
\draw  (0.2, -0.3) rectangle (2.1, -0.1);

\draw  (-2.1, 0.35) rectangle (-0.2, 0.55);
\draw  (0.2, 0.35) rectangle (2.1, 0.55);

\draw  (-2.1, 0.1) rectangle (-0.2, 0.3);
\draw  (0.2, 0.1) rectangle (2.1, 0.3);

\draw [red] (-0.18, -0.44) rectangle (0.18 ,-0.4);
\draw [red] (-0.18, -0.5) rectangle (0.18 ,-0.46);
\draw [dashed, red] (-0.225, -0.52) rectangle (0.225 ,-0.38);

\draw [red] (-0.18, -0.19) rectangle (0.18 ,-0.15);
\draw [red] (-0.18, -0.25) rectangle (0.18 ,-0.21);

\draw [red] (-0.18, 0.44) rectangle (0.18 ,0.4);
\draw [red] (-0.18, 0.5) rectangle (0.18 ,0.46);

\draw [red] (-0.18, 0.19) rectangle (0.18 ,0.15);
\draw [red] (-0.18, 0.25) rectangle (0.18 ,0.21);

\draw[dashed] (-2.4, -0.8) rectangle (2.4, 0.8);

\draw[dashed] (0.08, 0.05) rectangle (2.22, 0.6);
\draw[dashed] (-0.08, 0.05) rectangle (-2.22, 0.6);
\draw[dashed] (-0.08, -0.05) rectangle (-2.22, -0.6);
\draw[dashed] (0.08, -0.05) rectangle (2.22, -0.6);

\draw [<->, thin] (0.22, 0.15) -- (2.08, 0.15);

\node[scale = 0.55, above] at (1.15, 0.16) {$\Gamma a_{n - 1}$};

\draw [<->, thin] (-2.38, -0.86) -- (2.38, -0.86);

\node[scale = 0.8, below] at (0, -0.9) {$\Gamma (a_n + 2 a_{n - m - 1})$};

\draw [<->, thin] (-2.5, -0.78) -- (-2.5, 0.78);

\node[scale = 0.8, left] at (-2.55, 0) {$a_{n + 1} + 2a_{n - m}$};

\draw [<->, thin] (0.1, 0.65) -- (2.2, 0.65);

\node[scale = 0.55, above] at (1.15, 0.67) {$\Gamma (a_{n - 1} + 2 a_{n - m - 2})$};

\draw [<->, thin] (2.32, 0.07) -- (2.32, 0.58);

\node[scale = 0.55, right] at (2.4, 0.325) {$a_n + 2a_{n - m - 1}$};

\draw [<->, thin] (-2.08, -0.655) -- (2.08, -0.655);
\node[scale = 0.7, below] at (0, -0.66) {$\Gamma a_n$};

\draw [<->, thin] (2.16, -0.53) -- (2.16, -0.12);
\node[scale = 0.6, right] at (2.215, -0.325) {$a_n$};

\draw [<->, thin] (0.24, -0.28) -- (0.24, -0.12);
\node[scale = 0.55, right] at (0.245, -0.2) {$a_{n - 1}$};

\draw [<->, thin] (-2.15, -0.53) -- (-2.15, 0.53);
\node[scale = 0.7, left] at (-2.155, 0) {$a_{n + 1}$};

\end{tikzpicture}
\caption{{\bf The nesting structure of rectangles 
between levels $n - 1$ and $n$.} The number next to an arrow represents the length of the corresponding 
$\R$-interval. The four copies of $\tilde{V}_{n-1}^\Gamma$ have been indicated by black broken boundary lines while the eight copies of $\lfloor \mathcal I_{n-1, \Gamma, 0}\rfloor \times \lfloor \mathcal I_{n-1, 1, 0}\rfloor$ have been indicated by black solid boundary lines. The two copies of $\tilde V_{n-1}^\Gamma$ on the top (bottom) are $\tilde V_{n; 1, 1}^\Gamma$ and $\tilde V_{n; 1, 2}^\Gamma$ (respectively $\tilde V_{n; 2, 1}^\Gamma$ and $\tilde V_{n; 2, 2}^\Gamma$) from left to right. The rectangle with red broken boundary lines is a copy of $\tilde{V}_{n - m - 1}^\Gamma$. The rectangles with red solid boundary lines are copies of $\lfloor \mathcal I_{n- m - 1, \Gamma, 0}\rfloor \times \lfloor \mathcal I_{n-m-1, 1, 0}\rfloor$.}
\label{fig:Tiling}
\end{figure}

An alternative way to describe this placement scheme is
to visualize it as a tree. We begin with $\tilde{V}_{n}^\Gamma$ as the root node. The successors of a rectangle in the tree are the rectangles that are 
placed immediately inside it. Since rectangles of different dimensions co-occur at any stage of the placement, we choose the depths of the successor nodes according to their vertical (or equivalently horizontal) 
side lengths. Thus each node in the tree has two attributes viz. \emph{depth} and \emph{level}. The depth of a node indicates the dimension of the rectangle it 
corresponds to while the level indicates its lineage. We denote this tree as $\T_n$ and henceforth we will use \emph{node} interchangeably with the 
corresponding rectangle. Notice that two nodes from 
different levels may have the same depth. Also two rectangles from different branches may have (very 
slight) overlaps. We refer the reader to Fig~\ref{fig:Tree} for an illustration.
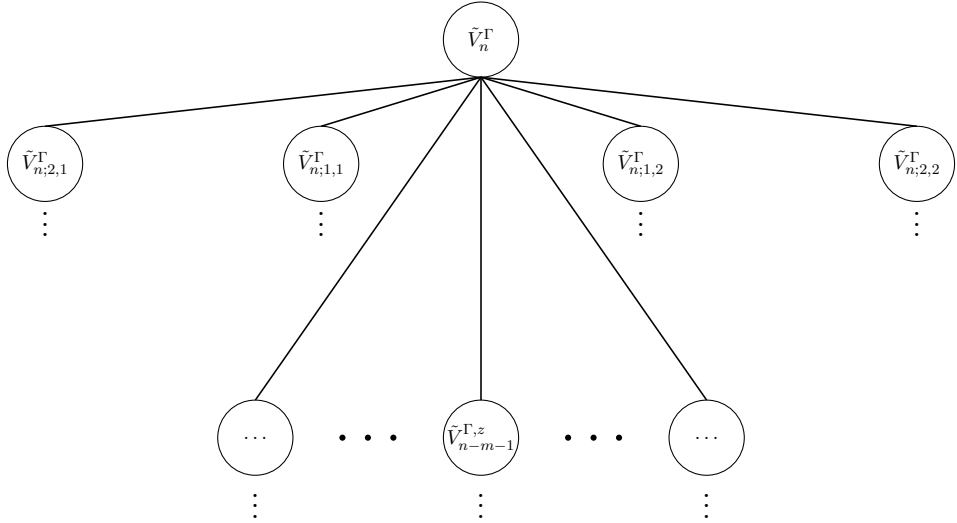
\begin{figure}[!htb]
\centering
\begin{tikzpicture}[scale = 2]
\draw  (0, 0.05) circle [radius = 0.25];
\node [scale = 0.7] at (0, 0.05) {$\tilde{V}_n^\Gamma$};

\draw  (0.9659258*3, - 0.258819*3) circle [radius = 0.25];
\node [scale = 0.7] at (0.9659258*3, - 0.258819*3) {$\tilde{V}_{n;2,2}^\Gamma$};
\node [scale = 1] at (0.9659258*3, - 0.258819*3 - 0.15 - 0.2) {$\vdots$};

\draw  (-3*0.9659258, -3*0.258819) circle [radius = 0.25];
\node [scale = 0.7] at (-0.9659258*3, - 0.258819*3) {$\tilde{V}_{n;2,1}^\Gamma$};
\node [scale = 1] at (-0.9659258*3, - 0.258819*3 - 0.15 - 0.2) {$\vdots$};

\draw (0.9659258*1.1, - 0.258819*3) circle [radius = 0.25];
\node [scale = 0.7] at (0.9659258*1.1, - 0.258819*3) {$\tilde{V}_{n;1,2}^\Gamma$};
\node [scale = 1] at (0.9659258*1.1, - 0.258819*3 - 0.15 - 0.2) {$\vdots$};

\draw (-0.9659258*1.1, -3*0.258819) circle [radius = 0.25];
\node [scale = 0.7] at (-0.9659258*1.1, - 0.258819*3) {$\tilde{V}_{n;1,1}^\Gamma$};
\node [scale = 1] at (-0.9659258*1.1, - 0.258819*3 - 0.15 - 0.2) {$\vdots$};

\draw [semithick] (0, -0.2) -- (0.9659258*3, - 0.258819*3 + 0.25);
\draw [semithick] (0, -0.2) -- (-0.9659258*3, - 0.258819*3 + 0.25);
\draw [semithick] (0, -0.2) -- (-0.9659258*1.1, - 0.258819*3 + 0.25);
\draw [semithick] (0, -0.2) -- (0.9659258*1.1, - 0.258819*3 + 0.25);

\draw  (-0.5*3, -0.8660254*3) circle [radius = 0.25];
\draw [semithick] (0, -0.2) -- (-0.5*3, -0.8660254*3 + 0.25);
\node [scale = 0.7] at (-0.5*3, -0.8660254*3) {$\hdots$};

\draw  (0.5*3, -0.8660254*3) circle [radius = 0.25];
\draw [semithick] (0, -0.2) -- (0.5*3, -0.8660254*3 + 0.25);
\node [scale = 0.7] at (0.5*3, -0.8660254*3) {$\hdots$};

\draw  (0, -0.8660254*3) circle [radius = 0.25];
\draw [semithick] (0, -0.2) -- (0, -0.8660254*3 + 0.25);
\node [scale = 0.7] at (0, -0.8660254*3) {$\tilde{V}_{n - m - 1}^{\Gamma, z}$};

\node [scale = 2] at (-0.5*1.5, -0.8660254*3) {$\hdots$};
\node [scale = 2] at (0.5*1.5, -0.8660254*3) {$\hdots$};

\node [scale = 1] at (-0.5*3, -0.8660254*3 - 0.4) {$\vdots$};
\node [scale = 1] at (0, -0.8660254*3 - 0.4) {$\vdots$};
\node [scale = 1] at (0.5*3, -0.8660254*3 - 0.4) {$\vdots$};
\end{tikzpicture}
\caption{{\bf The tree representation of the placement scheme.} Only two topmost levels have been shown. The four rectangles that are nearer to the root are copies of $\tilde V_{n-1}^\Gamma$ and the ones that are farther down are copies of $\tilde V_{n-1}^\Gamma$.}
\label{fig:Tree}
\end{figure}

We next choose a convention to describe the level and depth of nodes in $\T_n$. We enumerate the levels of 
nodes downwards starting with $n$ for the root node. As for the depth of a node $B$, we define it to be $d$ if $B = \tilde{V}_d^{\Gamma, z}$ for some $z \in \Z^2$. 
Denote by $\B_\ell$ the collection of all rectangles at 
level $\ell$. Except for $\ell = n$, $\B_\ell$ can be further divided into two sub-collections, namely $\B_{\ell; \principal}$ and $\B_{\ell; \midd}$, which consist of rectangles at depth $\ell$ and $\ell - m$ respectively. 
Let $B$ be a member of $\B_\ell$ with depth $d$. 
Denote the four rectangles of depth $d - 1$ 
that are placed inside $B$ by $\{B_{i, j}\}_{i \in 
[2], j \in [2]}$ in the usual order (see 
Figure~\ref{fig:Tiling}). Each $B_{i, j}$ contains two copies of $\lfloor \mathcal I_{d-1, \Gamma, 0} \rfloor \times \lfloor \mathcal I_{d-1, 1, 0}\rfloor$, denoted as $B_{i, j, 1}$ (the upper one) and $B_{i, j, 2}$ (the 
bottom one). Similarly denote by $B_{1}$ and $B_{2}$, the two copies of the rectangle $\lfloor \mathcal I_{d, \Gamma, 0} \rfloor \times \lfloor \mathcal I_{d, 1, 0}\rfloor$ placed inside $B$. 
There are also $2^{m-1}$ copies of $\tilde{V}^\Gamma_{d - m - 1}$ adjacent to each $B_{i, 1}$ and we refer to 
that collection as $B_{\midd, i}$. Let us state another important notation convention in this regard:  Often the notation for a particular rectangle $B$ 
might already involve some subscripts. Then the new subscripts will be appended to the end of the existing subscripts followed by a semicolon. 
For instance,  if $B = \tilde{V}_{\ell}^\Gamma$, the notation for $B_{i, j}$ would be $\tilde{V}_{\ell; i, j}^\Gamma$.

We now decompose the GFF along each branch of the tree. Since a point may belong to more than one rectangle at the same level, we may get different series 
of fields for the same point. For integers $\ell' < \ell \leq n$ and a rectangle $B \in \B_{\ell'}$, define the field $\{X_{n, B, \ell, v}\}_{v \in B}$ by
$$X_{n, B, \ell, v} = \E(\eta_{n, v} \mid \eta_{n, \partial B}) - \E(\eta_{n, v} \mid \eta_{n, \partial B^{\ancest, \ell}})\,,$$
where $B^{\ancest, \ell}$ is the ancestor of $B$ at level $\ell$. Also define another field $\{\eta_{n, B, v}\}_{v \in B}$ as
$$\eta_{n, B, v} = \eta_{n, v} - \E(\eta_{n, v} \mid \eta_{n, \partial B})\,.$$
If $B$ is $B'_i$ for some $B' \in \B_{\ell'}$ and $i' \in [2]$, then we use the same notations to denote the fields $\{X_{n, B', \ell, .}\}$ and $\{\eta_{n, B', 
.}\}$ when restricted to $B$. We often refer to the fields $\{X_{n, B, \ell, .}\}$ as \emph{coarse fields} and the field $\{\eta_{n, B, .}\}$ as the \emph{fine 
field} on $B$. In the particular case when $B = \tilde{V}^\Gamma_{\ell'}$, we modify these notations as $\{X_{n, \ell', \ell, .}\}$ and $\{\eta_{n, \ell', .}\}$ 
respectively. The following observation is based on the Markov property of the GFF.
\begin{observation}
\label{observ:indep}
The processes $\{X_{n, B, \ell, v}\}_{v \in B}$ and $\{X_{n, B^{\ancest, \ell}, \ell'', v}\}_{v \in B^{\ancest, \ell}}$ are independent Gaussian processes 
for all $\ell'' > \ell$. Furthermore the field $\{\eta_{n, B, v}\}_{v \in B}$ is a GFF on $B$ with Dirichlet boundary condition that is independent with $\{X_{n, B, \ell, v}\}_{v \in B}$ for all $\ell > 
\ell'$. Notice, however, that $\{\eta_{n, B_1, v}\}_{v \in B_1}$ and $\{X_{n, B_2, \ell, v}\}_{v \in B_2}$ may not be independent for different $B_1, B_2 \in \B_{\ell'}$.
\end{observation}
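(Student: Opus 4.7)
The plan is to reduce everything to the Markov field property of the discrete GFF: for any $A \subseteq V_n$, the process $\eta_{n,v} - \E(\eta_{n,v} \mid \eta_{n,\partial A})$ on $v \in A$ is a Dirichlet GFF on $A$, independent of the sigma-algebra generated by $\{\eta_{n,u}: u \in V_n \setminus \inte(A)\}$. Since every random variable in the statement is a linear functional of the jointly Gaussian family $\{\eta_{n,v}\}$, independence reduces to Gaussian orthogonality, and the observation then becomes a matter of checking which sigma-algebras contain which $X$-processes.

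For the second assertion, I would first note that the nested placement scheme of Section~\ref{subsec:GFF_represent} guarantees $\inte(B) \subseteq \inte(B^{\ancest,\ell})$ for every $\ell > \ell'$, hence $\partial B^{\ancest,\ell} \cap \inte(B) = \emptyset$. Combined with the trivial $\partial B \cap \inte(B) = \emptyset$, this places both $\partial B$ and $\partial B^{\ancest,\ell}$ inside $V_n \setminus \inte(B)$. The field $X_{n,B,\ell,v}$, being by definition a deterministic linear combination of $\{\eta_{n,u}: u \in \partial B \cup \partial B^{\ancest,\ell}\}$, is therefore measurable with respect to $\sigma(\eta_{n, V_n \setminus \inte(B)})$. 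The Markov property applied to $B$ then simultaneously identifies $\{\eta_{n,B,v}\}$ as a Dirichlet GFF on $B$ and gives its independence from $X_{n,B,\ell,\cdot}$ for every $\ell > \ell'$.

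For the first assertion, the starting point is the telescoping identity
$$X_{n,B,\ell,v} = \eta_{n,B^{\ancest,\ell},v} - \eta_{n,B,v},$$
which follows at once from the two definitions. The first summand, being the Markov residual of $\eta_n$ on $B^{\ancest,\ell}$, is independent of $\sigma(\eta_{n, V_n \setminus \inte(B^{\ancest,\ell})})$; the second summand is independent of the larger sigma-algebra $\sigma(\eta_{n, V_n \setminus \inte(B)})$, and hence of $\sigma(\eta_{n, V_n \setminus \inte(B^{\ancest,\ell})})$ in particular. Consequently every coordinate of $X_{n,B,\ell,\cdot}$ is orthogonal to $\eta_{n,u}$ for each $u \in V_n \setminus \inte(B^{\ancest,\ell})$. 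On the other hand, $X_{n,B^{\ancest,\ell},\ell'',w}$ is a linear combination of values of $\eta_n$ on $\partial B^{\ancest,\ell} \cup \partial (B^{\ancest,\ell})^{\ancest,\ell''}$, both of which lie in $V_n \setminus \inte(B^{\ancest,\ell})$ by the same nested argument applied one level up. Thus every coordinate of $X_{n,B,\ell,\cdot}$ is orthogonal to every coordinate of $X_{n,B^{\ancest,\ell},\ell'',\cdot}$, and joint Gaussianity promotes orthogonality to independence.

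The proof carries no genuine obstacle beyond the set-theoretic bookkeeping; the only care required is verifying the inclusions $\partial B^{\ancest,\ell} \cap \inte(B) = \emptyset$ and $\partial(B^{\ancest,\ell})^{\ancest,\ell''} \cap \inte(B^{\ancest,\ell}) = \emptyset$, both of which are immediate from the nested construction. The warning in the last sentence of the observation is precisely that when $B_1 \ne B_2$ both lie in $\B_{\ell'}$, the set $\partial B_2$ may intersect $\inte(B_1)$, so that $X_{n,B_2,\ell,\cdot}$ need not be measurable with respect to $\sigma(\eta_{n,V_n\setminus\inte(B_1)})$, and the measurability argument above breaks down.
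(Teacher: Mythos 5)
Your proof is correct and fills in exactly what the paper leaves implicit under the phrase ``based on the Markov property of the GFF.'' The telescoping identity $X_{n,B,\ell,v}=\eta_{n,B^{\ancest,\ell},v}-\eta_{n,B,v}$, the reduction to orthogonality via joint Gaussianity, and the bookkeeping with $\partial(\cdot)\cap\inte(\cdot)=\emptyset$ (valid here because ancestor-descendant nesting is strict, while same-level rectangles from different branches may overlap slightly, which is precisely the caveat in the last sentence) is the natural and presumably intended argument.
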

One can similarly define the fine field $\{\eta_{n, B, v}\}_{v \in B}$ on any rectangle $B$ that is a subset of 
$\tilde V_n^\Gamma$. The coarse field $\{X_{n, B, v}\}_{v \in B}$ in this case would be $\E (\eta_{n, v} | \eta_{n, \partial B})$. These notions will be useful later for constructing crossings through rectangles that are not in $\B_\ell$.

In the remaining subsections we prove a few results that will be used when we prove our main theorem 
in section~\ref{sec:the_main}. The reader might opt to skip these subsections and move directly to section~\ref{sec:total_variation} (and come back to them when they are used).
\subsection{Some inequalities involving extreme values of stochastic processes}
We record a few standard results in this subsection.
\begin{lemma}\cite[Theorem 7.1, Equation (7.4)]{L01}
\label{lem:Borell_ineq}
Consider a centered Gaussian process $\{X_v: v \in A\}$, with $A$ finite, and set $\sigma^2 = 
\sup_{v \in A}\E X_v^2$. Let $X_A^* = \max_{v \in A}X_v$. Then, for $x > 0$,
$$\P(| X_A^*  - \E X_A^*| \geq x) \leq 2 \e^{-x^2 / 2 \sigma^2}\,.$$
\end{lemma}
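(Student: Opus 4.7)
The plan is to deduce this from the general Gaussian concentration (Borell–TIS) inequality for Lipschitz functions of a standard Gaussian vector. Since $A$ is finite, let $\Sigma$ denote the covariance matrix of the vector $(X_v)_{v\in A}$ and write $\Sigma = M M^{T}$ via a Cholesky (or any square-root) factorization, so that we can represent
\[
X_v = \sum_{i=1}^{k} a_{v,i}\, g_i \qquad (v \in A),
\]
where $g=(g_1,\dots,g_k)$ is a standard Gaussian vector in $\R^k$ and $a_v$ is the row of $M$ corresponding to $v$. Note that $\|a_v\|^{2} = \E X_v^{2} \leq \sigma^{2}$ for every $v$.

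Next I would view $F(g) = X_A^{*}(g) = \max_{v\in A}\langle a_v,g\rangle$ as a deterministic function $F\colon \R^{k}\to\R$ and show that $F$ is $\sigma$-Lipschitz with respect to the Euclidean norm. Indeed, for any $g,g'\in\R^{k}$, pick $v^{*}$ attaining the max for $g$; then
\[
F(g) - F(g') \leq \langle a_{v^{*}}, g-g'\rangle \leq \|a_{v^{*}}\|\,\|g-g'\| \leq \sigma\,\|g-g'\|,
\]
and symmetry gives $|F(g)-F(g')|\leq \sigma\|g-g'\|$.

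The final step is to invoke the Gaussian concentration inequality for Lipschitz functions: if $F\colon \R^{k}\to\R$ is $L$-Lipschitz and $g$ is standard Gaussian on $\R^{k}$, then for every $x>0$,
\[
\P\bigl(|F(g)-\E F(g)|\geq x\bigr) \leq 2\e^{-x^{2}/(2L^{2})}.
\]
Applying this with $L=\sigma$ yields the stated bound. The only nontrivial ingredient is this Gaussian concentration estimate, which is the main content of the result; the standard derivations proceed either via the Gaussian isoperimetric inequality of Borell and Sudakov–Tsirelson, or via a semigroup / log-Sobolev (Herbst) argument applied to $e^{\lambda F}$ to control the moment generating function and then optimizing in $\lambda$. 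I would simply cite one of these (as the excerpt does with Ledoux's book) rather than reprove it, since the reduction above is the only problem-specific content and the concentration inequality is a well-established black box.
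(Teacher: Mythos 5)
Your argument is correct, and it is the standard route to this form of the Borell--TIS inequality: represent the finite Gaussian vector as a linear image of a standard Gaussian, check that the maximum is $\sigma$-Lipschitz in the underlying standard Gaussian variables, and invoke Gaussian concentration for Lipschitz functions. Note, however, that the paper does not prove this lemma at all --- it is stated purely as a citation to Ledoux's book (Theorem~7.1, Eq.~(7.4) in \cite{L01}), so there is no ``paper's proof'' to compare against. Your reduction is essentially what one finds in Ledoux's treatment, with the Lipschitz concentration estimate itself resting on the Gaussian isoperimetric inequality (or, equivalently, a log-Sobolev/Herbst argument), exactly as you indicate. One small stylistic point: the Lipschitz bound only needs $\|a_{v^*}\| \leq \sigma$ for the maximizer $v^*$ at the given point $g$, but since you have $\|a_v\| \le \sigma$ uniformly this is immediate; the way you wrote it is fine.
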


\begin{lemma}\cite[Theorem 4.1]{A90}
\label{lem:basic_chaining}
Let $(S, d)$ be a (finite) metric space such that $\max_{s, t \in S} d(s, t) = 1$. Suppose that there exist positive numbers $\beta$ and $C_1$ such that $N_\epsilon(S, d) \leq C_1\epsilon^{-\beta}$ for all $\epsilon \in (0, 1]$ where $N_\epsilon(S, d)$ is the $\epsilon$-covering number of $(S, d)$. If $\{X_s\}_{s \in S}$ is a centered Gaussian process satisfying
$$\sqrt{\E(X_s - X_{s'})^2} \leq C_2d(s, s')^\alpha, \mbox{  for all } s, s' \in S \mbox{ and some } \alpha, C_2 > 0\,,$$
then
$$\E \max_{s \in A} X_s \leq C_2(\sqrt{\beta\log 2} + \sqrt{\log (C_1 + 1)})C_\alpha\,,$$
where $C_\alpha = \sum_{n \geq 0}\sqrt{n + 1}2^{-n\alpha}$.
\end{lemma}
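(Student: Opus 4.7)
The plan is to execute the classical Dudley / generic chaining argument. The workhorse is the elementary Gaussian maximal inequality: for a finite family $\{Y_i\}_{i \in F}$ of centered Gaussians with $\max_i \E Y_i^2 \leq \tau^2$, one has $\E \max_i Y_i \leq \tau \sqrt{2 \log |F|}$ (proved by optimizing $\lambda$ in the Chernoff bound on $\E \e^{\lambda \max_i Y_i}$). This will be applied at each scale of a chain built from the assumed metric entropy bound.

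I would then construct nested dyadic nets. Since the $\epsilon$-covering number is at most $C_1 \epsilon^{-\beta}$, for each integer $n \geq 0$ pick a net $S_n \subseteq S$ with $|S_n| \leq C_1 2^{n \beta}$ such that every $s \in S$ lies within distance $2^{-n}$ of $S_n$; take $S_0$ to consist of a single arbitrary point (permissible since $\mathrm{diam}(S) = 1$). For each $s \in S$ let $\pi_n(s)$ be a nearest point of $S_n$. Since $S$ is finite, $\pi_n(s) = s$ for all $n$ sufficiently large, so the telescoping
\[
X_s \;=\; X_{\pi_0(s)} + \sum_{n \geq 0} \bigl( X_{\pi_{n+1}(s)} - X_{\pi_n(s)} \bigr)
\]
is in fact a finite sum pointwise in $s$. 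Taking $\max_{s \in S}$ and then expectation, and using that $\max_s X_{\pi_0(s)}$ reduces to a single centered Gaussian (hence has mean zero), gives
\[
\E \max_{s \in S} X_s \;\leq\; \sum_{n \geq 0} \E \max_{s \in S} \bigl( X_{\pi_{n+1}(s)} - X_{\pi_n(s)} \bigr).
\]

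For each $n$ the increment depends on $s$ only through the pair $(\pi_n(s), \pi_{n+1}(s))$, which takes at most $|S_n| \cdot |S_{n+1}| \leq C_1^2 \, 2^{(2n+1)\beta}$ distinct values; moreover the triangle inequality gives $d(\pi_n(s), \pi_{n+1}(s)) \leq 3 \cdot 2^{-n-1}$, so the H\"older hypothesis bounds each increment's variance by $C_2^2 (3 \cdot 2^{-n-1})^{2\alpha}$. Applying the Gaussian maximal inequality at each scale, splitting the logarithm via $\sqrt{a+b} \leq \sqrt a + \sqrt b$, and summing over $n$ produces the stated form, the factor $\sum_{n \geq 0} \sqrt{n+1}\, 2^{-n\alpha} = C_\alpha$ arising precisely because the Gaussian bound contributes a $\sqrt{(n+1)\beta \log 2}$ term at scale $n$. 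The only real subtlety is bookkeeping constants so that the two prefactors $\sqrt{\beta \log 2}$ and $\sqrt{\log(C_1+1)}$ separate cleanly (additively) rather than appearing as a mixed product, and so that numerical factors such as $3^\alpha$ and $2^{-\alpha}$ can be absorbed into $C_\alpha$; convergence of $C_\alpha$ is automatic since $\alpha > 0$ is implicit in the hypothesis.
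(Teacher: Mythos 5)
This lemma is not proved in the paper---it is cited verbatim from Adler's lecture notes \cite{A90}, so there is no in-paper proof to compare your argument against. Your proposal takes the standard Dudley chaining route, which is indeed the right method and is essentially what Adler does. That said, there is a real problem with the closing remark that ``numerical factors such as $3^\alpha$ and $2^{-\alpha}$ can be absorbed into $C_\alpha$.'' The quantity $C_\alpha = \sum_{n\geq 0}\sqrt{n+1}\,2^{-n\alpha}$ is a \emph{fixed} expression appearing in the statement, not a slack constant one is free to inflate; worse, it is monotone decreasing in $\alpha$ and tends to $1$ as $\alpha \to \infty$, while $(3/2)^\alpha \to \infty$. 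So the construction as written---with $\pi_n(s)$ the closest point of $S_n$ to $s$, yielding increments of size $3\cdot 2^{-n-1}$ over at most $|S_n|\cdot|S_{n+1}|$ pairs---produces a bound with a prefactor of order $(3/2)^\alpha$ (and an extra factor of $2$ from doubling the cardinality) that genuinely cannot be hidden inside $C_\alpha$.

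The fix is the standard one for constant-tight chaining: build the chain \emph{inward} rather than \emph{outward}. Since $S$ is finite, fix $N$ with $S_N = S$, set $\pi_N(s)=s$, and then recursively let $\pi_n(s)$ be the closest point of $S_n$ to $\pi_{n+1}(s)$, rather than to $s$ itself. This gives $d(\pi_n(s),\pi_{n+1}(s))\leq 2^{-n}$ directly (no triangle inequality, no factor of $3$), and the increment $X_{\pi_{n+1}(s)}-X_{\pi_n(s)}$ is a function of $\pi_{n+1}(s)\in S_{n+1}$ alone, so it ranges over at most $|S_{n+1}|\leq C_1 2^{(n+1)\beta}$ values rather than $|S_n|\,|S_{n+1}|$. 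Running the rest of your argument with this chain, using $\sqrt{a+b}\leq \sqrt a + \sqrt b$ and $\log C_1\leq\log(C_1+1)$, yields $\E\max_{s}X_s \leq C_2\bigl(\sqrt{\log(C_1+1)}+\sqrt{\beta\log 2}\bigr)\cdot\sqrt{2}\,C_\alpha$. Whether the residual $\sqrt{2}$ reflects a slight discrepancy with Adler's precise constant or a small error in the paper's transcription is a separate and inessential matter; the substantive point is that the naive (non-nested) chain does not give the claimed constant, while the nested chain (essentially) does.
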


As a consequence of Lemma~\ref{lem:basic_chaining} we get the following lemma which we will use repeatedly.
\begin{lemma} 
\label{lem:max_expectation}
Let $B_1, B_2, \ldots, B_N$ be squares of side lengths $b_1, b_2, \ldots, b_N$ respectively and $B = \cup_{j 
\in [N]}B_j$. Suppose that $\{X_v\}_{v \in B}$ is a centered Gaussian processes satisfying
$$\E(X_u - X_v)^2 \leq |u -v| / b_j, \mbox{  whenever } u, v \in B_j \mbox{ for some } j \in [N]\,.$$
Then there exists an absolute constant $C' > 0$ such that
$$\E \mbox{$\max_{v \in B}$} X_v \leq C'\sqrt{\log N}(1 + \mbox{$\max_{v \in B}$}\sqrt{\E X_v^2}) + C'\,.$$
\end{lemma}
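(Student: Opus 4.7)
The plan is to apply Lemma~\ref{lem:basic_chaining} separately on each square $B_j$ to obtain a \emph{uniform} bound on $\E \max_{v\in B_j} X_v$, and then combine these $N$ maxima using the sub-Gaussian concentration afforded by Borell's inequality (Lemma~\ref{lem:Borell_ineq}).

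\emph{Step 1 (bound inside each square).} Fix $j$ and let $M_j = \max_{v\in B_j} X_v$. Equip $B_j$ with the rescaled metric $d_j(u,v) = |u-v|/(\sqrt{2}\,b_j)$, so that $\max_{u,v\in B_j} d_j(u,v) = 1$. The assumed increment bound rewrites as $\sqrt{\E(X_u-X_v)^2} \leq \sqrt{|u-v|/b_j} = 2^{1/4}\, d_j(u,v)^{1/2}$, i.e.\ H\"older-$1/2$ in $(B_j,d_j)$ with constant $C_2 = 2^{1/4}$. A $d_j$-ball of radius $\epsilon$ has Euclidean radius $\sqrt{2}\,b_j\epsilon$, and since $|B_j|\leq (b_j+1)^2$ the covering number satisfies $N_\epsilon(B_j,d_j) \leq C_1 \epsilon^{-2}$ for some absolute $C_1$. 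Lemma~\ref{lem:basic_chaining} (with $\alpha=1/2$, $\beta=2$) therefore yields an absolute constant $C''$ with $\E M_j \leq C''$ for every $j$.

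\emph{Step 2 (concentration).} Set $\sigma^2 := \max_{v\in B}\E X_v^2$. Applying Lemma~\ref{lem:Borell_ineq} to $\{X_v\}_{v\in B_j}$ gives $\P(|M_j - \E M_j| \geq t) \leq 2\e^{-t^2/(2\sigma^2)}$, so $M_j-\E M_j$ is sub-Gaussian with variance proxy $\sigma^2$. The standard moment-generating-function argument then produces
\begin{equation*}
\E \max_{j\in [N]}(M_j - \E M_j) \leq \sigma\sqrt{2\log N},
\end{equation*}
valid for any joint dependence of the $M_j$'s (we only need the marginal tails).

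\emph{Step 3 (combine).} Writing $\max_{v\in B} X_v = \max_{j\in[N]} M_j$, we split
\begin{equation*}
\E \max_{v\in B} X_v \leq \max_{j\in[N]} \E M_j + \E \max_{j\in[N]}(M_j - \E M_j) \leq C'' + \sigma\sqrt{2\log N}.
\end{equation*}
Choosing $C' = \max\{C'',\sqrt{2}\}$ bounds this by $C'\sqrt{\log N}(1+\sigma) + C'$, which is the claim. The case $N=1$ is already handled since $\sqrt{\log N}=0$ and the additive $C'$ absorbs $C''$.

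There is no real obstacle here: both tools are off-the-shelf, and the only mildly delicate step is the book-keeping in Step~1 to verify that the rescaled metric makes the covering number independent of $b_j$ so that $C''$ is truly an absolute constant.
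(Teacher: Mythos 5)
Your proof is correct and is exactly the argument the paper has in mind: the lemma is stated as a direct consequence of Lemma~\ref{lem:basic_chaining}, to be combined with the concentration from Lemma~\ref{lem:Borell_ineq}, and your Steps~1--3 supply precisely those details. The key bookkeeping you flag in Step~1 --- normalizing the metric by the diameter of $B_j$ so that the covering constant and the H\"older constant are absolute, making $\E M_j$ uniformly bounded --- followed by the standard sub-Gaussian maximum bound, is the intended route.
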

\subsection{Some results on simple random walk in $\Z^2$}
\label{subsec:random_walk}
In this subsection we will present some results on simple random walk in $\Z^2$. 
First we need some notations. Denote by $\{S_t\}_{t \geq 0}$ a simple random walk in $\Z^2$ and by $\P^v$ the measure 
corresponding to the random walk starting from $v$. Let $A \subseteq \Z^2$ and $\tau_A = \min \{t \geq 0: S_t 
\notin \inte(A)\}$. For $x \in \inte(A), y \in \partial A$, define the Poisson kernel $H_{A}(x, y)$ as $\P^x 
(S_{\tau_{A}} = y)$. The simple random walk Green's function $G_A(x, y)$ is defined as $\E^x (\sum_{t = 
0}^{\tau_A - 1}\mathbf{1}_{\{S_t = y\}})$. For positive integers $M$ and $N$ we denote the rectangle $([0, M] \times [0, N]) \cap \Z^2$ as $R_{M, N}$, whereas for $z \in \Z^2$ we denote by $R_{M, N}^z$ the rectangle $z + R_{M, N}$.

In the next a few lemmas we will heavily use the following exact expression for $H_{R_{M, N}}(.,.)$.
\begin{proposition}\cite[Proposition 8.1.5]{Lawler10}
\label{prop:poisson_ker}
For $(x, y) \in \inte(R_{M, N})$ and $y_1 \in [N-1]$,
\begin{eqnarray*}
H_{R_{M, N}}((x, y), (0, y_1)) &=& H_{R_{M, N}}((M - x, y), (M, y_1))\\ 
&=&\frac{2}{N}\sum_{j = 1}^{N - 1} 
\frac{\sinh(r(\tfrac{j\pi}{N})(M - x))}{\sinh (r(\tfrac{j\pi}{N})M))}\sin 
\Big(\frac{j\pi y}{N}\Big)\sin \Big(\frac{j\pi y_1}{N}\Big)\,,
\end{eqnarray*}
where $r(t)$ is the even function $\cosh^{-1}(2 - \cos t)$. 
\end{proposition}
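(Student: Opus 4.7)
The statement is the standard explicit formula for the discrete Poisson kernel of a rectangle, and cited to Lawler's book; my plan is to derive it by discrete separation of variables in the spirit of the continuous analogue on a rectangle. The first equality $H_{R_{M,N}}((x,y),(0,y_1)) = H_{R_{M,N}}((M-x,y),(M,y_1))$ is immediate from the reflection symmetry $x \mapsto M-x$ of the rectangle, which exchanges the left and right sides while preserving the random walk law. So the substantive task is to establish the explicit sum formula for $H_{R_{M,N}}((x,y),(0,y_1))$.

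The key observation is that, as a function of $(x,y) \in \inte(R_{M,N})$, the kernel $H_{R_{M,N}}(\cdot,(0,y_1))$ is the unique discrete harmonic function on $\inte(R_{M,N})$ whose boundary values are $\mathbf{1}_{\{(0,y_1)\}}$ on $\partial R_{M,N}$. To construct this function, I would look for a basis of discrete harmonic functions of product form $u(x,y) = f(x)g(y)$ vanishing on the top, bottom, and right sides. Plugging into the discrete harmonicity relation and dividing by $f(x)g(y)$ yields
\[
\frac{f(x+1)+f(x-1)}{f(x)} + \frac{g(y+1)+g(y-1)}{g(y)} = 4.
\]
The boundary conditions $g(0) = g(N) = 0$ select the sine eigenfunctions $g_j(y) = \sin(j\pi y/N)$, $j \in [N-1]$, which satisfy $g_j(y+1)+g_j(y-1) = 2\cos(j\pi/N)g_j(y)$. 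This forces the $x$-recursion $f(x+1)+f(x-1) = 2(2-\cos(j\pi/N))f(x)$, whose characteristic equation has reciprocal roots $e^{\pm r_j}$ with $\cosh r_j = 2 - \cos(j\pi/N)$, i.e., $r_j = r(j\pi/N)$. Imposing $f(M)=0$ and normalizing $f(0)=1$ uniquely selects $f_j(x) = \sinh(r_j(M-x))/\sinh(r_j M)$.

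To match the boundary data on the left side, I would use the discrete sine orthogonality on $[N-1]$, which gives the expansion $\mathbf{1}_{\{y_1\}}(y) = \tfrac{2}{N}\sum_{j=1}^{N-1}\sin(j\pi y_1/N)\sin(j\pi y/N)$ for $y \in [N-1]$. Superposing the basis functions with these Fourier coefficients produces
\[
\tfrac{2}{N}\sum_{j=1}^{N-1}\frac{\sinh(r_j(M-x))}{\sinh(r_j M)}\sin\bigl(j\pi y/N\bigr)\sin\bigl(j\pi y_1/N\bigr),
\]
which is discrete harmonic on the interior (as a linear combination of harmonic functions), vanishes on the top, bottom, and right sides (from $g_j$ and $f_j(M)=0$), and restricts to $\mathbf{1}_{\{(0,y_1)\}}$ on the left side (by the sine expansion). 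Uniqueness of the bounded discrete harmonic extension then identifies this with $H_{R_{M,N}}((x,y),(0,y_1))$, finishing the proof. The only mild technical point to check is the $r_j$-independence part of the derivation, namely that the hyperbolic sine solutions really are well-defined (i.e., $r_j > 0$), which follows since $2 - \cos(j\pi/N) > 1$ for $j \in [N-1]$; this is the role of the auxiliary function $r(t) = \cosh^{-1}(2-\cos t)$.
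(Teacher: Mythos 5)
The paper does not prove this proposition; it cites it directly to Lawler--Limic, Proposition 8.1.5, where the formula is obtained by exactly the discrete separation-of-variables argument you give. Your derivation is correct: the reflection symmetry for the first equality, the product ansatz with sine eigenfunctions in $y$ (satisfying $g_j(y+1)+g_j(y-1)=2\cos(j\pi/N)g_j(y)$), the resulting second-order recursion in $x$ with reciprocal characteristic roots $e^{\pm r_j}$ where $\cosh r_j = 2-\cos(j\pi/N)$, the choice $f_j(x)=\sinh(r_j(M-x))/\sinh(r_jM)$ enforcing $f_j(M)=0$ and $f_j(0)=1$, the discrete sine-series expansion of $\mathbf{1}_{\{y_1\}}$, and the identification via uniqueness of the discrete Dirichlet problem are all sound and match the standard treatment in the cited source.
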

The function $r(t)$ is almost linear for small values of $t$ as shown below.
\begin{lemma}
\label{lem:r_t}
$r(t) = t + O(t^3)$ on $[0, 1]$ and $r(t) \geq t/4$ on $[0, \pi]$.
\end{lemma}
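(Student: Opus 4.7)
The plan is to treat the two assertions independently, with Taylor expansion for the first and an elementary chain of inequalities for the second.

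For the first claim, I would start from $2-\cos t = 1 + t^2/2 - t^4/24 + O(t^6)$ as $t\to 0$, and invert the even Taylor series $\cosh(x) = 1 + x^2/2 + x^4/24 + O(x^6)$. Setting $\cosh(r(t)) = 2 - \cos t$ and letting $u(t) := 2-\cos t - 1 = t^2/2 - t^4/24 + O(t^6)$, one solves $r(t)^2 = 2u(t) - r(t)^4/12 + O(r(t)^6)$ iteratively: the leading approximation is $r(t)^2 \approx 2u(t) = t^2 - t^4/12 + O(t^6)$, which upon taking square roots gives $r(t) = t(1 - t^2/24 + O(t^4))$, so $r(t) = t - t^3/24 + O(t^5)$ for $t$ in a neighborhood of $0$. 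Since $r$ is smooth and even, the remainder $r(t) - t$ is a smooth odd function vanishing to third order at $0$, hence bounded by an absolute multiple of $t^3$ on the compact interval $[0,1]$.

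For the second claim, by monotonicity of $\cosh$ on $[0,\infty)$, the inequality $r(t) \geq t/4$ on $[0,\pi]$ is equivalent to
\begin{equation*}
\cosh(t/4) \leq 2 - \cos t \qquad \text{for all } t \in [0,\pi].
\end{equation*}
I would bound the two sides separately. On the left, from the series $\cosh(x) = 1 + x^2/2 + x^4/24 + \cdots$, for $|x| \leq 1$ the tail beyond $x^2/2$ is dominated by $x^2/2$ itself (the ratios of successive terms are at most $1/12$), so $\cosh(x) \leq 1 + x^2$, giving $\cosh(t/4) \leq 1 + t^2/16$ for $t \in [0,\pi] \subset [0,4]$. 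On the right, using $2 - \cos t = 1 + 2\sin^2(t/2)$ together with the standard inequality $\sin u \geq 2u/\pi$ for $u \in [0,\pi/2]$, I get $2 - \cos t \geq 1 + 2t^2/\pi^2$ on $[0,\pi]$. It then suffices to verify $t^2/16 \leq 2t^2/\pi^2$, i.e.\ $\pi^2 \leq 32$, which is immediate.

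I do not anticipate a real obstacle: the first part is a routine series inversion on a compact interval, and the second part reduces to the numerical fact $\pi^2 < 32$ after applying two textbook bounds. The only mild care is in making sure the $O(t^3)$ error in part one is uniform on $[0,1]$ (handled by compactness and smoothness of $r$) and that the elementary bound $\cosh x \leq 1 + x^2$ is used within its range of validity $|x| \leq 1$, which is ensured because $t/4 \leq \pi/4 < 1$ on the relevant interval.
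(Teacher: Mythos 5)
Your proof of the first part is essentially the same as the paper's: both hinge on inverting the Taylor expansion of $\cosh$ around $1$ (the paper cites this as $\cosh^{-1}(1+x) = \sqrt{2x} + O(x^{3/2})$ from Lawler--Limic; you do the inversion by hand). Either way is routine and correct.

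For the second part, your route is genuinely different from the paper's, and arguably cleaner. The paper considers $f(t) = 2 - \cos t - \cosh(t/4)$, asserts $f(0)=0$, and then attempts to show $f$ is monotone via the bound $f'(t) = 2 + \sin t - \tfrac{1}{4}\sinh(t/4) \ge 2 - e/8 \ge 1$. That derivative formula is not the derivative of the displayed $f$ (which would be $\sin t - \tfrac14\sinh(t/4)$, a quantity that vanishes at $t=0$ and is negative near $t=\pi$), so the monotonicity claim as written does not go through; there is evidently a typo. Your approach sidesteps this entirely by bounding the two sides of $\cosh(t/4)\le 2-\cos t$ separately: $\cosh(t/4)\le 1+t^2/16$ via the dominated-tail estimate on $[0,1]$, and $2-\cos t = 1+2\sin^2(t/2)\ge 1+2t^2/\pi^2$ via Jordan's inequality, after which everything reduces to $\pi^2 < 32$. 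This is correct as stated, and each ingredient is used within its range of validity ($t/4 \le \pi/4 < 1$, and $t/2 \in [0,\pi/2]$). The one thing I'd flag is cosmetic: the constant $1/4$ in the lemma is not optimal, and your method would just as easily give a sharper constant; but since the paper only needs some positive lower bound on $r(t)/t$ over $[0,\pi]$, this is immaterial.
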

\begin{proof}
The first assertion follows from the fact that $\cosh^{-1}(1 + x) = \sqrt{2x} + O(x^{3/2})$ as $x \to  
0+$ (see \cite[Chapter 8]{Lawler10}). For the lower bound consider the function $f(t) = 2 - \cos t - \cosh 
\tfrac{t}{4}$. Then $f(0) = 0$ and
$$f'(t) = 2 + \sin t - \frac{\sinh \tfrac{t}{4}}{4} \geq 2 - \frac{\mathrm e}{8} \geq 1$$
on $[0, \pi]$. Thus $(2 - \cos t) \geq \cosh \tfrac{t}{4}$ on $[0, \pi]$. Combining this with the 
fact that $\cosh t$ is increasing for $t \geq 0$ yields the lemma.
\end{proof}
As a first application of Proposition~\ref{prop:poisson_ker}, we derive an upper bound on the probability that a simple random walk starting from a point inside $R_{\Upsilon N, N}$ exits 
it through one of the vertical boundaries. Here $\Upsilon$ is a positive number.
\begin{lemma}
\label{lem:kernel_bound1}
Let $v$ be a point in $\inte(R_{\Upsilon N, N})$ and $v_x' = \tfrac{v_x}{N}$. Then
$$\sum_{z \in \partial_{\mathrm{left}} R}H_{R_{\Upsilon N, N}} (v, z)\leq O(1)\e^{-\tfrac{\pi v_x'}{4}}\,.$$
\end{lemma}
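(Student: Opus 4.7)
The plan is to plug the exact Poisson kernel formula from Proposition~\ref{prop:poisson_ker} into the sum $\sum_{y_1 \in [N-1]} H_{R_{\Upsilon N, N}}(v, (0, y_1))$ and reduce everything to elementary estimates on $\sinh$ and $\sin$. Writing $v = (v_x, v_y)$ and $a_j = r(j\pi/N)$, the formula gives
\begin{equation*}
\sum_{y_1 = 1}^{N-1} H_{R_{\Upsilon N, N}}(v, (0, y_1)) = \frac{2}{N}\sum_{j=1}^{N-1} \frac{\sinh(a_j(\Upsilon N - v_x))}{\sinh(a_j \Upsilon N)}\sin\!\Big(\frac{j\pi v_y}{N}\Big) \sum_{y_1=1}^{N-1}\sin\!\Big(\frac{j\pi y_1}{N}\Big).
\end{equation*}

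Next, I would dispose of the three factors in each summand one at a time. First, the elementary identity $\sinh(a-b)/\sinh(a) \leq e^{-b}$ for $0 < b < a$ (which one checks by writing both sides in terms of $e^{\pm a}, e^{\pm b}$) immediately gives $\sinh(a_j(\Upsilon N - v_x))/\sinh(a_j \Upsilon N) \leq e^{-a_j v_x}$. Applying the lower bound $r(t) \geq t/4$ on $[0,\pi]$ from Lemma~\ref{lem:r_t}, this is at most $e^{-j\pi v_x/(4N)} = e^{-j\pi v_x'/4}$. Second, $|\sin(j\pi v_y/N)| \leq 1$. Third, the telescoping sine sum formula yields
\begin{equation*}
\sum_{y_1=1}^{N-1}\sin\!\Big(\frac{j\pi y_1}{N}\Big) = \sin\!\Big(\frac{j\pi}{2}\Big)\frac{\sin((N-1)j\pi/(2N))}{\sin(j\pi/(2N))},
\end{equation*}
which vanishes for even $j$ and has magnitude at most $1/\sin(j\pi/(2N)) \leq 2N/j$ for odd $j \in [N-1]$ (using $\sin x \geq 2x/\pi$ on $[0,\pi/2]$).

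Combining the three bounds and summing only over odd $j$ yields
\begin{equation*}
\sum_{y_1} H \leq \frac{2}{N} \sum_{j=1}^{N-1} e^{-j\pi v_x'/4} \cdot \frac{2N}{j} \leq \frac{4}{\pi}\sum_{j=1}^{\infty} e^{-j\pi v_x'/4} = \frac{4}{\pi}\cdot\frac{e^{-\pi v_x'/4}}{1 - e^{-\pi v_x'/4}}.
\end{equation*}
Finally, I would split into cases: when $v_x' \geq 1$ the denominator $1 - e^{-\pi v_x'/4}$ is bounded below by the absolute constant $1 - e^{-\pi/4}$, giving $O(1)e^{-\pi v_x'/4}$ as desired; when $v_x' < 1$, one uses the trivial bound $\sum_{y_1} H \leq 1 \leq e^{\pi/4}\, e^{-\pi v_x'/4}$.

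I do not expect any step to be a serious obstacle — the argument is essentially bookkeeping once Proposition~\ref{prop:poisson_ker} and Lemma~\ref{lem:r_t} are in hand. The only mild subtlety is ensuring that the geometric tail actually decays at the claimed rate $\pi/4$ rather than $\pi$: one has to be content with the weaker lower bound $r(t) \geq t/4$ instead of the sharper asymptotic $r(t) \sim t$, because the latter is only valid near $t=0$ whereas the sum ranges over all $j$ up to $N-1$. This is precisely why the rate appearing in the statement is $\pi/4$ and not $\pi$.
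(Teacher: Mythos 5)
Your proof is correct and follows essentially the same route as the paper: plug in the exact Poisson kernel from Proposition~\ref{prop:poisson_ker}, bound the $\sinh$ ratio by $\e^{-r(j\pi/N)v_x}$ and apply $r(t)\ge t/4$ from Lemma~\ref{lem:r_t}, bound the sine sum by $1/\sin(j\pi/(2N)) = O(N/j)$, sum the resulting geometric-type series, and dispose of the small-$v_x'$ regime with the trivial bound that the left-hand side is a probability. One small slip: the step $\sum_j \tfrac{4}{j}\e^{-j\pi v_x'/4}\le \tfrac{4}{\pi}\sum_j\e^{-j\pi v_x'/4}$ is false for $j\le 3$; just use $\tfrac{4}{j}\le 4$ (or bound $\cot(j\pi/2N)\le 2N/(j\pi)$ instead of $1/\sin$), which changes the absolute constant but nothing else.
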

\begin{proof}
From Proposition~\ref{prop:poisson_ker} we get,
\begin{eqnarray*}
\sum_{z \in \partial_{\mathrm{left}}}H_{R_{\Upsilon N, N}}(v, z) &=& 2N^{-1}\sum_{j = 1}^{N - 1} 
\frac{\sinh(r(\tfrac{j\pi}{N})(\Upsilon N - v_x))}{\sinh (r(\tfrac{j\pi}{N})\Upsilon N))}\sin 
\Big(\frac{j\pi v_y}{N}\Big)\sum_{k \in [N -1]}\sin \Big(\frac{j\pi k}{N}\Big)\\
& \leq & 2N^{-1}\sum_{j = 1}^{N - 1} \frac{\sinh(r(\tfrac{j\pi}{N})(\Upsilon N - v_x))}{\sinh (r(\tfrac{j\pi}{N})\Upsilon N))}\frac{1}{\sin \Big(\frac{j\pi }{2 N}\Big)}\\
& \leq & O(1)\sum_{j = 1}^{N - 1} \frac{\e^{-\frac{j\pi v_x'}{4}}}{1 - \e^{-\frac{j\pi \Upsilon}{2}}} \mbox{\hspace{0.2 cm} (from Lemma~\ref{lem:r_t} and the fact that $\inf\limits_{x \in [0, \pi/2]}\frac{\sin x}{x} > 0$)}\\
& \leq & \frac{O(1)\e^{-\frac{\pi v_x'}{4}}}{(1 - \e^{-\pi \Upsilon / 2})(1 - \e^{-\pi v_x'/4})}\,.
\end{eqnarray*}
This inequality gives us the bound $O(1) \e^{-\frac{\pi v_x'}{4}}$ whenever $v_x' \geq 0.1 N$ (say). 
Otherwise if any one of $v_x'$ or $\Upsilon$ is smaller than $0.1$, we get the bound trivially as $\e^{-\frac{\pi v_x'}{4}} = \Omega(1)$ in that case.
\end{proof}
When $v$ is very near one of the horizontal boundaries or the right boundary, the bound in Lemma~\ref{lem:kernel_bound1} can be considerably improved as shown by our next result.
\begin{lemma}
\label{lem:kernel_bound2}
Let $v = (v_x, v_y)$ be a point in $\inte(R_{\Upsilon N, N})$ and $(v_x', v_y') = (\tfrac{v_x}{N}, \tfrac{v_y}{N})$. 
Then
$$\sum_{z \in \partial_{\mathrm{left}} R} H_{R_{\Upsilon N, N}} (v, z)\leq O(v_y')O((\Upsilon - v_x')\wedge 1\Big)\frac{\e^{-\tfrac{\pi v_x'}{4}}}{(1 - \e^{-\pi v_x' / 4})^2(1 - \e^{-\Upsilon \pi / 2})}\,.$$
\end{lemma}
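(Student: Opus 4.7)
The plan is to refine the proof of Lemma~\ref{lem:kernel_bound1} so as to pull out the two additional factors $v_y'$ and $(\Upsilon - v_x') \wedge 1$ directly from the Poisson kernel formula in Proposition~\ref{prop:poisson_ker}. Summing that formula over $y_1 \in [N-1]$ and using the identity $\sum_{y_1=1}^{N-1} \sin(j\pi y_1/N) = \cot(j\pi/(2N))$ for odd $j$ (the sum vanishing for even $j$), which is a direct geometric-series computation, I would reduce $\sum_z H_R(v, z)$ to
$$\frac{2}{N}\sum_{j \text{ odd}} \frac{\sinh(r_j(\Upsilon N - v_x))}{\sinh(r_j \Upsilon N)}\, \sin(j\pi v_y/N)\,\cot(j\pi/(2N)),$$
with $r_j := r(j\pi/N)$. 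Whereas Lemma~\ref{lem:kernel_bound1} uses only $\sin(j\pi v_y/N) \leq 1$, the $v_y'$ factor is gained by bounding $|\sin(j\pi v_y/N)| \leq j\pi v_y'$ together with $\cot(j\pi/(2N)) \leq N/j$ (the latter from $\sin x \geq 2x/\pi$ on $[0,\pi/2]$). These estimates cancel the $j$'s and leave $\sum_z H_R(v,z) \leq 2\pi v_y' \sum_{j \geq 1} \sinh(r_j(\Upsilon N - v_x))/\sinh(r_j \Upsilon N)$.

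The $(\Upsilon - v_x') \wedge 1$ factor is extracted from the sinh ratio by combining two elementary bounds on $\sinh(a)$: the bound $\sinh(a) \leq \e^a/2$ used in Lemma~\ref{lem:kernel_bound1} and the sharper $\sinh(a) \leq a\cosh(a) \leq a\e^a$, which merge into $\sinh(a) \leq (\e^a/2)\min(1, 2a)$. Paired with $\sinh(b) \geq (\e^b/2)(1 - \e^{-2b})$ this gives
$$\frac{\sinh(r_j(\Upsilon N - v_x))}{\sinh(r_j \Upsilon N)} \leq \frac{\min(1,\, 2 r_j(\Upsilon N - v_x))\,\e^{-r_j v_x}}{1 - \e^{-2 r_j \Upsilon N}}.$$
The key trick, which I expect to be the main technical step, is the uniform comparison
$$\min(1,\, 2 r_j(\Upsilon N - v_x)) \leq 2 r_j N \cdot ((\Upsilon - v_x') \wedge 1):$$
when $(\Upsilon - v_x') \leq 1$ the right side is $2 r_j(\Upsilon N - v_x)$, while when $(\Upsilon - v_x') > 1$ we have $r_j N \geq \pi/4$ for every $j \geq 1$ by the bound $r(t) \geq t/4$ from Lemma~\ref{lem:r_t}, so $2 r_j N \geq \pi/2 > 1$.

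To close the argument, I would use the lower bound $r_j v_x \geq j\pi v_x'/4$ in the exponent (Lemma~\ref{lem:r_t}) and the matching upper bound $r_j \leq C_0 j/N$ for all $j \in [N-1]$, which follows on $[0, 1]$ from $r(t) = t + O(t^3)$ and on $[1, \pi]$ from the trivial $r(t) \leq \cosh^{-1}(3)$ combined with $t \geq 1$. Plugging in and using $1 - \e^{-2 r_j \Upsilon N} \geq 1 - \e^{-\pi \Upsilon/2}$ yields
$$\sum_{j \geq 1} \frac{\sinh(r_j(\Upsilon N - v_x))}{\sinh(r_j \Upsilon N)} \leq \frac{O((\Upsilon - v_x') \wedge 1)}{1 - \e^{-\pi \Upsilon/2}} \sum_{j \geq 1} j\, \e^{-j\pi v_x'/4},$$
and the identity $\sum_{j \geq 1} j x^j = x/(1-x)^2$ evaluated at $x = \e^{-\pi v_x'/4}$ produces exactly the $\e^{-\pi v_x'/4}/(1 - \e^{-\pi v_x'/4})^2$ factor from the statement; multiplying by the $O(v_y')$ from the first paragraph then gives the desired bound.
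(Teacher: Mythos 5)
Your argument is correct and follows essentially the same route as the paper's proof: both extract the $O(v_y')$ factor by pairing $|\sin(j\pi v_y/N)|$ against the cotangent from the $y_1$-sum, and both extract $(\Upsilon - v_x')\wedge 1$ from the bound $\sinh a \leq (\e^a/2)\min(1,2a)$ combined with $r(t)=O(t)$, before summing the resulting geometric-type series $\sum_j j\,\e^{-j\pi v_x'/4}$. The only cosmetic differences are which elementary sine inequalities you invoke to cancel the $j$'s and that you spell out the step $\min(1,2r_j(\Upsilon N - v_x)) \le 2r_jN\cdot((\Upsilon-v_x')\wedge 1)$, which the paper leaves implicit behind "$r(t)=O(t)$"; the content is identical.
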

\begin{proof}
The proof is again a straightforward consequence of Proposition~\ref{prop:poisson_ker}. Here we have
\begin{eqnarray*}
&& N \sum_{z \in \partial_{\mathrm{left}}}H_{R_{\Upsilon N, N}}(v, z) = 2\sum_{j = 1}^{N - 1} 
\frac{\sinh(r(\tfrac{j\pi}{N})(\Upsilon N - v_x))}{\sinh (r(\tfrac{j\pi}{N})\Upsilon N))}\sin 
\Big(\frac{j\pi}{N}\Big)\sum_{k \in [N - 1]}\sin \Big(\frac{j\pi k}{N}\Big)\\
&\leq& 2\sum_{j = 1}^{N - 1} 
\frac{\sinh(r(\tfrac{j\pi}{N})(\Upsilon N - v_x))}{\sinh (r(\tfrac{j\pi}{N})\Upsilon N))}|\sin 
\Big(\frac{j\pi v_y}{N}\Big)|\frac{\cos \Big(\frac{j\pi}{2N}\Big)}{\sin \Big( \frac{j\pi}{2N}\Big)} \leq 4v_y\sum_{j = 1}^{N - 1} 
\frac{\sinh(r(\tfrac{j\pi}{N})(\Upsilon N - v_x))}{\sinh (r(\tfrac{j\pi}{N})\Upsilon N)}\cos \Big(\frac{j\pi}{2N}\Big)^2\,,
\end{eqnarray*}
where we used the fact that $|\sin(kt)| \leq k\sin t$ 
for $t \in [0, \pi]$ and $k \in \N$. Since
$$\sinh(r(\tfrac{j\pi}{N})(\Upsilon N - v_x)) \leq \e^{-\frac{j\pi v_x'}{4}}\big(2r(\tfrac{j\pi}{N})(\Upsilon N - v_x) \wedge 1 \big)\,,$$
and $r(t) = O(t)$ on $[0, \pi]$, we get
\begin{equation*}
N \sum_{z \in \partial_{\mathrm{left}}}H_{R_{\Upsilon N, N}}(v, z) \leq \frac{O(v_y)O(\frac{\Upsilon N - v_x}{N} \wedge 1)\e^{-\frac{\pi v_x'}{4}}}{(1 - \e^{-\pi \Upsilon / 2})(1 - \e^{-\pi v_x'/4})^2}\,. \qedhere
\end{equation*} 
\end{proof}

The following bound on the sum of Poisson kernels along a horizontal segment will be useful.
\begin{lemma}
\label{lem:kernel_bound3}
Let $y, y_1\in [N - 1]$ and $\delta' \in (0,1)$. Then
$$\sum_{\Upsilon N \delta' \leq x \leq \Upsilon N - 1} H_{R_{\Upsilon N, N}} \big((x, y), (0, y_1)\big)= \frac{O(1)\e^{-\tfrac{\Upsilon \pi \delta'}{4}}}{(1 - \e^{-\Upsilon \pi \delta' / 4})^2}\,.$$
\end{lemma}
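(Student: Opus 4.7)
The plan is to invoke Proposition~\ref{prop:poisson_ker} and interchange the order of summation, pushing $\sum_x$ inside the spectral sum over $j$. Setting $r_j = r(j\pi/N)$ and substituting $k = \Upsilon N - x$, this yields
$$\sum_{x=\lceil \Upsilon N\delta'\rceil}^{\Upsilon N - 1} H_{R_{\Upsilon N,N}}((x,y),(0,y_1)) = \frac{2}{N}\sum_{j=1}^{N-1} \frac{\sin(\tfrac{j\pi y}{N})\sin(\tfrac{j\pi y_1}{N})}{\sinh(r_j \Upsilon N)} \sum_{k=1}^{K_j} \sinh(r_j k),$$
where $K_j = \Upsilon N - \lceil \Upsilon N\delta'\rceil$. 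The inner sum is essentially geometric: using $\sinh(t) \leq \e^t/2$, it is bounded by $\e^{r_j(K_j+1)}/(2(\e^{r_j}-1))$. Dividing by the lower bound $\sinh(r_j \Upsilon N) \geq \tfrac{1}{2}\e^{r_j \Upsilon N}(1 - \e^{-2 r_j \Upsilon N})$ and using $\Upsilon N - K_j - 1 \geq \Upsilon N\delta' - 1$, I obtain
$$\frac{\sum_{k=1}^{K_j}\sinh(r_j k)}{\sinh(r_j \Upsilon N)} \leq \frac{\e^{-r_j(\Upsilon N\delta' - 1)}}{(1-\e^{-r_j})(1-\e^{-2 r_j \Upsilon N})}.$$

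The next step is to replace the transcendental $r_j$ with the explicit quantity $j\pi/(4N)$ furnished by Lemma~\ref{lem:r_t}. Since $r_j \leq r(\pi)$ is a bounded absolute constant, the $\e^{r_j}$ factor is $O(1)$, so the numerator is $O(1)\e^{-ja}$ with $a := \pi\Upsilon\delta'/4$. For the denominator, the elementary inequality $(1 - \e^{-t})^{-1} \leq 1/t + 1$ combined with $r_j \geq j\pi/(4N)$ gives $(1-\e^{-r_j})^{-1} \leq O(N/j)$. Furthermore, since $2 r_j \Upsilon N \geq j\pi\Upsilon/2$ and the function $1 - \e^{-t}$ is increasing, $(1 - \e^{-2 r_j \Upsilon N})^{-1} \leq (1 - \e^{-\pi\Upsilon/2})^{-1}$ uniformly in $j \geq 1$.

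Bounding the sine factors by $1$ and putting everything together:
$$\text{LHS} \leq \frac{O(1)}{1 - \e^{-\pi\Upsilon/2}} \sum_{j=1}^{N-1} \frac{\e^{-ja}}{j} \leq \frac{O(1)}{1 - \e^{-\pi\Upsilon/2}} \cdot \frac{\e^{-a}}{1 - \e^{-a}},$$
using $-\log(1-x) = \sum_{j\ge 1} x^j/j \leq x/(1-x)$ applied at $x = \e^{-a}$. Finally, since $\delta' < 1$ implies $\pi\Upsilon/2 \geq a$, we have $1 - \e^{-\pi\Upsilon/2} \geq 1 - \e^{-a}$, which upgrades the bound to the claimed $O(1)\e^{-a}/(1-\e^{-a})^2$.

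The only technical subtlety is bookkeeping the exponential decay rates so that the same constant $\pi\Upsilon\delta'/4$ appears in both the numerator $\e^{-a}$ and each factor of $1 - \e^{-a}$ in the denominator; this forces the final step of comparing $\pi\Upsilon/2$ to $a$. Otherwise the argument is a straightforward parallel to the proof of Lemma~\ref{lem:kernel_bound1}, just with an additional summation over $x$ producing the extra $(1-\e^{-a})^{-1}$ factor.
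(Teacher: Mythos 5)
Your proposal is correct and follows essentially the same route as the paper's proof: expand via Proposition~\ref{prop:poisson_ker}, convert the $\sinh$ ratio to exponentials using Lemma~\ref{lem:r_t}, sum the resulting geometric series first in $x$ and then in $j$, and close by noting $1-\e^{-\Upsilon\pi/2}\geq 1-\e^{-\Upsilon\pi\delta'/4}$. The only cosmetic differences are your change of variables $k = \Upsilon N - x$ and your use of the slightly sharper $(1-\e^{-r_j})^{-1}=O(N/j)$ together with $\sum_j \e^{-ja}/j \leq \e^{-a}/(1-\e^{-a})$, where the paper instead bounds $(1-\e^{-j\pi/4N})^{-1}$ uniformly by $O(N)$ (canceling the $O(N^{-1})$ prefactor) and then sums a plain geometric series.
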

\begin{proof}
Again from Proposition~\ref{prop:poisson_ker} and Lemma~\ref{lem:r_t} we get,
\begin{eqnarray*}
\sum_{\Upsilon N \delta' \leq x \leq \Upsilon N - 1}H_{R_{\Upsilon N, N}} \big((x, y), (0, 
y_1)\big) & \leq & O(N^{-1})\sum_{j = 1}^{N - 1} \sum_{\Upsilon N \delta' \leq x \leq \Upsilon N - 1}\frac{\e^{-\frac{j\pi x}{4N}}}{1 - \e^{-\frac{j\pi \Upsilon}{2}}}\\
&\leq& O(N^{-1})\sum_{j = 1}^{N - 1} \frac{\e^{-\frac{j\Upsilon \pi\delta'}{4}}}{(1 - \e^{-\frac{j \Upsilon \pi}{2}})(1 - \e^{-\frac{j\pi}{4N}})}\\
&\leq& O(N^{-1}) \frac{\e^{-\frac{\Upsilon \pi\delta'}{4}}}{(1 - \e^{-\Upsilon \pi/2})(1 - \e^{-\Upsilon \pi \delta' / 4})(1 -\e^{-\pi/4N})}\\
&\leq &\frac{O(1)\e^{-\tfrac{\Upsilon \pi \delta'}{4}}}{(1 - \e^{-\Upsilon \pi \delta' / 4})^2}\,. \quad\qquad\qquad\qquad\qquad\qquad\qquad\qquad\qquad\qedhere
\end{eqnarray*}
\end{proof}
Another useful lemma is the next simple relation between Poisson kernel and Green's function.
\begin{lemma} \cite[Lemma 6.3.6]{Lawler10}
\label{lem:reversibility}
Let $v \in \inte(R_{\Upsilon N, N})$ and $z \in \partial R_{\Upsilon N, N}$. Then
$$H_{R_{\Upsilon N, N}}(v, z) = \frac{1}{4}G_{R_{\Upsilon N, N}}(z_{R_{\Upsilon N, N}}, v)\,,$$
where $z_{R_{\Upsilon N, N}}$ is the unique neighbor of $z$ that lies in $\inte(R_{\Upsilon, N})$.
\end{lemma}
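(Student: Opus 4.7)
The plan is to prove the identity by a last-exit decomposition of the simple random walk path, followed by invoking the symmetry of the Green's function. Since the lemma is essentially a classical fact about simple random walk, I would not expect deep obstacles, only a careful bookkeeping of paths.

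First I would check that $z_* := z_{R_{\Upsilon N, N}}$ is well-defined and unique. Under the paper's conventions, a corner of $R_{\Upsilon N, N}$ has no neighbor in the interior (both its in-rectangle neighbors sit on the boundary), so corners do not belong to $\partial R_{\Upsilon N, N}$. Any $z \in \partial R_{\Upsilon N, N}$ therefore lies on one of the four sides away from corners, and admits exactly one interior neighbor $z_*$. In particular, the event $\{S_{\tau} = z\}$ forces $S_{\tau - 1} = z_*$, where $\tau := \tau_{R_{\Upsilon N, N}}$.

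Next I would decompose the Poisson kernel according to the exit time $\tau = t$:
\begin{equation*}
H_{R_{\Upsilon N, N}}(v, z) = \sum_{t = 1}^{\infty} \P^v\big(S_0, \dots, S_{t-1} \in \inte(R_{\Upsilon N, N}),\, S_{t-1} = z_*,\, S_t = z\big)\,.
\end{equation*}
Conditioning on $\{S_0, \dots, S_{t-1}\}$ and using that a simple random walk in $\Z^2$ jumps to each of the four neighbors of $z_*$ with probability $1/4$, the $t$-th summand equals $\tfrac14\,\P^v(t-1 < \tau,\, S_{t-1} = z_*)$. Summing in $t$ and reorganizing as an expected occupation time gives
\begin{equation*}
H_{R_{\Upsilon N, N}}(v, z) = \frac{1}{4} \E^v \sum_{s=0}^{\tau - 1} \mathbf{1}_{\{S_s = z_*\}} = \frac{1}{4}\, G_{R_{\Upsilon N, N}}(v, z_*)\,.
\end{equation*}

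Finally, since simple random walk on $\Z^2$ is reversible with respect to counting measure (every vertex has the same degree $4$), the killed Green's function is symmetric: $G_A(x, y) = G_A(y, x)$ for all $x, y \in \inte(A)$. Applying this with $A = R_{\Upsilon N, N}$, $x = v$, and $y = z_*$ converts the identity above into the stated formula $H_{R_{\Upsilon N, N}}(v, z) = \tfrac14 G_{R_{\Upsilon N, N}}(z_*, v)$. The only point deserving care is ensuring that the boundary/interior conventions used for $\partial A$ and $\inte(A)$ match the ones under which the symmetry of $G_A$ is stated; this is routine.
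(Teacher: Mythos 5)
The paper does not supply a proof of this lemma; it simply cites \cite[Lemma 6.3.6]{Lawler10}. Your argument is a correct self-contained proof of that classical fact: the last-exit decomposition at $z_*$, the $1/4$ jump probability, reassembly as the occupation time $\E^v\sum_{s=0}^{\tau-1}\mathbf{1}_{\{S_s = z_*\}} = G_{R_{\Upsilon N, N}}(v, z_*)$, and finally the reversibility-based symmetry $G_A(x,y) = G_A(y,x)$ all go through, and your preliminary remark about corners not lying in $\partial R_{\Upsilon N,N}$ under the paper's boundary convention correctly justifies uniqueness of $z_*$. This is essentially the textbook argument underlying the cited Lemma 6.3.6, so while the paper's route is ``appeal to the reference,'' you have filled in exactly the proof that reference would give; no gap, no divergence in substance.
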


When $\Upsilon$ is large, a simple random walk starting from a ``typical'' point inside 
$R_{\Upsilon N, N}$ would most likely exit the rectangle before getting any close to the 
vertical boundaries. In our next result we use this simple intuition to show that the Green's 
function $G_{R_{\Upsilon N, N}}(v, v)$ at a typical point $v$ does not depend much on 
$\Upsilon$. We introduce a new notation in this connection. For a point $v$ inside $R_{\Upsilon N, 
N}$ and a positive integer $\Upsilon'$, let $R_{N, \Upsilon, \Upsilon'; v}$ denote the rectangle 
defined by the points $((v_x - \Upsilon' N)^+, 0)$ and $((v_x + \Upsilon' N) \wedge \Upsilon N, 
N)$.    
\begin{lemma}
\label{lem:greens_overshoot}
Let $\Upsilon, \Upsilon' \geq 1$. Then for any $v \in \inte(R_{\Upsilon N, N})$,
$$G_{R_{\Upsilon N, N}}(v, v) \leq G_{R_{N, \Upsilon, \Upsilon'; v}}(v, v) + O(1)\e^{-\frac{\Upsilon'\pi}{4}}\,.$$
\end{lemma}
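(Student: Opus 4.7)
My plan is to apply the strong Markov property at $\tau_{R'}$ where $R := R_{\Upsilon N,N}$ and $R' := R_{N,\Upsilon,\Upsilon';v}$, and then bound the two resulting factors separately. Observe that $R'\subseteq R$ with the top and bottom edges of $R'$ sitting on $\partial R$, while the left and right vertical edges of $R'$ generically lie in $\inte(R)$ (and the corners of $R'$ are on $\partial R$ as well). Splitting the sum defining $G_R(v,v)$ at the stopping time $\tau_{R'}$ and using the strong Markov property gives
\begin{equation*}
G_R(v,v) - G_{R'}(v,v) \;=\; \sum_{z' \in \partial R' \cap \inte(R)} H_{R'}(v,z')\,G_R(z',v),
\end{equation*}
where the sum is over the points on the vertical edges of $R'$ that lie strictly inside $R$ (each at horizontal distance $\Upsilon' N$ from $v$).

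For the harmonic-measure factor I would apply Lemma~\ref{lem:kernel_bound1} (and, by horizontal reflection, its right-boundary analogue) to the rectangle $R'$ itself, with $v_x' = \Upsilon'$; this yields $\sum_{z'} H_{R'}(v,z') = O(\e^{-\Upsilon'\pi/4})$.

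For the Green's function factor I need $G_R(z',v)=O(1)$ uniformly. The crude bound $G_R(z',v)\leq G_R(v,v)$ would cost a spurious logarithmic factor, since $G_R(v,v)$ can be as large as $O(\log N)$. Instead I would use domain monotonicity $G_R(z',v)\leq G_{\mathcal S}(z',v)$ with $\mathcal S := \Z\times\lfloor[0,N]\rfloor$ the infinite horizontal strip, and then the spectral representation (derived by the same sine-in-$y$/Fourier-in-$x$ eigendecomposition used for Proposition~\ref{prop:poisson_ker})
\begin{equation*}
G_{\mathcal S}\bigl((z_x',z_y'),(v_x,v_y)\bigr) \;=\; \frac{2}{N}\sum_{j=1}^{N-1}\frac{\e^{-r(j\pi/N)\,|z_x'-v_x|}}{\sinh r(j\pi/N)}\sin\tfrac{j\pi z_y'}{N}\sin\tfrac{j\pi v_y}{N}.
\end{equation*}
Bounding each sine by $1$, using $\sinh r(j\pi/N)\geq r(j\pi/N)\geq j\pi/(4N)$ and $\e^{-r(j\pi/N)\Upsilon' N}\leq \e^{-j\pi\Upsilon'/4}$ (both consequences of Lemma~\ref{lem:r_t}), the sum is dominated by $\tfrac{8}{\pi}\sum_{j\geq 1}j^{-1}\e^{-j\pi\Upsilon'/4}=O(1)$, uniformly for $\Upsilon'\geq 1$. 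Combining with the kernel estimate finishes the proof.

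The main obstacle is producing the $O(1)$ strip Green's function bound in the last step. The spectral formula is classical and parallels Proposition~\ref{prop:poisson_ker}, but it is not stated explicitly in the paper, so one must either derive it in passing or find an alternative: for instance, exhausting $\mathcal S$ by rectangles $R_{MN,N}$ as $M\to\infty$ and controlling $G_R(z',v)$ through Lemma~\ref{lem:reversibility} together with Lemma~\ref{lem:kernel_bound3} (applied along the horizontal line at height $v_y$), thereby avoiding any reference to an infinite-domain Green's function.
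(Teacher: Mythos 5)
Your proof is correct, but it follows a genuinely different route from the paper. Both arguments attribute the excess $G_R(v,v)-G_{R'}(v,v)$ to trajectories that exit $R'=R_{N,\Upsilon,\Upsilon';v}$ through a vertical side and return to $v$ before leaving $R$, and both control the vertical-exit probability with Lemma~\ref{lem:kernel_bound1}; the divergence is in how each obtains the uniform $O(1)$ bound on the remaining Green's function. The paper introduces additional vertical lines at horizontal distance $2\Upsilon'N$ from $v$ (the coordinates $x_{-2},x_1$) and decomposes the post-exit trajectory into excursions between consecutive lines $\Vl_{x_i}$; on each excursion it invokes the potential-kernel identity \eqref{eq:Greens_expression} together with the asymptotics \eqref{eq:poten_kernel} to bound the expected number of hits of $v$ by $O(1)$, and then iterates the geometric decay from Lemma~\ref{lem:kernel_bound1}. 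You instead make a single strong-Markov decomposition at $\tau_{R'}$ and bound $G_R(z',v)$ directly by the infinite-strip Green's function via a spectral formula; this is shorter and avoids the excursion bookkeeping, at the price of importing a tool not stated in the paper. Two small remarks: the prefactor in your spectral formula should be $\tfrac{4}{N}$ rather than $\tfrac{2}{N}$ --- the extra factor of $2$ comes from $\tfrac{1}{1-\lambda}=\tfrac{2}{2-\cos k-\cos(j\pi/N)}$ --- though this does not affect the $O(1)$ conclusion. And your parenthetical alternative for the $O(1)$ bound (exhausting the strip by rectangles and invoking Lemmas~\ref{lem:reversibility} and \ref{lem:kernel_bound3}) is not quite ready to go as stated, since Lemma~\ref{lem:reversibility} relates Green's function values at boundary neighbors to Poisson kernels whereas $z'$ lies deep in the interior of $R$; the spectral route, or the paper's potential-kernel route, is the cleaner way to close that step.
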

\begin{proof}
Denote the numbers $0, (v_x - 2\Upsilon' N)^+$ and $(v_x - \Upsilon' N)^+$ by $x_{-3}, x_{-2}$ and 
$x_{-1}$; and the numbers $(v_x + \Upsilon' N)\wedge \Upsilon N, (v_x + 2\Upsilon' N)\wedge 
\Upsilon N$ and $\Upsilon N$ by $x_{0}, x_{1}$ and $x_{2}$ respectively. Let $\tau_1 < \tau_2 < 
\ldots$ denote the successive time points at which the simple random walk $\{S_t\}$ visits the 
lines $\Vl_{x_i}$'s. It is easy to see that
$$G_{R_{\Upsilon N, N}}(v, v) = G_{R_{N, \Upsilon, \Upsilon'; v}}(v, v) + \E \Big (\sum_{j \geq 1}\mathbf{1}_{\{\tau_{j}^* < \tau_{R_{\Upsilon N, N}}\}}\sum_{t = 
\tau_j^*}^{\tau_{j+1}^* - 1}\mathbf{1}_{\{S_t = v\}}\Big)\,,$$
where $\tau_j^* = \tau_{R_{\Upsilon N, N}} \wedge 
\tau_j$. In order to estimate the expectation of $\sum_{t = \tau_j^*}^{\tau_{j+1}^* - 1}\mathbf{1}_{\{S_t = v\}}$, we will use the following expression for $G_{R_{M, N}}(u, w)$ ($u, w \in \inte(R_{M, N})$) from \cite[Theorem 4.6.2]{Lawler10}:
\begin{equation}
\label{eq:Greens_expression}
G_{R_{M, N}}(u, w) = \sum_{z \in \partial R_{M, N}}H_{R_{M, N}}(u, z)a(z - w) - a(u - w)\,,
\end{equation}
where $a(x)$ is the potential kernel for two dimensional simple random walk. An approximation for $a(x)$ is given in \cite[Theorem 4.4.4]{Lawler10} as follows
\begin{equation}
\label{eq:poten_kernel}
a(x) = \frac{2}{\pi}\log|x| + \frac{2\overline{\gamma} + \log 8}{\pi} + O(|x|^{-2})\,,
\end{equation}
where $a(0) = 0$ and $\overline{\gamma}$ is the 
Euler-Mascheroni constant. The last two expressions and the choice of the numbers $x_i$'s ensures that $\E^{S_{\tau_j^*}} (\sum_{t = \tau_j^*}^{\tau_{j+1}^* - 1}\mathbf{1}_{\{S_t = v\}}) = O(1)$ uniformly for all 
values of $S_{\tau_j^*}$. Also from Lemma~\ref{lem:kernel_bound1} we have $\P^v (\tau_j^* < \tau_{R_{\Upsilon N, N}}) \leq 
O(1)\e^{-\frac{\Upsilon'\pi j}{4}}$. The lemma now follows from these facts and strong Markov property.
\end{proof}
Lemma~\ref{lem:greens_overshoot} implies the following upper bound on the Green's function for a rectangle.
\begin{lemma}
\label{lem:free_field_var}
For all $v \in \inte(R_{\Upsilon N, N})$, we have $G_{R_{\Upsilon N, N}}(v, v) \leq \tfrac{2}
{\pi}\log N + O(1)$.
\end{lemma}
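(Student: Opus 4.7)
My plan is to combine Lemma~\ref{lem:greens_overshoot} (which lets us replace the possibly very wide rectangle $R_{\Upsilon N, N}$ with a rectangle whose horizontal extent is a bounded multiple of $N$) with the identity \eqref{eq:Greens_expression} and the asymptotics \eqref{eq:poten_kernel} for the potential kernel $a(\cdot)$.

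First, I would fix some absolute constant $\Upsilon' \geq 1$ (e.g.\ $\Upsilon' = 1$) so that $O(1)\e^{-\Upsilon'\pi/4} = O(1)$. Applying Lemma~\ref{lem:greens_overshoot} with this $\Upsilon'$, it suffices to show
\[
G_{R_{N, \Upsilon, \Upsilon'; v}}(v,v) \leq \tfrac{2}{\pi}\log N + O(1).
\]
Write $R = R_{N, \Upsilon, \Upsilon'; v}$. By construction $R$ has height $N$ and width at most $2\Upsilon' N$, so the diameter of $R$ is $O(N)$ and every $z \in \partial R$ satisfies $|z - v| \leq O(N)$.

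Next, I would apply identity \eqref{eq:Greens_expression} with $u = w = v$. Since $a(0) = 0$, this gives
\[
G_R(v,v) = \sum_{z \in \partial R} H_R(v,z)\, a(z - v).
\]
Using \eqref{eq:poten_kernel}, for every $z \in \partial R$ we have
\[
a(z - v) \leq \tfrac{2}{\pi}\log|z-v| + O(1) \leq \tfrac{2}{\pi}\log N + O(1),
\]
where in the last step I use $|z-v| \leq O(N)$. Since $v \in \inte(R)$, the simple random walk a.s.\ exits $R$ in finite time, so $\sum_{z\in\partial R}H_R(v,z) = 1$. Hence
\[
G_R(v,v) \leq \tfrac{2}{\pi}\log N + O(1),
\]
which combined with Lemma~\ref{lem:greens_overshoot} yields the claim.

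There is no real obstacle here: once one reduces to a rectangle of bounded aspect ratio via Lemma~\ref{lem:greens_overshoot}, the bound is a one-line consequence of the potential kernel asymptotics and the fact that the Poisson kernel is a probability measure. The only point that deserves a bit of care is to verify that the reduction is legitimate for \emph{all} $v \in \inte(R_{\Upsilon N, N})$, including points very close to the vertical boundaries; but this is already handled inside Lemma~\ref{lem:greens_overshoot} (the rectangle $R_{N,\Upsilon,\Upsilon'; v}$ is defined so that its horizontal extent is automatically truncated at the boundary of $R_{\Upsilon N, N}$).
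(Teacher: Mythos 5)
Your proof is correct and follows essentially the same route as the paper: reduce to a rectangle of bounded aspect ratio via Lemma~\ref{lem:greens_overshoot}, then combine \eqref{eq:Greens_expression} with \eqref{eq:poten_kernel} and the fact that the Poisson kernel is a probability measure. (The paper phrases the second step as a bound on $G_{R_{N,N}}$ and then invokes Lemma~\ref{lem:greens_overshoot}; you apply the identity directly to $R_{N,\Upsilon,\Upsilon';v}$, which is slightly cleaner since the reduced rectangle has width up to $2\Upsilon' N$ rather than $N$, but the reasoning is the same.)
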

\begin{proof}
From \eqref{eq:Greens_expression} and \eqref{eq:poten_kernel} we get that $G_{R_{N, N}}(v, v) 
\leq \tfrac{2}{\pi}\log N$ for all $v \in R_{N, N}$. The bound now follows from this observation and Lemma~\ref{lem:greens_overshoot}.
\end{proof}
We can similarly obtain an upper bound on the two points Green's function inside a rectangle.
\begin{lemma}
\label{lem:free_field_cov}
Let $u, v \in \inte(R_{\Upsilon N, N})$ such that $|u_x - v_x| \geq 0.1N$. 
Then we have
$$G_{R_{\Upsilon N, N}} (u, v) \leq O(1)\e^{-\tfrac{\pi|u_x - v_x|}{8N}}\,.$$
\end{lemma}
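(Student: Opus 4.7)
The plan is to apply the strong Markov property at the first exit from a sub-rectangle that separates $u$ from $v$, thereby factoring $G_{R_{\Upsilon N, N}}(u, v)$ into an exponentially small exit probability times a Green's function that is uniformly $O(1)$. Without loss of generality assume $u_x < v_x$; set $w = \lceil (u_x + v_x)/2 \rceil$ and let $R' = R_{\Upsilon N, N} \cap \{x \leq w\}$, a rectangle of the form $R_{w, N}$ (up to translation) that contains $u$ in its interior but not $v$. Since $G_{R_{\Upsilon N, N}}(\cdot, v)$ vanishes on $\partial R_{\Upsilon N, N}$, the strong Markov property at $\tau_{R'}$ gives
\[
G_{R_{\Upsilon N, N}}(u, v) \leq \P^u\bigl(S_{\tau_{R'}} \in \Vl_w\bigr) \cdot \max_{z \in \Vl_w \cap \inte(R_{\Upsilon N, N})} G_{R_{\Upsilon N, N}}(z, v) .
\]
Using the left--right symmetry built into Proposition~\ref{prop:poisson_ker} to convert the probability of exiting $R'$ through its right side into an exit through the left of a reflected copy, Lemma~\ref{lem:kernel_bound1} gives $\P^u(S_{\tau_{R'}} \in \Vl_w) \leq O(1)\,\e^{-\pi(w - u_x)/(4N)} \leq O(1)\,\e^{-\pi|u_x - v_x|/(8N)}$.

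The core step is to show that $G_{R_{\Upsilon N, N}}(z, v) = O(1)$ uniformly in $z \in \Vl_w$ and in $\Upsilon$. A naive bound $G(z, v) \leq G(v, v) \leq O(\log N)$ via Lemma~\ref{lem:free_field_var} leaves a stray $\log N$ factor which cannot be absorbed into the mild exponential decay $\e^{-\pi|u_x - v_x|/(8N)}$ when $|u_x - v_x|$ is only of order $N$, so the logarithm must be cancelled exactly. To this end, combine \eqref{eq:Greens_expression} with the potential-kernel expansion \eqref{eq:poten_kernel} to write
\[
G_{R_{\Upsilon N, N}}(z, v) = \tfrac{2}{\pi}\bigl(\E^z[\log|S_{\tau_{R_{\Upsilon N, N}}} - v|] - \log|z - v|\bigr) + O(1) ,
\]
and bound the expectation via Jensen's inequality. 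Simple random walk in $\Z^2$ has unit quadratic variation per step, and $\tau_{R_{\Upsilon N, N}}$ is dominated by the exit time from the infinite horizontal strip of height $N$; hence $\E^z[\tau_{R_{\Upsilon N, N}}] \leq z_y(N - z_y) \leq N^2/4$, so that $\E^z[|S_{\tau_{R_{\Upsilon N, N}}} - z|] \leq \sqrt{\E^z[\tau_{R_{\Upsilon N, N}}]} = O(N)$. Since $|z - v| \geq v_x - w = \Omega(N)$, the triangle inequality and Jensen give $\E^z[\log|S_{\tau_{R_{\Upsilon N, N}}} - v|] \leq \log(O(N) + |z - v|) \leq \log|z - v| + O(1)$, so that $G_{R_{\Upsilon N, N}}(z, v) = O(1)$. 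Combining with the exit-probability bound completes the proof.

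The main obstacle is precisely this uniform $O(1)$ control on $G_{R_{\Upsilon N, N}}(z, v)$ between two points at horizontal separation of order $N$: the cancellation of logarithms in the potential-kernel representation, which works because both $|z - v|$ and the typical value of $|S_{\tau_{R_{\Upsilon N, N}}} - v|$ are comparable to the same quantity ($\sim v_x - w$), is what lets us erase the spurious $\log N$ and obtain a bound uniform in the long direction $\Upsilon$. Everything else is a routine combination of the strong Markov property with the already-established exit-probability estimate of Lemma~\ref{lem:kernel_bound1}.
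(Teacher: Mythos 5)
Your proof is correct and follows essentially the same route as the paper's: strong Markov property at the vertical line halfway between $u_x$ and $v_x$, Lemma~\ref{lem:kernel_bound1} (plus left--right symmetry of the Poisson kernel) for the exponential exit-probability bound, and the potential-kernel representation to show $G_{R_{\Upsilon N, N}}(z, v) = O(1)$ for $z$ on the midline. The only real difference is in that last step: the paper says the $O(1)$ bound on $G(z,v)$ follows ``from an argument similar to the one used to prove Lemma~\ref{lem:greens_overshoot}'' without spelling it out, whereas you make the cancellation of logarithms explicit via optional stopping (giving $\E^z[|S_\tau - z|] = O(N)$ from $\E^z[\tau] = O(N^2)$) and Jensen's inequality. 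This is a tidy, self-contained way to fill in what the paper delegates by reference, and it isolates cleanly why the lower bound $|u_x - v_x| \geq 0.1N$ is needed: it is what makes $|z - v|$ comparable to the $O(N)$ typical overshoot and hence kills the $\log N$. (One cosmetic slip: for 2D SRW the vertical coordinate is a lazy walk with variance $1/2$ per step, so $\E^z[\tau_{\mathrm{strip}}] = 2z_y(N - z_y)$ rather than $z_y(N - z_y)$; this changes nothing since it is still $O(N^2)$.)
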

\begin{proof}
Assume that $u_x < v_x$. Let $R_{\Upsilon N, N; \mathrm{left}}$ and $R_{\Upsilon N, N; \mathrm{right}}$ be the two sub-rectangles of $R_{\Upsilon N, N}$ that are formed by the vertical line $\mathbb L$ passing through the middle of $u_x$ and $v_x$. Due to strong Markov property we have
$$G_{R_{\Upsilon N, N}} (u, v) = \sum_{z \in \mathbb L}H_{R_{\Upsilon N, N;\mathrm{left}}}(u, z)G_{R_{\Upsilon N, N}}(z, v)\,.$$
From an argument similar to the one used to prove Lemma~\ref{lem:greens_overshoot}, we can deduce
$$G(z, v) \leq O(1)\log\big(O(1) + \tfrac{O(1)N}{|u_x - v_x|}\big)\,,$$
for all $z \in \mathbb L$. Also since $|u_x - v_x| = \Omega(N)$ we get from Lemma~\ref{lem:kernel_bound1} that 
$$\sum_{z \in \mathbb L}H_{R_{\Upsilon N, N;\mathrm{left}}}(u, z) \leq O(1)\e^{-\tfrac{\pi|u_x - v_x|}{8N}}\,.$$
The last two displays together yield the lemma.
\end{proof}
We will conclude this subsection with some limit 
results. Let $\Upsilon > 1$ and $R_{2\Upsilon}$ denote the $\R^2$-rectangle $[0, 2\Upsilon] \times [0, 1]$. Now define a function $h_{R_{2\Upsilon}}(w, z): \inte(R_{2\Upsilon}) \times \partial R_{2\Upsilon} \to \R^+$ as:
\begin{eqnarray*}
h_{R_{2\Upsilon}}(w, (0, y)) &=& h_{R_{2\Upsilon}}((2\Upsilon - w_x, w_y), (\Upsilon, y))\\ &=& 2\sum_{j = 1}^{\infty} \frac{\sinh(j\pi(2\Upsilon - w_x))}{\sinh (2j\pi\Upsilon)}\sin(j\pi w_y)\sin(j\pi y)\,
\end{eqnarray*}
and,
\begin{eqnarray*}
h_{R_{2\Upsilon}}(w, (x, 0)) &=& h_{R_{2\Upsilon}}((w_x, 1 - w_y), (x, 1))\\
&=& \frac{1}{\Upsilon}\sum_{j = 1}^{\infty} \frac{\sinh(\tfrac{j\pi}{2\Upsilon}(1 - w_y))}{\sinh (\tfrac{j\pi}{2\Upsilon})}\sin \Big(j\pi \big(1 - \frac{w_x}{2\Upsilon}\big)\Big)\sin \Big(j\pi \big(1 - \frac{x}{2\Upsilon}\big)\Big)\,.
\end{eqnarray*}
Here $w_x$ and $w_y$ are the horizontal and vertical 
coordinates of $w$ respectively. A quick comparison with the expression in Proposition~\ref{prop:poisson_ker} suggests that $h_{R_{2\Upsilon}}(w, z) \approx N 
H_{R_{2\Upsilon N, N}}(Nw, Nz)$. Our next lemma gives a quantitative bound on the error of approximation.
\begin{lemma}
\label{lem:convergence}
Let $w, (x, y) \in \inte(R_{2\Upsilon}) \cap \tfrac{1}{N}\Z^2$. Then
\begin{equation*}
|N H_{R_{2\Upsilon N, N}}(Nw, (0, Ny)) - h_{R_{2\Upsilon}}(w, (0, y))| \leq \frac{O_{\Upsilon}(1)}{N^2w_x^6}\sin(\pi y)\sin(\pi w_y)\,,
\end{equation*}
and,
\begin{equation*}
|N H_{R_{2\Upsilon N, N}}(Nw, (Nx, 0)) - h_{R_{2\Upsilon}}(w, (x, 0))| \leq \frac{O_{\Upsilon}(1)}{N^2w_y^6}\sin \Big(j\pi \big(1 - \frac{w_x}{2\Upsilon}\big)\Big)\sin \Big(j\pi \big(1 - \frac{x}{2\Upsilon}\big)\Big)\,.
\end{equation*}
\end{lemma}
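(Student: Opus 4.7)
The plan is to write both sides termwise using Proposition~\ref{prop:poisson_ker}. The discrete kernel $NH_{R_{2\Upsilon N, N}}(Nw, (0, Ny))$ expands as a sum over $j = 1, \ldots, N-1$ of
\[
2\,f(r(j\pi/N)\,N)\,\sin(j\pi w_y)\sin(j\pi y), \qquad f(s) := \frac{\sinh(s(2\Upsilon - w_x))}{\sinh(2s\Upsilon)},
\]
while $h_{R_{2\Upsilon}}(w,(0,y))$ is the analogous \emph{infinite} sum with $r(j\pi/N)N$ replaced by $j\pi$. The error therefore splits into (i) a bulk piece $\sum_{j=1}^{N-1}2[f(r(j\pi/N)N)-f(j\pi)]\sin(j\pi w_y)\sin(j\pi y)$, measuring the mismatch $r(t)\ne t$, and (ii) a tail piece $-\sum_{j\ge N}2f(j\pi)\sin(j\pi w_y)\sin(j\pi y)$ arising from truncation.

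For indices $j\le N/\pi$, Lemma~\ref{lem:r_t} gives $|r(j\pi/N)N - j\pi| = O(j^3/N^2)$, while $r(t)\ge t/4$ ensures both arguments of $f$ lie above $j\pi/4$. Writing $f(s) = e^{-sw_x}g(s)$ with $g(s) = (1 - e^{-2s(2\Upsilon - w_x)})/(1 - e^{-4s\Upsilon})$, a direct check shows $g, g' = O_\Upsilon(1)$ uniformly for $s\ge 1$, which combined with $w_x\le 2\Upsilon$ yields $|f'(s)| \le O_\Upsilon(1)\,e^{-sw_x}$. The mean value theorem together with the Chebyshev-polynomial bound $|\sin(j\pi t)| \le j|\sin(\pi t)|$ then controls each bulk term by
\[
O_\Upsilon(j^5/N^2)\,e^{-j\pi w_x/4}\,\sin(\pi w_y)\sin(\pi y),
\]
and summing via $\sum_{j\ge 1} j^5 e^{-j c w_x} = O(c^{-6}w_x^{-6})$ produces the desired $O_\Upsilon(N^{-2}w_x^{-6})\sin(\pi w_y)\sin(\pi y)$. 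For the remaining discrete indices $j > N/\pi$ and the continuum tail $j\ge N$, the bound $r(j\pi/N)N \ge j\pi/4$ gives $|f(\cdot)| \le O_\Upsilon(e^{-j\pi w_x/4})$; factoring out $(\pi/N)^3$ from this range shows the total contribution is only $O_\Upsilon(N^{-3}w_x^{-6})\sin(\pi w_y)\sin(\pi y)$, which is absorbed into the main estimate.

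The second inequality, concerning hitting the horizontal boundary, follows by exactly the same argument applied to the rotated form of Proposition~\ref{prop:poisson_ker} (swapping the roles of $M$ and $N$), where the Fourier frequency is rescaled by $1/(2\Upsilon)$ and $w_y$ plays the role previously played by $w_x$; this rescaling is precisely what forces $O_\Upsilon$-constants rather than absolute ones in the final estimate. The main technical obstacle is producing the derivative bound $|f'(s)| \le O_\Upsilon(1)e^{-sw_x}$ with no spurious $w_x^{-1}$ factor, since differentiating the $\sinh$-ratio naively invites such a factor and would degrade the final summation to $w_x^{-5}$ rather than $w_x^{-6}$. The decomposition $f(s) = e^{-sw_x}g(s)$ together with the separate verification that $g$ and $g'$ are $O_\Upsilon(1)$ on $s\ge 1$, so that $f'(s) = (-w_x g(s) + g'(s))e^{-sw_x}$, is the clean way to keep the correct exponent in the final bound.
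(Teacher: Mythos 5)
Your proof is correct and follows essentially the same strategy as the paper's: compare the two Fourier series term by term, split at a threshold that separates a bulk regime (where the Taylor expansion $r(t) = t + O(t^3)$ controls the mismatch $r(j\pi/N)N - j\pi = O(j^3/N^2)$) from a tail regime (where the exponential decay alone suffices), and use $|\sin(jt)|\le j|\sin t|$ to pull out the factors $\sin(\pi w_y)\sin(\pi y)$ before summing $\sum_j j^5 e^{-c j w_x} = O(w_x^{-6})$.

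The technical execution differs in two modest ways. You cut at $j \le N/\pi$, the natural domain of Lemma~\ref{lem:r_t}'s expansion, and absorb $N/\pi < j < N$ into the tail via the observation $1 \le (j\pi/N)^3$; the paper cuts at $j < N^{2/3}$ so that the Taylor correction stays $o(1)$, which lets it phrase the bulk estimate as a multiplicative error $\sinh(Nr(j\pi/N)x) = \sinh(j\pi x)\bigl(1 + O_\Gamma(j^3/N^2)\bigr)$. Both thresholds work. More substantively, you treat the bulk via the mean value theorem applied to $f(s) = \sinh(s(2\Upsilon-w_x))/\sinh(2s\Upsilon)$ after factoring $f(s)=e^{-sw_x}g(s)$ and checking $g,g'=O_\Upsilon(1)$ on the relevant range, using $r(t)\ge t/4$ to keep the intermediate point bounded below by $j\pi/4$; the paper instead quotes the multiplicative $\sinh$ estimate without derivation. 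Your factorization is a transparent way to get $|f'(s)| \le O_\Upsilon(1)e^{-sw_x}$, and your remark that this is exactly what dodges the spurious $w_x^{-1}$ (and hence a degraded $w_x^{-5}$) is a genuine subtlety that the paper glosses over; the two routes carry the same content, with the same final bound. Your one-line treatment of the second inequality by exchanging the roles of the coordinates mirrors the paper's.
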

\begin{proof}
We will follow the approach adopted in the proof of 
\cite[Proposition~8.1.4]{Lawler10}. Let us first split $NH_{R_{2\Upsilon N, N}}(Nw , (0, Ny))$ into two parts:
\begin{equation*}
NH_{R_{2\Upsilon N, N}}(Nw , (0, Ny)) = 2\sum_{j = 1}^{N-1}\frac{\sinh (Nr(\tfrac{j\pi}{N})(2\Upsilon - w_x))}{\sinh(2N r(\tfrac{j\pi}{N})\Upsilon)}\sin (j\pi y)\sin (j \pi w_y) = 2\big(\sum_{1; H} + \sum_{2; H}\big)\,,
\end{equation*}
where $\sum_{1, H}$ and $\sum_{2, H}$ contain the terms corresponding to $k < N^{2/3}$ and $k \geq N^{2/3}$ 
respectively. In a similar way we can write,
\begin{equation*}
h_{R_{2\Upsilon}}(w , (0, y)) = 2\big(\sum_{1; h} + \sum_{2; h}\big)\,,
\end{equation*}
Using Lemma~\ref{lem:r_t}, and the fact that $|\sin(kt)| \leq k \sin t$ for $k \in \N$ and $t \in (0, \pi)$ we get
\begin{equation}
\label{eq:convergence1}
\big|\sum_{2;H}\big| \leq \sin(\pi y)\sin(\pi w_y)\sum_{j \geq N^{2/3}}j^2\e^{-\tfrac{j\pi w_x}{4}} \leq \frac{\sin(\pi y)\sin(\pi w_y)}{N^2}\sum_{j \geq N^{2/3}}j^5\e^{-\tfrac{j\pi w_x}{4}}\,.
\end{equation}
Similarly,
\begin{equation}
\label{eq:convergence2}
\big|\sum_{2;h}\big|  \leq \frac{\sin(\pi y)\sin(\pi w_y)}{N^2}\sum_{j \geq N^{2/3}}j^5\e^{-j\pi w_x}\,.
\end{equation}
When $k < N^{2/3}$, Lemma~\ref{lem:r_t} implies that for $0 \leq x \leq 2\Upsilon$,
\begin{equation*}
\sinh (Nr(\tfrac{j\pi}{N})x) = \sinh(j\pi x)\big(1 + O_\Gamma(\tfrac{j^3}{N^2})\big)\,.
\end{equation*} 
Thus
\begin{equation*}
\big|\sum_{1;H} - \sum_{1, h}\big| \leq O_{\Gamma}(1)\frac{\sin(\pi y)\sin(\pi w_y)}{N^2}\sum_{j < N^{2/3}}j^5\e^{-j\pi w_x}\,.
\end{equation*}
Together with \eqref{eq:convergence1} and \eqref{eq:convergence2} this gives us,
$$|N H_{R_{2\Upsilon N, N}}(Nw, (0, Ny)) - h_{R_{2\Upsilon}}(w, (0, y))| \leq \frac{O_{\Upsilon}(1)}{N^2w_x^6}\sin(\pi y)\sin(\pi w_y)\,.$$
The bound on $|N H_{R_{2\Upsilon N, N}}(Nw, (Nx, 0)) - h_{R_{2\Upsilon}}(w, (x, 0))|$ can be derived in a similar way.
\end{proof}
Using Lemma~\ref{lem:convergence} and properties of the function $h_{R_{2\Upsilon}}$, we can obtain an asymptotic expression for the average value of Green's function as follows:
\begin{lemma}
\label{lem:Brownian_scaling}
Let $\theta \in (0,1)$. Then for any subset $\mathcal I$ of $[\theta, 1 - \theta]$ that is a union of finitely many disjoint intervals,
$$\frac{1}{|\{\Upsilon N\}\times N \mathcal I \cap \Z^2|}\sum_{v \in \{\Upsilon N\}\times N\mathcal I \cap \Z^2}G_{R_{2\Upsilon N, N}}(v, v) = \frac{2}{\pi} \log N + C_{\mathcal I} + O_{\mathcal I, \theta, \Upsilon}(N^{-1})\,,$$
where
$C_{\mathcal I} = \tfrac{2}{\pi |I|}\int_{I}\int_{\partial R_{2\Upsilon}}\log |(\Upsilon, y) - w|h_{R_{2\Upsilon}}((\Upsilon, y), w) dw dy + \tfrac{2\overline{\gamma} + \log 8}{\pi}$.
\end{lemma}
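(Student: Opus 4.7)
The plan is to combine the identity \eqref{eq:Greens_expression} for $G_{R_{2\Upsilon N, N}}(v,v)$ with the potential-kernel asymptotic \eqref{eq:poten_kernel}, and then replace the resulting discrete sum by a continuum integral via Lemma~\ref{lem:convergence}. Since $a(0) = 0$, \eqref{eq:Greens_expression} reads
$$G_{R_{2\Upsilon N, N}}(v,v) = \sum_{z \in \partial R_{2\Upsilon N, N}} H_{R_{2\Upsilon N, N}}(v,z)\,a(z - v).$$
For $v = (\Upsilon N, v_y)$ with $v_y/N \in \mathcal{I} \subseteq [\theta, 1-\theta]$ and $z \in \partial R_{2\Upsilon N, N}$, I have $|z - v| \geq \min(\theta, \Upsilon)\,N$, so \eqref{eq:poten_kernel} yields $a(z-v) = \frac{2}{\pi}\log N + \frac{2}{\pi}\log(|z-v|/N) + \frac{2\overline{\gamma}+\log 8}{\pi} + O_{\theta,\Upsilon}(N^{-2})$. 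Using $\sum_z H_{R_{2\Upsilon N, N}}(v,z) = 1$, this gives
$$G_{R_{2\Upsilon N, N}}(v,v) = \frac{2}{\pi}\log N + \frac{2\overline{\gamma}+\log 8}{\pi} + \frac{2}{\pi}\sum_{z \in \partial R_{2\Upsilon N, N}} H_{R_{2\Upsilon N, N}}(v,z)\log\frac{|z-v|}{N} + O_{\theta,\Upsilon}(N^{-2}).$$

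Next, I would replace the remaining discrete sum by $\int_{\partial R_{2\Upsilon}} h_{R_{2\Upsilon}}(\tilde v, w)\log|\tilde v - w|\,dw$, where $\tilde v := v/N = (\Upsilon, v_y/N)$. Summing the contributions from the four sides separately, each becomes a step-$1/N$ Riemann sum of this form. Two errors appear: first, replacing $N H_{R_{2\Upsilon N, N}}(v,\cdot)$ by $h_{R_{2\Upsilon}}(\tilde v, \cdot)$ at each lattice boundary point costs $O_{\theta,\Upsilon}(N^{-2})$ by Lemma~\ref{lem:convergence} (the singular factors $w_x^{-6}$ and $w_y^{-6}$ there are harmless because $\tilde v$ lies at distance at least $\min(\theta,\Upsilon)$ from $\partial R_{2\Upsilon}$); summed over the $O_\Upsilon(N)$ boundary points and multiplied by the uniformly bounded factor $|\log(|z-v|/N)|$, this contributes $O_{\theta,\Upsilon}(N^{-1})$. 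Second, the usual Riemann-sum error for the smooth integrand $w \mapsto h_{R_{2\Upsilon}}(\tilde v, w)\log|\tilde v - w|$ on each side contributes a further $O_{\theta,\Upsilon}(N^{-1})$.

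Finally, I average the resulting expression over $v \in \{\Upsilon N\} \times N\mathcal{I} \cap \Z^2$. Because $\mathcal{I}$ is a finite union of intervals in $[\theta, 1-\theta]$, this average is itself a step-$1/N$ Riemann sum for $\tfrac{1}{|\mathcal{I}|}\int_{\mathcal{I}}(\cdot)\,dy$, and the map $y \mapsto \int_{\partial R_{2\Upsilon}} h_{R_{2\Upsilon}}((\Upsilon, y), w)\log|(\Upsilon, y) - w|\,dw$ is smooth on $[\theta, 1-\theta]$ since $(\Upsilon, y)$ stays uniformly away from $\partial R_{2\Upsilon}$. Hence this averaging incurs a further $O_{\mathcal{I},\theta,\Upsilon}(N^{-1})$ error, and the combined expression matches the claimed asymptotic with the stated constant $C_{\mathcal{I}}$.

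The only mildly delicate step is verifying that the errors in Lemma~\ref{lem:convergence} remain at the $O(N^{-1})$ scale uniformly over the relevant $v$: the prefactor there blows up as $\tilde v$ nears $\partial R_{2\Upsilon}$, but the hypotheses $v_x = \Upsilon N$ and $v_y/N \in [\theta, 1-\theta]$ precisely rule this out. Once that uniformity is in place, the remainder is straightforward bookkeeping of Riemann-sum discretization errors, and the identification of $C_{\mathcal{I}}$ is forced by the derivation.
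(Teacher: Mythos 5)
Your proof is correct and follows essentially the same route as the paper's: both start from identity \eqref{eq:Greens_expression}, use the potential-kernel expansion \eqref{eq:poten_kernel} together with $\sum_z H(v,z)=1$ to isolate the $\frac{2}{\pi}\log N + \frac{2\overline\gamma+\log 8}{\pi}$ terms, pass from the discrete Poisson-kernel sum to the continuum integral via Lemma~\ref{lem:convergence}, and control the remaining error using the Lipschitz continuity of $h_{R_{2\Upsilon}}$ and $\log|\cdot|$ on the separated domains. The paper states this in three lines; you have simply filled in the Riemann-sum bookkeeping explicitly.
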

\begin{proof}
Recall the expression for $G_{R_{2\Upsilon N, N}}(v, v)$ from \eqref{eq:Greens_expression}:
\begin{equation}
\label{eq:Brownian_scaling1}
G_{R_{2\Upsilon N, N}}(v, v) = \sum_{z \in \partial R_{2\Upsilon N, N}}H_{R_{2\Upsilon N, N}}(v, z)a(z - v)\,.
\end{equation}
Now notice that the functions $h_{R_{2\Upsilon}}(w, z)$ and $\log |w - z|$ are Lipschitz separately in each variable for $w \in \{\Upsilon\} \times [\theta, 1 - \theta]$ and $z$ in any one of the four boundary 
segments of $R_{2\Upsilon}$. The lemma now follows from this fact along with Lemma~\ref{lem:convergence}, \eqref{eq:Brownian_scaling1} and \eqref{eq:poten_kernel}.
\end{proof}
\subsection{Some properties of discrete Gaussian free field on a rectangle}
In the current subsection we will derive some properties 
of the discrete GFF defined on rectangles. For our first result we assume that $\Upsilon_1, \Upsilon_2 \geq 1$. 
Let $\{\chi_{v}\}_{v \in R_{\Upsilon_1 N, N}}$ be a GFF 
with Dirichlet boundary conditions. For $\theta \in (0, 1)$, call a point $v$ in the rectangle $R = ([a, b] \times [c, d]) \cap \Z^2$ as \emph{$\theta$-isolated} from $\partial R$ if $d(z_x, \{a, b\}) \geq \theta|a - 
b|$ and $d(z_y, \{c, d\}) \geq \theta|c - d|$. The set of all points that are $\theta$-isolated from $\partial R_{\Upsilon_1 N, N}$ is a rectangle, say, $\partial 
R_{\Upsilon_1 N, N, \theta}$. Let $R_{\Upsilon_2 K, K}^w$ be another rectangle contained in $R_{\Upsilon_1N, N, \theta}$ and define $R_{\Upsilon_2 K, K, \theta}^w$ 
similarly. Here $K$ (and hence $N$) is big enough so 
that $K\theta \geq 2$. Now define two additional fields on $R_{\Upsilon_2 K, K}^w$ as follows:
\begin{equation}
\label{eq:field_decomp}
\chi_v^c = \E(\chi_v | \chi_{\partial R_{\Upsilon_2 K, K}^w}),\mbox{\mbox{\hspace{0.3cm}}}\chi_v^f 
= \chi_v - \chi_v^c\,.
\end{equation}
Notice that $\chi_v^f$ is distributed as a GFF on $R_{\Upsilon_2 K, K}^w$ with Dirichlet boundary 
conditions, and that the Gaussian fields $\{\chi_v^f\}_{v \in R_{\Upsilon_2 K, K}^w}$ and 
$\{\chi_v^c\}_{v \in R_{\Upsilon_2 K, K}^w}$ are independent of each other.
\begin{lemma}
\label{lem:field_smoothness}
Let $u, v \in R_{\Upsilon_2 K, K, \theta}^w$ such that $||u - v||_\infty \leq (1 - 2\theta) K$.
$$\E(\chi_u^c - \chi_v^c)^2 \leq  O(1/\theta^3)\Big(\frac{|u - v|}{K}\Big)^2\,.$$
\end{lemma}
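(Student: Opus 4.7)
Write $R^w$ for $R_{\Upsilon_2 K, K}^w$ and $R^w_\theta$ for $R_{\Upsilon_2 K, K, \theta}^w$. By the Markov property the summands in $\chi_v = \chi_v^c + \chi_v^f$ are independent, so
\[
\E(\chi_u^c - \chi_v^c)^2 \;=\; \Phi_{R_{\Upsilon_1 N, N}}(u, v) - \Phi_{R^w}(u, v), \qquad \Phi_R(u, v) := G_R(u, u) + G_R(v, v) - 2 G_R(u, v).
\]
The task reduces to bounding this difference of ``second Dirichlet-differences'' of the two Green's functions.

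I would first reduce to the case $|u - v| = 1$. Since $R^w_\theta$ is a (convex) rectangle, any two points $u, v \in R^w_\theta$ with $\|u - v\|_\infty \leq (1 - 2\theta) K$ can be joined by an axis-aligned lattice path $u = u_0, u_1, \ldots, u_k = v$ of length $k \leq 2|u - v|$ lying entirely in $R^w_\theta$. Applying Cauchy--Schwarz to the telescoping sum $\chi_u^c - \chi_v^c = \sum_i (\chi_{u_i}^c - \chi_{u_{i+1}}^c)$ gives
\[
\E(\chi_u^c - \chi_v^c)^2 \;\leq\; k^2 \, \max_i \E(\chi_{u_i}^c - \chi_{u_{i+1}}^c)^2.
\]

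For a neighbor pair $u, v = u + e$ (with $e$ a unit vector), combining \eqref{eq:Greens_expression} with the symmetries $G_R(u, v) = G_R(v, u)$ and $a(x) = a(-x)$ yields the key representation
\[
\Phi_R(u, v) \;=\; 2 a(e) + \sum_{y \in \partial R}\bigl[H_R(u, y) - H_R(v, y)\bigr]\,\bigl[a(y - u) - a(y - v)\bigr].
\]
The leading term $2 a(e)$ is a universal constant, independent of $R$, and therefore \emph{cancels exactly} on subtracting $\Phi_{R^w}$ from $\Phi_{R_{\Upsilon_1 N, N}}$. This cancellation is the crux of the argument: each $\Phi_R(u, v)$ individually is of order $1$ for adjacent $u, v$ (dominated by $2a(e)$), so without identifying and removing it the telescoping bound would give only the useless $O(|u - v|^2)$. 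To control the surviving correction for a rectangle $R$ on which $u$ lies at distance $\geq d$ from $\partial R$, I would combine (i) the standard discrete gradient estimate together with Harnack's inequality for the non-negative harmonic function $H_R(\cdot, y)$ on a ball of radius $\Omega(d)$ about $u$, yielding $|H_R(u, y) - H_R(v, y)| \leq C\, H_R(u, y)/d$, with (ii) \eqref{eq:poten_kernel}, which gives $|a(y - u) - a(y - v)| \leq C/|y - u| \leq C/d$ for $y \in \partial R$. Summing against $\sum_y H_R(u, y) = 1$ bounds the correction by $O(d^{-2})$. In our setting $d \geq \theta K$ for $R = R^w$, and also $d \geq \theta N \geq \theta K$ for $R = R_{\Upsilon_1 N, N}$ (because $R^w \subseteq R_{\Upsilon_1 N, N, \theta}$), so each correction, and hence their difference, is $O((\theta K)^{-2})$. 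Inserting this into the telescoping bound yields $\E(\chi_u^c - \chi_v^c)^2 \leq O(|u - v|^2 / (\theta K)^2)$, which is stronger than the claimed $O(1/\theta^3)(|u - v|/K)^2$.
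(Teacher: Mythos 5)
Your proof is correct and reaches the stated bound (indeed a slightly stronger one, $O(\theta^{-2})(|u-v|/K)^2$), but it takes a genuinely different route from the paper. Both proofs begin identically: decompose via independence of the coarse and fine fields, write each $\E(\chi_u-\chi_v)^2$ using the representation $\Phi_R(u,v)=2a(u-v)+\sum_{z\in\partial R}\bigl(H_R(u,z)-H_R(v,z)\bigr)\bigl(a(z-u)-a(z-v)\bigr)$, and exploit the exact cancellation of the $2a(u-v)$ terms. From there the paper works directly at the macroscopic scale $|u-v|\lesssim K$: it applies the Lawler--Limic difference estimate over the square $R_{(1-2\theta)K}$ that contains both points, then controls $\max_{w\in R_{(1-2\theta)K}}H_R(w,z)$ by factoring through intermediate squares $R_K\subseteq R^w$ and $R_N\subseteq R_{\Upsilon_1 N,N}$; summing the Poisson kernel over the $O(N)$ points of $\partial R_N$ is what produces the extra factor $\theta^{-1}$ and hence the $\theta^{-3}$ in the final bound. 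You instead telescope along an axis-parallel path in $R^w_\theta$ (a valid move since $R^w_\theta$ is convex and every intermediate point stays $\theta$-isolated in both rectangles), reduce to adjacent pairs, and combine the gradient estimate with Harnack to get the pointwise bound $|H_R(u,y)-H_R(v,y)|\le C H_R(u,y)/d$, which pairs cleanly with $|a(y-u)-a(y-v)|\le C/d$ and with $\sum_y H_R(u,y)=1$. This bypasses the intermediate-square machinery entirely, saves a factor of $\theta^{-1}$, and also makes transparent that the hypothesis $\|u-v\|_\infty\le(1-2\theta)K$ (used by the paper to define $R_{(1-2\theta)K}$) is not needed for your argument. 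Both approaches are sound; yours is shorter and sharper, while the paper's direct macroscopic estimate avoids invoking Harnack and stays closer to the template it reuses in Lemmas~\ref{lem:Green_function_continuity} and \ref{lem:smoothness_conditional}.
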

\begin{proof}
Independence of the fields $\{\chi_v^f\}$ and $\{\chi_v^c\}$ imply that for any $u$ and $v$ in 
$R_{\Upsilon_2 K, K}^w$,
\begin{equation}
\label{eq:field_smoothness1}
\E (\chi_u^c - \chi_v^c)^2 = \E (\chi_u - \chi_v)^2 - \E (\chi_u^f - \chi_v^f)^2\,.
\end{equation}
From \eqref{eq:Greens_expression} and a routine algebra we get
\begin{equation}
\label{eq:field_smoothness2}
\E (\chi_u - \chi_v)^2 = 2a(u - v) + \sum_{z \in \partial R_{\Upsilon_1 N, N}}
\Big(H_{R_{\Upsilon_1 N, N}}(u, z) - H_{R_{\Upsilon_1 N, N}}(v, z)\Big)\big(a(z - u) - a(z - v)\big)\,,
\end{equation}
and
\begin{equation}
\label{eq:field_smoothness3}
\E (\chi_u^f - \chi_v^f)^2 = 2a(u - v) + \sum_{z \in \partial R_{\Upsilon_2 K, 
K}^w}\Big(H_{R_{\Upsilon_2 K, K}^w}(u, z) - H_{R_{\Upsilon_2 K, K}^w}(v, z)\Big)\big(a(z - u) - a(z - v)\big)\,.
\end{equation}
Since $\Upsilon_1 \geq 1$ and $u, v \in R_{\Upsilon_1 N, N, \theta}$, we have from \eqref{eq:poten_kernel} 
that
\begin{equation}
\label{eq:field_smoothness5}
\mbox{$\max_{z \in \partial R_{\Upsilon_1 N, N}}$}|a(z - u) - a(z - v)| \leq \frac{4|u - v|}{\theta N} + \frac{O(1)}{\theta^2N^2}\,.
\end{equation}
Since $||u - v||_\infty \leq (1 - 2\theta)K$, we can define a square $R_{(1 - 2\theta)K}$ of side length $(1 - 2\theta)K$ within $R^w_{\Upsilon_2K, K, \theta}$ that 
contains both $u$ and $v$. Let $R_K$ and $R_N$ denote the squares of side length $K$ and $N$ respectively 
placed symmetrically around $R_{(1 - 2\theta)K}$. It is clear that $R_K \subseteq R^w_{\Upsilon_2K, K}$ and 
$R_N \subseteq R^w_{\Upsilon_1N, N}$. Now applying difference estimates for the harmonic function $H_{R_{\Upsilon_1 N, N}}(v, z)$ (in $v$) on $int(R_{\Upsilon_1 N, N})$ (see e.g. \cite[Theorem 6.3.8]{Lawler10}), we obtain
\begin{equation}
\label{eq:field_smoothness6}
|H_{R_{\Upsilon_1 N, N}}(u, z) - H_{R_{\Upsilon_1 N, N}}(v, z)| \leq c \frac{|u - v|}{\theta N} 
\max_{w \in R_{(1 - 2\theta)K}}H_{R_{\Upsilon_1 N, N}}(w, z) \,,
\end{equation}
where $c$ is an absolute constant. Observe that
\begin{equation}
\label{eq:field_smoothness7}
H_{R_{\Upsilon_1N, N}}(w, z) = \sum_{w' \in \partial R_N}H_{R_K}(w, w')H_{R_{\Upsilon_1N, N}}(w', z)\,,
\end{equation}
for all $w \in R_{(1 - 2\theta)K}$. As each point inside $R_{(1 - 2\theta)K}$ lies at least $\theta N$ away from $\partial R_N$, we get from Proposition~\ref{prop:poisson_ker} and Lemma~\ref{lem:r_t}:
\begin{equation}
\label{eq:field_smoothness8}
H_{R_N}(w, w') \leq O(N^{-1})\sum_{j \in [N-1]}\e^{-\frac{j\pi\theta}{4}} = \frac{O(N^{-1})}{1 - \e^{-\frac{\theta \pi}{4}}} = O((N\theta)^{-1})\,,
\end{equation}
for all $w \in R_{(1 - 2\theta)K}$ and $w' \in \partial 
R_N$. The last four displays together imply that
\begin{equation}
\label{eq:field_smoothness9}
|\sum_{z \in \partial R_{\Upsilon_1 N, N}}
\Big(H_{R_{\Upsilon_1 N, N}}(u, z) - H_{R_{\Upsilon_1 N, N}}(v, z)\Big)\big(a(z - u) - a(z - v)\big)| \leq O(\theta^{-3})\Big(\frac{|u - v|}{N}\Big)^2\,.
\end{equation}
In a similar way we get that
\begin{equation}
\label{eq:field_smoothness10}
|\sum_{z \in \partial R^w_{\Upsilon_2 K, K}}
\Big(H_{R^w_{\Upsilon_2 K, K}}(u, z) - H_{R^w_{\Upsilon_2 K, K}}(v, z)\Big)\big(a(z - u) - a(z - v)\big)| \leq O(\theta^{-3})\Big(\frac{|u - v|}{K}\Big)^2\,.
\end{equation}
The lemma now follows from plugging in the expressions \eqref{eq:field_smoothness2} and \eqref{eq:field_smoothness3} into \eqref{eq:field_smoothness1}, and using the last two bounds.
\end{proof}
In a similar vein we obtain the following continuity result for Green's functions.
\begin{lemma}
\label{lem:Green_function_continuity}
Let $\Upsilon \geq 1$ and $u, v \in R_{\Upsilon N, N, \theta}$ such that $||u - v||_\infty \leq (1 - 2\theta)N$. Then
$$|G_{R_{\Upsilon N, N}} (u, u) - G_{R_{\Upsilon N, N}} (v, v)| \leq  O(\log \Upsilon /\theta^2)\frac{|u - v|}{N}\,.$$
\end{lemma}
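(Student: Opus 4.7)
The plan is to derive a clean identity for $G_R(u,u) - G_R(v,v)$, where $R := R_{\Upsilon N, N}$, and then estimate each term using only the potential-kernel asymptotics \eqref{eq:poten_kernel} together with the fact that harmonic measures are probability measures. Writing
$$G_R(u, u) - G_R(v, v) = \bigl[G_R(u, u) - G_R(u, v)\bigr] + \bigl[G_R(v, u) - G_R(v, v)\bigr],$$
applying \eqref{eq:Greens_expression} to each bracket and using the symmetry $G_R(u, v) = G_R(v, u)$, the explicit $\pm a(u - v)$ terms cancel and one obtains the key identity
$$G_R(u, u) - G_R(v, v) = \sum_{z \in \partial R}\bigl[H_R(u, z) + H_R(v, z)\bigr]\bigl[a(z - u) - a(z - v)\bigr].$$
The important feature is that this bypasses any need to bound $\sum_z |H_R(u, z) - H_R(v, z)|$ through harmonic-function difference estimates as in the proof of Lemma~\ref{lem:field_smoothness}; every factor appearing here involves only differences of the $a$-kernel.

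Next I would bound $|a(z - u) - a(z - v)|$ pointwise. Since $u, v \in R_{\Upsilon N, N, \theta}$ and $\Upsilon \geq 1$, every boundary point $z \in \partial R$ satisfies $|z - u|, |z - v| \geq \theta N$; moreover the hypothesis $\|u - v\|_\infty \leq (1 - 2\theta)N$ gives $|u-v| \leq \sqrt{2}\,N$. I would then split $\partial R$ into two regions. On $\{z : |z - v| \geq 2|u - v|\}$ the elementary bound $|\log(1+t)| \leq 2|t|$ for $|t| \leq 1/2$, applied with $t = (|z-u| - |z-v|)/|z-v|$, together with \eqref{eq:poten_kernel} yields $|a(z - u) - a(z - v)| = O(|u - v|/(\theta N))$. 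On its complement $\{z : |z - v| < 2|u - v|\}$, which is empty unless $|u - v| \geq \theta N/2$, both distances $|z-u|$ and $|z-v|$ lie in $[\theta N, O(N)]$ (crucially, bounded by $O(N)$ rather than $O(\Upsilon N)$, because $|u-v|$ itself is), yielding the crude bound $|a(z - u) - a(z - v)| = O(\log(1/\theta))$.

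Summing the identity against $H_R(u, \cdot) + H_R(v, \cdot)$ and using $\sum_z H_R(u, z) = \sum_z H_R(v, z) = 1$, the first region contributes $O(|u-v|/(\theta N))$ while the second contributes at most $O(\log(1/\theta))$. Since the second region is nontrivial only when $|u - v| \geq \theta N/2$, its contribution can be rewritten as $O\bigl(\log(1/\theta)/\theta\bigr)\cdot |u-v|/N = O(1/\theta^2)\cdot |u-v|/N$, absorbing $\log(1/\theta)/\theta$ into $1/\theta^2$. Adding the two contributions yields $|G_R(u,u) - G_R(v,v)| = O(1/\theta^2)\cdot |u-v|/N$, which is in fact slightly stronger than the stated bound (the $\log \Upsilon$ factor in the statement appears slack, presumably reflecting a looser accounting; one should read it as $\max(\log \Upsilon, 1)$ since $\Upsilon \geq 1$ makes $\log \Upsilon$ possibly vanish). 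The only delicate step is spotting the algebraic cancellation that produces the identity; after that, the proof is elementary case-work on $|z - v|$ versus $|u - v|$.
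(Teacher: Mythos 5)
Your proof is correct, and it takes a genuinely different (and arguably cleaner) route than the paper. The paper starts from $G_R(u,u)=\sum_z H_R(u,z)a(z-u)$, splits the difference $G_R(u,u)-G_R(v,v)$ into a piece weighted by $H_R(u,\cdot)-H_R(v,\cdot)$ and a piece weighted by $a(\cdot-u)-a(\cdot-v)$, and must then invoke a harmonic-function difference estimate (as in \eqref{eq:field_smoothness9}) to control $\sum_z|H_R(u,z)-H_R(v,z)|$, picking up a $\log\Upsilon$ from the spread of $\log|z-v|$ over the boundary of the $\Upsilon N\times N$ rectangle. Your symmetrization
$$G_R(u,u)-G_R(v,v)=\sum_{z\in\partial R}\bigl[H_R(u,z)+H_R(v,z)\bigr]\bigl[a(z-u)-a(z-v)\bigr]$$
exploits $G_R(u,v)=G_R(v,u)$ and the cancellation of the $\pm a(u-v)$ terms, after which every factor is a difference of potential kernels with no difference of Poisson kernels in sight. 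Since $\sum_z H_R(\cdot,z)=1$, the bound becomes elementary case-work on $|z-v|$ versus $|u-v|$, and the $\log\Upsilon$ disappears entirely (all relevant distances are controlled by $O(N)$ rather than $O(\Upsilon N)$ because $\|u-v\|_\infty\le(1-2\theta)N$). Two small points worth stating explicitly in a written-up version: (i) the $O(|z-u|^{-2})+O(|z-v|^{-2})$ error from \eqref{eq:poten_kernel} is $O((\theta N)^{-2})$, which is $\le O(|u-v|/(\theta^2 N))$ because $|u-v|\ge 1$ for distinct lattice points and $\theta N\ge 1$; and (ii) your reading of $\log\Upsilon$ as $\max(\log\Upsilon,1)$ is the right one, since otherwise the stated bound would vacuously fail at $\Upsilon=1$.
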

\begin{proof}
We begin with the expression of $G_{R_{\Upsilon N, N}}(u, u)$ from \eqref{eq:Greens_expression},
\begin{equation*}
G_{R_{\Upsilon N, N}} (u, u) = \sum_{z \in \partial R_{\Upsilon N, N}} H_{R_{\Upsilon N, N}}(u, z) a(z - u)\,.
\end{equation*}
Since $u, v \in R_{\Upsilon N, N, \theta}$, from \eqref{eq:poten_kernel} we get,
\begin{eqnarray*}
|G_{R_{\Upsilon N, N}} (u, u) - G_{R_{\Upsilon N, N}} (v, v)| &\leq& O(1)\sum_{z \in \partial R_{\Upsilon N, N}} |H_{R_{\Upsilon N, N}}(u, z) - H_{R_{\Upsilon N, N}}(v, z)| \log \frac{|z - v|}{\theta N} \\ 
&& + O(1)\sum_{z \in \partial R_{\Upsilon N, N}} H_{R_{\Upsilon N, N}}(u, z) \frac{|u - v|}{\theta N} + \frac{O(1)}{\theta^2 N^2}\\
&\leq& O(\log \Upsilon)\sum_{z \in \partial R_{\Upsilon N, N}} |H_{R_{\Upsilon N, N}}(u, z) - H_{R_{\Upsilon N, N}}(v, z)|  +  O(1)\frac{|u - v|}{\theta N}
\end{eqnarray*}
From a computation similar to the one that led to \eqref{eq:field_smoothness9} in the proof of previous lemma, we get
$$\sum_{z \in \partial R_{\Upsilon N, N}} |H_{R_{\Upsilon N, N}}(u, z) - H_{R_{\Upsilon N, N}}(v, z)| = O(1)\frac{|u - v|}{\theta^2 N}\,.$$
The last two displays together yield,
\begin{equation*}
|G_{R_{\Upsilon N, N}} (u, u) - G_{R_{\Upsilon N, N}} (v, v)| = O(\log \Upsilon)\frac{|u - v|}{\theta^2 N}\,. \qedhere
\end{equation*}
\end{proof}

In order to prove our main theorem we need smoothness results similar to the one given in Lemma~\ref{lem:field_smoothness} for certain other types 
of fields. To this end let $\{\chi_v\}_{v \in R_{\Upsilon N, N}}$ be a GFF with Dirichlet boundary 
conditions. For any (nonempty) subinterval $I$ of $[1, \Upsilon N - 1]$ and $\nu \in (0, 1) \cap N^{-1}\Z$, define a random variable $Z_{I, \nu}^\Sigma$ as
$$Z_{I, \nu}^\Sigma = \sum_{v \in I \times \{\nu N\}} \chi_v\,.$$
The following lemma gives bounds on variances and covariances of these variables.
\begin{lemma}
\label{lem:general_covar}
Suppose that $|I| \geq 2000N$. Then for all $\nu \in (0, 1)\cap N^{-1}\Z$ we have,
$$4\Big(|I| - 201N\log \frac{|I|}{N}\Big)\nu(1 - 
\nu)N \leq \var Z_{I, \nu}^\Sigma \leq 4|I|\nu(1 - \nu)N\,.$$
Also for any two disjoint subintervals $I_1$ and $I_2$ of $[1, \Upsilon N - 1]$ and $\nu_1, \nu_2\in (0, 1)\cap N^{-1}\Z$ we have
$$0 \leq \cov(Z_{I_1, \nu_1}^\Sigma, Z_{I_2, \nu_2}^\Sigma) \leq O(1)(|I_2|\wedge N)\nu_2(1 - \nu_2)\e^{-\frac{\pi d(I_1, I_2)}{4N}}N\,.$$
\end{lemma}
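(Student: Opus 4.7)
The starting point is the identity $\cov(\chi_u,\chi_v)=G_R(u,v)$ with $R:=R_{\Upsilon N,N}$, which expresses both the variance of $Z_{I,\nu}^{\Sigma}$ and the covariance of $Z_{I_1,\nu_1}^{\Sigma},Z_{I_2,\nu_2}^{\Sigma}$ as double sums of Green's function values. The nonnegativity of $\cov(Z_{I_1,\nu_1}^{\Sigma},Z_{I_2,\nu_2}^{\Sigma})$ is then immediate from $G_R\ge 0$, so the remaining task is the three one-sided bounds.

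For the upper bound $\var Z_{I,\nu}^{\Sigma}\le 4|I|\nu(1-\nu)N$, I would use domain monotonicity $G_R(u,v)\le G_S(u,v)$, where $S=\Z\times[0,N]$ is the infinite horizontal strip containing $R$, and reduce to the computation $\sum_{u\in\Hl_{\nu N}}G_S(u,v)=4\nu(1-\nu)N$ for any $v\in\Hl_{\nu N}$. This proceeds by projecting the two-dimensional walk onto its vertical coordinate $Y_t$: the process $Y_t$ is a lazy SRW on $\{0,\dots,N\}$ with laziness $\tfrac{1}{2}$ and absorption at the endpoints, so each visit of $Y_t$ to level $\nu N$ is inflated by a geometric waiting time of mean $2$, and the non-lazy Green's function at $(\nu N,\nu N)$ is $2\nu(1-\nu)N$. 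Summing over $v\in I\times\{\nu N\}$ yields the upper bound.

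For the lower bound, I would study the pointwise defect $\Delta(v):=4\nu(1-\nu)N-\sum_{u\in I\times\{\nu N\}}G_R(u,v)$ and show $\sum_{v\in I\times\{\nu N\}}\Delta(v)\le 804\,N^2\log(|I|/N)\,\nu(1-\nu)$. The defect splits into (a) a horizontal tail coming from restricting $u$ to $I$ rather than all of $\Hl_{\nu N}$, and (b) a vertical-side leakage from the replacement $G_S\mapsto G_R$; by strong Markov at $\tau_R$ the latter equals $\sum_{w\in\partial_{\mathrm{left}}R\cup\partial_{\mathrm{right}}R}H_R(v,w)\sum_{u\in\Hl_{\nu N}}G_S(w,u)$. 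For $v$ at horizontal distance $d$ from the nearest endpoint of $I$ or vertical side of $R$, Lemmas~\ref{lem:kernel_bound1} and~\ref{lem:kernel_bound3} give exponential decay once $d\gtrsim N$, while for $d\lesssim N$ one uses the short-range logarithmic behaviour of the two-dimensional Green's function via \eqref{eq:poten_kernel}. Summing the resulting pointwise bound over $v\in I\times\{\nu N\}$ produces the factor $N\log(|I|/N)$, and the hypothesis $|I|\ge 2000N$ leaves enough slack to absorb all constants into $201$.

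For the covariance upper bound, I would insert a vertical line $L=\Vl_{x_0}$ separating $I_1$ from $I_2$ and apply strong Markov at the first hitting time of $L$: for $u\in I_1\times\{\nu_1 N\}$ and $v\in I_2\times\{\nu_2 N\}$,
\[
G_R(u,v)=\sum_{z\in L\cap R}H_{R^{-}}(u,z)\,G_R(z,v),
\]
where $R^{-}$ is the sub-rectangle of $R$ on the same side of $L$ as $u$. Summing in $u$ and invoking Lemma~\ref{lem:kernel_bound1} (and Lemma~\ref{lem:kernel_bound2} when $z$ is close to a horizontal boundary) produces the factor $e^{-\pi d(I_1,I_2)/(4N)}$, while summing in $v$ one reuses the variance-type argument to bound $\sum_{v\in I_2\times\{\nu_2 N\}}G_R(z,v)$ by $O(1)(|I_2|\wedge N)\nu_2(1-\nu_2)N$: the walk from $z$ must first reach $\Hl_{\nu_2 N}$, and once there it effectively explores only the $x$-window of width $|I_2|\wedge N$ around $I_2$. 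The main obstacle I anticipate is the sharp constant $201$ in the lower variance bound, since the two defect sources have only polynomial decay at short scales; matching the $N\log(|I|/N)$ factor will require a careful split of the $v$-sum into dyadic distance scales combined with the Poisson-kernel estimates of Section~\ref{subsec:random_walk}.
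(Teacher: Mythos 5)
Your proposal follows essentially the same route as the paper for all three bounds: the variance upper bound via projection onto the vertical coordinate and the lazy one-dimensional random walk Green's function $G^{1,\star}_{[0,N]}(\nu N,\nu N)=4\nu(1-\nu)N$, the variance lower bound by controlling the defect near the endpoints of $I$, and the covariance bound via a separating vertical line together with Lemma~\ref{lem:kernel_bound1}. The one place where you overcomplicate is the lower bound: you anticipate needing a dyadic split of the $v$-sum into distance scales, but the paper's argument is more direct. It simply removes a band of width $100N\log(|I|/N)$ from each end of $I$; for $v$ in the remaining core $I_{\mathrm{center}}$, a single application of strong Markov at $\partial R_I$ ($R_I=I\times([0,N]\cap\Z)$) plus Lemma~\ref{lem:kernel_bound1} shows the pointwise defect is bounded by $4\nu(1-\nu)N\cdot O((|I|/N)^{-70})$, hence negligible when summed; for $v$ in the two discarded bands, the trivial nonnegativity bound $\Delta(v)\le 4\nu(1-\nu)N$ over $\le 200N\log(|I|/N)$ points gives the stated loss, with $200$ rounded up to $201$ to absorb the core contribution. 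This avoids the polynomial-decay concern you raise. Also note the paper combines your two defect sources (a) and (b) into one strong-Markov step at $\partial R_I$, which lands directly on $\sum_z H_{R_I}(v,z)G^{1,\star}(z_y,\nu N)$.
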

\begin{proof}
For any $A  \subseteq R_{\Upsilon N, N}$, define $G_{R_{\Upsilon N, N}}(v, A)$ to be $\sum_{w \in 
A}G_{R_{\Upsilon N, N}}(v, w)$. Notice that
\begin{equation}
\label{eq:general_covar1}
\var Z_{I, \nu}^\Sigma = \sum_{v \in I \times \{\nu N\}}G_{R_{\Upsilon N, N}}(v, I \times \{\nu N\})\,.
\end{equation}
From definition of Green's function (see the first paragraph in Subsection~\ref{subsec:random_walk}) we then get
\begin{equation}
\label{eq:general_covar2}
G_{R_{\Upsilon N, N}}(v, I \times \{\nu N\}) \leq \E^v\big(\sum_{0 \leq t \leq \tau_{0, N}}\mathbf 1_{\{S_{t; y} = \nu N\}}\big)\,,
\end{equation}
where $S_{t; y}$ is the vertical coordinate of $S_t$ and $\tau_{0, N}$ is the first time $S_t$ hits the lines 
$\Hl_0$ or $\Hl_{N}$. But the law of $\{S_{t; y}\}$ is that of a one-dimensional lazy random walk starting 
from $v_y$. More precisely $\{S_{t; y}\}$ is a $\Z$-valued Markov chain starting from $v_y$ with transition probabilities $\{p_{a, b}\}_{a, b \in \Z}$ given by
$$p_{a, b} = \begin{cases}
1/2 &\mbox{ if } a = b\,, \\
1/4 &\mbox{ if } |a - b| = 1\,, \\
0 &\mbox{ otherwise}\,.
\end{cases}$$
Likewise with simple random walk in two dimension, the lazy random walk Green's function $G^{1, \star}_{[a, b] \cap \Z}(x_1, x_2)$ is defined as
$$G^{1, \star}_{[a, b] \cap \Z}(y_1, y_2) = \E^{1, \star, y_1}\big(\sum_{0 \leq t \leq \tau_{a, b}}\mathbf 1_{\{S_{t; y} = y_2\}}\big)\,,$$
where $\E^{1, \star, y_1}$ is with respect to the law of $S_{t; y}$ starting from $y_1 \in \Z$ and $\tau_{a, b}$ 
is the first time $S_{t; y}$ hits $\Hl_a$ or $\Hl_b$. From a straightforward calculation involving effective resistance it follows that
$$G^{1, \star}_{[a, b]\cap \Z}(y, y) = 4\frac{(b -y)(y - a)}{b - a}\,,$$
whenever $a < b$. Hence from \eqref{eq:general_covar2} we get
\begin{equation*}
G_{R_{\Upsilon N, N}}(v, I \times \{\nu N\}) \leq G^{1, \star}_{[0, N]\cap \Z}(\nu N, \nu N) = 4\nu (1 - \nu) N\,,
\end{equation*}
for all $v \in I \times \{\nu N\}$. Plugging this into \eqref{eq:general_covar2} gives the upper bound on 
variance. For the lower bound we will show that the approximation of $G_{R_{\Upsilon N, N}}(v, I \times \{\nu N\})$ with $G^{1, \star}_{[0, N]\cap \Z}(\nu N, \nu N)$ is good except when $v$ lies very close to the 
endpoints of $I\times \{\nu N\}$. To this end denote by $I_{\mathrm{center}}$ the subset of $I$ consisting of points that are at least $100 N\log \tfrac{|I|}{N}$ 
away from the endpoints of $I$. Also denote by $R_I$ the 
rectangle $I \times ([0, N] \cap \Z)$. Then for any $v \in I_{\mathrm{center}} \times \{\nu N\}$, we have
\begin{equation*}
G^{1, \star}_{[0, N]\cap \Z}(\nu N, \nu N) - G_{R_{\Upsilon N, N}}(v, I \times \{\nu N\}) \leq \sum_{z \in \partial_{\mathrm{left}}R_I \cup \partial_{\mathrm{right}}R_I}H_{R_I}(v, z)G^{1, \star}_{[0, N]\cap \Z}(z_y, \nu N)\,.
\end{equation*}
But $G^{1, \star}_{[0, N]\cap \Z}(z_y, \nu N) \leq G^{1, \star}_{[0, N]\cap \Z}(\nu N, \nu N) = 4\nu (1 - \nu) 
N$. Thus
\begin{equation*}
G^{1, \star}_{[0, N]\cap \Z}(\nu N, \nu N) - G_{R_{\Upsilon N, N}}(v, I \times \{\nu N\}) \leq 4\nu (1 - \nu) N\sum_{z \in \partial_{\mathrm{left}}R_I \cup \partial_{\mathrm{right}}R_I}H_{R_I}(v, z)\,.
\end{equation*}
From Lemma~\ref{lem:kernel_bound1} we now get
\begin{align}
\label{eq:general_covar3}
G^{1, \star}_{[0, N]\cap \Z}(\nu N, \nu N) - G_{R_{\Upsilon N, N}}(v, I \times \{\nu N\}) &\leq 4\nu (1 - \nu) NO(\e^{-25\pi \log (\frac{|I|}{N})}) \nonumber \\
& = \nu (1 - \nu) N O((|I|/N)^{-70})\,,
\end{align}
for all $v \in I_{\mathrm{center}} \times \{\nu N\}$. 
Plugging this bound into \eqref{eq:general_covar1} and using the non-negativity of Green's functions we deduce
\begin{equation*}
\var Z_{I, \nu}^\Sigma \geq 4\Big(|I| - 201N\log \frac{|I|}{N}\Big)\nu(1 - \nu)N\,.
\end{equation*}
The lower bound on covariance is trivial. For the upper bound let us assume, without loss of generality, that 
$I_1$ lies to the left of $I_2$. 
Denote the interval $[0, r_{I_1}] \cap \Z$ as $I_1^\star$ and the rectangle $I_1^\star \times ([0, N] \cap \Z)$ as $R_{I_1^\star}$ (see Figure~\ref{fig:general_covar}).
\begin{figure}[!htb]
\centering
\begin{tikzpicture}[semithick, scale = 3]
\draw (-1.7, 0.3) -- (-0.2, 0.3);
\node [scale = 0.65, below] at (-0.95, 0.3) {$I_1 \times \{\nu_1 N\}$};


\draw (0.2, 0) -- (1.7, 0);

\node [scale = 0.65, below] at (0.85, 0) {$I_2 \times \{\nu_2 N\}$};

\draw [dashed] (-2.2, -0.2) rectangle (-0.2, 0.7);
\node [scale = 0.65, below] at (-1.2, -0.2) {$I_1^\star \times \{0\}$};

\end{tikzpicture}
\caption{{\bf The interval $I_1^\star$ and the rectangle $R_{I_1^\star}$.} The rectangle with broken boundary lines is $R_{I_1^\star}$.}
\label{fig:general_covar}
\end{figure}
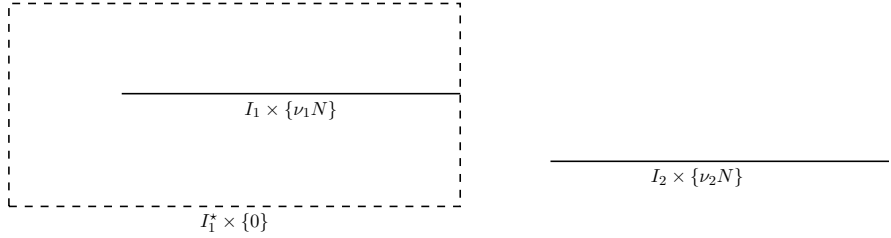
It follows from our previous discussion that for all $v \in R_{\Upsilon N, N}$,
$$G_{R_{\Upsilon N, N}}(v, I_2 \times \{\nu_2 N \}) \leq 4\nu_2(1 - \nu_2)N\,.$$
Now if $v \in I_1 \times \{\nu_1 N\}$, using Markov property we get
\begin{equation*}
G_{R_{\Upsilon N, N}}(v, I_2 \times \{\nu_2 N \}) \leq  \sum_{z \in \partial_{\mathrm{right}}R_{I_1^\star}}H_{R_{I_1}^\star}(v, z)G_{R_{\Upsilon N, N}}(z, I_2 \times \{\nu_2 N \})\,.
\end{equation*}
Hence from Lemma~\ref{lem:kernel_bound1} it follows that
\begin{equation}
\label{eq:general_covar4}
G_{R_{\Upsilon N, N}}(v, I_2 \times \{\nu_2 N \}) \leq O(1)\nu_2(1 - \nu_2)N\e^{-\frac{\pi d(v_x, I_2)}{4N}}\,.
\end{equation}
Consequently 
\begin{equation}
\label{eq:general_covar5}
\cov(Z_{I_1, \nu_1}^\Sigma, Z_{I_2, \nu_2}^\Sigma) = \sum_{v \in I_1 \times \{\nu_1 N\}}G_{R_{\Upsilon N, N}}(v, I_2 \times \{\nu_2 N \}) \leq \sum_{v \in I_1 \times \{\nu_1 N\}}\e^{-\frac{\pi d(v_x, I_2)}{4N}}O(\nu_2(1 - \nu_2))N\,.
\end{equation}
Summing this geometric series and using the fact that $1 - \e^{-x} = x (1 + o_{x \to 0}(1))$, we get the desired bound.
\end{proof}

Our next result shows that the field conditioned on any $Z_{I, \nu}^\Sigma$ is rather smooth.
\begin{lemma}
\label{lem:smoothness_conditional}
Let $\nu \in (0.4, 0.6) \cap N^{-1}\Z$ and $I$ be a sub-interval of $[0.1 N, \Upsilon N - 0.1 N]$. Also 
suppose that $|I| \geq 2000 N$. 
Then for all pairs of points $u, v \in \inte(R_{\Upsilon N, N})$ that lie at least $0.1 N$ away from $\partial_{\mathrm{left}}R_{\Upsilon N, N} \cup \partial_{\mathrm{right}}R_{\Upsilon N, N}$, we have
\begin{equation}
\label{eq:smoothness_conditional0}
\var \big(\E(\chi_u - \chi_v | Z_{I, \nu}^\Sigma)\big) = O(1)\Big(\frac{|u - v|}{N}\Big)^2 \vee \Big(\frac{|u - v|}{N}\Big)\,.
\end{equation}
In addition, if $u, v$ lie on the same side of the rectangle $I \times ([0, N] \cap \Z)$ such that distance of $u$ and $v$ from $I \times [N - 1]$ is at least $\Upsilon'N \geq 0.1 N$, then 
\begin{equation}
\label{eq:smoothness_conditional1}
\var \big(\E(\chi_u - \chi_v | Z_{I, \nu}^\Sigma)\big) = O(1)\e^{-\frac{\pi\Upsilon'}{2}}\Big(\frac{|u - v|}{N}\Big)^2 \vee \Big(\frac{|u - v|}{N}\Big)\,.
\end{equation}
\end{lemma}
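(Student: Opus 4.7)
The plan is to exploit that $Z_{I, \nu}^\Sigma$ is a one-dimensional linear functional of the centered Gaussian vector $\{\chi_v\}$, so by Gaussian regression
\begin{equation*}
\var\bigl(\E(\chi_u - \chi_v \mid Z_{I, \nu}^\Sigma)\bigr) \;=\; \frac{(\phi(u) - \phi(v))^2}{\var Z_{I, \nu}^\Sigma}, \qquad \phi(z) \;:=\; \sum_{w \in A} G_{R_{\Upsilon N, N}}(z, w),
\end{equation*}
where $A := I \times \{\nu N\}$. Under the hypotheses $\nu \in (0.4, 0.6)$ and $|I| \geq 2000 N$, Lemma~\ref{lem:general_covar} immediately yields $\var Z_{I, \nu}^\Sigma \geq c|I|N$ for an absolute constant $c > 0$, so the entire task reduces to upper-bounding $|\phi(u) - \phi(v)|$.

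For \eqref{eq:smoothness_conditional0}, the central intermediate estimate I would prove is the near-Lipschitz bound
\begin{equation*}
|\phi(u) - \phi(v)| \;\leq\; C\,|u-v|\bigl(1 + \log(N/|u-v|)\bigr) \qquad \text{whenever } |u-v| \leq N.
\end{equation*}
I attack this via a dyadic decomposition of $A$ about $v$: let $A_0 := \{w \in A : |v - w| \leq 2|u-v|\}$ and $A_k := \{w \in A : 2^k|u-v| < |v-w| \leq 2^{k+1}|u-v|\}$. On $A_0$, a horizontal segment of $O(|u-v|)$ points, the Green's function asymptotic $|G(z, w)| = O(1 + \log(N/(|z-w|\vee 1)))$ (from \eqref{eq:Greens_expression}--\eqref{eq:poten_kernel}) and the Riemann sum $\sum_{d=1}^{2|u-v|}\log(N/d) = O(|u-v|(1+\log(N/|u-v|)))$ bound $\sum_{A_0}(|G(u, w)| + |G(v, w)|)$. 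On shells with $2^{k+1}|u-v| \leq N$, the Lawler-type discrete gradient bound $|G(u, w) - G(v, w)| \leq C|u-v|/|u-w|$ derived from the potential kernel \eqref{eq:poten_kernel} gives $O(|u-v|)$ per shell and $O(|u-v|\log(N/|u-v|))$ summed over $O(\log(N/|u-v|))$ shells. On shells beyond horizontal distance $N$ I use the exponentially decaying gradient implied by Lemma~\ref{lem:free_field_cov}; the resulting geometric-times-linear sum $\sum_s s\,e^{-\pi s/8}$ contributes only $O(|u-v|)$. Squaring and combining with $\var Z_{I,\nu}^\Sigma \geq c|I|N$, the elementary calculus fact $\sup_{k \in [1, N]} k(1+\log(N/k))^2 = O(N)$ and the slack $|I| \geq 2000 N$ give $(\phi(u) - \phi(v))^2/\var Z_{I,\nu}^\Sigma = O(|u-v|/|I|) \leq O(|u-v|/N)$, which coincides with $(|u-v|/N)^2 \vee (|u-v|/N)$ in the regime $|u-v| \leq N$. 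For $|u-v| > N$, the uniform bound $\phi(z) \leq 4\nu(1-\nu)N$ (from the one-dimensional lazy random walk argument used in Lemma~\ref{lem:general_covar}) gives $O(N/|I|) = O(1)$, which is absorbed by $(|u-v|/N)^2 \geq 1$.

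For \eqref{eq:smoothness_conditional1}, the hypothesis that $u, v$ lie on the same side of $I \times ([0, N]\cap \Z)$ at horizontal distance $\geq \Upsilon'N$ forces the entire straight-line segment from $u$ to $v$ to remain in that far region. Interpret $\phi(z) = \E^z[\#\{t < \tau : S_t \in A\}]$ and decompose via the strong Markov property at the first crossing of a vertical barrier placed between $z$ and $A$: Lemma~\ref{lem:kernel_bound1} gives a crossing probability of $O(e^{-\pi\Upsilon'/4})$, while the post-crossing expected local time on $A$ is at most $4\nu(1-\nu)N$, so $\|\phi\|_\infty = O(N\,e^{-\pi\Upsilon'/4})$ throughout the far region. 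Since $\phi$ is discrete-harmonic off $A$ and the region has spatial scale $\gtrsim N$, the standard harmonic-function gradient estimate (analogous to the one invoked in the proof of Lemma~\ref{lem:field_smoothness}) promotes this to $|\nabla\phi| = O(e^{-\pi\Upsilon'/4})$, so integrating along the line gives $|\phi(u) - \phi(v)| = O(|u-v|)\,e^{-\pi\Upsilon'/4}$. Squaring, dividing by $c|I|N$, and using $N/|I| \leq 1/2000$ produces the claimed bound $O(1)\,e^{-\pi\Upsilon'/2}\bigl((|u-v|/N)^2 \vee (|u-v|/N)\bigr)$.

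The main obstacle is the sharpness of the sub-Lipschitz bound in \eqref{eq:smoothness_conditional0}: the derivative of $\phi$ has a genuine logarithmic blowup at the endpoints of the segment $A$ (visible by differentiating the Fourier expansion of the Green's function in the infinite strip), so no uniform Lipschitz bound $|\phi(u) - \phi(v)| \leq C|u-v|$ is available, and the $(1 + \log(N/|u-v|))$ factor is essentially unavoidable. It is only the hypothesis $|I| \geq 2000 N$ combined with the identity $k(1+\log(N/k))^2 = O(N)$ that just-barely absorbs this logarithm at the finest scale $|u-v| = 1$ and recovers the advertised $O(|u-v|/N)$ rate. A secondary technical point is that bounding the rectangle's boundary correction $\sum_z [H(u,z) - H(v,z)] a(z-w)$ pointwise in $w$ introduces a spurious $\log \Upsilon$ factor; one must combine the potential-kernel and boundary-correction contributions (via the identity $\sum_{w \in A} \E^u[a(S_\tau - w)] = F(u) + \phi(u)$ with $F(z) := \sum_{w \in A} a(z-w)$) to exploit cancellation and leave only the sharp logarithm in $N/|u-v|$.
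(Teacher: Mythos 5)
Your reduction via Gaussian regression and the bound $\var Z_{I,\nu}^\Sigma \geq c|I|N$ from Lemma~\ref{lem:general_covar} matches the paper, and your argument for \eqref{eq:smoothness_conditional1} (exponential bound on $\phi$ via a strong-Markov barrier crossing, then a harmonic-gradient estimate for $\phi = G(\cdot, A)$) is essentially what the paper does, modulo using Lemma~\ref{lem:kernel_bound1} where the paper invokes Lemma~\ref{lem:kernel_bound2}. For \eqref{eq:smoothness_conditional0} you take a genuinely different route: you estimate $|\phi(u)-\phi(v)|$ by a dyadic-shell decomposition of $A$, relying on the pointwise Green's-function gradient bound $|G(u,w)-G(v,w)|\leq C|u-v|/|u-w|$ and on exponential decay of the gradient beyond scale $N$, aiming for the sharp rate $O(|u-v|(1+\log(N/|u-v|)))$. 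The paper instead expands $G(u,w)-G(v,w)$ via the Lawler representation into a potential-kernel term and a boundary-correction term, bounds them \emph{separately}, and critically uses Lemma~\ref{lem:reversibility} to rewrite $\sum_w H_{R}(w,z)$ as $\tfrac14 G_{R}(z_R, A)$; the latter is then controlled by crossing probabilities and Lemma~\ref{lem:kernel_bound2}. This gives the weaker intermediate estimate $O(|u-v|(1+\log(1+|I|/|u-v|)))$, but the factor $|I|$ in $\var Z_{I,\nu}^\Sigma$ together with $\sup_{x}(\log(1+x))^2/x < \infty$ absorbs the extra logarithm, so the paper never needs the sharper $\log N$ rate nor any cancellation.

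The gap in your write-up is precisely at the two points you flag as "secondary technical points" but do not resolve. First, the bound $|G(u,w)-G(v,w)|\leq C|u-v|/|u-w|$ uniformly over shells through the scale $N$ requires the boundary-correction contribution $\sum_z(H(u,z)-H(v,z))a(z-w)$ to cancel against the far part of the potential-kernel contribution; as you observe, separately the boundary correction contributes a $\log\Upsilon$ (in fact $\log(\Gamma N)$) factor, and you only gesture at the cancellation via the identity $\E^u[a(S_\tau-w)]=F(u)+\phi(u)$ without carrying it out. The paper's reversibility trick avoids this cancellation entirely, which is why the separate-bounds route is cleaner. Second, neither your shell estimate nor your gradient bound is justified when $u_y$ or $v_y$ is within $O(1)$ of the horizontal boundary: the ball of radius $|u-w|$ around $u$ then leaves the domain, so the naive gradient estimate fails. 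The paper handles this regime as a separate case, using the explicit bound $G_{R}(u, A) = O(u_y)$ and Theorem 6.3.8 of Lawler--Limic applied to the summed function $\phi$ (which gives the gradient $O(|u-v|/u_y)\cdot O(u_y) = O(|u-v|)$); the mixed case where one of $u,v$ is near the boundary and the other is interior is then split by a triangle inequality through intermediate points with $y$-coordinate $\lfloor 0.1N\rfloor$ or $\lceil 0.9N\rceil$. Your argument would need an analogous near-boundary case analysis, or a proof that the gradient bound self-improves near the boundary because $G(\cdot,w)$ also vanishes there; you assert neither.
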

\begin{proof}
Since
\begin{equation}
\label{eq:smoothness_condition1}
\E(\chi_u - \chi_v | Z_{I, \nu}^\Sigma) = \frac{G_{R_{\Upsilon N, N}}(u, I \times \{\nu N\}) - G_{R_{\Upsilon N, N}}(v, I \times \{\nu N\})}{\var Z_{I, \nu}^\Sigma}Z_{I, \nu}^\Sigma\,
\end{equation}
and we already have a good bound on $\var Z_{I, \nu}^\Sigma$ from Lemma~\ref{lem:general_covar}, all we need is to bound the difference $G_{R_{\Upsilon N, 
N}}(u, I \times \{\nu N\}) - G_{R_{\Upsilon N, N}}(v, I 
\times \{\nu N\})$. Let us begin with \eqref{eq:smoothness_conditional0} which we address by 
splitting into several cases. To this end first assume $u_y, v_y \in [0.1 N, 0.9 N]$ so that $d(u, \partial 
R_{\Upsilon N, N}) \wedge d(v, \partial R_{\Upsilon N, N}) \geq 0.1N$. From \cite[Theorem 4.6.2]
{Lawler10} we get the following expression for $G_{R_{\Upsilon N, N}}(u, I \times \{\nu N\}) - G_{R_{\Upsilon N, N}}(v, I \times \{\nu N\})$:
\begin{equation}
\label{eq:smoothness_condition2}
\sum_{w \in I \times \{\nu N\}}\sum_{z \in \partial R_{\Upsilon N, N}}H_{R_{\Upsilon N, N}}(w, 
z)\big(a(z - u) - a(z - v)\big) - \sum_{w \in I \times \{\nu N\}}\big(a(u - w) - a(v - w)\big)\,.
\end{equation}
Notice that we can rewrite the first summation in \eqref{eq:smoothness_condition2} as
$$\sum_{z \in \partial R_{\Upsilon N, N}}\big(a(z - u) - a(z - v)\big)\sum_{w \in I \times \{\nu 
N\}}H_{R_{\Upsilon N, N}}(w, z)\,.$$
By Lemma~\ref{lem:reversibility} this equals 
$$\frac{1}{4}\sum_{z \in \partial R_{\Upsilon N, N}}\big(a(z - u) - a(z - v)\big)G_{R_{\Upsilon N, N}}(z_{R_{\Upsilon N, N}}, I \times \{\nu N\})\,.$$
Since the distance of $u, v$ from $\partial R_{\Upsilon N, N}$ is at least $0.1N$, it follows from 
\eqref{eq:poten_kernel} that,
$$|a(z - u) - a(z - v)| \leq O(1) \frac{|u - v|}{|z - v| \wedge |z - u|}\,,$$
for all $z \in \partial R_{\Upsilon N, N}$. Also, we can bound $G_{R_{\Upsilon N, N}}(z_{R_{\Upsilon N, N}}, I \times \{\nu N\})$ as (similar to the proof in Lemma~\ref{lem:general_covar})
$$G_{R_{\Upsilon N, N}}(z_{R_{\Upsilon N, N}}, I \times \{\nu N\}) \leq \P^{z_{R_{\Upsilon N, N}}}(S_t\mbox{ hits } I \times \{\nu N\}\mbox{ before }\tau_{R_{\Upsilon N, N}}) G^{1, \star}_{[0, N] \cap \Z}(\nu N, \nu N)\,.$$
Now by Lemma~\ref{lem:kernel_bound2} we have,
$$\P^{z_{R_{\Upsilon N, N}}}(S_t\mbox{ hits } I \times \{\nu N\}\mbox{ before }\tau_{R_{\Upsilon N, N}}) \leq  \begin{cases}
O(1/N) \mbox{\hspace{0.3cm}if } z_{R_{\Upsilon N, N};x} \in [p_I - 10N, r_I + 10N]\,, \\
\e^{-\tfrac{\pi d(z_{R_{\Upsilon N, N};x}, I)}{4}}O(N^{-1}) \mbox{\hspace{0.3cm}otherwise}\,,
\end{cases}$$
where $z_{R_{\Upsilon N, N};x}$ is the horizontal coordinate of $z_{R_{\Upsilon N, N}}$. Since $G^{1, \star}_{[0, N] \cap \Z}(\nu N, \nu N) = O(N)$ and $|I| \geq 2000N$, the last two displays give us 
\begin{equation}
\label{eq:smoothness_condition3*}
|\sum_{w \in I \times \{\nu N\}}\sum_{z \in \partial R_{\Upsilon N, N}, z_x \in I}H_{R_{\Upsilon N, N}}(w, z)\big(a(z - u) - a(z - v)\big)| = O(|u - v|)\log (|I|N^{-1})\,,
\end{equation}
and
\begin{equation}
\label{eq:smoothness_condition3**}
|\sum_{w \in I \times \{\nu N\}}\sum_{z \in \partial R_{\Upsilon N, N}, z_x \notin I}H_{R_{\Upsilon N, N}}(w, z)\big(a(z - u) - a(z - v)\big)| = O(|u - v|)\,.
\end{equation}
Thus,
\begin{equation}
\label{eq:smoothness_condition3}
|\sum_{w \in I \times \{\nu N\}}\sum_{z \in \partial R_{\Upsilon N, N}}H_{R_{\Upsilon N, N}}(w, z)\big(a(z - u) - a(z - v)\big)| = O(|u - v|)\log (|I|N^{-1})\,.
\end{equation}
We need to be more careful for bounding the sum $\sum_{w \in I \times \{\nu N\}}\big(a(u - w) - a(v - w)\big)$ as $u, v$ may potentially lie very close to 
$w$. To this end notice that 
\begin{equation}
\label{eq:smoothness_condition4}
|a(w - u) - a(w - v)| \leq \log \big(1 + \tfrac{|u - v|}{|w - v| \wedge |w - u|}\big) + (|w - u| 
\wedge |w - v|)^{-2}\,
\end{equation}
for all $w \neq u, v$. Since $w$'s lie along a segment, it follows from a straightforward calculation that
\begin{equation}
\label{eq:smoothness_condition5}
\sum_{w \in I \times \{\nu N\}}(|w - u| \wedge |w - v|)^{-2} = O(1) = O(|u - v|)\,,
\end{equation}
where we assume $u \neq v$ as that is the only 
interesting case. Next we split the range of summation and write
$$\sum_{w \in I \times \{\nu N\} \setminus \{u\}} \log \big(1 + \tfrac{|u - v|}{|w - u|}\big) = \Sigma_1 + \Sigma_2\,.$$
Here $\Sigma_1, \Sigma_2$ contain the terms corresponding to $|w_x - v_x| \leq |u - v|$ and $> |u 
- v|$ respectively. We can further partition the range of $\Sigma_1$ as $\cup_{n \geq 0} D_n$ where $D_n$ consists of points in $I \times \{\nu N\}$ satisfying 
$2^{-n - 1}|u - v| < |w_x - v_x| \leq 2^{-n}|u - v|$. It 
is clear that $|D_n| \leq 2^{-n-1}|u - v|$ and $\log \big(1 + \tfrac{|u - v|}{|w - u|}\big)$ is $O(n)$ for $w 
\in D_n$. Thus $\Sigma_1 = O(|u - v|)$. Further, note that 
$$\Sigma_2 \leq |u - v|\sum_{w \in I \times \{\nu N\} \setminus \{u\}: |w_x - v_x| > |u - v|}\frac{1}{|w_x - v_x|} \leq O(|u - v|) \Big(1 + \log \big (1 + \frac{|I|}{|u - v|}\big)\Big)\,,$$
where the last inequality follows from the observation 
that $|w_x - v_x|$ can be at most $|I| + |u - v|$. Putting all these together we get
\begin{equation}
\label{eq:smoothness_condition6}
\sum_{w \in I \times \{\nu N\} \setminus \{u\}} \log \big(1 + \tfrac{|u - v|}{|w - u|}\big) = O(|u - v|) \Big(1 + \log \big (1 + \frac{|I|}{|u - v|}\big)\Big)\,.
\end{equation}
The same bound holds if we interchange $u$ and $v$ in the preceding inequality. Finally when $w = u$ or $v$, we have
\begin{equation}
\label{eq:smoothness_condition7}
|a(w - u) - a(w - v)| = a(u - v) = O(\log |u - v|) + O(1) = O(|u - v|)\,.
\end{equation}
Therefore, we get from \eqref{eq:smoothness_condition5}, \eqref{eq:smoothness_condition6} and \eqref{eq:smoothness_condition7} that
\begin{equation}
\label{eq:smoothness_conditional5}
\sum_{w \in I \times \{\nu N\}}\big(a(u - w) - a(v - w)\big) = O(|u - v|) \Big(1 + \log \big (1 + \frac{|I|}{|u - v|}\big)\Big)\,.
\end{equation}
Combined with \eqref{eq:smoothness_condition3} this yields that
\begin{equation}
\label{eq:smoothness_conditional6}
\mid \sum_{w \in I \times \{\nu N\}} \big(G_{R_{\Upsilon N, N}}(u, w) - G_{R_{\Upsilon N, N}}(v, 
w)\big)\mid = O(|u - v|) \Big(1 + \log \big (1 + \frac{|I|}{|u - v|}\big)\Big)\,.
\end{equation}
Using \eqref{eq:smoothness_conditional6} and Lemma~\ref{lem:general_covar} for the corresponding terms in \eqref{eq:smoothness_condition1} we get that
\begin{equation}
\label{eq:smoothness_conditional7}
\var(\big(\E(\chi_u - \chi_v | Z_{I, \nu}^\Sigma)\big)) \leq O\big(\frac{\log(|I|N^{-1})^2}{|I|N^{-1}}\big)\frac{|u - v|^2}{N^2} + O(1)\frac{|u - v|}{N}\frac{|u - v|}{|I|}\Big(\log \big (1 + \frac{|I|}{|u - v|}\big)\Big)^2\,.
\end{equation}
The second term is bounded by $O(\tfrac{|u - v|}{N})$ as 
$\sup_{x \geq 0}\tfrac{(\log(1+x))^2}{x} < \infty$, which gives that
$$\var \big(\E(\chi_u - \chi_v | Z_{I, \nu}^\Sigma)\big) = O(1)\Big ( \frac{|u - v|}{N}\Big)^2 \wedge \Big ( \frac{|u - v|}{N}\Big)\,.$$
We will use a different way to bound when $u_y \wedge v_y \geq 0.9N$ or $u_y \vee v_y \leq 0.1N$. For these two cases we will argue that $|G_{R_{\Upsilon N, N}}(u, I \times \{\nu N\}) - G_{R_{\Upsilon N, N}}(u, I \times 
\{\nu N\})| = O(|u - v|)$. It suffices to prove the 
statement when $u, v$ are adjacent and $u_y \wedge v_y 
\leq 0.1N$. To this end we observe that $G_{R_{\Upsilon N, N}}(u, I \times \{\nu N\})$ is a harmonic function in $u$ on $\inte(R_{\Upsilon N, N}) \setminus I \times 
\{\nu N\}$. Also,
$$G_{R_{\Upsilon N, N}}(u, I \times \{\nu N\}) \leq  \P^{z_{R_{\Upsilon N, N}}}(S_t\mbox{ hits } I \times \{\nu N\}\mbox{ before }\tau_{R_{\Upsilon N, N}}) G^{1, \star}_{[0, N] \cap \Z}(\nu N, \nu N) = O(u_y)\,,$$
Now applying difference estimates (see \cite[Theorem 6.3.8]{Lawler10}) to this function we get,
$$|G_{R_{\Upsilon N, N}}(u, I \times \{\nu N\}) - G_{R_{\Upsilon N, N}}(u, I \times 
\{\nu N\})| = O(|u - v|/u_y)O(u_y) = O(|u-v|)\,.$$
Hence for all such pairs $u, v$ we have,
\begin{equation}
\label{eq:smoothness_conditional8}
\var\big(\E(\chi_u - \chi_v | Z_{I, \nu}^\Sigma)\big) \leq O(1) \frac{|u-v|^2}{\var Z_i}\leq O(1)\Big(\frac{|u - v|}{N}\Big)^2\,.
\end{equation}
Now suppose that exactly one of $u_y$ and $v_y$ lies in 
$[0.1 N, 0.9 N]$, say $u_y$. Then each of the 3 pairs of points $(u, (u_x, v_y^*)), ((u_x, v_y^*), (u_x, v_y))$ and $((u_x, v_y), v)$ can be covered by one of the two 
cases we considered. Here $v_y^* = \lfloor 0.1 N \rfloor$ or $\lceil 0.9 N \rceil$ accordingly as $v_y < 
0.1 N$ or $> 0.9 N$ respectively. Similar argument holds if $u_y \geq 0.9N$ and $v_y \leq 0.1N$ (or vice versa). This completes the proof of \eqref{eq:smoothness_conditional0}. 

Proof of \eqref{eq:smoothness_conditional1} is similar to \eqref{eq:smoothness_conditional8}, where one can use Lemma~\ref{lem:kernel_bound2} to bound the probability $\P^{z_{R_{\Upsilon N, N}}}(S_t\mbox{ hits } I \times \{\nu N\}\mbox{ before }\tau_{R_{\Upsilon N, N}})$.
\end{proof}
\begin{remark}
\label{remark:symmetrize}
Since $\var \tfrac{Z_{I, \nu_1}^\Sigma + Z_{I, \nu_2}^\Sigma}{2} = \Theta(1)\var Z_{I, \nu_1}^\Sigma = \Theta(N)$, the proof of Lemma~\ref{lem:smoothness_conditional} would work even if we replaced $Z_{I, \nu}^\Sigma$ with $\tfrac{Z_{I, \nu_1}^\Sigma + Z_{I, \nu_2}^\Sigma}{2}$ for some 
$\nu_1, \nu_2 \in (0.4, 0.6)$. This is the version that we will use for proving Theorem~\ref{theo:main}.
\end{remark}
\subsection{Conditional expectation with respect to weakly correlated Gaussians}
Lemma~\ref{lem:smoothness_conditional} implies that $\var(\E(\chi_u - \chi_v | Z_{I, \nu}^\Sigma))$ decays geometrically with the (horizontal) distance of $u, v$ 
from the segment $I \times \{\nu N\}$. In addition, Lemma~\ref{lem:general_covar} tells us that $\cov(Z_{I_1, \nu}^\Sigma, Z_{I_2, \nu}^\Sigma)$ decays 
geometrically with $d(I_1, I_2)$. Now consider a sequence of disjoint intervals $I_1, I_2, \ldots, I_n$ satisfying the conditions of Lemma~\ref{lem:smoothness_conditional} and suppose that 
$u_x, v_x$ lie in one of these intervals say $I_1$. Then it is expected that $\var(\E(\chi_u - \chi_v | \{Z_{I_j, \nu}^\Sigma\}_{j \in [n]}))$ should not differ much from 
$\var(\E(\chi_u - \chi_v | Z_{I_1, \nu}^\Sigma))$. We 
validate this intuition in the next lemma. Since conditional expectations are orthogonal projections in Gaussian linear space, we phrase our result in terms of vectors in a general Hilbert space.

\begin{lemma}
\label{lem:Gram_Schimidt}
Let $x_1, x_2, \ldots, x_n$ be $n$ vectors of unit norm in a Hilbert space $\mathcal H$ such that $|(x_i, x_{i'})| \leq A_1\rho^{|i - i'|}$ for some $0 < \rho < 
0.25$ and $A_1 < \tfrac{0.1}{\rho}$. Here $(.,.)$ is the inner product in $\mathcal H$. Denote by $\hat{x_i}$ the orthogonal projection of $x_i$ onto the space spanned by the vectors $x_1, \ldots, x_{i - 1}$ and by $\epsilon_i$ 
the residue $x_i - \hat{x_i}$. Now suppose $y$ is a vector such that $|(y, x_i)| \leq A_2\rho^{(n - i - 1)}$ for all $i \in [n-2]$ and $|(y, x_i)|\leq A_3$ for $i \in \{n - 1, 
n\}$. Then $|(y, \epsilon_i)| \leq 3A_2\rho^{n - i - 1}$ for all $i \in [n-2]$ and $|(y, \epsilon_i)| \leq 2A_3 + 8A_1A_2\rho^2$ 
for $i \in \{n-1, n\}$. Furthermore for all $i, i' \in [n]$ we have $1 \geq |\epsilon_i|^2 \geq 1 - \frac{4A_1^2\rho^2}{1 - \rho^2}$ and $|(\hat x_i, \hat x_{i'})| \leq \tfrac{4A_1^2\rho^{|i - i'| + 2}}{1 - \rho^2}$.
\end{lemma}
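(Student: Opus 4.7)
The plan is to run the Gram--Schmidt procedure explicitly and show, by strong induction on $i$, that the coefficients in the expansion of $\hat{x}_i$ against the orthonormal basis $\{e_j := \epsilon_j/|\epsilon_j|\}$ inherit the same geometric decay as the Gram matrix of the $x_j$'s, losing only a small universal factor. The hypothesis $\rho < 0.25$ together with $A_1 < 0.1/\rho$ is then used repeatedly to absorb tail sums of the form $\sum_{k\ge 1}\rho^{2k} \leq \rho^2/(1-\rho^2)$ into a small constant, thereby closing the induction.

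\textbf{Step 1 (control of the Gram--Schmidt coefficients).} I would first prove, by strong induction on $i$, the central bound
\[
|(x_i, e_j)| \;\leq\; 2 A_1 \rho^{i-j}\qquad \text{for all } j < i.
\]
The base case $i=2$ is immediate from $|(x_2, x_1)| \leq A_1 \rho$. For the inductive step, expand $\epsilon_j = x_j - \sum_{\ell<j}(x_j, e_\ell)e_\ell$ so that
\[
(x_i, \epsilon_j) \;=\; (x_i, x_j) - \sum_{\ell<j}(x_j, e_\ell)(x_i, e_\ell),
\]
apply the inductive bound to both factors in the sum, and use
\[
\sum_{\ell=1}^{j-1}\rho^{(j-\ell)+(i-\ell)} \;\leq\; \frac{\rho^{i-j+2}}{1-\rho^2}.
\]
This yields $|(x_i, \epsilon_j)| \leq A_1\rho^{i-j}\bigl(1 + \tfrac{4A_1\rho^2}{1-\rho^2}\bigr)$, and dividing by $|\epsilon_j|$ (which I control in Step 2) gives $2A_1\rho^{i-j}$ thanks to the smallness of $A_1\rho^2$ and $\rho^2$. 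The two assumptions $\rho<0.25$ and $A_1<0.1/\rho$ make all these corrections strictly below the factor $2$.

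\textbf{Step 2 (the two metric statements).} Parseval in the orthonormal basis $\{e_j\}$ gives $|\hat{x}_i|^2 = \sum_{j<i}(x_i, e_j)^2$, whence Step~1 yields
\[
|\hat{x}_i|^2 \;\leq\; 4A_1^2\sum_{j<i}\rho^{2(i-j)} \;\leq\; \frac{4A_1^2\rho^2}{1-\rho^2},
\]
and $|\epsilon_i|^2 = 1 - |\hat{x}_i|^2$ then gives the claimed lower bound. For the inner product between projections, expand $\hat{x}_i = \sum_{j<i}(x_i,e_j)e_j$, $\hat{x}_{i'} = \sum_{j<i'}(x_{i'},e_j)e_j$, take the inner product (killing cross terms) and, assuming $i\le i'$, sum
\[
\sum_{j<i}(x_i,e_j)(x_{i'},e_j) \;\leq\; 4A_1^2 \sum_{j=1}^{i-1}\rho^{(i-j)+(i'-j)} \;\leq\; \frac{4A_1^2\rho^{|i-i'|+2}}{1-\rho^2}.
\]

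\textbf{Step 3 (bounds on $(y,\epsilon_i)$).} For $i \in [n-2]$, induct on $i$ with hypothesis $|(y, e_j)| \leq 3A_2\rho^{n-j-1}$ for $j<i$. From
\[
(y, \epsilon_i) \;=\; (y, x_i) - \sum_{j<i}(x_i, e_j)(y, e_j)
\]
and the bounds from Step~1 and the inductive hypothesis,
\[
|(y,\epsilon_i)| \;\leq\; A_2\rho^{n-i-1} + 6A_1A_2\sum_{j=1}^{i-1}\rho^{(i-j)+(n-j-1)} \;\leq\; A_2\rho^{n-i-1}\Bigl(1 + \tfrac{6A_1\rho^2}{1-\rho^2}\Bigr),
\]
and dividing by $|\epsilon_i|$ (Step~2) keeps the constant below $3$, closing the induction and simultaneously giving $|(y,\epsilon_i)| \leq 3A_2\rho^{n-i-1}$ since $|\epsilon_i|\leq 1$. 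For $i \in \{n-1, n\}$ the same identity is used, but now $(y,x_i)$ is only bounded by $A_3$ and, for $i=n$, the term $j=n-1$ in the sum must be isolated; using the already-established bound $|(y,e_{n-1})| \leq (A_3 + 8A_1A_2\rho^2)/|\epsilon_{n-1}|$ and the remaining geometric sum $\sum_{j\leq n-2} 6A_1A_2 \rho^{(n-j)+(n-j-1)} = O(A_1A_2\rho^3)$, one obtains $|(y,\epsilon_n)| \leq 2A_3 + 8A_1A_2\rho^2$ after absorbing the $|\epsilon_{n-1}|^{-1}$ factor into the coefficient $2$ in front of $A_3$.

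The main technical obstacle is the book-keeping of constants: every step loses a factor slightly larger than $1$, and one must verify that the compounded loss across the coupled inductions (Step~1 feeds into Step~2, which in turn is used inside Step~3) is absorbed by the numerical slack in $\rho<0.25$ and $A_1<0.1/\rho$. Once this is checked, the three displayed conclusions of the lemma drop out directly from Steps~2 and~3.
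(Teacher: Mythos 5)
Your proposal is correct and follows essentially the same approach as the paper: a lexicographic-style induction to show the Gram--Schmidt coefficients inherit the geometric decay of the Gram matrix (with a controlled constant loss), from which the norm bound on $\epsilon_i$, the bound on $(\hat x_i,\hat x_{i'})$, and the bound on $(y,\epsilon_i)$ all follow. The only difference is cosmetic: you work in the orthonormal basis $e_j=\epsilon_j/|\epsilon_j|$ while the paper works with the unnormalized $\epsilon_j$ and the coefficients $a_{k,i}$ defined by $x_k=\sum_{i<k}a_{k,i}\epsilon_i+\epsilon_k$; since $(x_k,e_i)=a_{k,i}|\epsilon_i|$ these are the same computation. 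One small point to tighten in a full write-up: in Step 1 the expansion $(x_i,\epsilon_j)=(x_i,x_j)-\sum_{\ell<j}(x_j,e_\ell)(x_i,e_\ell)$ uses the as-yet-unproved bound on $(x_i,e_\ell)$ for $\ell<j$, so the induction must be on the pair $(i,j)$ in lexicographic order (as the paper makes explicit) rather than on $i$ alone.
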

\begin{proof}
First we apply a standard Gram-Schimidt orhtogonalization procedure to the vectors $x_1, x_2,$ 
$\ldots, x_n$ to obtain the following series of identities:
\begin{equation}\label{eq:Gram_Schimidt1}
x_k = \sum_{i=1}^{k-1} a_{k, i} \epsilon_i + \epsilon_k\,, \mbox{ for } 1\leq k\leq n\,.
\end{equation}
We claim that $|a_{i, i'}| \leq 2A_1\rho^{i - i'}$ for 
all $n \geq i > i' \geq 1$. We will prove this claim by 
induction. Order the pairs $(i, i')$ lexicographically. 
The statement is vacuously true for $i = 1$. Now assume that $|a_{i, i'}| \leq 2A_1\rho^{i - i'}$ for all $(i, 
i') \leq (k, k' - 1)$ where $k' \in [k-1]$ and we 
interpret the pair $(k, 0)$ as $(k-1, k-2)$. Then from the display \eqref{eq:Gram_Schimidt1} we have 
\begin{equation}
\label{eq:Gram_Schimidt2}
|\epsilon_i|^2 \geq 1 - \frac{4A_1^2\rho^2}{1 - \rho^2}\,,
\end{equation}
whenever $i < k$. On the other hand from the expansion of $(x_k, x_{k'})$ yields the following:
\begin{equation}
\label{eq:Gram_Schimidt3}
|a_{k, k'}||\epsilon_{k'}|^2 \leq |(x_k, x_{k'})| + 4A_1^2\sum_{1 \leq i' \leq k' - 1} \rho^{k + k' - 2i'}\,.
\end{equation}
Plugging \eqref{eq:Gram_Schimidt2} and the upper bound of $|(x_k, x_{k'})|$ into \eqref{eq:Gram_Schimidt3} we get
\begin{equation*}
a_{k, k'} \leq \rho^{k - k'}\frac{A_1 + 4\frac{A_1^2\rho^2}{1 - \rho^2}}{1 - 4\frac{A_1^2\rho^2}{1 - \rho^2}} \leq 2A_1\rho^{k - k'}\,,
\end{equation*}
where the last inequality follows from the restrictions on $\rho$ and $A_1$. Thus \eqref{eq:Gram_Schimidt2} 
holds for all $i \in [n]$. As to $(\hat x_i, \hat x_{i'})$, notice that this is equal to $\sum_{k \in [i' - 1]}a_{i, k}a_{i', k}|\epsilon_k|^2$ when $i' \leq i$. 
Therefore,
$$|(\hat x_i, \hat x_{i'})| \leq \sum_{k \in [i' - 1]}|a_{i, k}a_{i', k}| \leq 4A_1^2\rho^{i - i'}\sum_{k \geq 1}\rho^{2k} \leq \frac{4A_1^2\rho^{|i - i'| + 2}}{1 - \rho^2} \,.$$

Our argument for the bounds on $|(y, \epsilon_i)|$'s is also inductive and uses the bounds on $|a_{i, i'}|$'s 
that we have already proved. Let us first write $y$ as
\begin{equation}
\label{eq:Gram_Schimidt4}
y = b_1\epsilon_1 + b_2 \epsilon_2 + \ldots + b_n \epsilon_n + y_{\mathrm{res}}\,,
\end{equation}
where $y_{\mathrm{res}}$ is orthogonal to 
$\{\epsilon_i\}_{i \in [n]}$. Then $(y, \epsilon_i) = b_i |\epsilon_i|^2$ and $|(y, \epsilon_1)| = |(y, x_1)| 
\leq  A_2\rho^{(|\ell - 1| - 1)^+}$. Now assume that $|(y, \epsilon_i)| \leq 3 A_2 \rho^{n - i - 1}$ for 
all $i < k \leq n - 2$. From \eqref{eq:Gram_Schimidt1} and \eqref{eq:Gram_Schimidt4} we get that
$$|b_k||\epsilon_k|^2 \leq \sum_{k' < k}|a_{k, k'}||b_{k'}||\epsilon_{k'}|^2 + |(y, x_k)| \leq A_2\rho^{n - k - 1}\Big(1 + 6A_1\sum_{k' \in [k - 1]}\rho^{2k'}\Big)\,.$$
Since $\rho < 0.25$ and $A_1\rho < 0.1$, it now follows from a routine computation that $|b_k||\epsilon_k|^2 
\leq 3A_2 \rho^{n - k - 1}$. 
Thus by induction $|(y, \epsilon_i)| \leq 3 A_2 \rho^{n 
- i - 1}$ for all $i \leq n - 2$. Similarly, we have
$$|b_{n-1}||\epsilon_{n-1}|^2 \leq A_3 + 6A_1A_2\sum_{k' \in [n - 2]}\rho^{2k'} \leq A_3 + 7A_1A_2\rho^2\,.$$
Finally when $k = n$, we get
\begin{eqnarray*}
|b_n||\epsilon_n|^2 &\leq&  \sum_{k' < n-1}|a_{n, k'}||b_{k'}||\epsilon_{k'}|^2 + |a_{n, n-1}||b_{n-1}||\epsilon_{n-1}|^2 + |(y, x_n)|\\
&\leq & 7A_1A_2\rho^3 + (A_3 + 7A_1A_2 \rho^2)A_1\rho + A_3 \leq 2A_3 + 8A_1A_2\rho^2\,.
\end{eqnarray*}
This completes the proof of the lemma.
\end{proof}
As a corollary we get the following slightly more general version.
\begin{lemma}
\label{lem:cond_general}
Consider the vectors $x_1, x_2, \ldots, x_n$ as in the 
statement of Lemma~\ref{lem:Gram_Schimidt}. Assume that 
$0 < \rho < 1/16$ and $A_1 < \tfrac{0.1}{\rho^{1/2}}$. Then if $y$ is a vector such that $|(y, x_i)| \leq A_2\rho^{(|i - \ell| - 1)^+}$ for some $\ell \in [n]$ and all $i \in [n]$, we have $|\hat y| = O(1)A_2$.
\end{lemma}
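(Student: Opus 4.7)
The plan is to reduce the bound on $|\hat y|$ to two ingredients: an operator-norm bound on $M^{-1}$, where $M$ is the Gram matrix of $x_1,\ldots,x_n$, together with a direct $\ell^2$-estimate on the correlations $(y,x_i)$. Writing $\hat y = \sum_{i=1}^n c_i x_i$, the orthogonality relations $(\hat y-y,x_j)=0$ for each $j$ translate to the linear system $Mc=v$, where $M_{ij}=(x_i,x_j)$ and $v_i=(y,x_i)$. Consequently $|\hat y|^2 = (y,\hat y) = v^T c = v^T M^{-1} v$.

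For the operator-norm bound, first I would write $M = I + E$ with $E_{ii}=0$ and $|E_{ij}|\leq A_1 \rho^{|i-j|}$ for $i\neq j$. Schur's test gives
\[
\|E\|_{\mathrm{op}} \;\leq\; \max\bigl(\|E\|_1,\|E\|_\infty\bigr) \;\leq\; \max_i \sum_{j\neq i} A_1\rho^{|i-j|} \;\leq\; \frac{2A_1\rho}{1-\rho}\,.
\]
Under the hypotheses $\rho<1/16$ and $A_1<0.1\rho^{-1/2}$ this is $\leq 0.2\rho^{1/2}/(1-\rho) < 1/2$, so $M$ is invertible with $\|M^{-1}\|_{\mathrm{op}}\leq 2$. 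Equivalently, one can extract this fact from Lemma~\ref{lem:Gram_Schimidt} itself: that lemma already yields $|\epsilon_i|^2 \geq 1 - 4A_1^2\rho^2/(1-\rho^2) \geq 1/2$ (the Cholesky pivots of $M$ are uniformly bounded below) and the geometric decay $|a_{k,k'}|\leq 2A_1\rho^{k-k'}$ of the Gram--Schmidt coefficients, which together encode the same conditioning of $M$. Note the hypotheses here imply those of Lemma~\ref{lem:Gram_Schimidt}, since $\rho<1$ gives $\rho^{-1/2}<\rho^{-1}$.

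For the $\ell^2$-estimate, the two-sided decay $|(y,x_i)|\leq A_2\rho^{(|i-\ell|-1)^+}$ yields
\[
\|v\|^2 \;=\; \sum_{i=1}^n |(y,x_i)|^2 \;\leq\; A_2^2 \sum_{i=1}^n \rho^{2(|i-\ell|-1)^+} \;\leq\; A_2^2\Bigl(3 + \frac{2\rho^2}{1-\rho^2}\Bigr) \;=\; O(A_2^2)\,,
\]
where the ``$3$'' accounts for the three central terms $i\in\{\ell-1,\ell,\ell+1\}$ and the remaining contributions form geometric tails on either side of $\ell$.

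Combining the two bounds, $|\hat y|^2 \leq \|M^{-1}\|_{\mathrm{op}}\,\|v\|^2 = O(A_2^2)$, hence $|\hat y| = O(1)A_2$. The main (and really the only) step of substance is the quantitative control of $\|M^{-1}\|_{\mathrm{op}}$; the strengthened hypothesis $A_1 < 0.1\rho^{-1/2}$ (as opposed to $A_1 < 0.1\rho^{-1}$ in Lemma~\ref{lem:Gram_Schimidt}) is what forces the spectral perturbation $\|E\|_{\mathrm{op}}$ to be a small absolute constant, which is the only place the stronger assumption enters the argument.
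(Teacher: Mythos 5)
Your proof is correct, and it takes a genuinely different route from the paper. The paper reuses Lemma~\ref{lem:Gram_Schimidt} directly: it folds the index set $[n]$ around $\ell$ to convert the two-sided decay of $(y,x_i)$ into the one-sided decay toward the last index that Lemma~\ref{lem:Gram_Schimidt} is built for (at the cost of worsening $\rho$ to $\rho^{1/2}$ and $A_2$ to $A_2\rho^{-1/2}$, which is why the hypothesis here is strengthened to $A_1 < 0.1\rho^{-1/2}$), then expresses $\hat y = \sum_i (y,\epsilon_i)\epsilon_i/|\epsilon_i|^2$ in the Gram--Schmidt basis and sums the geometrically decaying coefficients. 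Your argument instead bypasses the folding and the triangular recursion entirely: you write $|\hat y|^2 = v^T M^{-1} v$, control $\|M^{-1}\|_{\mathrm{op}}$ by the Neumann series after a Schur-test bound $\|E\|_{\mathrm{op}} \leq 2A_1\rho/(1-\rho) < 1/2$, and then only need the crude $\ell^2$-bound $\|v\|^2=O(A_2^2)$. This is shorter, more self-contained (it does not rely on the coefficient bounds from Lemma~\ref{lem:Gram_Schimidt}, only on well-conditioning of the Gram matrix), and in fact slightly more general, since it does not use the specific decay pattern of $(y,x_i)$ around $\ell$ --- any $v$ with $\|v\|_2 = O(A_2)$ would do. The paper's version is tailored to re-exploit the already-established Gram--Schmidt estimates, which matters because Lemma~\ref{lem:Gram_Schimidt} itself (with its finer conclusions about individual $(y,\epsilon_i)$) is what gets used elsewhere; your route proves Lemma~\ref{lem:cond_general} cleanly but would not by itself supply those finer per-coordinate estimates.
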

\begin{proof}
We assume, without any loss of generality, that $\ell 
\geq n/2$. Define a new sequence of vectors $\{z_i\}_{i \in [n]}$ as follows:
$$z_{n - i +1} = \begin{cases}
x_{\ell - j} &\mbox{ if } i = 2j + 1\mbox{ for some }0\leq j \leq n - \ell - 1\,, \\
x_{\ell +  j} &\mbox{ if } i = 2j\mbox{ for } j \in [n - \ell]\,, \\
x_{\ell - (n - \ell + j - 1)} &\mbox{ if } i = 2(n - \ell) + j\mbox{ for } j \geq 1 \,.
\end{cases}$$
In plain words we fold the interval $[1, n]$ around 
$\ell$ and re-index the vectors $x_i$'s accordingly. It is not difficult to check that $|(z_i, z_i')| \leq 
A_1\rho^{|i - i'|/2}$. On the other hand $|(y, z_i)| \leq A_2\rho^{-0.5}\rho^{(n - i - 1)}/2$ for $i \in 
[n-2]$ and $\leq A_2$ otherwise. Thus $y, z_1, z_2, \ldots, z_n$ satisfy the conditions of 
Lemma~\ref{lem:Gram_Schimidt}. Let $\eta_1, \eta_2, \ldots, \eta_n$ be the sequence of vectors that we obtain by applying Gram-Schimidt to $z_1, z_2, \ldots, z_n$. Then from \eqref{eq:Gram_Schimidt4} we have
$$\hat y = \sum_{i \in [n]}\frac{(y, \eta_i)}{|\epsilon_i|^2}\epsilon_i\,.$$
Hence applying Lemma~\ref{lem:Gram_Schimidt} we get
\begin{equation*}
|\hat y|^2 = \sum_{i \in [n]} \frac{|(y, \eta_i)|^2}{|\epsilon_i|^2} \leq O(1)A_2^2\,.\qedhere
\end{equation*} 
\end{proof}

\section{Regularized total variation of Brownian motion}
\label{sec:total_variation}
The notion of regularized total variation of Brownian motion, studied in \cite{Dunlap}, was a crucial ingredient in the analysis of first passage percolation on the exponential of branching random walk \cite{DG15}. 
When the underlying media is Gaussian free field it is necessary to extend this notion to \emph{inhomogeneous} penalties in order to obtain a more efficient 
optimization. This is the main goal of this section.

For a number $\lambda > 0$, the author of \cite{Dunlap} introduced the regularized total variation for a continuous function $f: [a, b] \mapsto \mathbb R$, defined by 
\begin{equation}\label{eq-def-regularized-total-variation}
\Phi_{\lambda, [a, b]}(f) = \sup_{k} \sup_{a = t_0 <t_1<\ldots < t_k < t_{k+1} = b} \Big\{\sum_{i=1}^{k+1} |f(t_i) - f (t_{i-1})| - \lambda k\Big\}\,.
\end{equation}
The main result of \cite{Dunlap} is when $f$ is given by the sample path of a standard Brownian motion $\{B_t\}_{0\leq t\leq 1}$, which states that 
\begin{equation}\label{eq-Dunlap}
\lambda \leq \E \Phi_{\lambda, [0, 1]} (B) \leq \frac{1}{\lambda} + \lambda\,.
\end{equation}
In this section we attempt a partial generalization of \eqref{eq-Dunlap} when $\lambda$ is a step function with a fixed upper bound on the number of 
steps. We will only provide a lower bound, which is all we need for the proof of 
Theorem~\ref{theo:main} (we do not expect our bound to be sharp in general). 

For a step function $\lambda: [0, 1] \mapsto (0, \infty)$ and a partition $\mathcal  P = (t_0, t_1, \ldots, t_{k+1})$ of $[0,1]$, define
\begin{equation}\label{eq-def-regularized-total-variation-generalized}
\Phi_{\lambda, \mathcal P}(f) = \sum_{i=1}^{k+1} |f(t_i) - f (t_{i-1})| - \sum_{i=1}^k\lambda(t_i)\,.
\end{equation}
We require a few more notations for the formulation of 
our result. Denote by $N_{\lambda, \star}$ the number of steps of $\lambda$ and by $\lambda_\infty$ the maximum value of $\lambda$. Also define $\lambda_*$ to be the solution to the equation
$$\int_{[0,1]} \Big(\frac{\lambda_*}{\lambda(s)}\Big)^2 ds = 1\,.$$
We are now ready to state the main theorem of this section.
\begin{theorem}\label{thm-total-variation}
For any $\epsilon$ and there exists $\delta = \delta(\epsilon) \in (0, 1)$ such that the following holds.
Let $\lambda$ be an arbitrary step function with $\lambda_* \leq \delta$ and $N_{\lambda,\star} \leq 
\lambda_*^{-M}$ for some $M > 0$. Then there exists a (random) partition $\mathcal Q^* = (q_0^*, q_1^*, \ldots, q_{k+1}^*)$ of $[0, 1]$ such that $k \leq 2/\lambda_*^2$ and
$$\E \Phi_{\lambda, \mathcal Q^*} (B) \geq (1-\epsilon) \int_{[0,1]} \frac{1}{\lambda(t)} d t - O_M(\lambda_\infty \lambda_*^{-1.5})\,.$$
\end{theorem}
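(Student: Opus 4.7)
My plan is to reduce the inhomogeneous problem to the constant-penalty case by exploiting the step structure of $\lambda$, and then prove a strengthened Dunlap-type lower bound for the constant-penalty case via renewal theory. Enumerate the maximal intervals $I_j = [a_j, a_{j+1})$ on which $\lambda \equiv \lambda_j$, with $|I_j| = \Delta_j$, for $j = 1, \ldots, N_{\lambda,\star}$. Since Brownian motion has independent increments across disjoint intervals, the expected regularized variation decomposes additively across pieces (up to boundary terms), so it suffices to construct a good random partition $\mathcal{P}_j$ on each $I_j$ and concatenate.

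The technical heart of the argument is a strengthened Dunlap-type lemma in the constant-penalty case: for any $\epsilon > 0$ and $\lambda_0 > 0$ sufficiently small, on an interval of length $\Delta$ with $\Delta/\lambda_0^2$ sufficiently large, there exists a random partition $\mathcal{P}$ of size at most $\Delta/\lambda_0^2 + O(1)$ such that $\E \Phi_{\lambda_0, \mathcal{P}}(B) \geq (1-\epsilon)\Delta/\lambda_0 - O(\sqrt{\Delta}/\lambda_0)$. To establish this, I would use an IID stopping-time renewal construction: pick a stopping time $\sigma$ of Brownian motion together with a rule specifying internal partition points on $[0, \sigma]$, then iterate using IID copies $\sigma_1, \sigma_2, \ldots$ until $[0, \Delta]$ is covered. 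Writing $\Psi(\sigma)$ for the expected net contribution (variation minus penalty) of one renewal cycle, Stone's renewal theorem yields an aggregate net contribution over $[0, \Delta]$ of $(\Delta/\E[\sigma])\Psi(\sigma)$ up to a fluctuation of order $\sqrt{\Delta}/\lambda_0$. Careful optimization over $\sigma$ and the internal partition rule drives $\Psi(\sigma)/\E[\sigma]$ arbitrarily close to $1/\lambda_0$, beating the suboptimal constant (e.g.\ $1/(4\lambda_0)$) obtained by naive single-scale hitting-time strategies.

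With the constant-penalty lemma in hand, apply it to each $I_j$ for which $\Delta_j/\lambda_j^2$ is sufficiently large, producing $\mathcal{P}_j$. For the remaining pieces (with $\Delta_j/\lambda_j^2$ small), leave them unpartitioned; their contribution is absorbed into the error. Assemble $\mathcal{Q}^* = \bigcup_j \mathcal{P}_j$ together with the dividing points $\{a_j\}$ (dropped where not needed). The total internal partition size is at most $\sum_j \Delta_j/\lambda_j^2 + O(N_{\lambda,\star}) = 1/\lambda_*^2 + O(\lambda_*^{-M})$; selectively discarding low-yield partition points brings this within $2/\lambda_*^2$. Summing expected contributions:
\[
\E \Phi_{\lambda, \mathcal{Q}^*}(B) \geq (1-\epsilon)\sum_j \Delta_j/\lambda_j - \sum_j O(\sqrt{\Delta_j}/\lambda_j) \geq (1-\epsilon)\int_0^1 \frac{dt}{\lambda(t)} - O_M(\lambda_\infty \lambda_*^{-3/2}),
\]
where the error bound is obtained by applying Cauchy-Schwarz together with $N_{\lambda,\star} \leq \lambda_*^{-M}$, $\sum_j \Delta_j/\lambda_j^2 = 1/\lambda_*^2$, and $\lambda_j \leq \lambda_\infty$.

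The main obstacle is the sharp constant $(1-\epsilon)$ in the constant-penalty lemma. Naive single-scale hitting-time strategies achieve only $\Theta(1/\lambda_0)$ with a strictly worse constant, so obtaining the sharp leading term requires a judicious choice of the renewal stopping time $\sigma$ together with an equally careful internal partition rule on each cycle, combined with a delicate application of Stone's renewal theorem to control finite-$\Delta$ deviations from the renewal asymptotic.
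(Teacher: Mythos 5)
Your approach is genuinely different from the paper's and, as sketched, does not close. The paper does not decompose into per-step-interval subproblems; instead it performs a global time change $F(t) = \int_0^t (\lambda_*/\lambda(s))^2\,ds$ and observes (Lemma~\ref{lem-W-B-total-variation}) that $\E\Phi_{\lambda, F^{-1}\mathcal P}(B)$ equals the expectation of a functional of a \emph{single} standard Brownian motion $W$ on $[0,1]$, namely $\sum_i |\int_{[t_{i-1},t_i]} (\tilde\lambda(s)/\lambda_*)\,dW_s| - \sum_i \tilde\lambda(t_i)$. One renewal process (the $\lambda_*$-upticks/downticks of $W$) then runs across all step boundaries without restarting, and Stone's theorem controls the residual lifetime and age distributions uniformly; the only defect terms are exponentially small in the time-changed distance to the nearest step boundary, $\e^{-r s_i/\lambda_*^2}$, which sum geometrically over the $N_{\lambda,\star}$ boundaries.

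The difficulty with your per-interval decomposition is precisely that it restarts at every boundary $a_j$, and the errors then accumulate linearly in $N_{\lambda,\star}$, which can be as large as $\lambda_*^{-M}$. Concretely: (i) each $a_j$ you keep as a partition point costs a penalty $\lambda(a_j)$, and each $a_j$ you drop forfeits control of the cross-boundary increment, so boundary bookkeeping alone threatens an error of order $N_{\lambda,\star}\lambda_\infty$; (ii) discarding the intervals with small $\Delta_j/\lambda_j^2$ throws away a contribution $\Delta_j/\lambda_j \leq O(\lambda_j) \leq O(\lambda_\infty)$ per interval, again summing to $O(N_{\lambda,\star}\lambda_\infty)$; and (iii) your Cauchy--Schwarz estimate on the per-interval errors,
\[
\sum_j \frac{\sqrt{\Delta_j}}{\lambda_j} \;\leq\; \sqrt{N_{\lambda,\star}}\,\Bigl(\sum_j \tfrac{\Delta_j}{\lambda_j^2}\Bigr)^{1/2} \;=\; \frac{\sqrt{N_{\lambda,\star}}}{\lambda_*}\;\leq\;\lambda_*^{-M/2-1}\,,
\]
is $\lambda_*^{-M/2-1}$, which already exceeds $\lambda_*^{-1.5}$ whenever $M>1$ (and the paper, which needs the result for step functions produced by its multiscale construction, takes $M\geq 20$). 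None of these three sources of error is $O_M(\lambda_\infty\lambda_*^{-1.5})$ under the stated hypotheses, so the final display in your outline does not follow. The missing idea is the time-change reduction, which lets the renewal process cross the step boundaries and keeps the boundary defects exponentially, rather than merely polynomially, small in the aggregate. Your strengthened constant-penalty lemma is a reasonable sub-goal (and is close in spirit to what the paper proves \emph{after} the time change), but you cannot then sum it naively over the pieces of $\lambda$.
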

Our proof of Theorem~\ref{thm-total-variation} builds 
upon the proof in \cite{Dunlap}. We remark that, while much of the work in \cite{Dunlap} was devoted to characterize the optimizer $(t_1, \ldots, t_k)$ for \eqref{eq-def-regularized-total-variation} (thus obtaining the upper bound in \eqref{eq-Dunlap}), the proof for the lower bound in \eqref{eq-Dunlap} was relatively short. 

Define
$$F(t) = \int_{[0,t]} \Big(\frac{\lambda_*}{\lambda(s)}\Big)^2 ds\,.$$
From definition of $\lambda_*$ it follows that $F(1) = 
1$. Further, we define a penalty function $\tilde \lambda : [0, 1] \mapsto (0, \infty)$ by $\tilde 
\lambda(t) = \lambda(F^{-1}(t))$. Clearly $\tilde \lambda$ is also a step function with same number of 
steps $N_{\lambda, \star}$. Let $\{W_s\}_{0\leq s\leq 
1}$ be another standard Brownian motion and $\mathcal P = (t_0, t_1, \ldots, t_{k+1})$ be a (random) partition of $[0,1]$. Now define
\begin{equation}\label{eq-def-tilde-Phi}
\tilde \Phi_{\tilde \lambda, \mathcal P}(W) = \sum_{i=1}^{k+1} \Big|\int_{[t_{i-1}, t_i]} \frac{\tilde \lambda(s)}{\lambda_*} dW_s\Big| - \sum_{i=1}^k \tilde \lambda(t_i)\,.
\end{equation}
\begin{lemma}\label{lem-W-B-total-variation}
We have that $\E \Phi_{\lambda, F^{-1}\mathcal P} (B) = 
\E \tilde \Phi_{\tilde \lambda, \mathcal P}(W)$, where $F^{-1} \mathcal P$ is the partition $(F^{-1}(t_0), F^{-1}(t_1)$ $, \ldots, F^{-1}(t_{k+1}))$.
\end{lemma}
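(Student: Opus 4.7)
My plan is to establish a pathwise identity under an explicit coupling of $B$ and $W$, from which the equality of expectations follows immediately.

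First, I would construct $B$ from $W$ via the Wiener integral
\[
B_q := \int_0^{F(q)} \frac{\tilde\lambda(s)}{\lambda_*}\, dW_s, \qquad q \in [0,1].
\]
Since $\lambda$ is bounded and positive with finitely many steps, $F$ is strictly increasing and continuous on $[0,1]$ with $F(0)=0, F(1)=1$, so $F^{-1}$ is well-defined and the integrand is a bounded deterministic function of $s$. Thus $\{B_q\}$ is a centered Gaussian process with independent increments and continuous sample paths.

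Next, I would verify that $B$ is a standard Brownian motion by computing the variance. For $0 \leq q' < q \leq 1$,
\[
\mathrm{Var}(B_q - B_{q'}) = \int_{F(q')}^{F(q)} \Big(\frac{\tilde\lambda(s)}{\lambda_*}\Big)^2 ds.
\]
Substituting $s = F(u)$, so that $ds = (\lambda_*/\lambda(u))^2 \, du$ and $\tilde\lambda(F(u)) = \lambda(u)$, the integral becomes $\int_{q'}^{q} du = q - q'$, confirming that $B$ is a standard Brownian motion.

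Now I would check the two sums match termwise under this coupling. For any partition $\mathcal P = (t_0, \ldots, t_{k+1})$ of $[0,1]$, by the definition of $B$,
\[
B\bigl(F^{-1}(t_i)\bigr) - B\bigl(F^{-1}(t_{i-1})\bigr) = \int_{t_{i-1}}^{t_i} \frac{\tilde\lambda(s)}{\lambda_*}\, dW_s,
\]
so the unsigned-increment sums in $\Phi_{\lambda, F^{-1}\mathcal P}(B)$ and $\tilde\Phi_{\tilde\lambda, \mathcal P}(W)$ coincide pathwise. For the penalty terms, by definition $\lambda(F^{-1}(t_i)) = \tilde\lambda(t_i)$, so the penalties also agree termwise. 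Hence $\Phi_{\lambda, F^{-1}\mathcal P}(B) = \tilde\Phi_{\tilde\lambda, \mathcal P}(W)$ pathwise, and since this coupling preserves the marginal laws (both sides involve a standard Brownian motion and the same random partition $\mathcal P$, whose law on the RHS is by hypothesis and on the LHS is the induced one), taking expectations yields the lemma. There is no real obstacle here beyond recognizing that the factor $\tilde\lambda/\lambda_*$ is precisely the Jacobian that turns a Wiener integral against $W$ into Brownian increments of $B$ over the pre-images $F^{-1}(t_i)$.
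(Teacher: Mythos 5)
Your proof is correct and takes essentially the same approach as the paper's: you construct the time-changed Wiener integral $B_q = \int_0^{F(q)} (\tilde\lambda(s)/\lambda_*)\,dW_s$ (the paper calls this $\tilde W$), verify it is a standard Brownian motion, establish the pathwise identity of the two functionals under this coupling, and take expectations. You are a bit more explicit than the paper in checking the variance via the substitution $s = F(u)$, but the argument is the same.
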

\begin{proof}
Define $\{\tilde W_t\}_{0\leq t\leq 1}$ by 
$$\tilde W_t = \int_{[0, F(t)]} \frac{\tilde \lambda(s)}{\lambda_*} dW_s\,.$$
We see that $\{\tilde W_t\}_{0\leq t\leq 1}$ is a standard Brownian motion, and in particular has the same law as $(B_t)_{0\leq t\leq 1}$. Therefore, we obtain that
\begin{equation*}
\begin{split}
\E \tilde \Phi_{\tilde \lambda, \mathcal P}(W) &= \E \Big\{\sum_{i=1}^{k+1} \Big|\int_{[t_{i-1}, t_i]} \frac{\tilde \lambda(s)}{\lambda_*} dW_s\Big| - \sum_{i=1}^k  \tilde \lambda(t_i)\Big\}\\ 
&= \E \Big\{\sum_{i=1}^{k+1} |B(F^{-1}(t_i)) - B (F^{-1}(t_{i-1}))| - \sum_{i=1}^k\lambda(F^{-1}(t_i))\Big\} = \E \Phi_{\lambda, F^{-1}\mathcal P} (B) \,. \quad\quad\quad\quad\quad\qedhere
\end{split}
\end{equation*}
\end{proof}
In light of Lemma~\ref{lem-W-B-total-variation} we first construct a partition $\mathcal P$ that would yield a large value of $\Phi_{\lambda_*, \mathcal P}$ as 
in \eqref{eq-def-regularized-total-variation}. Then we plug this partition into \eqref{eq-def-tilde-Phi} to 
deduce the desired lower bound. The construction of the partition follows \cite{Dunlap}; the analysis on the lower bound in the second step uses renewal theory. 

For an interval $[a, b]$, we say it contains a $\lambda_*$-uptick (respectively, $\lambda_*$-downtick) if there exists $a\leq s < t\leq b$ so that $W_t - W_s = \lambda_*$ (respectively, $W_t - W_s = -\lambda_*$). Let $\tau_0 = 0$, and define recursively for all $i\geq 1$
\begin{align*}
\tau_{2i-1} &= \inf\{t\geq \tau_{2i-2}: [\tau_{2i-2}, \tau_{2i-1}] \mbox{ contains a $\lambda_*$-uptick}\}\,,\\
\tau_{2i} &= \inf\{t\geq \tau_{2i-1}: [\tau_{2i-1}, \tau_{2i}] \mbox{ contains a $\lambda_*$-downtick}\}\,.
\end{align*}
Let $N_{\tau} = \sup\{j: \tau_j \leq 1\}$. We see that $\tau_1, \tau_2, \ldots$ are stopping times of the 
Brownian motion $W$. It is obvious from strong Markov property that the random variables $\tau_1, \tau_2 - \tau_1, \tau_3 - \tau_2, \ldots$ are identically and 
independently distributed (i.i.d.). The following result is standard.
\begin{lemma}
\label{lem:tau_1}
$\tau_1$ admits a density $f_{\tau_1} \in C^\infty([0, \infty))$, $\E \tau_1 = \lambda_*^2$ and $\E \e^{\theta \tau_1} < \infty$ for a suitable $\theta > 0$.
\end{lemma}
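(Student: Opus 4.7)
The plan is to recognize $\tau_1$ as a classical Brownian exit time and read off the three claims from standard results about it. First I would note that the interval $[0,t]$ contains a $\lambda_*$-uptick iff $\max_{0 \le u \le t}\bigl(W_u - \min_{0 \le s \le u} W_s\bigr) \ge \lambda_*$, so by continuity $\tau_1 = \inf\{t \ge 0 : W_t - \min_{s \le t} W_s = \lambda_*\}$. Applying L\'evy's theorem to the Brownian motion $-W$ (whose running maximum is $-\min_{s \le t} W_s$), the process $(W_t - \min_{s \le t} W_s)_{t \ge 0}$ has the same law as $(|B_t|)_{t \ge 0}$ for a standard Brownian motion $B$. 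Hence $\tau_1 \stackrel{d}{=} T := \inf\{t \ge 0 : |B_t| = \lambda_*\}$, the first exit time of $B$ from the interval $(-\lambda_*, \lambda_*)$.

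Given this reduction, the mean is obtained by optional stopping on the martingale $B_t^2 - t$ at $T \wedge n$; letting $n \to \infty$ (justified via monotone/dominated convergence once one knows $\E T < \infty$, which is immediate from $B_T^2 = \lambda_*^2$) yields $\E T = \E B_T^2 = \lambda_*^2$. For the finite exponential moment I would use the martingale $\cos(\sqrt{2\theta}\, B_t)\, e^{\theta t}$ for $0 < \theta < \pi^2/(8\lambda_*^2)$, which is bounded on the stopped filtration $\mathcal F_{T \wedge n}$ before the first pole of $\cos$; optional stopping plus Fatou give $\E e^{\theta T} \le 1/\cos(\lambda_*\sqrt{2\theta}) < \infty$ (equality in fact holds).

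For smoothness of $f_{\tau_1}$ on $(0,\infty)$, I would use the spectral expansion coming from the Dirichlet eigenfunctions of $-\tfrac{1}{2}\partial_x^2$ on $(-\lambda_*, \lambda_*)$:
$$f_T(t) = \frac{\pi}{2\lambda_*^2} \sum_{k=0}^{\infty}(-1)^k(2k+1)\exp\!\Bigl(-\frac{(2k+1)^2 \pi^2 t}{8\lambda_*^2}\Bigr).$$
The exponential decay justifies term-by-term differentiation uniformly on $[\varepsilon, \infty)$ for every $\varepsilon > 0$, giving $f_T \in C^\infty((0, \infty))$. To extend smoothly at $t=0$, I would use the complementary reflection-principle (Poisson-dual) representation
$$f_T(t) = (2\pi t^3)^{-1/2} \sum_{n \in \Z} (-1)^n (2n+1)\lambda_* \exp\!\bigl(-(2n+1)^2\lambda_*^2/(2t)\bigr),$$
from which every derivative is dominated by $O_k(t^{-k-3/2})\exp(-\lambda_*^2/(2t))$, hence vanishes as $t \to 0^+$. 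This yields a $C^\infty$ extension to $[0, \infty)$ with $f_T^{(k)}(0) = 0$ for all $k$.

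No step is genuinely hard given the translation to $T$; the main point to be careful about is the L\'evy-theorem identification at the level of the whole process (not merely a one-dimensional marginal), and the use of the small-$t$ dual representation to override the slow near-zero convergence of the spectral series when establishing smoothness at the origin.
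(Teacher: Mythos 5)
Your proof is correct and takes essentially the same route as the paper: both reduce $\tau_1$ to the exit time of $(-\lambda_*,\lambda_*)$ via L\'evy's theorem on $W-\min W$, obtain the mean from optional stopping on $B_t^2-t$, and get the MGF from the $\cos(\sqrt{2\theta}\,B_t)e^{\theta t}$ martingale (the paper's stated range $\theta\le\pi^2/(32\lambda_*^2)$ is more conservative than your correct $\theta<\pi^2/(8\lambda_*^2)$). Your treatment of $C^\infty$-regularity at $t=0$ via the reflection-principle dual representation is more explicit than the paper's one-line appeal to Fourier inversion of the characteristic function, but it is the same underlying argument.
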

\begin{proof}
Notice that $\tau_1 = \inf\{t \geq 0: W_t - m_t = 
\lambda_*\}$, where $m_t = \min_{s \in [0, t]} W_s$. 
Since the process $W - m$ is identically distributed as the reflected Brownian motion (see e.g. \cite[Theorem 2.34]{Peres_Morters}), $\tau_1$ has the same distribution as $\tau^* = \inf\{t \geq 0: |W_t| = \pm 
\lambda_*\}$. Consequently $\E \tau_1 = \lambda_*^2$, and $\E \e^{\theta \tau_1} = \sec \sqrt{2\theta}\lambda_*$ for $0 \leq \theta \leq \tfrac{\pi^2}{32\lambda_*^2}$ and $\mathrm{sech} \sqrt{2\theta}\lambda_*$ for $\theta 
\leq 0$. The existence of a smooth density should follow from applying Fourier inversion to the 
characteristic function of $\tau_1$. 
\end{proof}

In addition, we define for $i \geq 1$ 
$$\xi_{2i-1} = \arg\min_{\tau_{2i-2} \leq t\leq \tau_{2i-1}}W_t\,, \mbox{ and } \xi_{2i} = \arg\max_{\tau_{2i-1} \leq t\leq \tau_{2i}}W_t\,.$$
We remark that $\xi_1, \xi_2, \ldots$ are \emph{not} stopping times. Observe that the event $\{\xi_1 = s\}$ is equivalent to
$$ \{W_s = m_s, \max_{t \in [0, s]} (W_t - m_t) < \lambda_*\} \cap \{\exists t > \xi_1 \mbox{ such that } W_t = W_{\xi_1} + \lambda_*, \min_{\xi_1 \leq s\leq t} W_s = W_{\xi_1} \}\,.$$
As such, we see that $\xi_1$ is independent of $\tau_1 - 
\xi_1$. For a rigorous proof we can use the random walk approximation of $W$ obtained by sampling it at 
regularly spaced time points. The heuristic claim presented above obviously holds for such a walk. 
Consequently one can take limit as the common gap between successive time points approaches 0 which would imply the independence of $\xi_1$ and $\tau_1 - \xi_1$ due to continuity of Brownian motion paths.

Generalizing this argument and using the fact that $\tau_i$'s are stopping times, we get the next lemma.
\begin{lemma}
\label{lem:xi*_indep}
$\xi_2-\xi_1, \xi_3 - \xi_2, \xi_4 - \xi_3, \ldots,$ is a sequence of i.i.d.\ random variables with the common distribution as same as that of $\tau_1$.
\end{lemma}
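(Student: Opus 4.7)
\textbf{Proof plan for Lemma~\ref{lem:xi*_indep}.} The plan is to write each increment $\xi_{i+1} - \xi_i$ as a sum of two pieces coming from two consecutive $\tau$-segments, show that these pieces are mutually independent (using strong Markov across segments together with the extremum-splitting argument within each segment), and then identify the distribution of the sum with that of $\tau_1$.

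For $i \geq 1$, set $A_i = \tau_i - \xi_i$ and $B_i = \xi_i - \tau_{i-1}$ (with $\tau_0 = 0$), so that $\xi_{i+1} - \xi_i = A_i + B_{i+1}$ for every $i \geq 1$. First I would apply the strong Markov property at the stopping times $\tau_1, \tau_2, \ldots$ to conclude that the translated segments $\{W_{\tau_{i-1}+t} - W_{\tau_{i-1}} : 0 \le t \le \tau_i - \tau_{i-1}\}$ are independent as $i$ varies, and each is distributed as the corresponding segment of a fresh standard Brownian motion (with the role of ``uptick'' and ``downtick'' swapped according to the parity of $i$, which is harmless since $W \stackrel{d}{=} -W$). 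In particular $(B_i, A_i) \stackrel{d}{=} (\xi_1, \tau_1-\xi_1)$ for every $i \geq 1$, and the pairs are independent across $i$.

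Next I would establish the within-segment independence $A_i \perp B_i$, which is the direct analogue of the claim $\xi_1 \perp \tau_1 - \xi_1$ proved in the excerpt just before the lemma. The decomposition of the event $\{\xi_i - \tau_{i-1} = s\}$ into a condition on $W$ restricted to $[\tau_{i-1}, \tau_{i-1}+s]$ (namely that the process, or its negative for even $i$, attains its running extremum at time $\tau_{i-1}+s$ and has not yet completed a $\lambda_*$-fluctuation) and an independent condition on the post-$(\tau_{i-1}+s)$ increments (reaching level $\pm\lambda_*$ without backtracking) is the same as the one carried out for $i=1$. Hence the same random-walk approximation argument (sampling $W$ on a regular grid, invoking Markov property for the discrete process, then passing to the limit using path continuity) yields $A_i \perp B_i$.

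Combining these two ingredients, the family $\{A_i, B_i\}_{i \geq 1}$ is mutually independent, with $A_i \stackrel{d}{=} \tau_1 - \xi_1$ and $B_i \stackrel{d}{=} \xi_1$. Therefore each $\xi_{i+1} - \xi_i = A_i + B_{i+1}$ satisfies
$$\xi_{i+1} - \xi_i \stackrel{d}{=} (\tau_1 - \xi_1) + \xi_1 = \tau_1.$$
For independence across $i$, the only non-trivial pair to check is $(\xi_{i+1}-\xi_i, \xi_{i+2}-\xi_{i+1}) = (A_i + B_{i+1}, A_{i+1}+B_{i+2})$, which share dependence on segment $i+1$ only through $B_{i+1}$ and $A_{i+1}$; these two pieces are independent by the within-segment step, while $A_i$ and $B_{i+2}$ are from disjoint segments. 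Hence the increments form an i.i.d.\ sequence with the distribution of $\tau_1$, as claimed.

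The main obstacle is the rigorous justification of $A_i \perp B_i$: the time $\xi_i$ is not a stopping time, so the ``before/after'' independence must be imported from a discrete approximation rather than from the Markov property of $W$ directly. This is precisely the technical point flagged in the paragraph preceding the lemma, and the argument transfers verbatim from $i=1$ to general $i$ after shifting by $\tau_{i-1}$ and (for even $i$) replacing $W$ by $-W$.
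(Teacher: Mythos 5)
Your plan is correct and is essentially the same argument the paper uses: both decompose $\xi_{i+1}-\xi_i = (\tau_i - \xi_i) + (\xi_{i+1} - \tau_i)$, obtain mutual independence of all these pieces from the strong Markov property across $\tau$-segments together with the within-segment argument establishing $\xi_1 \perp \tau_1 - \xi_1$, and then conclude via the convolution identity $\tau_1 \stackrel{d}{=} (\tau_1-\xi_1) + \xi_1$ with the increments depending on disjoint subsets of these pieces.
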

\begin{proof}
Strong Markov property and the discussion in the last paragraph imply that the random variables $\xi_1, \tau_1 - \xi_1, \xi_2 - \tau_1, \tau_2 - \xi_2 + \tau_1, 
\ldots,$ are all independent. Also by definition of $\xi_j$'s it follows that the random variables $\xi_1, \xi_2 - \tau_1, \ldots,$ are identically distributed. 
Since distribution of $\tau_1$ is the convolution of the distributions of $\xi_1$ and $\tau_1 - \xi_1$, the lemma follows.
\end{proof}
We will hereafter denote the common density function of $\tau_1, \xi_2 - \xi_1, \xi_3 - \xi_2, \ldots$ by $f$. 
Although $\xi_1, \xi_2, \ldots$ are not stopping times, we nevertheless have an analogue of strong Markov property for the processes $\{W_t - W_{\xi_j}\}_{\xi_j \leq t \leq \xi_{j+1}}$ as shown by our next lemma.
\begin{lemma}
\label{lem:conditional_law}
The pairs $(\{W_t - W_{\xi_j}\}_{\xi_j \leq t \leq \xi_{j + 1}}, \xi_{j + 1} - \xi_j)$'s are independent for $j \geq 0$ (here $\xi_0 = 0$).
\end{lemma}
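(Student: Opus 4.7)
The plan is to combine strong Markov at the stopping times $\tau_j$ with an internal decomposition of each excursion $E_j := \{W_{\tau_{j-1} + s} - W_{\tau_{j-1}}\}_{0 \leq s \leq \tau_j - \tau_{j-1}}$ into its pre-$\xi_j$ piece and post-$\xi_j$ piece. Strong Markov applied at each $\tau_j$ immediately gives that the random paths $E_1, E_2, \ldots$ (together with their random durations $\tau_j - \tau_{j-1}$) are mutually independent.

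The key structural claim is that \emph{within} a single excursion $E_j$, the pre-$\xi_j$ piece $\{W_t - W_{\tau_{j-1}}\}_{\tau_{j-1} \leq t \leq \xi_j}$ and the post-$\xi_j$ piece $\{W_t - W_{\xi_j}\}_{\xi_j \leq t \leq \tau_j}$ are independent as random paths with random lengths. I would establish this via the random walk approximation already used in the preamble to Lemma~\ref{lem:xi*_indep}. After discretizing $W$ by a simple random walk $S^{(n)}$ (with a fixed tie-breaking convention, e.g.\ last-hitting time for the argmin/argmax), the event $\{\Xi_j^{(n)} = s\}$ factors into a condition on the increments on $[0, s]$ (the running extremum is attained for the last time at $s$ and the running range remains $< \lambda_*$) and a condition on the increments after $s$ (the walk stays strictly on the correct side of $S_s^{(n)}$ until it hits $S_s^{(n)} \pm \lambda_*$). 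Since these events involve disjoint i.i.d.\ increments, the pre- and post-$\Xi_j^{(n)}$ pieces are exactly independent for the walk; passing $n \to \infty$ and using path continuity transfers the independence to $W$.

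The segment $\{W_t - W_{\xi_j}\}_{\xi_j \leq t \leq \xi_{j+1}}$ is then the concatenation of the post-$\xi_j$ piece of $E_j$ (which ends at height $\pm \lambda_*$ at time $\tau_j$) with a shift of the pre-$\xi_{j+1}$ piece of $E_{j+1}$, and $\xi_{j+1} - \xi_j$ is the sum of the two piece durations (for $j = 0$ the segment is simply the pre-$\xi_1$ piece of $E_1$). To obtain joint independence across $j$, observe that the only overlap between the data used for segment $j$ and segment $j+1$ is $E_{j+1}$, which contributes its pre-piece to segment $j$ and its post-piece to segment $j+1$; by the within-excursion decomposition these are independent, and combined with the between-excursion independence this gives that all pre- and post-pieces across all excursions are mutually independent, hence so are the segments. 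I expect the main obstacle to be the rigorous execution of the within-excursion step: one must choose the tie-breaking convention in the discrete approximation with care so that the factorization of $\{\Xi_j^{(n)} = s\}$ into events on disjoint increments is literally exact, and then verify that $\Xi_j^{(n)} \to \xi_j$ almost surely (which follows from the a.s.\ uniqueness of the Brownian argmin on a compact interval) so that the independence survives the limit.
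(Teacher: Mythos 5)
Your proof takes essentially the same approach as the paper's: both reduce the lemma to the claim that the alternating sequence of paths $(\{W_t\}_{0\leq t \leq \xi_1}, \xi_1), (\{W_t - W_{\xi_1}\}_{\xi_1 \leq t \leq \tau_1}, \tau_1 - \xi_1), (\{W_t - W_{\tau_1}\}_{\tau_1\leq t \leq \xi_2}, \xi_2 - \tau_1), \ldots$ is mutually independent, and both establish that claim via a random walk approximation in which the event $\{\xi_j = s\}$ factorizes into conditions on disjoint blocks of increments, then pass to the continuous limit. Your write-up supplies more detail on the discrete tie-breaking and the limiting step, but the decomposition and the mechanism are identical to the paper's.
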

\begin{proof}
This follows from a slightly more general statement that the pairs $(\{W_t\}_{0\leq t \leq \xi_1}, \xi_1), (\{W_t - W_{\xi_1}\}_{\xi_1 \leq t \leq \tau_1}, \tau_1 - \xi_1), (\{W_t - W_{\tau_1}\}_{\tau_1\leq t \leq \xi_2}, 
\tau_1 - \xi_2), \ldots,$ are independent. For the proof of this fact we again appeal to the random walk 
intuition. It is easy to see that an analogous statement is true for any random walk approximation of $W$ that we mentioned earlier while discussing the independence of 
$\xi_1$ and $\tau_1 - \xi_1$. Thus we can prove the full statement by passing to limits as the common gap between successive time points approaches 0.
\end{proof}
We will use only a subset of $\{F^{-1}(\xi_1), F^{-1}(\xi_2), \ldots\}$ and possibly an extra time 
point to define the partition $\mathcal P$. Let 
$\Delta_j = (-1)^{j+1} (W_{\xi_{j+1}} - W_{\xi_j})$. We see that 
\begin{equation}\label{eq-expectation-Delta-1}
\E \Delta_j = 2 \lambda_* + (-1)^{j+1}\E (W_{\tau_{j+1}} - W_{\tau_{j}}) = 2\lambda_*\,.
\end{equation}
For $t \geq 0$, let $I_t = \sup\{j: \xi_j\leq t\}$, and let $\xi^*(t) = \xi_{I_t+1} - \xi_{I_t}$ and $A(t) = t - 
\xi_{I_t}$. Write $\Delta(t) = \Delta_{I_t}$. Recall at 
this point that $\tilde \lambda$ is a step function. So there is a (non-random) partition $\mathcal S \equiv \mathcal S_\lambda = (s_0, s_1, \ldots, s_{N_{\lambda,\star}})$ of $[0, 1]$ such that $\tilde \lambda$ is constant on each $(s_{i - 
1}, s_i)$. Moreover, by increasing $N_{\lambda, \star}$ by at most a factor of $2$, we can make the $\ell_\infty$ norm of 
$\mathcal S$ is at most $\lambda_*^M$. We can also assume, 
without loss of generality, that $M \geq 20$. From description of the random times $\xi_i$'s, it is clear that the total gain  $\sum_{j \geq 1}(-1)^{j + 1}\int_{[\xi_{j-1} \vee s_{i-1}, \xi_j \wedge s_i]}\tfrac{\tilde \lambda (s)}{\lambda_*}dW_s$ from 
$(s_{i - 1}, s_i)$ is at least
$$\tilde G_i = (-1)^{I_{s_{i - 1}} + 1}\frac{\tilde \lambda_i}{\lambda_*}(W_{s_i} - W_{s_{i-1}})\mathbf 1_{\{R(s_{i - 1}) \geq s_{i - 1}^*\}}\,,$$
where $R(t) = t - A(t)$ and $s_{i - 1}^* = s_i - s_{i - 
1}$. Now define a new collection of functions $G_t: [0, t] \mapsto \R$ as
\begin{equation}
G_t(s) = \E (W_{\xi_1 + s} - W_{\xi_1} \mid \xi_2 - \xi_1 = t)\,, \mbox{ for } 0\leq s \leq t\,.
\end{equation}
In view of Lemma~\ref{lem:conditional_law} we can write
\begin{equation}
\label{eq:expectation1}
\E \tilde G_i = \frac{\tilde \lambda_i}{\lambda_*}\E \Big(G_{\xi^*(s_{i - 1})}(A(s_{i-1}) + s_{i-1}^*) - G_{\xi^*(s_{i - 1})}(A(s_{i-1}))\Big)\mathbf 1_{\{R(s_{i-1}) \geq s^*_{i-1}\}}\,.
\end{equation}
We will analyze this expectation in several steps. A good starting point would be the joint distribution of total 
lifetime $\xi^*(t)$ and age $A(t)$. For the moment being let us forget that our renewal process is actually 
delayed by $\xi_1$, which we will take care of later. It is a well-known fact from renewal theory (see, e.g., \cite[Page 132, Theorem 7]{SR01}) that for any $t > 0$
\begin{equation}
\label{eq-age-lifetime-convergence}
(\xi^*(t), A(t)) \mbox{ converge uniformly as } \delta \to 0 \mbox{ in distribution to } (Z, ZU), 
\end{equation}
where $Z$ has density function $f_Z(z) = \frac{f(z) z}{\lambda_*^2}$, $U$ is a uniform in $[0, 1]$ 
independent of $Z$. But since we are dealing with a ``large'' number of ``tiny'' intervals we need a density 
version of this convergence if possible. So let us first 
look at the joint density of $(\xi^*(t), A(t))$. Some 
new notations will be useful for this purpose. Denote by $f^{(n)}$ the $n$-fold convolution of $f$ with itself 
and by $h$ the sum $\sum_{n \geq 1}f^{(n)}$. Thus $h$ can be thought of as the density for the renewal measure $H = \sum_{n \geq 0} F^{(n)}$ where $F^{(n)}$ is the 
distribution function for $\xi_{n+1} - \xi_1$. Also denote $\xi^*_j = \xi_{j + 1} - \xi_j$. It is straightforward that for $0 < a < t \wedge z$,
\begin{eqnarray}
\label{eq:jt_density}
\P(\xi^*(t) \leq z, A(t) \leq a) &=& \sum_{n \geq 1}\P(t \geq \xi_n \geq t - (a \wedge \xi_n^*), \xi^*_n \leq z)\nonumber\\
&=& \sum_{n \geq 1}\int\limits_{t - (a \wedge z') \leq x \leq t, z' \leq z}f^{(n)}(x)f(z')dx dz' \nonumber\\
&=& \int\limits_{0 \leq a' \leq a, z' \leq z}\sum_{n \geq 1}f^{(n)}(t - a')f(z')\mathbf 1_{\{a' \leq z'\}} da'dz' \nonumber\\
&=& \int\limits_{0 \leq a' \leq a, z' \leq z}h(t - a')f(z')\mathbf 1_{\{a' \leq z'\}} da'dz'\,.
\end{eqnarray}
Blackwell's renewal theorem states that for any given $t > 0$ and $\lambda_*$ sufficiently small, $H(t + a) - 
H(t)$ is approximately $\tfrac{a}{\lambda_*^2}$. The density version of this statement i.e. $h \approx \tfrac{1}{\lambda_*^2}$ when $t \gg \lambda_*^2$ is 
discussed in \cite{Stone65}. Below we state the main theorem in that paper in a form that suits our purpose.
\begin{theorem}\cite[Page 3]{Stone65}
\label{thm:renewal_density}
Let $\mu_H$ be the renewal measure corresponding to the 
distribution function $F$. Then there exist measures $\mu_H', \mu_H''$ on $([0, \infty), \mathcal B(0, \infty))$ such that the following conditions hold:
\begin{enumerate}[(a)]
\item $\mu_H = \mu_H' + \mu_H''$.
\item $\mu_H'$ is absolutely continuous and has a density $h'(x)$ satisfying $h'(x) = \tfrac{1}{\lambda_*^2}(1 + o_{x \to \infty}(\e^{-rx/\lambda_*^2}))$. Here $r$ is a positive, absolute constant.
\item $\mu_H''((x, \infty)) = o_{x \to \infty}(\e^{-rx/\lambda_*^2})$, where $r$ is the same constant as in part (b).
\end{enumerate}
\end{theorem}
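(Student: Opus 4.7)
The plan is to exploit Laplace-transform techniques applied to the renewal equation $h = f + f \ast h$, leveraging the fact (recorded in Lemma~\ref{lem:tau_1}) that $f \in C^\infty([0,\infty))$ and that $\tau_1$ obeys the Brownian scaling $\tau_1 \stackrel{d}{=} \lambda_*^2\, \tau^{**}$, where $\tau^{**}$ is the exit time of standard Brownian motion from $(-1,1)$. Setting $\phi(s) = \E e^{-s\tau_1}$ and $\tilde\phi(u) = \E e^{-u\tau^{**}} = 1/\cosh\sqrt{2u}$, we have $\phi(s) = \tilde\phi(\lambda_*^2 s)$, and $\phi$ is analytic on $\{\mathrm{Re}(s) > -\pi^2/(8\lambda_*^2)\}$. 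Taking Laplace transforms of the renewal equation yields $\psi(s) = \phi(s)/(1-\phi(s))$, where $\psi$ is the Laplace transform of $h$.

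The next step is to isolate the principal singularity of $\psi$. From the expansion $\tilde\phi(u) = 1 - u + \tfrac{5}{6}u^2 + O(u^3)$ one obtains $1 - \phi(s) = \lambda_*^2 s + O(\lambda_*^4 s^2)$ near $s = 0$, so $s=0$ is a simple pole of $\psi$ with residue $1/\lambda_*^2$. The zeros of $1-\tilde\phi(u)$ are precisely $\{-2\pi^2 k^2: k\in\Z\}$, and $\tilde\phi$ has poles only at $\{-\pi^2(2k+1)^2/8: k\geq 0\}$; in particular $\tilde\phi$ is analytic and $1-\tilde\phi$ is nonvanishing on the punctured strip $\{-\pi^2/8 < \mathrm{Re}(u)\leq 0\}\setminus\{0\}$. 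Undoing the scaling, we may decompose
$$\psi(s) = \frac{1}{\lambda_*^2 s} + R(s),$$
where $R$ is holomorphic on the strip $\{\mathrm{Re}(s) > -r/\lambda_*^2\}$ for an absolute constant $r>0$ (any $r < \pi^2/8$ works).

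Having removed the principal part, Laplace inversion by contour deformation produces the desired decomposition. Define the candidate absolutely continuous density $h'(x) = \tfrac{1}{\lambda_*^2} + \tfrac{1}{2\pi i}\int_{\mathcal C} e^{sx} R(s)\, ds$ for a contour $\mathcal C$ just to the right of $0$, and let $\mu_H'' = \mu_H - h'(x)\,dx$, so that (a) is immediate. Shifting $\mathcal C$ leftward to $\{\mathrm{Re}(s) = -r/\lambda_*^2\}$ and using decay of $R$ along horizontal lines (a consequence of $|\tilde\phi(u)| \to 0$ as $|\mathrm{Im}(u)|\to \infty$ inside the strip, combined with smoothness of $f$ giving Riemann--Lebesgue decay) yields $|h'(x) - 1/\lambda_*^2| = O(e^{-rx/\lambda_*^2})$, which is part (b). Part (c) then follows by integrating the same exponential tail over $(x,\infty)$, after verifying that $\mu_H''$ is a finite signed measure whose Laplace transform inherits the exponential decay from $R$.

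The main obstacle, and the delicate heart of Stone's argument, is obtaining the exponent $r/\lambda_*^2$ in a form that is \emph{uniform} in $\lambda_*$ as $\lambda_*\to 0$; a naive application of non-lattice renewal arguments produces a rate constant that collapses in this limit. The remedy is to carry out \emph{all} of the zero-counting and the vertical-decay estimates for $R$ first in the rescaled variable $u = \lambda_*^2 s$, where $\tilde\phi$ and its gap $|1 - \tilde\phi(u)|$ admit absolute bounds from the explicit $\cosh$-formula, and only at the very end to pass to the variable $s$. The technical point that requires the most care is verifying absolute convergence of the shifted contour integral and showing that the vertical integrals over $\{|\mathrm{Im}(s)| > T\}$ are uniformly negligible in $\lambda_*$; this follows from quantitative bounds on $|\cosh\sqrt{2u}|^{-1}$ for $\mathrm{Re}(u)$ negative and $|\mathrm{Im}(u)|$ large, together with the smoothness of $f$ giving polynomial decay of $R$ in the imaginary direction.
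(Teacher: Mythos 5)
The paper does not prove this statement; it imports it verbatim from Stone's 1966 paper and relies on it as a black box, so there is no internal proof to compare against. What you have produced is a genuinely different route: a self-contained Laplace-transform derivation tailored to the specific exit-time distribution, rather than an appeal to the general spread-out renewal theorem. The logical skeleton is sound. Writing $\phi(s) = \tilde\phi(\lambda_*^2 s)$ with $\tilde\phi(u) = 1/\cosh\sqrt{2u}$, identifying the simple pole of $\psi = \phi/(1-\phi) = 1/(\cosh\sqrt{2\lambda_*^2 s} - 1)$ at $s = 0$ with residue $1/\lambda_*^2$, locating the remaining poles of $\psi$ at $s = -2\pi^2 k^2/\lambda_*^2$ (not at the poles of $\phi$, which are removable for $\psi$), and shifting the Bromwich contour to $\mathrm{Re}(s) = -r/\lambda_*^2$ is exactly how one would prove this for a distribution whose transform is explicitly known. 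Your central observation --- that all singularity locations and gap estimates become $\lambda_*$-free after the substitution $u = \lambda_*^2 s$ --- is the right way to see that $r$ is an absolute constant, and it reflects the Brownian scaling $\tau_1 \stackrel{d}{=}\lambda_*^2\tau^{**}$. The conceptually cleanest packaging of that observation is to apply the result (Stone's or yours) once and for all to the $\lambda_*$-independent renewal process built from $\tau^{**}$, with a fixed rate $r$, and then use the elementary identity $h_{\tau_1}(x) = \lambda_*^{-2}h_{\tau^{**}}(x/\lambda_*^2)$ together with $\mu_H'' = \delta_0$ to deduce the scaled statement; this avoids re-deriving the contour estimates for each $\lambda_*$.

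Two technical points in your sketch deserve more care, though neither is fatal. First, $R(s) = \psi(s) - (\lambda_*^2 s)^{-1}$ is not absolutely integrable on vertical lines, since the subtracted pole term decays only like $|s|^{-1}$; the clean fix is to integrate $\psi$ itself over a bounded rectangular contour, pick up the residue $1/\lambda_*^2$ at $s = 0$ by the residue theorem, and then send the horizontal sides to infinity using the rapid decay $|\psi(c' + it)| = O(e^{-\sqrt{\lambda_*^2|t|}})$, which also furnishes the vertical-line integrability one needs. Second, it should be stated that the Bromwich inversion does recover $h = \sum_{n\ge1} f^{(n)}$ so that $\mu_H'' = \mu_H - h'\,dx = \delta_0$; here the smoothness and exponential tail of $f_{\tau_1}$ (Lemma~\ref{lem:tau_1}) guarantee that $\psi$ is absolutely integrable on $\{\mathrm{Re}(s) = c\}$ for small $c > 0$, which legitimises the inversion. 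Once those two points are nailed down, your argument gives $|h(x) - \lambda_*^{-2}| \le C\lambda_*^{-2}e^{-rx/\lambda_*^2}$ (the prefactor $\lambda_*^{-2}$ arising precisely from the size of the vertical integral of $\psi$), matching the form stated in the theorem, and (c) is trivial since $\mu_H''((x,\infty)) = 0$ for $x > 0$. Compared with the paper's approach of citing Stone, your proof trades generality (Stone handles arbitrary spread-out distributions) for transparency and for an explicit, computable value of $r$ in this special case.
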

We will use Theorem~\ref{thm:renewal_density} to 
estimate \eqref{eq:expectation1}. But before that we need to restrict $s_{i-1}$ and $\xi^*(s_{i-1})$ to a 
suitable range. To this end let $s_{i-1} \geq 
\tfrac{10M}{r_1}\lambda_*^2\log 
\tfrac{1}{\lambda_*}$ where $r_1 = r \wedge 1$. Also assume $\delta$ to be small enough so that $\E \Delta_1 \mathbf 1_{\{\lambda_*^{20} \leq \xi_1^* \leq \tfrac{100}{r_1}\lambda_*^2\log\tfrac{1}{\lambda_*}\}}\geq (1 - \epsilon)\E \Delta_1$. Let $\Omega(s_{i-1})$ be the event that $\{s_{i-1}^* \leq \xi^*(s_{i-1}) \leq \tfrac{5M}{r_1}\lambda_*^2\log\tfrac{1}{\lambda_*}\}$. Thus
\begin{equation}
\label{eq:expectation2}
\E \tilde G_i\mathbf 1_{\Omega(s_{i-1})} = \E \tilde G_{i, 2} -\E \tilde G_{i, 1}\,,
\end{equation}
where 
\begin{align*}
\tilde G_{i, 1} &= \frac{\tilde \lambda_i}{\lambda_*}\int\limits_{\substack{0 < a < z-s_{i-1}^*,\\ s_{i-1}^*\leq z \leq \tfrac{5M}{r_1} \lambda_*^2\log \tfrac{1}{\lambda_*}}}G_z(a)h(s_{i -1} - a)f(z)da dz\,, \mbox{ and}\\
\tilde G_{i, 2} &= \frac{\tilde \lambda_i}{\lambda_*}\int\limits_{\substack{s_{i-1}^* < a < z,\\ s_{i-1}^*\leq z \leq \tfrac{5M}{r_1}\lambda_*^2\log \tfrac{1}{\lambda_*}}}G_z(a)h(s_i - a)f(z)da dz\,.
\end{align*}


We can further divide the range of integration in the definition of $\tilde G_{i, 1}$ (or $\tilde G_{i, 2}$) into two parts based on whether $a \leq s_{i-1}^*$ or not (respectively whether $a \geq z - s_{i-1}^*$ 
or not). Denote the corresponding random variables as $\tilde G_{i, 1, \side}, \tilde G_{i, 1, \midd}$ and $\tilde G_{i, 2, \side}, \tilde G_{i, 2, 
\midd}$ respectively. We will first show that $\E (\tilde G_{i, 2, \midd} - \tilde G_{i, 1, \midd})$ is small. To this end, note
\begin{equation}
\label{eq:expectation_neglect1}
\E (\tilde G_{i, 2, \midd} - \tilde G_{i, 1, \midd}) = \frac{\tilde \lambda_i}{\lambda_*}\int\limits_{\substack{s_{i-1}^* < a < z-s_{i-1}^*,\\ s_{i-1}^* \leq z \leq \tfrac{5M}{r_1} \lambda_*^2\log \tfrac{1}{\lambda_*}}}\E G_{z, \Delta(s_{i - 1})}(a)(h(s_i - a) - h(s_{i -1} - a))f(z)da dz
\end{equation}
From Theorem~\ref{thm:renewal_density}, we get
\begin{eqnarray}
\label{eq:expectation_neglect2}
|\E (\tilde G_{i, 2, \midd} - \tilde G_{i, 1, \midd})| &\leq& \frac{\tilde \lambda_i}{\lambda_*}\e^{-r(s_i/\lambda_*^2 - \tfrac{5M}{r_1}\log\tfrac{1}{\lambda_*})}\int\limits_{[s_{i-1}^*, 5M\log \tfrac{1}{\lambda_*}]}\E (\Delta_1 | \xi_1^* = z)f(z) dz \nonumber \\
&\leq& O(\tilde\lambda_i)\lambda_*^{-5M}\e^{-rs_i/\lambda_*^2}\,.
\end{eqnarray}
Next we will estimate contribution from the ``main term'' i.e. $\E(\tilde G_{i, 2, \side} - \tilde G_{i, 1, \side})$. 
Notice that 
\begin{eqnarray}
\label{eq:expectation3}
\E \tilde G_{i, 2, \side} &=& \frac{\tilde \lambda_i}{\lambda_*}\E (-1)^{I_{s_{i-1}} + 1}(W_{s_i} - W_{\xi_{I_{s_{i-1}}}})\mathbf 1_{\{R(s_i) \leq s_{i-1}^*\} \cap \Omega(s_{i-1})} \nonumber \\
&=& \frac{\tilde \lambda_i}{\lambda_*}\E \big(\Delta(s_{i-1}) + (-1)^{I_{s_{i-1}} + 1}(W_{s_i} - W_{\xi_{I_{s_i} + 1}})\big)\mathbf 1_{\{R(s_i) \leq s_{i-1}^*\} \cap \Omega(s_{i-1})}\,.
\end{eqnarray}
Similarly
\begin{equation}
\label{eq:expectation4}
\E \tilde G_{i, 1, \side} = \frac{\tilde \lambda_i}{\lambda_*}\E (-1)^{I_{s_{i-1}} + 1}(W_{s_{i-1}} - W_{\xi_{I_{s_{i-1}}}})\mathbf 1_{\{A(s_{i-1}) \leq s_{i-1}^*\} \cap \Omega(s_{i-1})}
\end{equation}
The last two displays clearly point at three distinct components of $\E(\tilde G_{i, 2, \side} - \tilde G_{i, 
1, \side})$ among which the following is most significant:
\begin{equation}
\label{eq:expectation5}
\mathrm I = \frac{\tilde\lambda_i}{\lambda_*}\int\limits_{\substack{z-s_{i-1}^* \leq a < z,\\ s_{i-1}^* \leq z \leq \tfrac{5M}{r_1} \lambda_*^2 \log \tfrac{1}{\lambda_*}}}\E (\Delta_1 | \xi_1^* = z)h(s_i - a)f(z)da dz\,.
\end{equation}
Using Theorem~\ref{thm:renewal_density} we get
\begin{eqnarray}
\label{eq:expectation6}
\mathrm I &\geq& \frac{\tilde \lambda_i}{\lambda_*}\Big(\frac{s_{i-1}^*}{\lambda_*^2} - \e^{-r(s_i/\lambda_*^2 - \tfrac{5M}{r_1}\log \tfrac{1}{\lambda_*})}\Big) \int\limits_{[s_{i-1}^*, \tfrac{5M}{r_1} \lambda_*^2\log \tfrac{1}{\lambda_*}]}\E (\Delta_1 | \xi_1^* = z)f(z)dz \nonumber\\
&\geq& (1 - \epsilon)\frac{2\tilde \lambda_i s_{i-1}^*}{\lambda_*^2} - O(\tilde \lambda_i)\lambda_*^{-5M}\e^{-rs_i/\lambda_*^2}\,,
\end{eqnarray}
The remaining components are $\E \tilde G_{i, 2, \side} 
- \mathrm I$ and $\E \tilde G_{i, 1, \side}$. Notice that these expectations involve differences in the value of $W$ sampled at very closely located time points and 
consequently are expected to be very small. In fact the H\"{o}lder continuity of Brownian motion paths implies that $|W_s - W_t| \leq C_{W, \eta}|s - t|^{1/2 - \eta}$ 
for any $\eta \in (0, 1/2)$. It is also standard that the constant $C_{W, \eta}$ has good moment 
behavior. However we still give a proof of this fact in Lemma~\ref{lem:Holder_constant_main} for sake of 
completeness. As a preliminary we need the following result.
\begin{lemma}
\label{lem:Holder_constant_prelim}
Let $Z_1, Z_2, \ldots, Z_{N}$ be i.i.d.\ standard Gaussian variables and $S_K = \sum_{j \leq K}Z_j$ for $1 \leq K 
\leq N \equiv 2^n$. Define $C_{n, \eta} = \max_{M \in [N], K \in [N - M] \cup \{0\}} \tfrac{|S_{K + M} - 
S_K|}{M^{1/2}(N/M)^\eta}$ for some small, positive 
$\eta$. Then $\E \e^{C_{n, \eta}} = O_\eta(1)$.
\end{lemma}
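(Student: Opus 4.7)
\emph{Proof proposal.} My plan is to dominate $C_{n,\eta}$ by a maximum of dyadic-scale Gaussian maxima, control its mean by a standard chaining estimate, and then promote this to an exponential moment bound via Borell's concentration inequality. Throughout I will write $T_{j,K} = S_{K+2^j} - S_K$ and
$$\tilde V_j = \max_{0 \leq K \leq N - 2^j} \frac{|T_{j,K}|}{2^{j/2}(N/2^j)^\eta}, \qquad \tilde V^* = \max_{0 \leq j \leq n} \tilde V_j.$$

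The first step will be a deterministic reduction to $\tilde V^*$. For any $M \in [N]$ I will use the binary expansion $M = 2^{j_1} + \cdots + 2^{j_r}$ with $j_1 > \cdots > j_r$ to telescope $S_{K+M} - S_K = \sum_{i=1}^{r} T_{j_i, K_i}$ with $K_i = K + \sum_{\ell < i} 2^{j_\ell}$. The triangle inequality together with the crude bound $|T_{j_i,K_i}| \leq 2^{j_i/2}(N/2^{j_i})^\eta \tilde V^*$ yields
$$|S_{K+M} - S_K| \leq N^\eta \tilde V^* \sum_{i=1}^r 2^{j_i(1/2-\eta)} \leq C_\eta N^\eta M^{1/2-\eta} \tilde V^*,$$
valid for $\eta < 1/2$ (which motivates the ``small, positive $\eta$'' in the hypothesis), since the geometric series is dominated by its largest term $2^{j_1(1/2-\eta)} \leq M^{1/2-\eta}$. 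Dividing by $M^{1/2}(N/M)^\eta$ gives $C_{n,\eta} \leq C_\eta \tilde V^*$.

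The second step is the mean bound $\E \tilde V^* = O_\eta(1)$. For each fixed $j$, $\{T_{j,K}/2^{j/2}\}_K$ is a stationary unit-variance Gaussian sequence whose autocovariance $(1 - |K-K'|/2^j)^+$ is supported on lags of size at most $2^j$; a standard chaining estimate in the spirit of Lemma~\ref{lem:basic_chaining} therefore yields $\E \max_K |T_{j,K}|/2^{j/2} \leq C\sqrt{\log(N/2^j + \e)}$. Setting $k = n-j$, this becomes $\E \tilde V_j \leq C\sqrt{k+1}/2^{k\eta}$, and summing the majorant gives $\E \tilde V^* \leq \sum_{j=0}^n \E \tilde V_j \leq \sum_{k \geq 0} C\sqrt{k+1}/2^{k\eta} = O_\eta(1)$.

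The final step is Gaussian concentration. Since $\tilde V^*$ is the supremum of the centered Gaussian family $\{\pm T_{j,K}/(2^{j/2}(N/2^j)^\eta)\}_{j,K}$ with maximum variance $\sup_j (2^j/N)^{2\eta} = 1$ (attained at $j=n$), Lemma~\ref{lem:Borell_ineq} gives $\P(|\tilde V^* - \E \tilde V^*| > u) \leq 2\e^{-u^2/2}$, and integrating this tail against $\e^{\lambda u}$ yields $\E \e^{\lambda(\tilde V^* - \E \tilde V^*)} = O(\e^{\lambda^2/2})$ for every fixed $\lambda > 0$. Combined with the mean bound and the first step applied with $\lambda = C_\eta$, this produces $\E \e^{C_{n,\eta}} \leq \E \e^{C_\eta \tilde V^*} = O_\eta(1)$. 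The main subtlety, and the reason a naive union bound over the $\Theta(N^2)$ pairs $(M,K)$ does not suffice, is that the largest individual variance is already $1$, so the union bound would force a spurious factor $\sqrt{\log N}$ in $\E C_{n,\eta}$; the two ingredients that rescue the argument are the dyadic reduction, which collapses the contribution of all but $O_\eta(1)$ scales via the $\eta$-geometric decay, and Borell--TIS, which measures fluctuations against the largest individual variance rather than the cardinality of the index set.
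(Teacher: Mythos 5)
Your proof is correct, and it takes a genuinely different route from the paper's. The paper defines $T_m^*$ as the maximum of $|S_{K+M}-S_K|$ over the entire dyadic block $2^m\leq M<2^{m+1}$ and over all shifts $K$, and controls $\E T_m^{*+}$ by a two-level chaining in both $K$ and $M$ inside the block; it also proves one-sided bounds $\E C_{+,n,\eta}$ and $\E C_{-,n,\eta}$ separately and recombines them via the identity $C_{n,\eta}\leq (C_{+,n,\eta}-Z_1/N^\eta)+(Z_1/N^\eta-C_{-,n,\eta})+|Z_1|/N^\eta$. You instead begin with a deterministic Kolmogorov--Chentsov reduction: expanding $M$ in binary and telescoping forces $C_{n,\eta}\leq C_\eta\tilde V^*$, so that the stochastic estimate only needs to be carried out for exact powers of two; for each fixed $j$ the sequence $T_{j,K}/2^{j/2}$ is stationary with triangular autocovariance $(1-|K-K'|/2^j)^+$, so its maximum is controlled by the single chaining Lemma~\ref{lem:basic_chaining} with $\beta=2$, $C_1=O(N/2^j)$, and the $(N/2^j)^{-\eta}$ normalization makes the sum over $j$ converge. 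You also sidestep the $C_\pm$ bookkeeping by including $\pm$ in the Gaussian family before applying Borell, so the maximal variance is still $1$ and the exponential-moment step is immediate. Both arguments chain to the same $O_\eta(1)$ mean bound followed by the same application of Lemma~\ref{lem:Borell_ineq}, but your dyadic reduction makes the chaining one-dimensional per scale and therefore noticeably more modular; what you lose is essentially nothing, since the deterministic loss is only a factor $C_\eta$ in the exponent, which does not affect an $O_\eta(1)$ conclusion.
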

\begin{proof}
Define a new family of random variables $\{T_{K, M}\}_{M \in [N], K \in [N - M]\cup \{0\}}$ as $T_{K, M} = S_{K + M} - S_{K}$ and let $T_m^*$ be the maximum of $\{T_{K, M}\}_{2^m \leq M < 2^{m + 1}, K \in [N - M] \cup 
\{0\}}$. It is clear that $\var (T_{K, M} - T_{K', M}) = 2(|K - K'| \wedge M)$ while $\var (T_{K, M} - 
T_{K, M'}) = 2|M - M'|$. We will use a standard chaining 
argument to show that $\E {T_m^*}^+ = O(1)$. To this end, let us denote by $[N]_j$ the set of all integers in 
$[N - 1] \cup \{0\}$ that are multiples of $2^{n-j}$. 
Then we have,
\begin{eqnarray*}
\E {T_m^*}^+ &\leq& \E \max_{2^m \leq M < 2^{m+1}}T_{0, M}^+ + \sum_{n - m \leq j < n}\E \max_{K \in [N]_j, 2^m \leq M < 2^{m+1}}(T_{K + 2^{n - j - 1}, M} - T_{K, M})^+\\ 
&& + \E \max_{K \in [N]_{n-m}, 2^m \leq M < 2^{m+1}}(T_{K, M} - T_{0, M})^+\\
&\leq& O(2^{m/2}) + \sum_{n - m \leq j < n}\E \max_{K \in [N]_j, 2^m \leq M < 2^{m+1}}(T_{K + M, 2^{n-j-1}} - T_{K + 2^m, 2^{n-j-1}})^+ \\
&& + \sum_{n - m \leq j < n}\E \max_{K \in [N]_j}(T_{K + 2^{n - j - 1}, 2^m} - T_{K, 2^m})^+ + \E \max_{K \in [N]_{n-m}, 2^m \leq M < 2^{m+1}}(T_{K, M} - T_{0, M})^+\\
&\leq& O(2^{m/2}) + O(1)\sum_{0 \leq j < m}\big(\sum _{0 \leq j' < j}(\sqrt{n + m - j'- j - 2}+1)2^{j'/2} + (\sqrt{n + m - 2j - 1} + 1)2^{j/2}\big) \\
&& + O(1)\sum_{0 \leq j < m}(\sqrt{n - j - 1} + 1)2^{j/2} + O(1) \sum_{0 \leq j' < m}(\sqrt{n - j' - 1}+1)2^{j'/2} \\
&\leq& O(2^{m/2})(\sqrt{n - m} \vee 1)\,,
\end{eqnarray*}
where we have repeatedly used Lemma~\ref{lem:basic_chaining} in the second and third 
step. Denoting $C_{+, n, \eta} = \max_{M \in [N], K \in [N - M] \cup \{0\}} \tfrac{S_{K + M} - S_K}{M^{1/2}(N/M)^\eta}$, the last display gives us
\begin{align}
\label{eq:Holder_constant_prelim1}
\E C_{+, n, \eta} \leq O(1)\sum_{j \leq n} 2^{-\eta j}j^{3/2} + O(1) = O_{\eta}(1)\,.
\end{align}
Similarly $\E C_{-, n, \eta} = -O_\eta(1)$ where $C_{-, n, \eta} = \min_{M \in [N], K \in [N - M] \cup \{0\}} \tfrac{S_{K + M} - S_K}{M^{1/2}(N/M)^\eta}$. Now observe that
$$C_{n, \eta} \leq \Big(C_{+, n, \eta} - \frac{Z_1}{N^\eta}\Big) + \Big(\frac{Z_1}{N^\eta} - C_{-, n, \eta}\Big) + \frac{|Z_1|}{N^\eta}\,.$$ 
An immediate consequence of this is $\E C_{n, \eta} 
= O_\eta(1)$. Also since $\var \tfrac{S_{K + M} - S_K}{M^{1/2}(N/M)^\eta} \leq 1$ for all $K, M$, it follows from Lemma~\ref{lem:Borell_ineq} that $\E \e^{C_{n, \eta}} = O_\eta(1)$, thus completing the proof of the lemma.
\end{proof}
\begin{lemma}
\label{lem:Holder_constant_main}
Let $\{B_t\}_{t \geq 0}$ be a standard Brownian motion and $C_{B, \eta} = \sup_{\theta \in (0, 1), t \in [0, 1 - \theta]}\tfrac{|B_{t + \theta} - B_t|}{\theta^{1/2 - 
\eta}}$ where $\eta \in (0, 1/2)$. Then $\E \e^{C_{B, \eta}} = O_\eta(1)$.
\end{lemma}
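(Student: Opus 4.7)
The plan is to reduce Lemma~\ref{lem:Holder_constant_main} to the discrete estimate in Lemma~\ref{lem:Holder_constant_prelim} via dyadic approximation of the Brownian path, and then transfer the exponential moment bound by monotone convergence. First I will sample $B$ at the dyadic grid $\{k/2^n: 0 \leq k \leq 2^n\}$ and set $Z^{(n)}_k = 2^{n/2}(B_{k/2^n} - B_{(k-1)/2^n})$ for $k \in [2^n]$; these are i.i.d.\ standard Gaussians. Applying Lemma~\ref{lem:Holder_constant_prelim} with $N = 2^n$ and the same exponent $\eta$ produces a random constant $C^{(n)} := C_{n, \eta}$ satisfying $\E e^{C^{(n)}} = O_\eta(1)$ uniformly in $n$. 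Unwinding the definition after substituting $M = 2^n(s' - t')$ and $K = 2^n t'$ shows that
$$|B_{s'} - B_{t'}| \leq C^{(n)} (s' - t')^{1/2 - \eta}$$
for every pair of dyadic rationals $t' < s'$ at level $n$.

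Since every level-$n$ dyadic pair is also a level-$(n+1)$ dyadic pair, the sequence $C^{(n)}$ is almost surely non-decreasing in $n$, so the pointwise limit $C^{(\infty)} := \lim_n C^{(n)}$ is well-defined. Applying monotone convergence to the non-decreasing non-negative sequence $e^{C^{(n)}}$ then yields
$$\E e^{C^{(\infty)}} = \lim_n \E e^{C^{(n)}} = O_\eta(1).$$

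To lift the dyadic bound to the full supremum defining $C_{B,\eta}$, I fix any $0 \leq t < s \leq 1$ and approximate by $t_n^* = \lfloor 2^n t \rfloor / 2^n$ and $s_n^* = \lceil 2^n s \rceil / 2^n$, which are dyadic at level $n$, sandwich $t < s$, and satisfy $s_n^* - t_n^* \to s - t$. The dyadic bound applied at level $n$ gives $|B_{s_n^*} - B_{t_n^*}| \leq C^{(\infty)} (s_n^* - t_n^*)^{1/2 - \eta}$, and letting $n \to \infty$ while invoking almost-sure continuity of Brownian paths produces $|B_s - B_t| \leq C^{(\infty)} (s - t)^{1/2 - \eta}$. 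Since this holds simultaneously for all $t < s$ on the full-measure event of continuity, I conclude $C_{B, \eta} \leq C^{(\infty)}$ and therefore $\E e^{C_{B, \eta}} \leq \E e^{C^{(\infty)}} = O_\eta(1)$.

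The main technical point to be careful about is the uniformity in $n$ of the exponential moment from Lemma~\ref{lem:Holder_constant_prelim}: this is exactly what allows the monotone-convergence step to preserve the bound under $n \to \infty$, as opposed to a Fatou-type estimate which would go in the wrong direction. Everything else is routine: the dyadic-to-continuous transfer uses only monotonicity of the refinement and continuity of $B$, and the exponent $\eta$ need not be shrunk during the passage to the limit because $s_n^* - t_n^* \geq s - t$ makes the dyadic bound an upper bound on the continuous increment with no loss.
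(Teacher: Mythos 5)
Your proof is correct and follows essentially the same route as the paper's: identify the dyadic maximum $C^{(n)}$ with $C_{n,\eta}$ from Lemma~\ref{lem:Holder_constant_prelim}, observe monotonicity of $C^{(n)}$ in $n$, and pass to the limit with monotone convergence. You spell out the dyadic-to-continuous transfer (via the outer approximation $t_n^*, s_n^*$ and path continuity) more explicitly than the paper does, but the underlying argument is the same.
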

\begin{proof}
For each $n \geq 1$, define 
$$C_{B, n, \eta} = \max_{M \in [2^n], K \in [2^n - M] \cup \{0\}}\frac{\big|B_{\frac{K + M}{2^n}} -  B_{\frac{K}{2^n}}\big|}{\big(\frac{M}{2^n}\big)^{1/2 - \eta}}\,.$$
It is easy to see that $C_{B, n, \eta}$ is distributed identically as $C_{n, \eta}$ in 
Lemma~\ref{lem:Holder_constant_prelim}. Also the sequence $\{C_{B, n, \eta}\}$ is \emph{nondecreasing} and converges almost surely to $C_{B, \eta}$ due to 
continuity of Brownian motion paths. Hence from Lemma~\ref{lem:Holder_constant_prelim}, we can conclude $\E \e^{C_{B, \eta}} = O_\eta(1)$.
\end{proof}
Applying Lemma~\ref{lem:Holder_constant_main} (for $\eta = 3/8$) and H\"{o}lder's inequality to the expression for $\E \tilde G_{i, 1, \side}$ in \eqref{eq:expectation4}, we get that
$$|\E \tilde G_{i, 1, \side}| \leq O(1)\frac{\tilde \lambda_i}{\lambda_*}{s_{i-1}^*}^{3/8}\big(\P(\{A(s_{i-1}) \leq s_{i-1}^*\} \cap \Omega(s_{i-1}))\big)^{2/3}\,.$$
We can then use Theorem~\ref{thm:renewal_density} and \eqref{eq:jt_density} to obtain
\begin{eqnarray}
\label{eq:expectation7}
|\E \tilde G_{i, 1, \side}| &\leq& O(1)\frac{\tilde \lambda_i}{\lambda_*}{s_{i-1}^*}^{3/8}\Big(\int\limits_{\substack{0 < a \leq s_{i-1}^*,\\ s_{i-1}^* \leq z \leq \tfrac{5M}{r_1}\lambda_*^2\log \tfrac{1}{\lambda_*}}}h(s_{i-1} - a)f(z)da dz \Big)^{2/3}\nonumber \\
&\leq & O(1)\frac{\tilde \lambda_i}{\lambda_*}{s_{i-1}^*}^{3/8}\Big(\frac{s_{i-1}^*}{\lambda_*^2} + \lambda_*^{-5M}\e^{-rs_i/\lambda_*^2}\Big)^{2/3}\,.
\end{eqnarray}
Similarly one can bound $|\E \tilde G_{i, 2, \side} - 
I|$. So what remains is the term $\E \tilde G_i 
\mathbf 1_{\Omega(s_{i-1})^c}$. By H\"{o}lder's inequality and Lemma~\ref{lem:Holder_constant_main},
$$|\E \tilde G_i \mathbf 1_{\Omega(s_{i-1})^c}| \leq O(1)\frac{\tilde \lambda_i}{\lambda_*}{s_{i-1}^*}^{3/8}\big(\P(\{\xi^*(s_{i-1}) > \tfrac{5M}{r_1}\lambda_*^2\log \tfrac{1}{\lambda_*}\})\big)^{2/3}\,.$$
Also from \eqref{eq:jt_density}, Theorem~\ref{thm:renewal_density} and the bound on the moment generating function of $\xi_1^*$ from Lemma~\ref{lem:tau_1}, we have
\begin{eqnarray*}
\P(\{\xi^*(s_{i-1}) \leq \tfrac{5M}{r_1}\lambda_*^2\log \tfrac{1}{\lambda_*}\}) &\geq& \int\limits_{[0, \tfrac{5M}{r_1}\lambda_*^2\log \tfrac{1}{\lambda_*}]}\frac{z}{\lambda_*^2}f(z)dz - \lambda_*^{-5M}\e^{-rs_{i-1}/\lambda_*^2}\\
&\geq& 1 - O(\lambda_*^{1.5M}) - \lambda_*^{-M}\e^{-rs_{i-1}/\lambda_*^2}\,.
\end{eqnarray*}
Hence
\begin{equation}
\label{eq:expectation8}
|\E \tilde G_i \mathbf 1_{\Omega(s_{i-1})^c}| \leq O(1)\frac{\tilde \lambda_i}{\lambda_*}{s_{i-1}^*}^{3/8}\big(O(\lambda_*^{1.5M}) + \lambda_*^{-5M}\e^{-rs_{i-1}/\lambda_*^2}\big)^{2/3}\,.
\end{equation}
Collecting everything together we get
\begin{equation}
\label{eq:expectation9}
\E \tilde G_i \geq (1 - \epsilon)\frac{2\tilde \lambda_is_{i-1}^*}{\lambda_*^2} - O(\tilde \lambda_i)\lambda_*^{\frac{25M}{24} - \frac{7}{3}}\,.
\end{equation}
Now we will define a random partition $\mathcal P$ of $[0, 1]$ which takes into account the delaying 
effect by $\xi_1$. Let $i'$ be the minimum integer such that $s_{i'} \geq \tfrac{13M}{r_1}\lambda_*^2\log 
\tfrac{1}{\lambda_*}$. On the event $\{\xi_1 \leq \tfrac{3M}{r_1}\lambda_*^2\log\tfrac{1}{\lambda_*}\}$, let $\mathcal P$ consist of all the time points $\xi_2, \xi_3, \ldots$ that lie between $s_{i'}$ and $1$ 
along with $0, 1$ and $s_{i'}$. On the other hand if $\xi_1 > \tfrac{3M}{r_1}\lambda_*^2\log \tfrac{1}{\lambda_*}$, we simply define $\mathcal P$ to 
be $(0, 1)$. From Lemma~\ref{lem:conditional_law} and the discussions leading to \eqref{eq:expectation9} we get for $i \geq i' + 1$,
$$\E (\tilde G_i | \{\xi_1 \leq \tfrac{3M}{r_1}\lambda_*^2\log \tfrac{1}{\lambda_*}\}) \geq (1 - \epsilon)\frac{2\tilde \lambda_is_{i-1}^*}{\lambda_*^2} - O(\tilde \lambda_i)\lambda_*^{\frac{25M}{24} - \frac{7}{3}}\,.$$
Also as a consequence of Lemma~\ref{lem:tau_1} we have
$$\P(\xi_1 \leq \tfrac{3M}{r_1}\lambda_*^2\log \tfrac{1}{\lambda_*}) \geq 1 - \lambda_*^{0.9M}\,.$$
Thus for all $i \geq i' + 1$,
$$\E \tilde G_i \mathbf 1_{\{\xi_1 \leq \tfrac{3M}{r_1}\lambda_*^2\log \tfrac{1}{\lambda_*}\}} \geq (1 - \epsilon - \lambda_*^{0.9M})\frac{2\tilde \lambda_is_{i-1}^*}{\lambda_*^2} - O(\tilde \lambda_i)\lambda_*^{\frac{25M}{24} - \frac{7}{3}}\,.$$
Summing over $i$ and using the fact that $|\mathcal S| = O(\lambda_*^{-M})$ we get,
\begin{eqnarray}
\label{eq:expectation10}
\sum_{i'+1}^{N_{\lambda, \star}}\E \tilde G_i\mathbf 1_{\{\xi_1 \leq \tfrac{3M}{r_1}\lambda_*^2\log \tfrac{1}{\lambda_*}\}} &\geq&  (1 - \epsilon - \lambda_*^{0.9M})\int\limits_{[\tfrac{13M}{r_1}\lambda_*^2\log \tfrac{1}{\lambda_*}, 1]}\frac{2\tilde \lambda(s)}{\lambda_*^2}ds - O(\lambda_\infty\lambda_*^{\frac{M}{24} - \frac{7}{3}})\nonumber \\
&\geq& (1 - \epsilon - \lambda_*^{18})\int\limits_{[\tfrac{13M}{r_1}\lambda_*^2\log \tfrac{1}{\lambda_*}, 1]}\frac{2\tilde \lambda(s)}{\lambda_*^2}ds - O(\lambda_\infty\lambda_*^{-1.5})
\,,
\end{eqnarray}
where in the last inequality we used $M \geq 20$. Now notice that 
$$\int_{[\tfrac{13M}{r_1}\lambda_*^2\log \tfrac{1}{\lambda_*}, 1]}\frac{2\tilde \lambda(s)}{\lambda_*^2}ds = \int_{[F^{-1}\big(\tfrac{13M}{r_1}\lambda_*^2\log \tfrac{1}{\lambda_*}\big), 1]}\frac{1}{\lambda(s)}ds\,.$$
From Cauchy-Schwartz inequality we get,
$$\int\limits_{[0, F^{-1}\big(\tfrac{13M}{r_1}\lambda_*^2\log \tfrac{1}{\lambda_*}\big)]}\frac{1}{\lambda(s)}ds \leq \frac{\sqrt{F^{-1}\big(\tfrac{13M}{r_1}\lambda_*^2\log\tfrac{1}{\lambda_*}}\big)}{\lambda_*}\sqrt{\int_{[0, F^{-1}\big(\tfrac{13M}{r_1}\lambda_*^2\log \tfrac{1}{\lambda_*}\big)]}\frac{\lambda_*^2}{\lambda(s)^2}ds}\,.$$
The second factor on the right hand side of the last display is $\sqrt{\tfrac{13M}{r_1}\lambda_*^2\log \tfrac{1}{\lambda_*}}$ by definition of $F$ while the 
first factor can be at most $\tfrac{1}{\lambda_*}$. Hence
\begin{equation}
\label{eq:expectation11}
\sum_{i'+1}^{N_{\lambda, \star}}\E \tilde G_i\mathbf 1_{\{\xi_1 \leq \tfrac{3M}{r_1}\lambda_*^2\log \tfrac{1}{\lambda_*}\}} \geq  (1 - \epsilon - \lambda_*^{18})\int\limits_{[0, 1]}\frac{2}{\lambda(s)}ds - \sqrt{\tfrac{13M}{r_1}\log 
\tfrac{1}{\lambda_*}} - O(\lambda_\infty\lambda_*^{-1.5})\,.
\end{equation}
Finally we want to bound the number of switches without 
compromising too much in terms of the net gain. For this purpose let us try to estimate the amount that we may loose in terms of expected gain if we stop switching 
beyond $\xi_{2/\lambda_*^2}$. From Lemma~\ref{lem:conditional_law} and \eqref{eq-expectation-Delta-1} we can see that this expected loss is bounded by $2\lambda_\infty\P(\tau_{2/\lambda_*^2} \leq 1)(\E I_{\xi_1^*}+ 1)$, where $I_1^* = \sup 
\{i: \xi_i - \xi_1 \leq 1\}$. Using Markov's inequality and the expression for $\E \e^{\theta\tau_1}$ provided in the proof of Lemma~\ref{lem:tau_1}, we get $\E 
I_1^* = O(1/\lambda_*^2)$. Also it is straightforward that $\P(\tau_{2/\lambda_*^2} \leq 
1) = O(\lambda_*^2)$. Now let $\mathcal P^* \equiv (t_0^*, t_1^*, \ldots, t_{k+1}^*)$ denote the partition obtained after throwing off all the $\xi_j$'s from $\mathcal P$ for $j > 2/\lambda_*^2$. 
From the discussion we had so far and the fact that $\lambda_*$ is small, it follows
\begin{equation}
\label{eq:expectation12}
\E \Big\{\sum_{i = 1}^{k + 1}\big|\int_{[t_{i-1}^*, t_i^*]}\frac{\tilde \lambda(s)}{\lambda_*}dW_s\big|\Big\} \geq (1 - \epsilon - \lambda_*^{18})\int\limits_{[0, 1]}\frac{2}{\lambda(s)}ds - O_M(\lambda_\infty\lambda_*^{-1.5})\,.
\end{equation}

Having computed the expected gain we now turn to the 
cost incurred from penalties. By Theorem~\ref{thm:renewal_density} again, we deduce that 
$$\E \Big(\sum_{j:\xi_j \in (s_{i-1}, s_i)}\tilde \lambda(\xi_j)\Big) = \tilde \lambda_i \int_{s_{i-1} < t \leq s_i}h(t) dt \leq \tilde \lambda_i \Big( \frac{s_{i-1}^*}{\lambda_*^2} + \e^{-rs_{i-1}/\lambda_*^2}\Big)\,.$$
Summing over $i$, this gives
\begin{equation}
\label{eq:expectation13}
\E \Big(\sum_{j = 1}^k\tilde \lambda(t_j^*)\Big) \leq \int\limits_{[0, 1]}\frac{\tilde \lambda(s)}{\lambda_*^2}ds + O(\lambda_\infty)\lambda_*^{12M} = \int\limits_{[0, 1]}\frac{1}{\lambda(s)}ds + O(\lambda_\infty)\lambda_*^{12M}\,.
\end{equation}

Therefore, we conclude that for an arbitrarily fixed $\epsilon > 0$ and sufficiently small $\delta > 0$
$$\E \tilde \Phi_{\tilde \lambda, \mathcal P^*}(W)  \geq (1-2\epsilon - 2\lambda_*^{18}) \int\limits_{[0, 1]}\frac{1}{\lambda(s)}ds - O(\lambda_\infty\lambda_*^{-1.5}) \geq  (1-3\epsilon) \int\limits_{[0, 1]}\frac{1}{\lambda(s)}ds - O_M(\lambda_\infty\lambda_*^{-1.5})\,.$$
Combined with Lemma~\ref{lem-W-B-total-variation}, this completes the proof of Theorem~\ref{thm-total-variation}.
\begin{remark}
\label{remark:total-variation-general-time}
Using the scaling property of Brownian motion we can easily adapt the bound in Theorem~\ref{thm-total-variation} when the underlying process is $\{B_t\}_{0 \leq t \leq T}$ for some $T > 0$. 
In this case the penalty function $\lambda$ is defined on $[0, T]$ and the functional $\lambda_*$ satisfies the equation 
$$\int_{[0, T]}\Big(\frac{\lambda_*}{\lambda(s)}\Big)^2 ds = 1\,.$$
Keeping other conditions and definitions same as in Theorem~\ref{thm-total-variation}, we can then find a partition $\mathcal Q^* = (q_0^*, q_1^*, \ldots, q_{k+1}^*)$ of $[0, T]$ such that $k \leq 2/\lambda_*^2$ and
$$\E \Phi_{\lambda, \mathcal Q^*} (B) \geq (1-\epsilon) \int_{[0,T]} \frac{1}{\lambda(t)} d t - O_M(\lambda_\infty \lambda_*^{-1.5})\,.$$
\end{remark}

\section{Multi-scale analysis on light crossings}
\label{sec:the_main}
We are now ready to carry out the multi-scale analysis. 
In Subsection~\ref{subsec:descrip} we will describe our recursive construction for light crossings, for which we introduce Strategy~I (the base and the easy case) and 
Strategy II (the hard case). Strategy II is the essential construction, where Strategy I guarantees that the multiple scales can be decomposed into a number of blocks (each of which consists of constant number of 
scales) that are independent of each other.  In Subsection~\ref{subsec:induct_hypo}, we formulate a list of induction hypotheses, most of which are of auxiliary nature (i.e., in order to facilitate the verification of 
the central hypothesis \eqref{hypo:expected_weight}). In Subsection~\ref{subsec:easy}, we prove the induction hypothesis for Strategy I; in Subsections~\ref{subsec:induct_step_hard}, \ref{subsec:error_terms} and \ref{subsec:hard-remaining}, we give precise constructions for Strategy II and verify all the induction hypotheses for this case (here we will extensively use results obtained in the previous two sections).

\subsection{The recursive construction of crossings}
\label{subsec:descrip}
We will build crossings recursively starting from straight line crossing at the bottommost level. 
Our goal is to give an algorithm $\A_\ell$ that will produce two ``identically distributed'' crossings, one through each of $\tilde{V}_{\ell; 1}^\Gamma$ and 
$\tilde{V}_{\ell;2}^\Gamma$. We denote these two crossings by $\cross^{*, \ell}_{1}$ and $\cross^{*, 
\ell}_{2}$ respectively. The algorithm will take as its inputs the free field on $\tilde{V}_\ell^\Gamma$ and possibly some additional random variables independent 
with $\{\eta_{n, .}\}$. In general we can apply $\A_\ell$ to any $B \in \B_{\ell; \principal}$ with the corresponding fine field as the underlying field and generate a crossing $\cross^{*, B}_i$ through $B_i$ for 
$i \in [2]$. By abuse of terminology we will often refer a crossing through $B_1$ or $B_2$ as a crossing through 
$B$. In the next paragraph we discuss the general idea 
behind $\A_\ell$. We discussed a similar idea in a different setup in the beginning of Subsection~\ref{subsec:heuristic}.

Suppose that at the beginning of step $\ell$, we have built crossings for all levels $\ell' < 
\ell$. In particular we have crossings through $\tilde V_{\ell; i, j, k}^\Gamma$ for $i, j, k \in [2]$ and through each $\tilde V^{\Gamma, z}_{\ell - m - 1}$ in 
$\tilde V^\Gamma_{\ell; \midd, i}$ for $i \in [2]$. Thus for any interval $I$ contained in the base of $\tilde V_{\ell; i, j, 1}^\Gamma$ (and hence $\tilde V_{\ell; i, j, 2}^\Gamma$), we can use the crossing through either $\tilde V_{\ell; i, j, 1}^\Gamma$ or $\tilde V_{\ell; i, 
j, 2}^\Gamma$ to travel across $I$. Similar thing is true if $I$ is an subinterval of the base of some (hence all) $\tilde V^{\Gamma, z}_{\ell - m - 1}$ in $\tilde 
V^\Gamma_{\ell; \midd, i}$. Roughly speaking, we can break up the horizontal range of $\tilde V_{\ell; i}^\Gamma$ into several intervals and choose the ``horizontal piece'' for $\cross_i^{*, \ell}$ along each such interval from one of the available crossings. 
Such \emph{switchings} enable us to select economic pieces for building $\cross_i^{*, \ell}$. However, we also incur additional weight from the crossings (or paths) that are needed to link these pieces. We refer to this type of crossings as \emph{gadgets}. Therefore the gain (here, by gain we mean \emph{decrement} of the weight since our goal is to bound the weight from above) obtained from switchings must exceed the weight of 
gadgets in order to build efficient crossings. In Subsection~\ref{subsec:induct_step_hard} we show that this can be achieved by solving a regularized total variation problem that was discussed in section~\ref{sec:total_variation}. But, as already mentioned in Subsection~\ref{sec:new-challenge}, 
this requires the fine structures of crossings through $\tilde V_{\ell; i, j, k}^\Gamma$ which makes the switching locations across all levels dependent. 
This dependence causes some technical problems and in order to avoid that we periodically use a naive construction to decorrelate our choices. Thus our strategy will vary with the location of $\ell$ between two successive integer multiples of the period which we choose as $200m_\Gamma$.

We will elaborate more on our periodic scheme of 
construction. But before that we need to do some 
preparatory work. The tree representation $\T_n$ introduced in Subsection~\ref{subsec:GFF_represent} will 
be useful here. Let $a$ be the integer $\lfloor \ell / 200m_\Gamma\rfloor$. Go down each branch descending from $\tilde{V}_\ell^\Gamma$ in $\T_n$ until the first time a 
node of depth $\leq 200am_\Gamma - 1$ appears. Denote the family of nodes thus obtained by $\descend_\ell = \descend_{\ell, 200\lfloor \ell / 200m_\Gamma\rfloor 
m_\Gamma}$. The rectangles $B_1$ and $B_2$, where $B \in \descend_\ell$, define another family of rectangles which we denote as $\widetilde{\descend}_\ell = \widetilde{\descend}_{\ell, 200\lfloor \ell / 
200m_\Gamma\rfloor m_\Gamma}$. So at the beginning of step~$\ell$ we have a crossing $\cross^{*, B}$ through each $B$ in $\widetilde \descend_\ell$.
In \cite{DG15} we introduced a notion of coarsening of paths to describe the multi-level 
construction of crossings. In short, coarsening of a path is the sequence of squares in a dyadic partition of 
$\Z^2$ that the path visits along its way. Choice of dyadic partition was natural given the hierarchical 
definition of the underlying field. In the present paper, we use the rectangles in $\widetilde \descend_\ell$ to define a 
similar notion. Some features of the rectangles in $\widetilde{\descend}_\ell$ are worth mentioning here. 
Bases of the rectangles in $\widetilde{\descend}_\ell$ form the collection $\mathscr C_{\ell, \Gamma, 0; \ell\%200(m_\Gamma) + 1}$, where the notation $a 
\% b$ means $a$ mod $b$. The collection of spans of the rectangles on a given base $I \in \mathscr C_{\ell, \Gamma, 0; \ell\%200m_\Gamma + 1}$ is $\mathscr C_{\ell + 1, 1, 0; d, \principal}$ for some $d$ between $\ell \% 200m_\Gamma + 2$ and $\ell \% 
200m_\Gamma + m + 2$. Thus the spans of any two rectangles in $\widetilde \descend_\ell$ are either disjoint or one is a subset of the other, whereas their bases could either be same or have at most two points in 
common. Since the intervals in $\mathscr C_{n, k, 0; d}$ are naturally aligned from left to right, we can make sense of an aligned sequence of rectangles in 
$\widetilde \descend_\ell$. We say a sequence of intervals in $\mathscr C_{\ell, \Gamma, 0; \ell\%200m_\Gamma + 1}$ is \emph{right aligned} if they are aligned from left to right and every consecutive pair 
is adjacent, and we say a sequence of rectangles $\{R_1, R_2, \ldots, R_b\}$ in $\widetilde \descend_\ell$ is \emph{oriented} if the bases of its elements form a 
right aligned sequence of intervals. Now consider a descendant $B$ of $\tilde V_\ell^\Gamma$ at some level 
$\ell' \geq a200m_\Gamma$. 
We say that an oriented sequence of rectangles $S = \{R_1, R_2, \ldots, R_b\}$ is a \emph{skeleton} for the crossing $\cross_i^{*, B}$ if the following conditions are satisfied:\\
(a) For each $I$ in the collection $\mathscr C_{\ell', \Gamma, p_{B, \mathrm{base}}; \ell' \% 200m_\Gamma + 1}$, there is a unique rectangle in $S$ that is based 
on $I$. Here $p_{B, \mathrm{base}}$ is the left endpoint of the base of $S$. Thus the elements of $S$ are naturally indexed by intervals in $\mathscr C_{\ell', \Gamma, p_{B, \mathrm{base}}; \ell' \% 200m_\Gamma + 1}$.\\
(b) All the rectangles are subsets of $B_i$.\\
(c) There are connected multisets $g_{1, 2}, g_{2, 3}, \ldots g_{b-1, b}$ (which we also refer as gadgets) such that $g_{b', b'+1}$ connects the pair $(\cross^{*, R_{b'}}, \cross^{*, R_{b'+1}})$ and that $\cross_i^{*, B}$ is the union of $\cross^{*, R_j}$'s and $g_{b', b'+1}$'s (as multisets).

Using this skeleton $S$ we can define a notion of coarsening of $\cross_i^{*, B}$ for any level $\ell''$ 
between $200am_\Gamma$ and $\ell'$. To this end note that as a consequence of the definition of oriented sequences, the span of each rectangle $R_{b'}$ in $S$ is a subset of a unique interval $I_{b', \ell''}$ in $\mathscr C_{\ell' + 1, 1, 0; \ell' - \ell'' + 1, \principal}$. We call the sequence $\{I_{1, \ell''}, I_{2, \ell''}, \ldots, I_{b, \ell''}\}$ as the \emph{$\ell''$-coarsening} of $\cross_i^{*, B}$ or 
rather the skeleton $S$ to be precise. Similar to $S$, the elements of $\ell''$-coarsening of $\cross_i^{*, B}$ are also indexed by $\mathscr C_{\ell', \Gamma, p_{B, 
\mathrm{base}}; \ell' \% 200m_\Gamma + 1}$. We refer the reader to Figure~\ref{fig:skeleton} for an illustration.
\begin{figure}[!htb]
\centering
\begin{tikzpicture}[semithick, scale = 4]

\foreach \y/\z in {0/0.15, 0.45/0.6, 0.65/0.8}
{
  \draw (-2, \y) rectangle (-1.35, \z);
}
\foreach \y/\z in {0/0.15, 0.2/0.35, 0.45/0.6}
{
  \draw (2, \y) rectangle (1.35, \z);
}

\foreach \y/\z in {0/0.15, 0.45/0.6, 0.65/0.8}
{

  \draw (-0.325, \y) rectangle (0.325, \z);

}
\foreach \y/\z in {0/0.15, 0.2/0.35, 0.65/0.8}
{
  \draw (0.375, \y) rectangle (1.025, \z);
}

\foreach \y/\z in {0.2/0.35, 0.45/0.6, 0.65/0.8}
{
  \draw (-0.375, \y) rectangle (-1.025, \z);
}


\foreach \y/\z in {0/0.06, 0.09/0.15, 0.29/0.35, 0.45/0.51, 0.54/0.6, 0.65/0.71, 0.74/0.8}
{
  \draw (-1.32, \y) rectangle (-1.055, \z);
}

\foreach \y/\z in {0/0.06, 0.2/0.26, 0.29/0.35, 0.45/0.51, 0.54/0.6, 0.65/0.71, 0.74/0.8}
{
  \draw (1.32, \y) rectangle (1.055, \z);
}
\fill [blue!20] (-2, 0.2) rectangle (-1.35, 0.35);
\draw [red, style={decorate,decoration={snake,amplitude = 0.5}}] (-2, 0.275) -- (-1.35, 0.275);

\fill [blue!20] (2, 0.65) rectangle (1.35, 0.8);
\draw [red, style={decorate,decoration={snake,amplitude = 0.5}}] (2, 0.725) -- (1.35, 0.725);

\fill [blue!20] (-0.325, 0.2) rectangle (0.325, 0.35);
\draw [red, style={decorate,decoration={snake,amplitude = 0.5}}] (-0.325, 0.275) -- (0.325, 0.275);

\fill [blue!20] (0.375, 0.45) rectangle (1.025, 0.6);
\draw [red, style={decorate,decoration={snake,amplitude = 0.5}}] (0.375, 0.525) -- (1.025, 0.525);

\fill [blue!20] (-0.375, 0) rectangle (-1.025, 0.15);
\draw [red, style={decorate,decoration={snake,amplitude = 0.5}}] (-0.375, 0.075) -- (-1.025, 0.075);

\fill [blue!20] (-1.32, 0.2) rectangle (-1.055, 0.26);
\draw [red, style={decorate,decoration={snake,amplitude = 0.5}}] (-1.32, 0.23) -- (-1.055, 0.23);

\fill [blue!20] (1.32, 0.09) rectangle (1.055, 0.15);
\draw [red, style={decorate,decoration={snake,amplitude = 0.5}}] (1.32, 0.12) -- (1.055, 0.12);
\draw [orange, style={decorate,decoration={snake,amplitude = 0.3}}] (-1.4, 0.2) -- (-1.4, 0.4) -- (-1.27, 0.4) -- (-1.27, 0.2);

\draw [orange, style={decorate,decoration={snake,amplitude = 0.3}}] (-1.1, 0.26) -- (-1.1, 0.02) -- (-0.975, 0.02) -- (-0.975, 0.15);

\draw [orange, style={decorate,decoration={snake,amplitude = 0.3}}] (-0.425, 0) -- (-0.425, 0.33) -- (-0.275, 0.33) -- (-0.275, 0.2);

\draw [orange, style={decorate,decoration={snake,amplitude = 0.3}}] (0.275, 0.2) -- (0.275, 0.58) -- (0.425, 0.58) -- (0.425, 0.45);

\draw [orange, style={decorate,decoration={snake,amplitude = 0.3}}] (0.975, 0.6) -- (0.975, 0.05) -- (1.1, 0.05) -- (1.1, 0.15);

\draw [orange, style={decorate,decoration={snake,amplitude = 0.3}}] (1.27, 0.15) -- (1.27, 0.05) -- (1.4, 0.05) -- (1.4, 0.8);
\draw [blue] (-1.675, -1) -- (-1.675, -0.65);
\draw (-1.675, -0.55) -- (-1.675, -0.2);

\draw [blue] (-1.1875, -1) -- (-1.1875, -0.65);
\draw (-1.1875, -0.55) -- (-1.1875, -0.2);

\draw [blue] (-0.7, -1) -- (-0.7, -0.65);
\draw (-0.7, -0.55) -- (-0.7, -0.2);

\draw [blue] (1.675, -0.55) -- (1.675, -0.2);
\draw (1.675, -1) -- (1.675, -0.65);

\draw [blue] (1.1875, -1) -- (1.1875, -0.65);
\draw (1.1875, -0.55) -- (1.1875, -0.2);

\draw [blue] (0.7, -0.55) -- (0.7, -0.2);
\draw (0.7, -1) -- (0.7, -0.65);

\draw [blue] (0, -1) -- (0, -0.65);
\draw (0, -0.55) -- (0, -0.2);

\foreach \y/\z in {-2/-1.35, -1.32/-1.055, -1.025/-0.375, -0.325/0.325, 0.375/1.025, 1.055/1.32, 1.35/2}
{ \draw (\y, -1) -- (\z, -1);
  \draw (\y, -0.2) -- (\z, -0.2);
}
\end{tikzpicture}
\caption{{\bf Skeleton of $\cross^{*, 200am_\Gamma + 1}_2$ (above) and its $200am_\Gamma$-coarsening (below).} The rectangles in the figure at the top are the descendants of $\tilde V_{200am_\Gamma + 1; 2}^\Gamma$ in $\widetilde \descend_{\ell}$. $\cross^{*, 200am_\Gamma + 1}_2$ is indicated by the red and orange lines. The filled rectangles define its skeleton $S$. The red line inside a rectangle $R$ in $S$ is the crossing $\cross^{*, R}$. The orange lines correspond to gadgets $g_{b', b'+1}$'s used for connecting the crossings through two successive rectangles in $S$. The blue vertical segments in the figure at the bottom indicates the $200am_\Gamma$-coarsening for $\cross^{*, 200am_\Gamma + 1}_2$. The horizontal segments which they stand on correspond to intervals in $\mathscr C_{a 200m_\Gamma + 1, \Gamma, 0; 2}$. }
\label{fig:skeleton}
\end{figure}
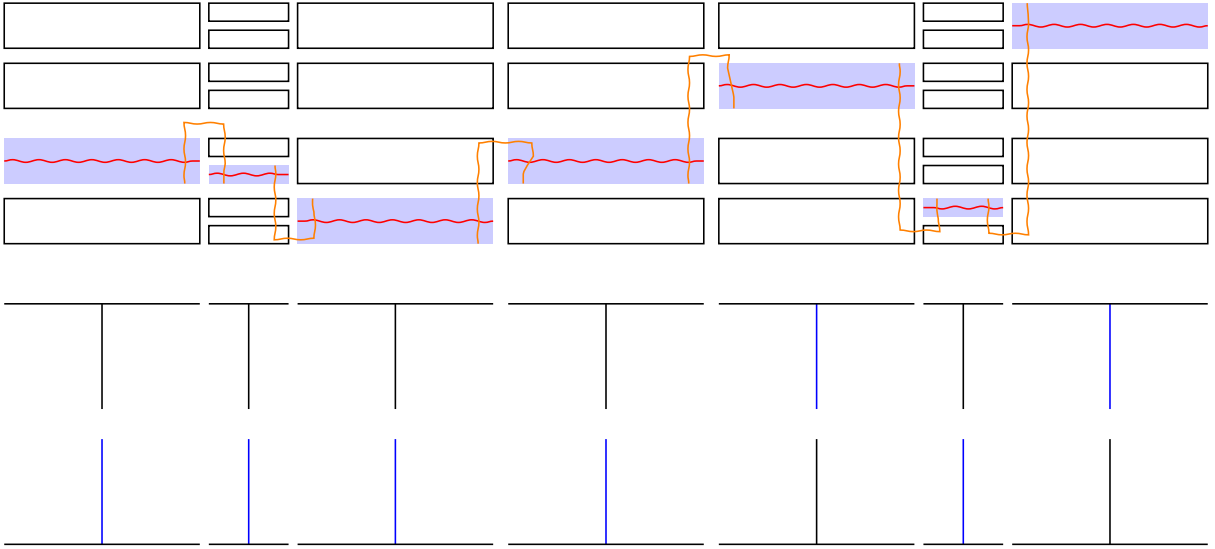

We are now ready to describe our algorithm. Since our construction of crossings at different levels are  interrelated, it is more convenient to start from level 
$200am_\Gamma$ instead. 
For descendants of $\tilde{V}_\ell^\Gamma$ in first $m_\Gamma + 100m$ levels (i.e., between the levels $(200am_\Gamma$ and $(200a+1)m_\Gamma + 100m - 1)$) we employ a very simple strategy, say 
\emph{Strategy I}, as outlined below. For convenience we only describe the construction of $\cross^{*, \ell'}_2$.\\
First we define an oriented sequence $S_{2, \ell'}$ which is a skeleton for the crossing $\cross^{*, 
\ell'}_2$. For any $I \in \mathscr C_{\ell', \Gamma, 0; \ell' \% 200m_\Gamma + 1}$, let $d_I$ be the integer such that the spans of rectangles in $\widetilde \descend_\ell$ based on $I$ form the collection $\mathscr C_{\ell' + 1, 1, 0; d_I, \principal}$ (see the 
discussion in the previous paragraph). Let $I_{\mathrm{left}, \ell'}$ be the leftmost interval in $\mathscr C_{\ell', \Gamma, 0; \ell' \% 200m_\Gamma + 
1}$. Select an interval $J_{I_{\mathrm{left}, \ell'}, \ell', 2}$ uniformly from $\mathscr C_{\ell', 1, 0; \ell' \% 200m_\Gamma + 1, \principal}$ (notice that $d_{I_{\mathrm{left}, \ell'}} = \ell' 
\% 200m_\Gamma + 1$). Now starting from the interval that is immediately to the right of $I_{\mathrm{left}, \ell'}$ we select, for each $I \in \mathscr C_{\ell', \Gamma, 0; \ell' \% 200m_\Gamma + 1}$, an interval $J_{I, \ell', 2}$ in $\mathscr C_{\ell'+1, 1, 0; d_I, 
\principal}$ recursively as follows. Let $I'$ be the last interval to the left of $I$ such that $d_{I'} \leq 
d_I$ (or in other words $|I'| \geq |I|$). We select $J_{I, \ell', 2}$ uniformly from all the intervals in $\mathscr C_{\ell'+1, 1, 0; d_I, \principal}$ that are 
also subsets of $J_{I', \ell', 2}$. To be precise, this selection is independent of all the previous selections. 
We now define $S_{2, \ell'}$ as the sequence of rectangles $I \times J_{I, \ell', 2}$ ordered according to the natural alignment of $I$'s in $\mathscr C_{\ell', \Gamma, 0; \ell' \% 200m_\Gamma + 
1}$. Finally we join the crossings through successive rectangles in $S_{2, \ell'}$ using appropriate gadgets (to be made precise later) to construct $\cross_2^{*, \ell'}$. See Figure~\ref{fig:strategy1} for an illustration.


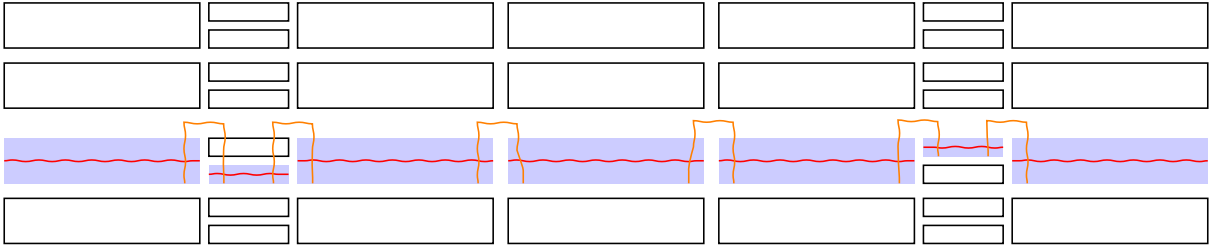
\begin{figure}[!htb]
\centering
\begin{tikzpicture}[semithick, scale = 4]

\foreach \y/\z in {0/0.15, 0.45/0.6, 0.65/0.8}
{
  \draw (-2, \y) rectangle (-1.35, \z);
  \draw (2, \y) rectangle (1.35, \z);
  \draw (-0.325, \y) rectangle (0.325, \z);
  \draw (0.375, \y) rectangle (1.025, \z);
  \draw (-0.375, \y) rectangle (-1.025, \z);
  }


\foreach \y/\z in {0/0.06, 0.09/0.15, 0.29/0.35, 0.45/0.51, 0.54/0.6, 0.65/0.71, 0.74/0.8}
{
  \draw (-1.32, \y) rectangle (-1.055, \z);
}

\foreach \y/\z in {0/0.06, 0.09/0.15, 0.2/0.26, 0.45/0.51, 0.54/0.6, 0.65/0.71, 0.74/0.8}
{
  \draw (1.32, \y) rectangle (1.055, \z);
}
\fill [blue!20] (-2, 0.2) rectangle (-1.35, 0.35);
\fill [blue!20] (2, 0.2) rectangle (1.35, 0.35);
\fill [blue!20] (-0.325, 0.2) rectangle (0.325, 0.35);
\fill [blue!20] (0.375, 0.2) rectangle (1.025, 0.35);
\fill [blue!20] (-0.375, 0.2) rectangle (-1.025, 0.35);

\fill [blue!20] (-1.32, 0.2) rectangle (-1.055, 0.26);
\fill [blue!20] (1.32, 0.29) rectangle (1.055, 0.35);
\draw [red, style = {decorate,decoration={snake,amplitude = 0.3}}] (-2, 0.275) -- (-1.35, 0.275);

\draw [red, style = {decorate,decoration={snake,amplitude = 0.3}}] (2, 0.275) -- (1.35, 0.275);

\draw [red, style = {decorate,decoration={snake,amplitude = 0.3}}] (-0.325, 0.275) -- (0.325, 0.275);

\draw [red, style = {decorate,decoration={snake,amplitude = 0.3}}] (0.375, 0.275) -- (1.025, 0.275);

\draw [red, style = {decorate,decoration={snake,amplitude = 0.3}}] (-0.375, 0.275) -- (-1.025, 0.275);

\draw [red, style = {decorate,decoration={snake,amplitude = 0.3}}] (-1.32, 0.23) -- (-1.055, 0.23);

\draw [red, style = {decorate,decoration={snake,amplitude = 0.3}}] (1.32, 0.32) -- (1.055, 0.32);
\draw [orange, style = {decorate,decoration={snake,amplitude = 0.3}}] (-1.4, 0.2) -- (-1.4, 0.37) -- (-1.27, 0.37) -- (-1.27, 0.2);

\draw [orange, style = {decorate,decoration={snake,amplitude = 0.3}}] (-1.105, 0.2) -- (-1.105, 0.37) -- (-0.975, 0.37) -- (-0.975, 0.2);

\draw [orange, style = {decorate,decoration={snake,amplitude = 0.3}}] (-0.425, 0.2) -- (-0.425, 0.37) -- (-0.275, 0.37) -- (-0.275, 0.2);

\draw [orange, style = {decorate,decoration={snake,amplitude = 0.3}}] (0.425, 0.2) -- (0.425, 0.37) -- (0.275, 0.37) -- (0.275, 0.2);

\draw [orange, style = {decorate,decoration={snake,amplitude = 0.3}}] (1.105, 0.29) -- (1.105, 0.37) -- (0.975, 0.37) -- (0.975, 0.2);

\draw [orange, style = {decorate,decoration={snake,amplitude = 0.3}}] (1.4, 0.2) -- (1.4, 0.37) -- (1.27, 0.37) -- (1.27, 0.29);
\end{tikzpicture}
\caption{{\bf Construction of $\cross^{*, 200am_\Gamma + 1}_2$ using Strategy I.} The filled rectangles define the skeleton $S_{2, 200am_\Gamma + 1}$ and $\cross^{*, 200am_\Gamma + 1}_2$ is indicated by the red and orange lines. The red and orange lines have the same meaning as in Figure~\ref{fig:skeleton}.}
\label{fig:strategy1}
\end{figure}
If $\ell \leq (200a + 1)m_\Gamma + 100m - 1$, 
then we are done. Otherwise we use \emph{Strategy~II} from level $(200a + 
1)m_\Gamma + 100m$ upwards. This strategy is the one that employs switchings to construct efficient 
crossings. We elaborate the construction for $\cross_2^{*, \ell'}$ in what follows.\\
Fix $\beta = \delta^{-40}$. For $j \in [2]$, partition the base of $\tilde V_{\ell'; 2, j, 1}^\Gamma$ (or equivalently $\tilde V_{\ell'; 2, j, 2}^\Gamma$) into intervals of length $\lfloor \beta a_{\ell' - 1} \rfloor + 1$ with the last interval of possibly smaller 
length. We will make small adjustments to the endpoints of these intervals to so that they ``almost'' coincide with the endpoints of intervals in $\mathscr C_{\ell, 
\Gamma, 0; \ell \% 200m_\Gamma + 1}$. There can be several ways to do this but we adopt a particular 
convention. For any two adjacent intervals we move the right endpoint of the left interval to the nearest right endpoint of an interval in $\mathscr C_{\ell, \Gamma, 0; 
\ell\%200m_\Gamma + 1}$. The left endpoint of the 
right interval is then moved accordingly. Thus we have a sequence of intervals $\{I_{\ell',j, j'}\}_{j' \in 
[\Gamma_{\ell', \beta}]}$ from left to right. Here $\Gamma_{\ell', \beta}$ is the total number of intervals 
which depends solely on $\Gamma, \beta$ and $\ell'$. 
We say that an interval $I \in \mathscr C_{\ell, \Gamma, 0; \ell\%200m_\Gamma + 1}$ \emph{overlaps} with some $I_{\ell, j, j'}$ if the right endpoint of $I$ lies in 
$I_{\ell, j, j'}$. Notice that $I$ can overlap with at 
most one $I_{\ell, j, j'}$. These new intervals correspond to our potential switching locations at level 
$\ell'$. Let us describe it in a formal way. Denote by $\tilde V^\Gamma_{\ell'; 2, j, j', k}$ the sub-rectangle of $\tilde V^\Gamma_{\ell'; 2, j, k}$ based on 
$I_{\ell', j, j'}$ ($k \in [2]$). For each $(j, j')$,  select (by a certain rule to be specified) a number $k_{\ell'; 2, j, j'} \in 
[2]$. These numbers will determine the $\ell' - 1$-coarsening of $\cross^{*, \ell'}_2$. Also select a rectangle $\tilde V_{\ell' - 1; \midd}^\Gamma$ from $\tilde V_{\ell'; \midd, 2}$ and some $k_{\midd, 2}$ 
from $[2]$. Now consider crossings $\cross^{*, \tilde V_{\ell'; 2, j}}_k$ for $j, k \in [2]$, and also $\cross^{*, \tilde V_{\ell' - 1; 
\midd}^\Gamma}_{k_{\midd, 2}}$. Let $S_{2, \ell', j, k}$ and $S_{2, \ell', \midd}$ denote \emph{the} skeletons (we will indeed keep track of a specific one) of the crossings $\cross^{*, \tilde V_{\ell'; 2, j}}_k$'s and $\cross^{*, \tilde V_{\ell' - 1; \midd}^\Gamma}_{k_{\midd, 2}}$ 
respectively. Also denote by $S_{2, \ell', j, j', k}$ the subsequence of $S_{2, \ell', j, j'}$ consisting of rectangles which are based on intervals overlapping with 
$I_{\ell', j, j'}$. 
Now define the oriented sequence $S_{2, \ell'}$ to be the one obtained by concatenating the subsequences $S_{2, \ell', j, j', k_{\ell'; 2, j, j'}}$'s and $S_{2, \ell', \midd}$ in appropriate order. 
From the definition of skeleton we know that there are gadgets that join the crossings through any two successive rectangles 
in each $S_{2, \ell', j, j', k_{\ell'; 2, j, j'}}$. But if $k_{\ell'; 2, j, j'} \neq k_{\ell'; 2, j, j'+ 1}$ for some $j'$ (i.e., we are making a switch at $I_{\ell', j, j'}$), we need to connect the rightmost and leftmost rectangles in $S_{2, \ell', j, j', k_{\ell'; 2, j, j'}}$ and $S_{2, \ell', j, j'+1, k_{\ell'; 2, j, j'+1}}$ respectively 
with some new gadgets. Let $R_{j, j', \mathrm{left}}$ and $R_{j, j', \mathrm{right}}$ denote these two rectangles. Also let $R_{j, j', \mathrm{right}; -1}$ be the rectangle that appears immediately before $R_{j, j', \mathrm{right}}$ in 
$S_{2, \ell', j, k_{\ell'; 2, j, j'+1}}$. Then there is a gadget $g_{j', j' + 1}$ joining the crossing through $R_{j, j', \mathrm{right}}$ with the crossing through $R_{j, j', \mathrm{right}; -1}$.
We now build a gadget $g_{\ell', 2, j; j', j'+1}$ so that the union of $g_{\ell', 2, j; j', j'+1}$ and $g_{j', j' + 1}$ forms a connected (multi) set which joins the crossings through $R_{j, j', \mathrm{left}}$ and $R_{j, 
j', \mathrm{right}}$. We refer to this procedure as 
\emph{gluing the junction at $I_{\ell, j, j'}$}. Finally we glue the junctions at the two ends of $\tilde V_{\ell' - 1; \midd}^\Gamma$ and get $\cross^{*, 
\ell'}_2$. See Figure~\ref{fig:strategy2} for an illustration.
\begin{figure}[!htb]
\centering
\begin{tikzpicture}[semithick, scale = 4]
\foreach \y/\z in {0/0.06, 0.09/0.15, 0.29/0.35, 0.45/0.51, 0.54/0.6, 0.74/0.8}
{\draw (-2, \y) rectangle (-1.735, \z); 
}
\foreach \y/\z in {0/0.06, 0.09/0.15, 0.29/0.35, 0.45/0.51, 0.54/0.6, 0.74/0.8}
{
 \draw (-1.615, \y) rectangle (-1.35, \z);
 
}
\foreach \y/\z in {0/0.06, 0.09/0.15, 0.29/0.35, 0.45/0.51, 0.54/0.6, 0.74/0.8}
{
 \draw (-1.32, \y) rectangle (-1.055, \z);

}

\foreach \y/\z in {0.09/0.15, 0.2/0.26, 0.29/0.35, 0.45/0.51, 0.54/0.6, 0.74/0.8}
{
 \draw (-0.760, \y) rectangle (-1.025, \z);
 
}

\foreach \y/\z in {0/0.06, 0.2/0.26, 0.29/0.35, 0.45/0.51, 0.65/0.71, 0.74/0.8}
{
 \draw (-0.375, \y) rectangle (-0.640, \z);
 
}

\foreach \y/\z in {0/0.06, 0.09/0.15, 0.2/0.26, 0.29/0.35, 0.54/0.6, 0.65/0.71, 0.74/0.8}
{
 \draw (-0.325, \y) rectangle (-0.06, \z);
}

\foreach \y/\z in {0/0.06, 0.09/0.15, 0.2/0.26, 0.29/0.35, 0.45/0.51, 0.65/0.71, 0.74/0.8}
{
 \draw (0.06, \y) rectangle (0.325, \z);
}

\foreach \y/\z in {0/0.06, 0.2/0.26, 0.29/0.35, 0.45/0.51, 0.54/0.6, 0.74/0.8}
{
 \draw (0.375, \y) rectangle (0.640, \z);
 
}

\foreach \y/\z in {0.09/0.15, 0.2/0.26, 0.29/0.35, 0.45/0.51, 0.65/0.71, 0.74/0.8}
{
 \draw (0.760, \y) rectangle (1.025, \z);
 
}

\foreach \y/\z in {0/0.06, 0.09/0.15, 0.2/0.26, 0.45/0.51, 0.65/0.71, 0.74/0.8}
{
 \draw (1.32, \y) rectangle (1.055, \z);
}

\foreach \y/\z in {0/0.06, 0.2/0.26, 0.29/0.35, 0.45/0.51, 0.54/0.6, 0.74/0.8}
{
 \draw (1.615, \y) rectangle (1.35, \z);
 
}

\foreach \y/\z in {0.09/0.15, 0.2/0.26, 0.29/0.35, 0.45/0.51, 0.65/0.71, 0.74/0.8}
{
 \draw (2, \y) rectangle (1.735, \z); 
 
}

\foreach \y / \z in {0/0.024, 0.036/0.06, 0.09/0.114, 0.2/0.224, 0.236/0.26, 0.29/0.314, 0.326/0.35, 0.45/0.474, 0.486/0.51, 0.576/0.6, 0.65/0.674, 0.686/0.71, 0.74/0.764, 0.776/0.8}
{ \draw (-1.725, \y) rectangle (-1.625, \z); 

}

\foreach \y / \z in {0/0.024, 0.036/0.06, 0.09/0.114, 0.2/0.224, 0.236/0.26, 0.29/0.314, 0.326/0.35, 0.45/0.474, 0.486/0.51, 0.576/0.6, 0.65/0.674, 0.686/0.71, 0.74/0.764, 0.776/0.8}
{ 
  \draw (-0.75, \y) rectangle (-0.65, \z);
}

\foreach \y / \z in {0/0.024, 0.036/0.06, 0.09/0.114, 0.2/0.224, 0.236/0.26, 0.29/0.314, 0.326/0.35, 0.45/0.474, 0.486/0.51,  0.576/0.6, 0.65/0.674, 0.686/0.71, 0.74/0.764, 0.776/0.8}
{ 
  \draw (0.65, \y) rectangle (0.75, \z); 
  
}

\foreach \y / \z in {0/0.024, 0.036/0.06, 0.126/0.15, 0.2/0.224, 0.236/0.26, 0.29/0.314, 0.326/0.35, 0.45/0.474, 0.486/0.51, 0.54/0.564, 0.65/0.674, 0.686/0.71, 0.74/0.764, 0.776/0.8}
{
  \draw (1.725, \y) rectangle (1.625, \z); 
 
}

\foreach \y / \z in {0/0.024, 0.036/0.06, 0.09/0.114, 0.126/0.15, 0.2/0.224, 0.236/0.26, 0.29/0.314, 0.326/0.35, 0.45/0.474, 0.486/0.51, 0.54/0.564, 0.65/0.674, 0.686/0.71, 0.74/0.764, 0.776/0.8}
{ 
  \draw (-0.05, \y) rectangle (0.05, \z); 
 
}
\fill [blue] (-2, 0.65) rectangle (-1.735, 0.71); 
\draw [red] (-2, 0.68) -- (-1.735, 0.68); 

\fill [blue!20] (-2, 0.2) rectangle (-1.735, 0.26); 

\draw [red] (-1.785, 0.65) -- (-1.785, 0.73) -- (-1.71, 0.73) -- (-1.71, 0.54); 

\fill [blue] (-1.725, 0.54)  rectangle (-1.625, 0.564);
\draw [red] (-1.725, 0.552)  -- (-1.625, 0.552);

\fill [blue] (-1.615, 0.65) rectangle (-1.35, 0.71);
\draw [red] (-1.615, 0.68) -- (-1.35, 0.68); 

\draw [red] (-1.640, 0.54) -- (-1.640, 0.73) -- (-1.565, 0.73) -- (-1.565, 0.65);

\fill [blue!20] (-1.615, 0.2) rectangle (-1.35, 0.26);

\fill [blue!20] (-1.32, 0.65) rectangle (-1.055, 0.71);

\fill [blue] (-1.32, 0.2) rectangle (-1.055, 0.26);
\draw [red] (-1.32, 0.23) -- (-1.055, 0.23); 

\draw [orange, style = {decorate,decoration={snake,amplitude = 0.2}}] (-1.4, 0.71) -- (-1.4, 0.18) -- (-1.27, 0.18) -- (-1.27, 0.26);

\fill [blue!20] (-1.025, 0.65) rectangle (-0.760, 0.71);

\fill [blue] (-1.025, 0) rectangle (-0.760, 0.06);
\draw [red] (-1.025, 0.03) -- (-0.760, 0.03); 

\draw [red] (-1.105, 0.2) -- (-1.105, 0.275) -- (-0.975, 0.275) -- (-0.975, 0);

\fill [blue] (-0.75, 0.126) rectangle (-0.65, 0.15);
\draw [red] (-0.75, 0.1358) -- (-0.65, 0.1358);

\draw [red] (-0.81, 0) -- (-0.81, 0.18) -- (-0.735, 0.18) -- (-0.735, 0.126);

\fill [blue] (-0.640, 0.54) rectangle (-0.375, 0.6);
\draw [red] (-0.640, 0.57) -- (-0.375, 0.57);

\draw [orange, style = {decorate,decoration={snake,amplitude = 0.2}}] (-0.665, 0.126) -- (-0.665, 0.63) -- (-0.59, 0.63) -- (-0.59, 0.54);

\fill [blue!20] (-0.640, 0.09) rectangle (-0.375, 0.15);

\fill [green!20] (-0.325, 0.45) rectangle (-0.06, 0.51);
\draw [red] (-0.325, 0.48) -- (-0.06, 0.48);

\draw [orange, style = {decorate,decoration={snake,amplitude = 0.2}}] (-0.425, 0.54) -- (-0.425, 0.62) -- (-0.275, 0.62) -- (-0.275, 0.45); 

\fill [green!20] (-0.05, 0.576) rectangle (0.05, 0.6); 
\draw [red] (-0.05, 0.588) rectangle (0.05, 0.588);

\draw [red] (-0.11, 0.45) -- (-0.11, 0.62) -- (-0.035, 0.62) -- (-0.035, 0.576);

\fill [green!20] (0.06, 0.54) rectangle (0.325, 0.6);
\draw [red] (0.06, 0.57) -- (0.325, 0.57);

\draw [red] (0.035, 0.576) -- (0.035, 0.62) -- (0.11, 0.62) -- (0.11, 0.54);


\fill [blue] (0.375, 0.65) rectangle (0.640, 0.71);
\draw [red] (0.375, 0.68) -- (0.640, 0.68);

\draw [orange, style = {decorate,decoration={snake,amplitude = 0.2}}] (0.275, 0.54) -- (0.275, 0.7) -- (0.425, 0.7) -- (0.425, 0.65); 

\fill [blue!20] (0.375, 0.09) rectangle (0.640, 0.15);

\fill [blue] (0.65, 0.54) rectangle (0.75, 0.564); 
\draw [red] (0.65, 0.552) rectangle (0.75, 0.552); 

\fill [blue] (0.760, 0.54) rectangle (1.025, 0.6);
\draw [red] (0.760, 0.57) -- (1.025, 0.57);

\draw [red] (0.59, 0.65) -- (0.59, 0.73) -- (0.665, 0.73) -- (0.665, 0.54);
\draw [red] (0.735, 0.54) -- (0.735, 0.62) -- (0.81, 0.62) -- (0.81, 0.54);

\fill [blue!20] (0.760, 0)    rectangle (1.025, 0.06);

\fill [blue!20] (1.055, 0.54) rectangle (1.32, 0.6);

\fill [blue] (1.055, 0.29) rectangle (1.32, 0.35);
\draw [red] (1.055, 0.32) -- (1.32, 0.32);

\draw [orange, style = {decorate,decoration={snake,amplitude = 0.2}}] (0.975, 0.6) -- (0.975, 0.01) -- (1.105, 0.01) -- (1.105, 0.35);


\fill [blue!20] (1.615, 0.65) rectangle (1.35, 0.71);

\fill [blue] (1.615, 0.09) rectangle (1.35, 0.15);
\draw [red] (1.615, 0.12) -- (1.35, 0.12);

\draw [red] (1.27, 0.35) -- (1.27, 0.07) -- (1.4, 0.07) -- (1.4, 0.15);

\fill [blue] (1.725, 0.09) rectangle (1.625, 0.114); 
\draw [red] (1.725, 0.102)  -- (1.625, 0.102); 
\draw [red] (1.565, 0.09) -- (1.565, 0.17) -- (1.645, 0.17) -- (1.645, 0.09);

\fill [blue] (2, 0.54)     rectangle (1.735, 0.6); 
\draw [red] (2, 0.57)   -- (1.735, 0.57);

\fill [blue!20] (2, 0)     rectangle (1.735, 0.06);

\draw [orange, style = {decorate,decoration={snake,amplitude = 0.2}}] (1.71, 0.09) -- (1.71, 0.62) -- (1.785, 0.62) -- (1.785, 0.54);

\fill [blue!20] (-1.725, 0.126) rectangle (-1.625, 0.15); 

\fill [blue!20] (-0.75, 0.54) rectangle (-0.65, 0.564);

\fill [blue!20] (0.65, 0.126) rectangle (0.75, 0.15);

\fill [blue!20] (1.725, 0.576)   rectangle (1.625, 0.6);







\draw [dashed] (-2.03, 0.42) rectangle (-0.355, 0.83);
\draw [dashed] (2.03, 0.42) rectangle (0.355, 0.83);
\draw [dashed] (2.03, -0.03) rectangle (0.355, 0.38);
\draw [dashed] (-2.03, -0.03) rectangle (-0.355, 0.38);
\draw [<->] (2.375 -2, -0.06) -- (2.375 - 1.67, -0.06);
\draw [<->] (2.375 - 1.65, -0.06) -- (2.375 - 1.33, -0.06);
\draw [<->] (2.375 - 1.31, -0.06) -- (2.375 - 0.99, -0.06);	
\draw [<->] (2.375 - 0.97, -0.06) -- (2.375 - 0.65, -0.06);
\draw [<->] (2.375 - 0.64, -0.06) -- (2.375 - 0.38, -0.06);

\draw [<->] (-2, -0.06) -- (-1.67, -0.06);
\draw [<->] (-1.65, -0.06) -- (-1.33, -0.06);
\draw [<->] (-1.31, -0.06) -- (-0.99, -0.06);	
\draw [<->] (-0.97, -0.06) -- (-0.65, -0.06);
\draw [<->] (-0.64, -0.06) -- (-0.38, -0.06);
\draw [<->] (-2, -0.11) -- (-1.625, -0.11);
\draw [<->] (-1.605, -0.11) -- (-1.35, -0.11);
\draw [<->] (-1.31, -0.11) -- (-1.045, -0.11);
\draw [<->] (-1.015, -0.11) -- (-0.65, -0.11);
\draw [<->] (-0.64, -0.11) -- (-0.38, -0.11);

\draw [<->] (2.375 - 2, -0.11) -- (2.375 - 1.625, -0.11);
\draw [<->] (2.375 - 1.605, -0.11) -- (2.375 - 1.35, -0.11);
\draw [<->] (2.375 - 1.31, -0.11) -- (2.375 - 1.045, -0.11);
\draw [<->] (2.375 - 1.015, -0.11) -- (2.375 - 0.65, -0.11);
\draw [<->] (2.375 - 0.64, -0.11) -- (2.375 - 0.38, -0.11);

\node [scale = 0.7, below]  at (-1.8125, -0.11) {$I_{\ell', 1, 1}$};

\node [scale = 0.7, below]  at (-1.4775, -0.11) {$I_{\ell', 1, 2}$};

\node [scale = 0.7, below]  at (-1.1775, -0.11) {$I_{\ell', 1, 3}$};

\node [scale = 0.7, below]  at (-0.8325, -0.11) {$I_{\ell', 1, 4}$};

\node [scale = 0.7, below]  at (-0.51, -0.11) {$I_{\ell', 1, 5}$};

\node [scale = 0.7, below]  at (2.375 - 1.8125, -0.11) {$I_{\ell', 2, 1}$};

\node [scale = 0.7, below]  at (2.375 - 1.4775, -0.11) {$I_{\ell', 2, 2}$};

\node [scale = 0.7, below]  at (2.375 - 1.1775, -0.11) {$I_{\ell', 2, 3}$};

\node [scale = 0.7, below]  at (2.375 - 0.8325, -0.11) {$I_{\ell', 2, 4}$};

\node [scale = 0.7, below]  at (2.375 - 0.51, -0.11) {$I_{\ell', 2, 5}$};






\end{tikzpicture}
\caption{{\bf Construction of $\cross^{*, \ell'}_2$ using Strategy II.} The crossing is indicated by red and orange lines. The rectangles without broken boundary lines are the elements of $\widetilde \descend_{\ell'}$ which are subsets of $\tilde V_{\ell'; 2}$. The 4 rectangles with broken boundary lines are only meant to demarcate the elements of $\widetilde \descend_{\ell'}$ that are contained in $\tilde V_{\ell'; 2, j, k}$ for some $j, k \in [2]$. The first set of arrows (i.e. the ones nearest to the broken lines) indicate the original partition of the base of $\tilde V_{\ell'; 2, j, k}$ whereas the second set of arrows indicate $I_{\ell', j, j'}$'s. Hence $\Gamma_{\ell', \beta} = 5$ in this figure. Rectangles filled with blue (both light and deep) define the skeletons $S_{2, \ell', j, k}$'s for $j, k \in [2]$ and the rectangles filled with green define the skeleton $S_{2, \ell', \midd}$. $S_{2, \ell'}$ consists of the rectangles filled with deep blue. Thus, in this figure, $k_{\ell';2, j, j'} = 1$ for $j' \in \{1, 2, 5\}$ and $ = 2$ for $j' \in \{3, 4\}$ for all $j \in [2]$. The orange lines indicate the gadgets used for gluing the junctions at switching locations. All other gadgets are inherited from $\cross_k^{*, \tilde V_{\ell'; 2, j}}$'s and $\cross_{k_{\midd, 2}}^{*, \tilde V_{\ell' - 1; \midd}}$.}
\label{fig:strategy2}
\end{figure}
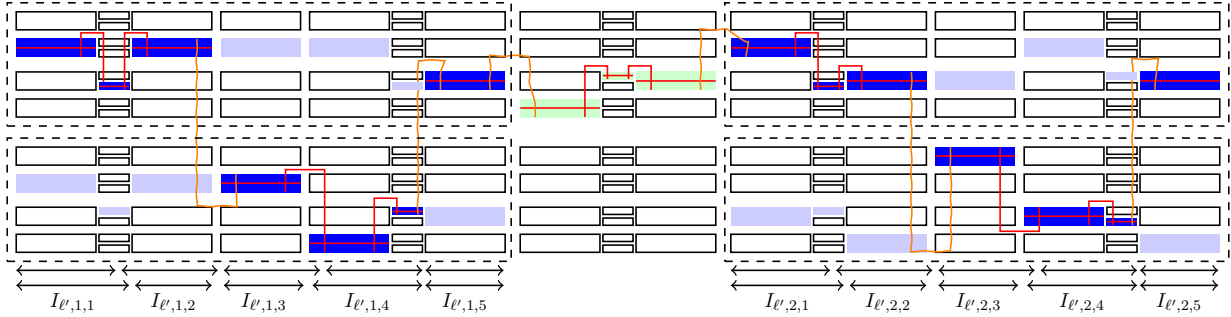

It would be useful to have some convenient notations for the weight of crossings as well as their expected 
values. To this end define for any rectangle $B \in \B_\ell$,
$$D_{\gamma, B, i} = \sum_{v \in \cross^{*, B}_i}m_{B,i}(v)\e^{\gamma \eta_{n, B, v}} \mbox{ and } d_{\gamma, B} = \E  D_{\gamma, B, i}\,,$$ 
where $m_{B, i}(v)$ is the multiplicity of $v$ in the multiset 
$\cross^{*, B}_i$ and $\eta_{n, B, v}$ is the fine field on $B$ (see the end of Subsection~\ref{subsec:GFF_represent}). 
Let $D_{\gamma, B, i, \join}$ denote the difference between $D_{\gamma, B, i}$ and the total weight of crossings (with respect to $\{\eta_{n, B, .}\}$) through the rectangles in the skeleton of $\cross^{*, B}_i$. Thus $D_{\gamma, B, i, \join}$ is the total weight of gadgets that we have used to build $\cross^{*, 
B}_i$ from its skeleton. We denote its expectation by 
$d_{\gamma, B, \join}$. When the underlying rectangle is $\tilde{V}_\ell^\Gamma$, we replace ``$B$'' with ``$\ell$'' in these notations. Finally for a subset $S$ of $B_i$ define,
\begin{equation}\label{eq-def-d}
D_{\gamma, B_i, S} = \sum_{v \in \cross^{*, B}_i \cap S}m_{B,i}(v)\e^{\gamma \eta_{n, B, v}} \mbox{ and } d_{\gamma, B_i, S} = \E  D_{\gamma, B_i, S}\,.
\end{equation}

\subsection{The induction hypotheses}
\label{subsec:induct_hypo}
We need several induction hypotheses to carry out the 
proof of Theorem~\ref{theo:main} which we state in this 
subsection. In Subsection~\ref{subsec:descrip}, we discussed a general scheme to construct crossings through $B \in \B_\ell$ using the fine field on $B$ and 
some ``extra'' random variables. Before we formulate our hypothesis, it is necessary to have a precise 
description of these random variables. To this end, let us introduce a collection of random variables $\Xi_{B, i}$ for each $\ell' \leq n, B \in \B_{\ell'}$ and $i \in 
[5]$. The variables in $\cup_{B \in \B_\ell, \ell \leq n, i \in [5]}\Xi_{B, i}$ are independent of each 
other. They are also independent of the field 
$\{\eta_{n, v}\}_{v \in \tilde V_n^\Gamma}$. Further the members of $\Xi_{B, i}$ can be arranged in a sequence $(\xi_{B, i, 1}, \xi_{B, i, 2}, \ldots, \xi_{B, i, |\Xi_{B, j}|})$ so that these sequences are identically 
distributed for all $B \in \B_{\ell'}$. We will identify $\Xi_{B, i}$ with this sequence whenever we need to use 
$\Xi_{B, i}$ as a random vector. We do not define $\xi_{B, i, j}$'s a priori, rather they will be defined explicitly while we carry out the induction step. So at the beginning one can think of the families $\Xi_{B,i}$'s as collections of independent random 
variables. Let $B \in \B_{\ell'}$. We denote by $\Xi_B$ the union of $\Xi_{B', i}$'s for all descendants $B'$ of $B$ and $i \in [5]$. If $B = \tilde V_{\ell'}^\Gamma$, we denote these families as $\Xi_{\ell', j}, 
\Xi_{\ell'}$ etc. Now we are ready to state the induction hypotheses. At the beginning of step $\ell$ we assume that each of these hypotheses holds for all 
$0 \leq \ell' < \ell$ (unless otherwise mentioned) and all $B \in \B_{\ell'; \principal}$. To avoid complications we phrase them only for $B = \tilde V_{\ell'}^\Gamma$.

Our first hypothesis asserts that the construction of $\cross_1^{*, \ell'}$ and $\cross_2^{*, \ell}$ are ``internal'' to $\tilde V_{\ell'}^\Gamma$:
\vspace{-0.2in}
\begin{hypothesis}
\label{hypo:measurabilty}
\mbox{$\cross_1^{*, \ell'}$ and $\cross_2^{*, \ell'}$ are determined by the random variables $\{\eta_{n, \ell', v}\}_{v \in \tilde V_{\ell'}^\Gamma}$ and $\Xi_{\ell'}$}\,.
\end{hypothesis}
Let $\overline \cross_1^{*, \ell'}$ be the image of $\cross_1^{*, \ell'}$ with respect to the reflection of 
$\tilde V_{\ell'}^\Gamma$. Then $\overline \cross_1^{*, \ell'}$ is a crossing through $\tilde V_{\ell'; 
2}^\Gamma$. The following hypothesis states that $\overline \cross_1^{*, \ell'}$, $\cross_2^{*, \ell'}$ are identically distributed as and so are their weights. Formally,
\begin{hypothesis}
\label{hypo:identical_distribution}
\mbox{$(\overline \cross_1^{*, \ell'}, \{\eta_{n, \ell', \overline v}\}_{v \in \tilde V_{\ell'}^\Gamma})$ is identically distributed as $(\cross_2^{*, \ell'}, \{\eta_{n, \ell', v}\}_{v \in \tilde V_{\ell'}^\Gamma})$}\,,
\end{hypothesis}
where $\overline v$ is the reflected image of $v$.

Our third hypothesis, which is our main hypothesis, gives an upper bound on the increase in expected weight of crossings between two successive 
levels. One of the keys for driving this hypothesis is the limit result given in 
Lemma~\ref{lem:Brownian_scaling}. But for this lemma to be effective, we need $N$ (as in the lemma) to be sufficiently large so that the error term 
becomes sufficiently small. As a consequence our 
desired upper bounds will only be posed for large $\ell'$. In order to incorporate this, we 
denote by $a'$ the smallest integer bigger than $4$ such that the error term in Lemma~\ref{lem:Brownian_scaling} is less than $10^{-6}$ for all $N \geq \lfloor (2 + \delta)^{200a'm_\Gamma} \rfloor + 1$ whenever $\theta = \delta / (2 + 3\delta)$, $\Upsilon = \Gamma\delta / 16$ and $\mathcal I = \tfrac{\delta}{2 + 3\delta} + \tfrac{2 + \delta}{2 + 3\delta}\mathcal I_{0, 1, 0; d, \principal}$ (see Subsection~\ref{subsec:self_similar}) for some $d \in 
[m_\Gamma + 100m + 1, 200m_\Gamma + m]$. This class of subsets (of $\R$) are  limits of sets that are union of spans of all possible rectangles in $S_{i, \ell'}$ based on a particular interval $I$ in $\mathscr C_{\ell', \Gamma, 0; \ell' \% 200m_\Gamma + 1}$, scaled (after a translation) so that the span of $\tilde V_{\ell'}^\Gamma$ converges to $[0, 1]$ as 
$\ell' \to \infty$. The reason for the choice of $\Upsilon$ will be clarified in Subsection~\ref{subsec:hard-remaining}. Recalling definition of $d_{\gamma, \ell'}$ in \eqref{eq-def-d} and preceding texts, our third hypothesis states
\begin{hypothesis}
\label{hypo:expected_weight}
d_{\gamma, \ell'} \leq d_{\gamma, \ell' - 1}(2 + \delta + e_{\ell'} \gamma^2)\,.
\end{hypothesis}
where
$$e_{\ell'} = \begin{cases}
-0.045 &\mbox{if }\ell' \geq 200a'm_\Gamma\mbox{ and }\ell' \% 200m_\Gamma \geq m_\Gamma + 100m
\,,\\
1 &\mbox{if }\ell' \geq 200a'm_\Gamma\mbox{ and }\ell' \% 200m_\Gamma < m_\Gamma + 100m
\,,\\
O(1)\log(1 / \delta) &\mbox{if }1 \leq \ell' < 200a'm_\Gamma\,.
\end{cases}$$
We also need a lower bound on the increment (for convenience of analysis). For $\ell' \geq 1$,
\begin{hypothesis}
\label{hypo:short_range_ratio}
d_{\gamma, \ell'} \geq d_{\gamma, \ell' - 1}(2 + \delta - 4\delta^{-1/8}\gamma^2)\,.
\end{hypothesis}
The next  hypothesis states that the total expected weight of gadgets used at any level is negligible:
\begin{hypothesis}
\label{hypo:gadget_negligible}
d_{\gamma, \ell', \join} \leq 8\log(1 / \delta) d_{\gamma, \ell' - 1}(\ell' \% 200m_\Gamma + 1)\gamma^2 \,.
\end{hypothesis}
A crucial component of our proof is the symmetry of 
switchings we make at every step. Our next hypothesis 
gives a formal formulation. Let $i \in [2]$, $I \in \mathscr C_{\ell', \Gamma, 0; \ell' \% 200m_\Gamma}$ and $R_{I, i}^{\ell'}$ be the unique interval in the skeleton $S_{i, \ell'}$ of $\cross^{*, \ell'}_i$ that is 
based on $I$. Also denote by $\descend_{\ell', I, i}$ the collection of all possible choices for $R_{I, i}^{\ell'}$ and by $\B_{\ell', I, i}$ the collection of all rectangles in $\descend_{\ell'}$ that contain some 
member of $\descend_{\ell', I, i}$. Then,
\begin{hypothesis}
\label{hypo:symmetry}
\mbox{$R_{I, i}^{\ell'}$ is uniform in $\descend_{\ell', I, i}$ and is determined by $(\{X_{n, B, \ell'', .}\}_{B \in \B_{\ell', I, i}, \ell'' \in [\lfloor \ell' / 200m_\Gamma\rfloor, \ell']}, \Xi_{\ell'})$\,. 
}
\end{hypothesis}
\vspace{-0.05in}

While describing Strategy II, we defined the sequence $S_{i, \ell', \midd}$. This is essentially the subsequence of $S_{i, \ell'}$ whose elements (i.e. rectangles) are based on intervals contained in $\lfloor \mathcal I_{\ell - m - 1, \Gamma, \lfloor \Gamma a_{\ell 
- 1}\rfloor} \rfloor$. Hence we can define the same for 
Strategy I as well. Our next induction hypothesis states 
an important property of $S_{i, \ell', \midd}$: for $\ell'_m = (\ell' - 100m + 1) \vee (\lfloor \ell' / 200m_\Gamma\rfloor - m)$,
\begin{hypothesis}
\label{hypo:irrelevance}
\mbox{the $\ell'_m$-coarsening of $S_{i, \ell', \midd}$ lies in $\Xi_{\ell', i + 2}$}\,.
\end{hypothesis}
Finally we want a fixed bound on the total number of switches that we make at every step when we apply 
Strategy II. Thus the following induction hypothesis applies only for levels satisfying $\ell' \geq a'200m_\Gamma$ and $\ell' \% 200m_\Gamma \geq 
m_\Gamma + 100m$. For $i \in [2]$,
\begin{hypothesis}
\label{hypo:bound_n_switch}
\mbox{The number of switches made at step~$\ell'$ to construct $\cross_{i}^{*, \ell'}$ is at most $3\alpha$ 
}\,.
\end{hypothesis}
In accordance with our formulation of \eqref{hypo:expected_weight}, we will split our discussion 
of the induction step into three different cases. The base case includes all $0 \leq \ell < 200a'm_\Gamma$ and the easy case includes all $\ell \geq 200a'm_\Gamma$ 
such that $\ell \% 200m_\Gamma < m_\Gamma + 100m$. We discuss these two cases together in Subsection~\ref{subsec:easy} 
as we use Strategy~I for both. The hard case includes all $\ell \geq a'200m_\Gamma$ such that $\ell \% 
200m_\Gamma \geq m_\Gamma + 100m$. We discuss this case in Subsections~\ref{subsec:induct_step_hard}, \ref{subsec:error_terms} and 
\ref{subsec:hard-remaining}. In the ensuing analysis we repeatedly use the fact that $\delta$ is fixed but small 
and $\gamma \ll \delta$. For the sake of convenience, we do not provide explicit bounds on $\delta$ or $\gamma$ that are required for any given bound or inequality to 
hold. However these requirements should be clear at any particular context.

\subsection{Induction step for the base and the easy case} \label{subsec:easy}
We have already defined the skeleton $S_{i, \ell}$ of $\cross^{*, \ell}_i$ in a precise way when we described 
Strategy I in Subsection~\ref{subsec:descrip}. We further point out that the interval valued random variables 
and $J_{I, \ell, i}$'s (for $I \in \mathscr C_{\ell, \Gamma, 0; \ell \% 200m_\Gamma+1}$) lie in $\Xi_{\ell', i}$. In what follows, we verify the induction hypothesis in the base and easy cases.

For the base case, i.e., when $0 \leq \ell < 200a'm_\Gamma$, we use the rectangles $\tilde V_{\ell', i'}^\Gamma$'s for $-(m + 1) \leq \ell' \leq -1$ and $i' 
\in [2]$ as building blocks in $S_{i, \ell}$. Since these rectangles are essentially straight lines, we do not need any separate gadget to join the crossings through successive rectangles in the skeleton. 
Evidently , $\cross^{*, \ell}_i$ satisfies the hypotheses~\eqref{hypo:measurabilty}, \eqref{hypo:identical_distribution}, \eqref{hypo:gadget_negligible}, \eqref{hypo:symmetry} and 
\eqref{hypo:irrelevance}. Since $a_{\ell' - \ell''} (1 - \tfrac{O(\gamma^2)}{\alpha \delta}) \leq d_{\gamma, \ell'} / d_{\gamma, \ell''} \leq a_{\ell' - \ell''} (1 + \tfrac{O(\gamma^2)}{\alpha \delta})$ for all $-(m + 1) \leq \ell'' < \ell' \leq 0$, it is easy to verify that $\cross^{*, \ell}_i$ obeys \eqref{hypo:expected_weight}. 
This is because the coarse field variance is at most 
$O(\log(1 / \delta))$ at each level by 
Lemma~\ref{lem:free_field_var}. Finally \eqref{hypo:short_range_ratio} follows from the induction hypothesis \eqref{hypo:expected_weight}.

For the easy case, i.e., when $\ell \geq 200a'm_\Gamma$ and $\ell\% 200m_\Gamma < m_\Gamma + 100m$, we need to define some gadgets in order to join the crossings through successive 
rectangles in $S_{i, \ell}$. To this end consider the crossing $\cross^{*, I, \ell, i}$ through $I \times J_{I, \ell', i}$ which is a rectangle in $\widetilde 
\descend_\ell$. We can link the crossings $\cross^{*, I, \ell, i}$'s in a simple way which we call \emph{sewing} 
for convenience. We describe this technique in a general setting as we will use it 
several times. The reader is referred to Fig~\ref{fig:gadget_sew} for an illustration.

Consider two adjacent intervals $I_1$ and $I_2$ in $\mathscr C_{n, \Gamma, 0; r}$ where $n - r$ is big 
enough so that $a_{n - r} \geq \delta$. From the description of $\mathscr C_{n, \Gamma, 0; r}$, we know that $|I_j| = \lfloor \Gamma a_{m_j} \rfloor + 1$ for some $m_j \in \{n - r - m, \ldots, n - r\}$ (here $j \in 
[2]$). Suppose, without loss of generality, that $I_1$ 
is longer (or of equal length) than $I_2$. Let $I_2' \subseteq I_1'$ be intervals of lengths $\lfloor a_{m_2} \rfloor + 1$ and $\lfloor a_{m_1} \rfloor + 1$ respectively such that the rectangles $R_{I_j} = I_j 
\times I_j'$'s are contained in $\lfloor \mathcal I_{n, \Gamma, 0} \rfloor \times \lfloor \mathcal I_{n + 1, 1, 
0} \rfloor$. Also define three additional rectangles $R_{1, 2; 1} = \lfloor [r_{I_1} - \lfloor a_{m_1 - m_\Gamma}\rfloor , r_{I_1}]\rfloor \times I_1$, $R_{1,2;2} = \lfloor [r_{I_1}+ 1, r_{I_1} + 1 + \lfloor a_{m_2 - m_\Gamma}\rfloor]\rfloor \times I_2$, and $R_{1,2;3} = \lfloor [r_{I_1} - \lfloor a_{m_1 - m_\Gamma}\rfloor, r_{I_1} - \lfloor a_{m_1 - m_\Gamma}\rfloor + \lfloor a_{m_{1} - m_\Gamma + 2}\rfloor] \rfloor \times \lfloor [p_{I_2'}, p_{I_2'} + \lfloor a_{m_{1} - 2m_\Gamma + 
2}\rfloor]\rfloor$. If we have ``already'' built crossings at these levels, then we can construct up-down crossings $\cross^{*, R_{1, 2; 1}}$ , $\cross^{*, R_{1, 2; 2}}$ for $R_{1, 2; 1}$ and $R_{1, 2; 2}$ respectively; and a left-right crossing $\cross^{*, 
R_{1, 2; 3}}$ for $R_{1, 2; 3}$. Now let $\cross_{R_{I_1}}$ and $\cross_{R_{I_2}}$ be (left-right) crossings for $R_{I_1}$ and $R_{I_2}$ 
respectively. Notice that union (as multisets) of the crossings $\cross_{R_{I_1}}$, $\cross_{R_{I_2}}$, $\cross^{*, R_{1, 2; 1}}$, $\cross^{*, R_{1, 2; 2}}$ and $\cross^{*, R_{1, 2; 3}}$ is a crossing between 
$\partial_{\mathrm{left}}R_{I_1}$ and 
$\partial_{\mathrm{left}}R_{I_2}$. We refer to this as the crossing obtained from sewing $\cross_{R_{I_1}}$ and 
$\cross_{R_{I_2}}$. Similar construction could be done if the longer dimensions of the rectangles $R_{I_1}$ and 
$R_{I_2}$ were in vertical direction. The extra random variables that are used to build these gadgets are independent for all different gadgets and are elements of $\Xi_{\ell, 5}$.
\begin{figure}[!htb]
\centering
\begin{tikzpicture}[scale = 10]
\draw [dashed] (-0.7, -0.22) rectangle (0.7, 0.22);

\draw (-0.65, -0.05) rectangle (-0.15, 0.1);
\draw (-0.14, -0.05) rectangle (0.15, 0.04);

\draw [purple, style={decorate,decoration={snake,amplitude = 0.5}}] (-0.65, 0.05) -- (-0.15, 0.05);
\draw [purple, style={decorate,decoration={snake,amplitude = 0.5}}] (-0.14, 0) -- (0.15, 0);

\draw [blue, style={decorate,decoration={snake,amplitude = 0.5}}] (-0.17, -0.05) -- (-0.17, 0.1);
\draw [red, dashed] (-0.19, -0.05) -- (-0.19, 0.1);

\draw [blue, style={decorate,decoration={snake,amplitude = 0.5}}] (-0.125, -0.05) -- (-0.125, 0.04);
\draw [red, dashed] (-0.11, -0.05) -- (-0.11, 0.04);

\draw [blue, style={decorate,decoration={snake,amplitude = 0.5}}] (-0.19, -0.04) -- (-0.08, -0.04);
\draw [red, dashed] (-0.19, -0.05) rectangle (-0.08, -0.03);

\node [scale = 0.7] at (-0.4, 0.025) {$R_{I_1}$};
\node [scale = 0.7, below] at (0.005, -0.01) {$R_{I_2}$};

\end{tikzpicture}
\caption{{\bf Sewing $\cross_{R_{I_1}}$ and $\cross_{R_{I_2}}$.} The crossings $\cross_{R_{I_1}}$ and $\cross_{R_{I_2}}$ are indicated by purple lines. The two vertical blue lines indicate the crossings $\cross^{*, R_{1, 2; 1}}$ (left) and $\cross^{*, R_{1, 2; 2}}$ (right). The horizontal blue line indicates the crossing $\cross^{*, R_{1, 2; 3}}$.}
\label{fig:gadget_sew}
\end{figure}
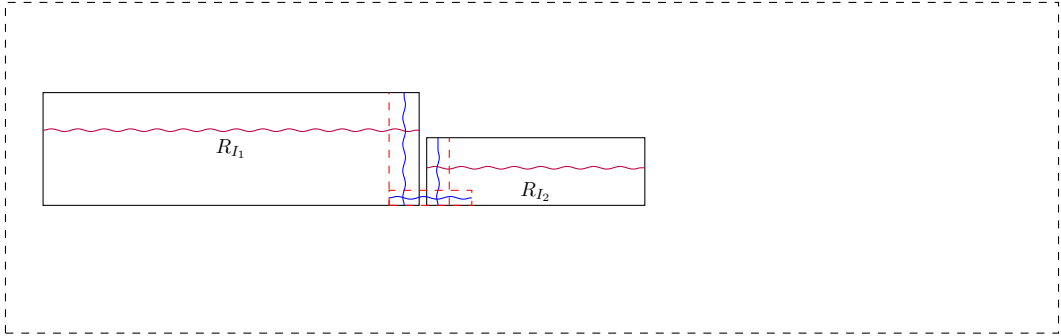

Thus we obtain our crossing $\cross^{*, \ell}_i$ through 
$\tilde V_{\ell; i}^\Gamma$. It is clear that $\cross^{*, \ell}_i$'s satisfy the hypotheses \eqref{hypo:measurabilty},\eqref{hypo:identical_distribution}, \eqref{hypo:symmetry}, \eqref{hypo:irrelevance}. 
Verifying \eqref{hypo:expected_weight} for $\ell 
\geq a'200m_\Gamma$ requires some work. We will perform a very similar calculation in Subsection~\ref{subsec:hard-remaining} to verify the 
same hypothesis for hard case. So we defer its 
discussion till then. \eqref{hypo:short_range_ratio} holds because of the induction hypothesis 
\eqref{hypo:expected_weight}. From the same hypothesis we can also deduce \eqref{hypo:gadget_negligible}.
\subsection{Induction step for the hard case: construction of the crossing}
\label{subsec:induct_step_hard}
We will first derive an ``approximate'' expression for the weight $\tilde{D}_{\gamma, \ell, i}$ that $\cross^{*, \ell}_i$ inherits from the rectangles in 
$\widetilde \descend_\ell$. This expression will then 
guide our particular switching strategy. We begin with 
some definitions. Consider an interval $I$ in $\mathscr C_{\ell, \Gamma, 0; \ell \% \Theta(\log \Gamma) + 1}$. 
Recall from the statement of \eqref{hypo:symmetry} that $\descend_{\ell, I, i}$ is the collection of all rectangles in $\widetilde{\descend}_\ell$ which are based on $I$ and whose spans are contained in the span 
of $\tilde{V}^\Gamma_{\ell; i}$. Define $\descend_{\ell, I}$ as $\descend_{\ell, I, 1} \cup \descend_{\ell, I, 
2}$. For $v \in \cup_{B \in \descend_{\ell, I, i}} B$, denote the unique rectangle in $\descend_{\ell, I, i}$ 
containing $v$ as $B_{\ell, I, i}(v)$. The counterpart for $\descend_{\ell, I}$ is denoted by $B_{\ell, I}(v)$. From definition of $\descend_\ell$ it follows that any two rectangles in $\descend_{\ell, I}$ are translates of 
each other. Call a point $w \in \cup_{B \in \descend_{\ell, I}}B$ a \emph{shift} of another point $v \in \cup_{B \in \descend_{\ell, I}}B$ (or vice versa) if $v$ gets mapped to $w$ when $B_{\ell, I}(v)$ is mapped 
to $B_{\ell, I}(w)$ via translation. We can similarly define all these terms if the underlying rectangle is $\tilde{V}_{\ell;i, j}^\Gamma$ instead of 
$\tilde{V}_\ell^\Gamma$. The corresponding notations are then modified as $\descend_{\ell, i, j; I, k}$, $\descend_{\ell, i, j; I}$, $B_{\ell, i, j; I, k}(v)$ 
and $B_{\ell, i, j; I}(v)$ respectively. Let $[v]_I = [v]_{I, \ell}$ denote the collection of all shifts of $v$ in $\cup_{B \in \descend_{\ell, I}}B$, which we also refer to as a \emph{shift class}. 
Thus we have
\begin{equation}
\label{eq:expr_total_weight}
\tilde{D}_{\gamma, \ell, i} = \sum_{I \in \mathscr C_{\ell, \Gamma, 0; \ell \% 200m_\Gamma + 1}} \sum_{[v]_I \in \descend_{\ell, I}} D_{\gamma, B_{\ell, I, i}(v^*), v^*}\e^{\gamma X_{n, B_{\ell, I, i}(v^*), \ell,v^*}}\,,
\end{equation}
where the range of second summation includes all shift classes inside $\descend_{\ell, I, i}$ and $v^* = v^{*, \ell, i}_I$ is the unique representative from 
$[v]_I$ in $\cross^{*, \ell}_i$. Suppose our algorithm at level $\ell$ respects \eqref{hypo:symmetry} and \eqref{hypo:irrelevance} (this, of course, has to be 
verified). Then it follows from our induction hypotheses that choice of $v^*$ is independent of the fields $\{\eta_{n, B, .}\}$ for $B \in \descend_{\ell, I}$. 
Consequently,
\begin{equation}
\label{eq:expr_total_weight2}
\E\tilde{D}_{\gamma, \ell, i} = \sum_{I \in \mathscr C_{\ell, \Gamma, 0; \ell \% 200m_\Gamma + 1}} \sum_{[v]_I \in \descend_{\ell, I}} d_{\gamma, \ell, [v]_I} \E \e^{\gamma X_{n, B_{\ell, I, i}(v^*), \ell,v^*}}\,,
\end{equation}
where $d_{\gamma, \ell, [v]_I}$ is the expected value of $D_{\gamma, B_{\ell, I}(v), v}$ for some (hence all) $v 
\in [v]_I$. Thus as far as expected weights are concerned, it suffices to consider the expression
\begin{equation}
\label{eq:expr_total_weight3}
\tilde{D}_{\gamma, \ell, i}^\star = \sum_{I \in \mathscr C_{\ell, \Gamma, 0; \ell \% \Theta(\eqref{eq:optimization_BM3}) + 1}} \sum_{[v]_I \in \descend_{\ell, I}} d_{\gamma, \ell, [v]_I}\e^{\gamma X_{n, B_{\ell, I, i}(v^*), \ell,v^*}}
\end{equation}
instead of \eqref{eq:expr_total_weight}. Now we will rewrite \eqref{eq:expr_total_weight3} in a way that accommodates the strategy employed in step~$\ell$. 
Recall from Subsection~\ref{subsec:descrip} that our strategy also varies along the base of 
$\tilde{V}^\Gamma_{\ell; i}$. To account for this variation we split the family $\mathscr C_{\ell, \Gamma, 0; \ell \% \Theta(\log \Gamma) + 1}$ into three subfamilies namely $\mathscr C_{\ell, 1}$, $\mathscr 
C_{\ell, 2}$ and $\mathscr C_{\ell, 3}$. $\mathscr C_{\ell, 1}$ and $\mathscr C_{\ell, 2}$ consist of intervals contained in the bases of $\tilde{V}_{\ell; i, 1}^\Gamma$ and $\tilde{V}_{\ell; i, 2}^\Gamma$ respectively while $\mathscr C_{\ell, 3}$ consists of intervals contained in $\lfloor \mathcal I_{\ell - m - 1, \Gamma, \lceil \Gamma a_{\ell-1} \rceil}\rfloor$. 
Accordingly we can split the first summation in $\eqref{eq:expr_total_weight3}$ and write
$$\tilde{D}^\star_{\gamma, \ell, i} = \tilde{D}^\star_{\gamma, \ell, i, \mathscr C_{\ell, 1}} + \tilde{D}^\star_{\gamma, \ell, i, \mathscr C_{\ell, 2}} + \tilde{D}^\star_{\gamma, \ell, i, \mathscr C_{\ell, 3}}\,.$$
We will deal with the first two parts and the third part 
separately. To avoid cumbersome notations we will only 
write the expressions for $i = 2$. From description of Strategy~II given in Subsection~\ref{subsec:descrip} we get
\begin{equation}
\label{eq:expr_total_weight4}
\tilde{D}_{\gamma, \ell, 2, \mathscr C_{\ell, j}}^\star = \sum_{I \in \mathscr C_{\ell, j}} \sum_{[v]_I \in \descend_{\ell, I}} d_{\gamma, \ell, [v]_I}\e^{\gamma X_{n, B_{\ell, 2, j; I, k_I}(v^*), \ell - 1, v^*}}\e^{\gamma X_{n, \ell - 1, \ell, v^*}}\,,
\end{equation}
where $j \in [2]$, $v^* = v^{*, \ell - 1, k_I}_I$ is the unique representative from $[v]_I$ in $\cross_{k_I}^{*, \tilde{V}_{\ell; 2, j}^\Gamma}$ and $k_I \in [2]$ is informed by the switchings at step~$\ell$. 
The job now is to optimize the combined weight from $\tilde{D}^\star_{\gamma, \ell, 2, \mathscr C_{\ell, j}}$ and vertical gadgets with respect to the 
choice of $k_I$'s. As we already mentioned in the beginning, we will optimize based on an expression that is similar to but not the same as 
\eqref{eq:expr_total_weight4}. 

We will gradually delve into the terms in $\tilde{D}_{\gamma, \ell, 2, \mathscr C_{\ell, 
j}}^\star$ to arrive at a ``nice'' approximate expression. Let us begin with an expansion of $\e^{\gamma X_{n, \ell - 1, \ell, v^*}}$ as follows:
$$\e^{\gamma X_{n, \ell - 1, \ell, v^*}} = 1 + \gamma X_{n, \ell - 1, \ell, v^*} + \tfrac{\gamma^2}{2}\E X_{n, \ell - 1, \ell, v^*}^2 + \quadr_{n,\ell, v^*} + \Taylor_{n, \ell, v^*}\,,$$
where $\quadr_{n, \ell, v^*} = \tfrac{\gamma^2}{2}(X_{n, \ell - 1, \ell, v^*}^2 - \E X_{n, \ell - 1, \ell, v^*}^2)$ and $\Taylor_{n, \ell, v^*}$ consists of the 
remaining terms. Apart from increase in length, the main contribution towards the increment in weight of $\cross_i^{*, \ell}$ comes from the random variables $\tfrac{\gamma^2}{2}d_{\gamma, \ell, [v]_I}\e^{\gamma X_{n, B_{\ell, 2, j ; I, k}(v^*), \ell - 1, v^*}}\E X_{n, \ell -1, \ell, v^*}^2$ which we 
denote as $\increment_{[v]_I, 2, k}$. Here again we took the liberty of changing the notation $v^{*, \ell - 
1, k}_I$ to $v^*$. The following simple claim relates the expected values of $\increment_{[v]_I, 2, k_I}, \increment_{[v]_I, 2, 1}$ and $\increment_{[v]_I, 2, 2}$.
\begin{claim}
\label{claim:invariant_expect}
If our construction of $\cross_2^{*, \ell}$ obeys \eqref{hypo:symmetry}, then
$$\E (\increment_{[v]_I, 2, k_I}) = \tfrac{1}{2}\E(\increment_{[v]_I, 2, 1} + \increment_{[v]_I, 2, 2}) \,,$$ 
$$\mbox{ and }  \quad \E \e^{\gamma X_{n, B_{\ell, 2, j ; I, k_I}(v^*), \ell - 1, v^*}} =  \E \e^{\gamma X_{n, B_{\ell, 2, j ; I, 1}(v^*), \ell - 1, v^*}} = \E \e^{\gamma X_{n, B_{\ell, 2, j ; I, 2}(v^*), \ell - 1, v^*}}\,.$$
\end{claim}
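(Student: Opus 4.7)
My plan is to reduce both equalities to two structural inputs: the uniform sampling built into \eqref{hypo:symmetry} at step $\ell$, and the reflection symmetry granted by \eqref{hypo:identical_distribution} applied at level $\ell-1$ (noting $\tilde V_{\ell - 1}^\Gamma$ coincides, up to translation, with $\tilde V_{\ell; 2, j}^\Gamma$). I will first prove the chain of three expectations in the second display; the first display will then follow because $d_{\gamma, \ell, [v]_I}$ is deterministic and the extra factor $\E X_{n, \ell - 1, \ell, v^*}^2$ is just another (deterministic) function of the random point $v^*$, to which the same independence argument applies verbatim.

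For the equality between the two layer-specific terms, I invoke \eqref{hypo:identical_distribution} on $\tilde V_{\ell; 2, j}^\Gamma$: reflecting $\cross^{*, \tilde V_{\ell; 2, j}}_1$ across the horizontal midline produces a multiset equal in joint law to $\cross^{*, \tilde V_{\ell; 2, j}}_2$, together with the correspondingly reflected fine field. This reflection maps $\descend_{\ell, 2, j; I, 1}$ bijectively onto $\descend_{\ell, 2, j; I, 2}$ and sends the representative $v^{*, \ell - 1, 1}$ to a point whose law is that of $v^{*, \ell - 1, 2}$. Moreover, the Gaussian coarse field $X_{n, \cdot, \ell - 1, \cdot}$ has a covariance given by Dirichlet Green's functions on a horizontally reflection-symmetric rectangle and is therefore invariant in law under this reflection. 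Hence the joint law of the exponent is preserved, giving
\[
\E \e^{\gamma X_{n, B_{\ell, 2, j; I, 1}(v^{*, \ell - 1, 1}), \ell - 1, v^{*, \ell - 1, 1}}} = \E \e^{\gamma X_{n, B_{\ell, 2, j; I, 2}(v^{*, \ell - 1, 2}), \ell - 1, v^{*, \ell - 1, 2}}}.
\]

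For the $k_I$ version, I will show that (i) $k_I$ is uniform on $\{1, 2\}$ and (ii) $k_I$ is independent of the random vector $\{(v^{*, \ell - 1, k}, X_{n, B_{\ell, 2, j; I, k}(v^{*, \ell - 1, k}), \ell - 1, v^{*, \ell - 1, k}})\}_{k \in [2]}$. Part (i) follows because \eqref{hypo:symmetry} makes $R_{I, 2}^\ell$ uniform on $\descend_{\ell, I, 2}$, whose elements split evenly between the two layers by the symmetric placement described in Subsection~\ref{subsec:GFF_represent}. For (ii), \eqref{hypo:symmetry} expresses $k_I$ as a measurable function of coarse fields at scales strictly above the scale $\ell - 1 \to \ell$ together with $\Xi_\ell$, whereas the other random variables are built from fine fields on rectangles of level $\leq \ell - 1$, the coarse field $X_{n, \cdot, \ell - 1, \cdot}$ itself, and variables in $\Xi_{\ell'}$ for $\ell' < \ell$; by Observation~\ref{observ:indep} and the a priori independence of the $\Xi$'s across levels, these two collections are independent. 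Conditioning on $k_I$ then gives
\[
\E \e^{\gamma X_{n, B_{\ell, 2, j; I, k_I}(v^*), \ell - 1, v^*}} = \tfrac{1}{2} \sum_{k=1}^{2} \E \e^{\gamma X_{n, B_{\ell, 2, j; I, k}(v^{*, \ell - 1, k}), \ell - 1, v^{*, \ell - 1, k}}},
\]
which equals either summand by the reflection argument. Substituting back into the definition of $\increment_{[v]_I, 2, k}$ (the factor $d_{\gamma, \ell, [v]_I}$ is deterministic and $\E X_{n, \ell-1, \ell, v^*}^2$ is a deterministic function of $v^*$, absorbed into the same conditioning argument) immediately yields the first displayed equality.

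The delicate point is the scale bookkeeping needed to invoke Observation~\ref{observ:indep} in (ii): one must verify that the coarse field the algorithm uses at step $\ell$ lives on the portion of the Markov decomposition strictly above the scale $\ell - 1 \to \ell$, while the factor $\e^{\gamma X_{n, \cdot, \ell - 1, \cdot}}$ together with the previous-step construction of $v^{*, \ell - 1, k}$ is measurable with respect to the complementary (disjoint) scales. Once this scale alignment is established, the proof reduces to the elementary decomposition displayed above.
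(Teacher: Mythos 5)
The overall structure matches the paper's one-line proof (uniformity of $k_I$, independence, reflection symmetry of the two layers), and the reflection argument via \eqref{hypo:identical_distribution} — which the paper leaves entirely implicit — is a correct and useful amplification. However, your justification of the independence in step (ii) has a genuine gap. You claim that \eqref{hypo:symmetry} makes $k_I$ a measurable function of coarse fields ``at scales strictly above the scale $\ell - 1 \to \ell$'' together with $\Xi_\ell$, and then invoke Observation~\ref{observ:indep} and $\Xi$-independence to separate the scales. But this is not how the switching strategy is actually built. The strategy $\{k_{\ell; 2, j, j'}\}$ is produced by applying Theorem~\ref{thm-total-variation} to the Brownian process $S_{\cdot, \ell}$ \emph{together with} the penalty step function $\lambda$, which is constructed from $\Delta\tilde\nu_{2, j, j'} = (\nu_{2, j, j', 1} - \nu_{2, j, j', 2})/\vertical_{\ell - 1}$ and the conditional variances $g_{2, j, \gamma, j'}$. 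Those heights $\nu_{2, j, j', k}$ are features of the previously constructed crossings $\cross^{*, \tilde V_{\ell; 2, j}}_k$ and hence are measurable with respect to the fine field on $\tilde V_{\ell; 2, j}^\Gamma$ and $\Xi_{\ell'}$ for $\ell' < \ell$ — exactly the scales you put on the other side of the alleged separation. So the two collections you propose to show are independent actually overlap.

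What actually rescues the independence (and the marginal uniformity, which is not automatic from \eqref{hypo:symmetry} restricted to level $\ell$ without the extra randomization) is the fair Bernoulli variable $s_{\ell, 2, j} \in \Xi_{\ell, 5}$ that the construction flips to choose between the uptick-starting and downtick-starting partitions $\mathcal Q^*$. By symmetry of Brownian motion, the downtick-started partition yields the strategy $3 - k^*_{2, j, j'}$, so \emph{conditionally} on $(\{\nu_{2, j, j', k}\}, S_{\cdot, \ell}, \text{fine field}, \Xi_{\ell' < \ell})$ the variable $k_I$ is uniform on $\{1, 2\}$. Conditional uniformity given the entire $\sigma$-algebra generating $\{(v^{*, \ell - 1, k}, X_{n, B_{\ell, 2, j; I, k}(v^{*, \ell - 1, k}), \ell - 1, v^{*, \ell - 1, k}})\}_{k \in [2]}$ is exactly the independence you need, and it does \emph{not} come from the scale decomposition alone. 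You should condition on the level-$(\leq \ell - 1)$ data, use the Bernoulli to get conditional uniformity, and only then integrate out. (A cosmetic but related slip: the coarse field at scale $\ell - 1 \to \ell$ \emph{does} enter the construction of $k_I$, so ``strictly above'' the scale $\ell - 1 \to \ell$ misstates where $k_I$ lives even for the Brownian-motion part of the recipe.)
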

\begin{proof}The crucial observation is the following: $k_I$ is uniform on $\{1, 2\}$ and is independent with the fields $X_{n, B, \ell - 1, .}$'s for all $B \in \descend_{\ell -1, 
I}$. Now the claim follows immediately.
\end{proof}

As is clear from Claim~\ref{claim:invariant_expect} and the discussions that immediately precede it, the only potential contributors to the gain from a particular strategy are the following:
$$\gain_{I, 2, k} = \sum_{[v]_I \in \descend_{\ell, I}}\gamma d_{\gamma, \ell, [v]_I}\e^{\gamma X_{n, B_{\ell, 2, j ; I, k}(v^*), \ell - 1, v^*}}X_{n, \ell - 1, \ell, v^*}\,,$$
where $v^* = v^{*, \ell - 1, k}_I$. Consider an interval $I \in \mathscr C_{\ell, j}$ and a rectangle 
$B \in \descend_{\ell, 2, j; I}$. Fix $\epsilon = 
\delta^{100}$. There is a unique interval of length $\lfloor \epsilon a_\ell \rfloor + 1$ in $\mathscr C_{\ell, 1, 0; 100m, \principal}$ containing the span of $B$. Let $\nu_{B, 1}$ and $\nu_{B, 2}$ 
respectively denote the right and left endpoints of that 
interval. Also from the description of switching locations we know that there is a unique $j' \in [\Gamma_{\ell, \beta}]$ such that $I$ overlaps with 
$I_{\ell; j, j'}$. In view of Lemma~\ref{lem:field_smoothness} we can then approximate $X_{n, \ell - 1, \ell, v}$ for any point $v$ in $B$ by the average coarse field value along $(I_{\ell; j, j'} \cap I) \times \{\nu_{B, 1}\}$ or 
$(I_{\ell; j, j'} \cap I) \times \{\nu_{B, 2}\}$. Now 
let us revisit the summands in $\gain_{I, 2, k}$. Denote by $\overline X_{n, \ell, I}$ the average coarse field value along $(I_{\ell; j, j'} \cap I) \times \{\nu_{B_{\ell, 2, j;I, k}(v^*), k}\}$ and by $\resid_{n, \ell, v*}$ the difference $X_{n, \ell - 1, \ell, v^*} - \overline X_{n, 
\ell, 2, I}$. Thus we can decompose $\gain_{I, 2, k}$ as
\begin{equation}
\label{eq:expr_total_weight5}
\gain_{I, 2, k} = \gain_{I, 2, k}^\star + \widetilde{\resid}_{n, \ell, I, 2, k}\,,
\end{equation}
where
$$\gain_{I, 2, k}^\star = \sum_{[v]_I \in \descend_{\ell, I}}\gamma d_{\gamma, \ell, [v]_I}\e^{\gamma X_{n, B_{\ell, 2, j ; I, k}(v^*), \ell - 1, v^*}}\overline X_{n, \ell, 2, I}\,,$$
and
$$\widetilde{\resid}_{n, \ell, I, 2, k} = \sum_{[v]_I \in \descend_{\ell, I}}\gamma d_{\gamma, \ell, [v]_I}\e^{\gamma X_{n, B_{\ell, 2, j ; I, k}(v^*), \ell - 1, v^*}}\resid_{n, \ell, v^*}\,.$$
We have not said anything about the coefficients $\e^{\gamma X_{n, B_{\ell, 2, j ; I, k}(v^*), \ell - 1, 
v^*}}$ so far. The following lemma shows that these coefficients are reasonably close to 1.
\begin{lemma}
\label{lem:error_control}
Let $M_{n, \ell} = \max\limits_{I \in \mathscr C_{\ell, 1} \cup \mathscr C_{\ell, 2}}\max\limits_{B \in \descend_{\ell, I}}\max\limits_{v \in B}X_{n, B, \ell-1, v}$, then there exists a positive number $C_\delta$ depending solely on $\delta$ such that
\begin{equation}
\label{lem:error_control1}
\E \e^{\gamma M_{n, \ell}}\mathbf{1}_{\{M_{n, \ell} \geq C_\delta\log \Gamma\}} = O(\Gamma^{-2})\e^{\gamma O_\delta(1)\log \Gamma}\,.
\end{equation}
Furthermore if $\abs_{n, \ell} = \max\limits_{I \in \mathscr C_{\ell, 1}}\max\limits_{B \in \descend_{\ell, I}}\max\limits_{v \in B}|X_{n, B_{\ell - 1, I, k_I}(v^*), \ell -1, v^*}|$, then we have
\begin{equation}
\label{lem:error_control2}
\P \Big(\abs_{n, \ell} \geq C_\delta\log \Gamma\Big) = O(\Gamma^{-2})\,,
\end{equation}

\end{lemma}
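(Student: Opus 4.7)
The plan is to attack both \eqref{lem:error_control1} and \eqref{lem:error_control2} via the standard two-step Gaussian supremum recipe. First obtain a uniform variance $\sigma^2 := \max_{B, v}\var X_{n, B, \ell-1, v}$ and a bound on the expected maximum $\mu := \E M_{n, \ell}$; then apply Borell's inequality (Lemma~\ref{lem:Borell_ineq}) to get Gaussian concentration for $M_{n, \ell}$, and integrate this tail against $\e^{\gamma t}$ for the exponential moment. The bound for $\abs_{n, \ell}$ in \eqref{lem:error_control2} will then follow by applying the same argument to $\pm X$ together with a union bound.

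For the uniform variance, the Markov property of the GFF gives that $\E(\eta_v \mid \eta_{\partial B^{\ancest, \ell-1}})$ and $\E(\eta_v \mid \eta_{\partial B})$ are nested orthogonal projections, so $\var X_{n, B, \ell-1, v} = G_{B^{\ancest, \ell-1}}(v, v) - G_B(v, v)$. By construction of $\descend_\ell$, every rectangle $B$ appearing in the maximum has depth at least $\ell - 1 - 200 m_\Gamma$, so the linear sizes of $B^{\ancest, \ell-1}$ and $B$ differ by at most the factor $(2+\delta)^{200 m_\Gamma} = \Gamma^{O(1)}$; Lemma~\ref{lem:free_field_var} then yields $\sigma^2 = O(\log \Gamma)$.

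For the expected maximum, I would invoke Lemma~\ref{lem:max_expectation}: cover each rectangle $B \in \descend_{\ell, I}$ by $O(\Gamma)$ axis-aligned squares whose side equals the shorter dimension of $B$, producing a total of $\Gamma^{O(1)}$ squares. Inside each square, Lemma~\ref{lem:field_smoothness}, applied to the difference of the two nested harmonic extensions defining $X_{n, B, \ell-1, \cdot}$, gives a quadratic smoothness bound $\E(X_u - X_v)^2 = O_\delta((|u-v|/b_j)^2)$, which dominates the linear bound required by Lemma~\ref{lem:max_expectation} whenever $|u-v| \leq b_j$. The conclusion is $\mu \leq C'\sqrt{\log \Gamma}(1 + \sigma) + C' = O(\log \Gamma)$.

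With these ingredients, Lemma~\ref{lem:Borell_ineq} yields $\P(M_{n,\ell} \geq t) \leq 2\e^{-(t-\mu)^2/(2\sigma^2)}$ for $t \geq \mu$. Choosing $C_\delta$ large enough (depending only on the absolute constants in $\mu$ and $\sigma^2$) guarantees $C_\delta \log \Gamma \geq 2\mu$, so $(t-\mu)^2/(2\sigma^2) \geq \Omega(C_\delta^2 \log \Gamma)$ for all $t \geq C_\delta \log \Gamma$, making the tail as small as $O(\Gamma^{-10})$. For the exponential moment, writing
\[
\E \e^{\gamma M_{n, \ell}}\mathbf{1}_{\{M_{n, \ell} \geq C_\delta\log\Gamma\}} = \e^{\gamma C_\delta \log \Gamma}\P(M_{n, \ell} \geq C_\delta\log \Gamma) + \gamma \int_{C_\delta \log \Gamma}^\infty \e^{\gamma t}\P(M_{n, \ell} \geq t)\, dt,
\]
and completing the square inside the Gaussian, the integrand $\e^{\gamma t - (t-\mu)^2/(2\sigma^2)}$ is maximized at $t^* = \mu + \gamma \sigma^2 = O_\delta(\log \Gamma)$, so (once $C_\delta$ is large) it is decreasing on $[C_\delta \log \Gamma, \infty)$; the integral is then bounded by a $\delta$-dependent constant times its boundary value, giving the desired $O(\Gamma^{-2})\e^{\gamma O_\delta(1)\log \Gamma}$. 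The only technical care is in verifying the covering/smoothness input to Lemma~\ref{lem:max_expectation} in the nested-rectangle geometry of $\descend_\ell$; no essentially new idea beyond the tools of Section~\ref{sec:prelim} is required.
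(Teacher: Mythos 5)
Your approach matches the paper's: bound the uniform variance via the Green's function identity, bound $\E M_{n,\ell}$ by chaining with Lemma~\ref{lem:max_expectation} and Lemma~\ref{lem:field_smoothness}, apply Borell (Lemma~\ref{lem:Borell_ineq}), and integrate the Gaussian tail against $\e^{\gamma t}$. The paper is terse on the integration step; you spell it out correctly.

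However, your variance bound has a genuine gap. You write that, since the side lengths of $B$ and $B^{\ancest, \ell-1}$ differ by a factor $\Gamma^{O(1)}$, Lemma~\ref{lem:free_field_var} yields $\sigma^2 = O(\log \Gamma)$. But Lemma~\ref{lem:free_field_var} only gives an \emph{upper} bound on Green's functions; to control the \emph{difference} $G_{B^{\ancest, \ell-1}}(v, v) - G_B(v, v)$ you also need a \emph{lower} bound on $G_B(v, v)$. If $v$ is allowed to approach $\partial B$, then $G_B(v, v) \to 0$ while $G_{B^{\ancest, \ell-1}}(v, v)$ can still be as large as $\frac{2}{\pi}\log a_{\ell-1} + O(1) = \Theta(\ell)$, which is unbounded and certainly not $O(\log \Gamma)$. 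The missing ingredient is the structural observation that the maximum in $M_{n, \ell}$ ranges over $v \in B$ with $B \in \descend_{\ell, I} \subseteq \widetilde\descend_\ell$, and these are the inner sub-rectangles $B'_j$ of $B' \in \descend_\ell$, which by construction sit at distance $\Theta(\delta a_d)$ from $\partial B'$ (the $\lfloor a_{d-m} \rfloor$ and $\lfloor \Gamma a_{d-m-1} \rfloor$ margins in the definition of $\tilde V_d^\Gamma$). With that isolation, $G_{B'}(v, v) \geq \frac{2}{\pi}\log(\delta a_d) - O(1)$ (from \eqref{eq:Greens_expression}, \eqref{eq:poten_kernel} and $|z - v| \geq d(v, \partial B')$), and together with $G_{B'^{\ancest, \ell - 1}}(v,v) \leq \frac{2}{\pi}\log a_{\ell-1} + O(1)$ and the fact that the depth gap $\ell - 1 - d$ is at most $O(m_\Gamma + m)$, you get the desired $\sigma^2 = O(\log(\Gamma/\delta))$. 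Note that the same isolation is also implicitly required when you invoke Lemma~\ref{lem:field_smoothness}, whose hypothesis is that $u, v \in R^w_{\Upsilon_2 K, K, \theta}$ for some fixed $\theta > 0$; without it the $O_\delta(\cdot)$ smoothness constant is uncontrolled. This is exactly the subtlety the authors flag in Section~\ref{sec:new-challenge} (``it is important not to take harmonic averages for points close to the boundary''), and it cannot be omitted.
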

\begin{proof}
Recall from section~\ref{subsec:self_similar} that $|\mathscr P_{\ell, k, 0, r}| \leq (2 + \delta)^{r+m}$ 
for all $k, r \in \N$. This fact and the definition of $\descend_\ell$ together imply $|\descend_\ell| \leq (2 
+ \delta)^{2\Theta(\log \Gamma) + 2m}$. Now as a consequence of Lemma~\ref{lem:free_field_var} we have
\begin{equation}
\label{eq:error_control1}
\max_{v \in \cup_{B \in \descend_{\ell}}}\E X_{n, B, \ell - 1, v}^2 \leq O(1)\log (\Gamma / \delta)\,.
\end{equation}
Using the bound on $|\descend_\ell|$, \eqref{eq:error_control1} and  Lemmas~\ref{lem:field_smoothness}, \ref{lem:max_expectation} we get
\begin{equation}
\label{eq:error_control2}
\E \Big(\max_{v \in B, B \in \descend_{\ell}} X_{n, B, \ell - 1, v}\Big) \leq O_{\delta}(1)\log \Gamma\,.
\end{equation}
Similarly
\begin{equation}
\label{eq:error_control3}
\E \Big(\min_{v \in B, B \in \descend_{\ell}} X_{n, B, \ell - 1, v}\Big) \geq -O_{\delta}(1)\log \Gamma\,.
\end{equation}
Finally Lemma~\ref{lem:Borell_ineq} and the last three displays yield us
\begin{equation}
\label{eq:error_control4}
\P\Big(\max_{v \in B, B \in \descend_\ell} |X_{n, B, \ell - 1, v}| \geq O_\delta(1)\log \Gamma + u\sqrt{\log (\Gamma/ \delta)}\Big) \leq 2\e^{-\Omega(u^2)}\,.
\end{equation}
\eqref{lem:error_control1} and \eqref{lem:error_control2} now follow for an appropriate choice of $c$.
\end{proof}
We will call the event 
$\{\abs_{n, \ell} \geq C_\delta\log \Gamma\}$ as $G_{n, 
\ell}$. Based on the last lemma we can effectively assume that the coefficients $\e^{\gamma X_{n, B_{\ell, 2, j ; I, k}(v^*), \ell - 1, v^*}}$'s are all equal to 
1. This leads us to approximate $\gain_{I, 2, k}^\star$ with $\gamma d_{\gamma, \ell, I}\overline{X}_{n, \ell, 2, I}$ where $d_{\gamma, \ell, I} = \sum_{[v]_I 
\in\descend_{\ell, I}}d_{\gamma, \ell, [v]_I}$. On the other hand, from \eqref{hypo:short_range_ratio} and \eqref{hypo:gadget_negligible} we get that $d_{\gamma, \ell, I} = \tfrac{d_{\gamma, \ell - 1}}{\Gamma a_{\ell - 1}}|I_{\ell; j, j'} \cap I|(1 + o_{\gamma \to 0; 
\delta}(1))$. Here $I_{\ell; j, j'}$ is the unique switching interval that overlaps 
with $I$. Thus it seems reasonable to replace $d_{\gamma, \ell, I}$ with $\tfrac{d_{\gamma, \ell - 1}}{\Gamma a_{\ell - 1}}|I_{\ell; j, j'} \cap I|$ in the 
corresponding expressions. Rearranging everything in terms of switching intervals we get a new process as follows:
\begin{equation}
\label{eq:expr_total_weight6}
\gain_{n, \ell, 2, j, j', k}^\star = \frac{d_{\gamma, \ell - 1}}{\Gamma a_{\ell - 1}}\sum_{I \subseteq I_{\ell; j, j'}}\sum_{w \in (I_{\ell; j, j'} \cap I)\times \{\nu_{B_{\ell, 2, I, k}, k}\}}\gamma X_{n, \ell - 1, \ell, w}\,,
\end{equation}
where $I \subseteq I_{\ell; j, j'}$ in the first summation should be interpreted as ``$I$ overlaps $I_{\ell; j, j'}$'' and $B_{\ell, 2, I, k}$ is the unique rectangle from $\descend_{\ell, I}$ in the skeleton of $\cross_k^{*, \tilde{V}^\Gamma_{\ell; 2, j}}$.

Since we have an \emph{a priori} upper bound on the number of switches due to \eqref{hypo:bound_n_switch}, the heights $\nu_{B_{\ell, 2, I, k}}$ are the same along $I_{\ell; j, j'}$ for all but 
$O_{\delta}(1)$ many pairs $(j, j')$. Let $B_{\ell, j, j',k,\mathrm{end}}$ be the rightmost rectangle whose base overlaps with $I_{\ell; j, j'}$. 
Denote $\nu_{B_{\ell, j,j',k,\mathrm{end}}, k}$ by 
$\nu_{2, j, j', k}$. Consequently we can further approximate $\gain_{n, \ell, 2, j, j', k}^\star$ with
\begin{equation}
\label{eq:expr_total_weight7}
\gain_{n, \ell, 2, j, j', k}^{\star \star} = \frac{d_{\gamma, \ell - 1}}{\Gamma a_{\ell - 1}}\sum_{j \in [2], j' \in [\Gamma_{\ell, \beta}]}\sum_{v \in I_{\ell; j, j'}\times \{\nu_{2, j, j', k}\}}\gamma X_{n, \ell - 1, \ell, v}\,.
\end{equation}
We are still a few steps away from our final expression. 
Recall that each $X_{n, \ell - 1, \ell, v}$ is a linear combination of $\eta_{n, \ell, v}$'s along the boundary of $\tilde{V}_{\ell; 2, j}^\Gamma$ where the coefficients are the values of corresponding Poisson 
kernel. Based on Lemmas~\ref{lem:kernel_bound2} and \ref{lem:kernel_bound3} we see that the main contribution comes from $\eta_{n, \ell, I_{\ell; j, j'} \times \{\lfloor a_{\ell} \rfloor + \lfloor a_{\ell - m - 1} \rfloor\}}$ and $\eta_{n, \ell, I_{\ell; j, j'} 
\times \{-\lfloor a_{\ell - m - 1} \rfloor\}}$. A direct application of Lemma~\ref{lem:reversibility} now yields that the sum of coefficients at a point $v \in I_{\ell; j, j'} \times \{\lfloor a_\ell \rfloor + \lfloor a_{\ell - m - 1} \rfloor\}$ (or $I_{\ell; j, j'} \times \{-\lfloor a_{\ell - m - 1}\rfloor \}$) is exactly 
$\tfrac{1}{4}G_{\tilde{V}^\Gamma_{\ell; 2, j}}(v_{2, j}, I_{\ell; j, j'}\times \{\nu_{2, j, j',k}\})$, where $v_{2, j}$ is the unique neighbor of $v$ in $\inte(\tilde V_{\ell; 2, j}^\Gamma)$ and 
$$G_{\tilde{V}^\Gamma_{\ell; 2, j}}(v_{2, j}, I_{\ell; j, j'}\times \{\nu_{2, j, j',k}\}) = \sum_{w \in I_{\ell; j, j'}\times \{\nu_{2, j, j',k}\}}G_{\tilde{V}^\Gamma_{\ell; 2, j, k}}(v_{2, 
j}, w)\,.$$ We shall see in the next subsection that as a consequence of $\beta$ and $\Gamma$ being large, for ``almost all'' $v \in I_{\ell; j, j'} \times \{\lfloor a_\ell \rfloor + \lfloor a_{\ell - m - 1} \rfloor\}$ (or $I_{\ell; j, j'} \times \{-\lfloor a_{\ell - m - 1}\rfloor \}$) this coefficient is very close to $\tfrac{\nu_{2, j, j', k} + \lfloor a_{\ell - m - 1}\rfloor }{\vertical_{\ell - 1}}$ (respectively $\tfrac{\lfloor a_\ell \rfloor + \lfloor a_{\ell - m - 
1}\rfloor - \nu_{2, j, j', k}}{\vertical_{\ell - 1}}$). 
Here $\vertical_{\ell - 1}$ represents the length of $\tilde{V}_{\ell; 2, j}^\Gamma$'s or equivalently $\tilde{V}_{\ell - 1}^\Gamma$'s span.
Finally, contribution from $\eta_{n, \ell, I_{\ell; j, j'} \times \{-\lfloor a_{\ell - m - 1}\rfloor \}}$ is insignificant compared to $\eta_{n, \ell, I_{\ell; j, j'} \times \{\lfloor a_\ell \rfloor + \lfloor a_{\ell - m - 1}\rfloor \}}$ since $I_{\ell; j, j'} \times \{-\lfloor a_{\ell - m - 1}\rfloor \}$ lies close to $\partial_\down \tilde{V}^\Gamma_\ell$. 
Putting all these things together we arrive at the approximation given by
\begin{equation}
\label{eq:expr_total_weight8}
\widetilde{\gain}^{\star \star}_{n, \ell, 2, j, j', k} = \frac{d_{\gamma, \ell-1}}{\Gamma a_{\ell - 1}}\sum_{j \in [2], j' \in [\Gamma_{\ell, \beta}]}\sum_{v \in I_{\ell; j, j'} \times \{\lfloor a_\ell\rfloor + \lfloor a_{\ell - m - 1}\rfloor\}}\gamma \frac{(\nu_{2, j, j', k} + \lfloor a_{\ell - m - 1}\rfloor)}{\vertical_{\ell - 1}}\eta_{n, \ell, v}\,.
\end{equation}
Repeating the entire procedure for $i = 1$ we get
\begin{equation}
\label{eq:expr_total_weight9}
\widetilde{\gain}^{\star \star}_{n, \ell, 1, j, j', k} = \frac{d_{\gamma, \ell-1}}{\Gamma a_{\ell - 1}}\sum_{j \in [2], j' \in [\Gamma_{\ell, \beta}]}\sum_{v \in I_{\ell; j, j'} \times \{p_\ell\}}\gamma \frac{(\lfloor a_{\ell + 1}\rfloor + \lfloor a_{\ell - m - 1}\rfloor - \nu_{2, j, j', k})}{\vertical_{\ell - 1}}\eta_{n, \ell, v}\,,
\end{equation}
where $(-\lfloor \Gamma a_{\ell - m - 1}\rfloor, p_\ell)$ is the lower left corner vertex of 
$\tilde{V}^\Gamma_{\ell; 1, 1,}$. Again since $p_\ell$ and $\lfloor a_\ell\rfloor + \lfloor a_{\ell - m - 1}\rfloor$ are very close to each other, we can substitute $\tfrac{\eta_{n, \ell, v} + \eta_{n, \ell, \overline{v}}}{2}$ for $\eta_{n, \ell, v}$ in the last 
two displays where $\overline{v} = (v_x, \lfloor a_\ell \rfloor + \lfloor a_{\ell - m - 1}\rfloor)$ or $(v_x, p_\ell)$ accordingly as $v_y = p_\ell$ or $\lfloor a_\ell\rfloor + \lfloor a_{\ell - m - 1}\rfloor$ 
respectively. The main purpose behind such a modification is to get rid of unnecessary correlations between different random variables that we use for 
deciding switching locations. This will be very helpful 
when we prove Lemma~\ref{lem:gadget_cost_correction}. We also need to ensure the symmetric construction of $\cross^{*, \ell}_1$ and $\cross^{*, \ell}_2$ which then makes the average of $\eta_{n, \ell, v}$ and $\eta_{n, 
\ell, \overline{v}}$ an automatic choice. Thus we get yet another bunch of random variables as follows: 
\begin{equation}
\label{eq:expr_total_weight10}
\widetilde{\gain}_{n, \ell, 2, j, j', k} = \frac{d_{\gamma, \ell-1}}{\Gamma a_{\ell - 1}}\sum_{j \in [2], j' \in [\Gamma_{\ell, \beta}]}\gamma \frac{(\nu_{2, j, j', k} + \lfloor a_{\ell - m - 1}\rfloor)}{\vertical_{\ell - 1}}\eta_{n, \ell,j, j'}\,,
\end{equation}
where $\eta_{n, \ell, j, j'} = \sum_{v \in I_{\ell; j, j'}\times \{\lfloor a_{\ell}\rfloor + \lfloor a_{\ell - m - 1}\rfloor\}}\tfrac{\eta_{n, \ell, v} + 
\eta_{n, \ell, \overline{v}}}{2}$. If the underlying rectangle is $B \in \B_{\ell; \principal}$ instead of $\tilde V^\Gamma_{\ell}$, we denote these random variables as $\eta_{n, B, j, j'}$.

The Gaussian variables $\eta_{n, \ell,2, j, j'}$'s are not 
independent, but they are very weakly correlated. 
Let $S_{j, j'}$ denote the set of all pairs in $[2] \times [\Gamma_{\ell, \beta}]$ that are smaller than 
$(j, j')$ lexicographically. Then the random variables $\tilde \eta_{n, \ell, j, j'} = \eta_{n, \ell, j, j'} - \E (\eta_{n, \ell, j, j'} | \eta_{n, \ell, 
S_{j, j'}})$ are independent. We do not lose much in terms of variance of $\eta_{n, \ell, j, j'}$ in this 
process. In fact from Lemmas~\ref{lem:general_covar} and \ref{lem:Gram_Schimidt} we get $\var (\tilde \eta_{n, \ell, j, j'}) \geq \big(1 - O(\beta^{-2})\big)\var 
(\eta_{n, \ell, j, j'})$. Substituting $\tilde \eta_{n, \ell, j, j'}$'s for $\eta_{n, \ell, j, j'}$'s in \eqref{eq:expr_total_weight10}, we obtain our final approximate expression which is
\begin{equation}
\label{eq:expr_total_weight11}
\frac{d_{\gamma, \ell-1}}{\Gamma a_{\ell - 1}}\sum_{j \in [2], j' \in [\Gamma_{\ell, \beta}]}\gamma \frac{(\nu_{2, j, j', k} + \lfloor a_{\ell - m - 1}\rfloor)}{\vertical_{\ell - 1}}\tilde \eta_{n, \ell, j, j'}\,.
\end{equation}

Having had the final expression we can now focus on the 
optimization. For convenience we will optimize 
separately for $j = 1$ and $2$. Below we discuss the case $j = 1$ only as the other case is similar. Let $G_{\gamma, \ell, 2, 1, j'}$ denote the total weight of the vertical gadget at the right end of $I_{\ell, 1, j'}$ with respect to the field $\{\eta_{n, 
\ell, .}\}$. Then our objective function is 
\begin{equation}
\label{eq:objective_fxn1}
\frac{d_{\gamma, \ell-1}}{2\Gamma a_{\ell - 1}}\sum_{j' \in [\Gamma_{\ell, \beta}]}(-1)^{k_{2, 1, j'}}\gamma \frac{(\nu_{2, 1, j', 1} - \nu_{2, 1, j', 2})}{\vertical_{\ell - 1}}\tilde \eta_{n, \ell, 1, j'} - \sum_{j' \in [\Gamma_{\ell, \beta} - 1]} \mathbf 1_{\{k_{2,1,j'} \neq k_{2,1,j'+1}\}} G_{\gamma, \ell, 2, 1, j'}\,,
\end{equation}
where $k_{2, 1, j'} = k_{\ell; 2, 1, j'} \in [2]$. 

We need to decide upon a specific way to construct the gadgets before we can talk about the expectation of 
\eqref{eq:objective_fxn1}. In Figure~\ref{fig:gadget} 
we illustrate such a construction. The covering $\mathscr C_{\ell, 1, 0; 100m}$ and the choice of 
$\Gamma$ as a power of $2 + \delta$ will be useful here. 
Recall that for $k \in [2]$, $B_{\ell, 2, j', k, \mathrm{end}}$ is the rightmost rectangle in $S_{k, \ell - 1}$ whose base overlaps with $I_{\ell, 1, j'}$ (see the discussion immediately preceding 
\eqref{eq:expr_total_weight7}). Denote by $I_{j'; \ell - 1, \epsilon, k}$ the unique interval in $\mathscr C_{\ell, 1, 0; 100m, \principal}$ containing the 
span of $B_{\ell, 2, j', k, \mathrm{end}}$. Now consider an interval $I$ in $\mathscr C_{\ell, 1, 0; 100m}$ that lies between $I_{j'; \ell - 1, \epsilon, 1}$ and 
$I_{j'; \ell - 1, \epsilon, 2}$. From the definition of $\mathscr C_{\ell, 1, 0; 100m}$ we know that $|I| = \lfloor a_{m_I}\rfloor + 1$ for some $m_I \in \{\ell - 
101m, \ldots, \ell - 100m \}$. Consequently the rectangle $R_{\ell, I} = \lfloor r_{\ell, 1, j'} - [0, \frac{a_{m_I - m_\Gamma}}{\Gamma}]\rfloor \times I$ is a copy of $\lfloor \mathcal I_{m_I - m_\Gamma, 1, 0}\rfloor \times \lfloor \mathcal I_{m_I - m_\Gamma, 
\Gamma, 0}\rfloor$. Let $\tilde R_{\ell, I}$ be a rectangle surrounding $\tilde R_{\ell, I}$ so that $(\tilde R_{\ell, I}, R_{\ell, I})$ can be mapped to $(\tilde V_{m_I - m_\Gamma}, \tilde V_{m_I - m_\Gamma, 1}^\Gamma)$ by rotation and translation of $\Z^2$. 
By our induction hypothesis, we already know how to build crossings between $\partial_{\up} R_{\ell, I}$ and $\partial_{\down} R_{\ell, I}$ when the underlying field 
is the fine field on $\tilde R_{\ell, I}$. Denote this 
crossing as $\cross^{*, R_{\ell, I}}$. The ``extra'' random variables that are used to build this crossings are independent for all such pairs $(j', I)$ (more precisely the triplets $(i, j, j', I)$) and are elements 
of $\Xi_{\ell, 5}$. We now obtain a crossing $\cross^{*, j', \ell}$ between $\partial_{\up} R_{\ell, I_{j'; \ell - 1, \epsilon, 1}}$ and $\partial_{\down} R_{\ell, I_{j'; \ell - 1, \epsilon, 1}}$ by sewing 
successive $\cross^{*, R_{\ell, I}}$'s. But we still need a few more gadgets to be able to glue the junction. 
To this end let $|I_{\ell; 1, j'}| = \lfloor a_{m_{\ell, j'}} \rfloor + 1$. Since $\ell \% 200m_\Gamma \geq m_\Gamma + 100m$ 
, we know that $\ell - 200m_\Gamma \leq m_{\ell} \leq \ell + 1 - 
100m$. Thus we can form a chain of rectangles $R_{j', k}^1 (\equiv S_{k, \ell - 1}), R_{j', k}^2, \ldots, R_{j', k}^{m_j'}$ such that the following conditions are satisfied:\\
(a) $m_j'= O(m_\Gamma)$.\\
(b) $R_{j', k}^2$ has the same base as $R_{j', k}^1$ and span length $\lfloor \Gamma a_{m_{\ell, j'}} \rfloor + 1$; $R_{j', k}^3$ has the same span as $R_{j', k}^2$ and base length $\lfloor \Gamma a_{m_{\ell, j'} + m_\Gamma} \rfloor + 1$ and so on.\\ 
(c) $R_{j', k}^{m_j'}$ is a copy of $\lfloor \mathcal I_{m_k', \Gamma, 0} \rfloor \times \lfloor \mathcal I_{m_k', 1, 0} \rfloor$ for some $m_k'$.\\
(d) $\partial_{\mathrm{left}}R_{j', k}^{m_j'} \subseteq \partial_{\mathrm{left}}R_{\ell, I_{j'; \ell - 
1, \epsilon, 1}}$. \\
Likewise in the case of $R_{\ell, I}$'s, we can use our wisdom from lower levels to construct crossings $\cross^{*, R_{j', k}^r}$'s through $R_{j', k}^r$'s alternately in the horizontal and vertical directions. 
It is now easy to see that (see Fig~\ref{fig:gadget}) the union of $\cross^{*, j', \ell}$ and $\cross^{*, R_{j', k}^1}, \cross^{*, R_{j', k}^2}, \ldots, \cross^{*, R_{j', k}^{m_j'}}$ glues 
the junction at $I_{\ell, 1, j'}$. The extra random variables that are needed to build these gadgets (including the ones used for sewing) are independent for all different gadgets and are elements of $\Xi_{\ell, 5}$.

\begin{figure}[!htb]
\centering
\begin{tikzpicture}[scale = 10]




\draw [blue, style={decorate,decoration={snake,amplitude = 0.5}}] (-0.05, 0) -- (-0.05, 0.16);
\draw [red, dashed] (-0.07, 0) rectangle (0, 0.16);

\draw [blue, style={decorate,decoration={snake,amplitude = 0.5}}] (-0.03, 0.01) -- (-0.03, -0.21);
\draw [red, dashed] (-0.09, 0.01) rectangle (0, -0.21);

\draw [blue, style={decorate,decoration={snake,amplitude = 0.5}}] (-0.04, -0.2) -- (-0.04, -0.2 -0.16);
\draw [red, dashed] (-0.07, -0.2) rectangle (0, -0.2 - 0.16);

\draw [blue, style={decorate,decoration={snake,amplitude = 0.5}}] (-0.03, -0.2 - 0.25) -- (-0.03, -0.2 - 0.15);
\draw [red, dashed] (-0.06, -0.2 - 0.15) rectangle (0, -0.2 - 0.25);

\draw [blue, style={decorate,decoration={snake,amplitude = 0.5}}] (-0.05, -0.2 - 0.24) -- (-0.05, -0.2 - 0.4);
\draw [red, dashed] (-0.07, -0.2 - 0.24) rectangle (0, -0.2 - 0.4);

\draw (-0.04, 0.06) rectangle (0, 0.08);
\draw [blue, style={decorate,decoration={snake,amplitude = 0.2}}] (-0.04, 0.07) -- (0, 0.07);
\draw (-0.04, 0.06) rectangle (0, 0.08);
\draw [blue, style={decorate,decoration={snake,amplitude = 0.2}}] (-0.02, 0.06) -- (-0.02, 0.17);
\draw [blue, style={decorate,decoration={snake,amplitude = 0.4}}] (-0.07, 0.13) -- (0, 0.13);
\draw (-0.04, -0.2 - 0.34) rectangle (0, -0.2 - 0.32);
\draw [blue, style={decorate,decoration={snake,amplitude = 0.2}}] (-0.04, -0.2 - 0.33) -- (0, -0.2 - 0.33);
\draw [blue, style={decorate,decoration={snake,amplitude = 0.2}}] (-0.02, -0.2 - 0.34) -- (-0.02, -0.2 - 0.24);
\draw [blue, style={decorate,decoration={snake,amplitude = 0.4}}] (-0.07, -0.2 - 0.27) -- (0, -0.2 - 0.27);
\draw [blue, style={decorate,decoration={snake,amplitude = 0.4}}] (-0.07, -0.2 - 0.27) -- (0, -0.2 - 0.27);
\end{tikzpicture}
\caption{{\bf Linking by a vertical gadget.}}
\label{fig:gadget}
\end{figure}
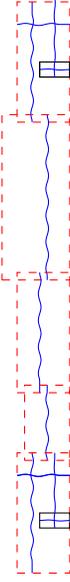

We can now analyze the expected total weight of gadgets in \eqref{eq:objective_fxn1} 
To this end denote by $\mathcal F_\ell$ the $\sigma$-field generated by the random variables $\eta_{n, B, j, j'}$ for $j \in [2], j' \in [\Gamma_{\ell' , \beta}]$, $B \in \B_{\ell; \ell', \principal}$ and $\ell' \geq (200a + 1)m_\Gamma + 100m$ where $\B_{\ell; \ell', \principal}$ consists all rectangles in $\B_{\ell', \principal}$ which are 
descendants of $\tilde{V}_\ell^\Gamma$. Due to \eqref{hypo:irrelevance} we see that  
$$\E \sum_{j' \in [\Gamma_{\ell, \beta} - 1]} \mathbf 1_{\{k_{2,1,j'} \neq 
k_{2,1,j'+1}\}}G_{\gamma, \ell, 2, 1, j'} = \E \sum_{j' \in [\Gamma_{\ell, \beta} - 1]} \mathbf 1_{\{k_{2,1,j'} \neq k_{2,1,j'+1}\}}\E (G_{\gamma, \ell, 2, 1, j'}|\mathcal F_\ell)\,.$$

Now let $\{\eta_{n, \ell, v}^*\}_{v \in \tilde V_\ell^\Gamma}$ be an independent copy of $\{\eta_{n, 
\ell, .}\}$. Denote by $G_{\gamma, \ell, 2, 1, j'}^*$ the total weight of the vertical gadget at the right end of $I_{\ell, 1, j'}$ with respect to 
$\{\eta_{n, \ell, .}^*\}$. We will estimate $\E G^*_{\gamma, \ell, 2, }$. From \eqref{hypo:short_range_ratio}, the expected weight of $\cross^{*, R_{\ell, I}}$ with respect to the fine field on $\tilde R_{\ell, I}$ is at most $\tfrac{d_{\gamma, \ell - 1}}{\Gamma a_{\ell - 1}}(1 + O(\gamma^2))(2 + \delta)^{m_I}$ where $|I| = \lfloor 
a_{m_I}\rfloor + 1$. Since the ratio of dimensions of $R_{\ell, I}$ and $\tilde V_n^\Gamma$ is bounded by some (fixed) power of $\Gamma$, it follows that the expected weight of $\cross^{*, R_{\ell, I}}$ is at most $\tfrac{d_{\gamma, \ell - 1}}{\Gamma a_{\ell - 1}}(2 + 
\delta)^{m_I}(1 + O(\gamma^2 \log \Gamma))$. Summing these over $I$ and using the facts that any two adjacent intervals in $\mathscr C_{\ell, 1, 0; 100m}$ can have at most two points in common and $\ell \geq a'200m_\Gamma$, we get that the expected weight of $\cross^{*, R_{\ell, I}}$ is bounded by $\tfrac{d_{\gamma, \ell - 1}}{\Gamma a_{\ell-1}}(\nu_{2, 1, j', 1} - \nu_{2, 1, j', 2})(1 + 
O(\gamma^2 \log \Gamma))$. Using $\vertical_{\ell - 1} = (2 + O(\delta)) a_{\ell -1}$, we can further modify this bound as $\tfrac{2d_{\gamma, \ell - 1}}{\Gamma a_\ell}\tfrac{\nu_{2, 1, j', 1} - \nu_{2, 1, 
j', 2}}{\vertical_{\ell - 1}}(1 + O(\delta))$. Now there are $O(\tfrac{\nu_{2, 1, j', 1} - \nu_{2, 1, j', 2}}{\epsilon \delta \vertical_{\ell - 1}})$ many different $I$'s and the expected weight of gadgets that are used to sew any two adjacent $\cross^{*, R_{\ell, I}}$'s is bounded by $\tfrac{d_{\gamma, \ell - 1}}{\Gamma^2} O(\epsilon)$ by a similar reasoning as 
before. Thus the expected total weight of gadgets used in sewing is at most $\tfrac{d_{\gamma, \ell - 1}}{\Gamma}\tfrac{\nu_{2, 1, j', 1} - \nu_{2, 1, j', 
2}}{\vertical_{\ell - 1}}O(\gamma^2 / \delta)$. In a similar way we find the expected total weight of $\cross^{*, R_{j', k}^r}$'s to be bounded by $\tfrac{d_{\gamma, \ell - 1}}{\Gamma}\epsilon(1 + 
O(\gamma^2 \log \Gamma))$. Since $\tfrac{\nu_{2, 1, j', 1} - \nu_{2, 1, j', 2}}{\vertical_{\ell - 1}} = \Omega (\delta)$ and $\epsilon = \delta^{100}$, we get from combining preceding discussions
\begin{equation}
\label{eq:expected_gadget_cost1}
\E G_{\gamma, \ell, 2, 1, j'}^* \leq (1 + O(\delta))\frac{2d_{\gamma, \ell - 1}}{\Gamma}\frac{(\nu_{2, 1, j', 1} - \nu_{2, 1, j', 2})}{\vertical_{\ell - 1}} = (1 + O(\delta))\frac{2d_{\gamma, \ell - 1}}{\Gamma}\Delta \tilde \nu_{2, 1, j'}\,,
\end{equation}
where $\Delta \tilde \nu_{2, 1, j'} = \tfrac{\nu_{2, 1, j', 1} - \nu_{2, 1, j', 2}}{\vertical_{\ell - 1}}$
In addition,
\begin{equation}
\label{eq:expected_gadget_cost2}
\E (G_{\gamma, \ell, 2, 1, j'} | \mathcal F_\ell) \leq \frac{\E G_{\gamma, \ell, 2, 1, j'}^*}{\E \e^{\gamma \eta_{n, \ell, \min}}}\e^{\gamma \eta_{n, \ell, \max}}\,,
\end{equation}
where $\eta_{n, \ell, \max} = \max_{v \in \tilde{V}^\Gamma_\ell}\E(\eta_{n, \ell, v} | \mathcal F_\ell)$ and $\eta_{n, \ell, \min} = \min_{v \in \tilde{V}^\Gamma_\ell}\E(\eta_{n, \ell, v} | \mathcal 
F_\ell)$. The following lemma states that $\e^{\gamma \eta_{n, \ell, \max}}$ and $\e^{\gamma \eta_{n, \ell, \min}}$ are close to 1.
\begin{lemma} We have that
\label{lem:gadget_cost_correction}
\begin{equation}
\label{lem:gadget_error_correction1}
\e^{-O(\gamma)(\log \Gamma)^{1.5}} \leq \E \e^{\gamma \eta_{n, \ell, \min}} \leq \E \e^{\gamma \eta_{n, \ell, \max}} \leq \e^{O(\gamma)(\log \Gamma)^{1.5}}\,.
\end{equation}
Also
\begin{equation}
\label{lem:gadget_error_correction2}
\E (\e^{\gamma\eta_{n, \ell, \max}} - 1)^2 \leq O(\gamma)(\log \Gamma)^{1.5}\,.
\end{equation}
\end{lemma}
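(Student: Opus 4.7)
The field $\{\E(\eta_{n,\ell,v}\mid\mathcal F_\ell)\}_{v\in\tilde V_\ell^\Gamma}$ is a centered Gaussian process, so my plan is to reduce everything to two estimates: a pointwise variance bound $\sigma_{\max}^2 := \max_v \var\E(\eta_{n,\ell,v}\mid\mathcal F_\ell) = O(\log\Gamma)$ and an expected-maximum bound $\E\eta_{n,\ell,\max}\leq O((\log\Gamma)^{1.5})$. Once these are in place, \eqref{lem:gadget_error_correction1} follows from the sub-Gaussian moment generating function estimate
\[
\E e^{\gamma\eta_{n,\ell,\max}}\leq \exp\bigl(\gamma\E\eta_{n,\ell,\max}+\tfrac{1}{2}\gamma^2\sigma_{\max}^2\bigr)\leq e^{O(\gamma)(\log\Gamma)^{1.5}},
\]
with the lower bound on $\E e^{\gamma\eta_{n,\ell,\min}}$ obtained by combining the symmetry $\eta\stackrel{d}{=}-\eta$ of the GFF (which gives $\eta_{n,\ell,\min}\stackrel{d}{=}-\eta_{n,\ell,\max}$) with Jensen's inequality. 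For \eqref{lem:gadget_error_correction2}, one writes $(e^{\gamma\eta_{\max}}-1)^2\leq 2\gamma^2\eta_{\max}^2+O(\gamma^4\eta_{\max}^4 e^{2\gamma|\eta_{\max}|})$ on a typical event and uses Borell--TIS tail bounds on the complement; since $\E\eta_{\max}^2\leq O((\log\Gamma)^{3})$ and $\gamma(\log\Gamma)^{1.5}\ll 1$, the dominant contribution $\gamma^2(\log\Gamma)^3$ is bounded by $\gamma(\log\Gamma)^{1.5}$.

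For the pointwise variance, I decompose $\mathcal F_\ell$ across scales. By the Markov property recorded in Observation~\ref{observ:indep}, the fine fields $\eta_{n,B,\cdot}$ on rectangles $B$ at different levels are independent, and at a fixed level $\ell'$ only the (unique) rectangle $B$ containing $v$ has $\eta_{n,B,j,j'}$ correlated with $\eta_{n,\ell,v}$. There are only $O(\log\Gamma)$ relevant scales because the constraint $\ell'\in[(200a+1)m_\Gamma+100m,\ell-1]$ contains at most $200m_\Gamma-m_\Gamma-100m$ integers. Within a fixed scale, the family $\{\eta_{n,B,j,j'}\}_{j,j'}$ has covariances decaying geometrically with horizontal separation by Lemma~\ref{lem:general_covar}, so applying the Gram--Schmidt estimates of Lemma~\ref{lem:Gram_Schimidt} and the projection bound of Lemma~\ref{lem:cond_general} together with Lemma~\ref{lem:smoothness_conditional} controls the projection of $\eta_{n,\ell,v}$ onto the span of these variables by $O(1)$. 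Summing over the $O(\log\Gamma)$ scales yields $\sigma_{\max}^2=O(\log\Gamma)$.

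For the expected maximum, I use Lemma~\ref{lem:max_expectation}. The key observation is that the conditional expectation field is smooth at the finest conditioning scale $a_{\ell'_{\min}}=a_{(200a+1)m_\Gamma+100m}$: applying \eqref{eq:smoothness_conditional0} at each scale and summing the geometric series in $\ell'$ gives an increment variance of the form $O(\log\Gamma)\cdot(|u-v|/a_{\ell'_{\min}})+O(1)$ on pairs within a square of side $a_{\ell'_{\min}}$. Thus $\tilde V_\ell^\Gamma$ can be covered by $O(\Gamma^{C})$ squares satisfying the Lipschitz-type hypothesis of Lemma~\ref{lem:max_expectation}, which then gives $\E\eta_{n,\ell,\max}\leq O(\sqrt{\log\Gamma})\cdot(1+\sigma_{\max})+O(1)=O((\log\Gamma)^{1.5})$.

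The main obstacle is the clean multi-scale variance accounting: I must combine the independence across scales (coming from the Markov decomposition of the GFF) with the weak correlation within a scale (coming from Lemmas~\ref{lem:general_covar}--\ref{lem:cond_general}), and I must be careful that the symmetrization $\tfrac{1}{2}(\eta_{n,\ell,v}+\eta_{n,\ell,\bar v})$ appearing in the definition of $\eta_{n,\ell,j,j'}$ does not spoil the almost-orthogonality between different $(B,j,j')$ — this is precisely what Remark~\ref{remark:symmetrize} was introduced to address. Once these two ingredients are combined, the chaining and Borell estimates are standard and deliver the claimed bounds.
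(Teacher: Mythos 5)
Your overall framework — bound the aggregate conditional variance by $O(\log\Gamma)$, bound $\E\eta_{n,\ell,\max}$ by $O((\log\Gamma)^{1.5})$, then apply Borell--TIS once — is a legitimate alternative to what the paper does, and your symmetry-plus-Jensen route to the lower bound and your Taylor-expansion route to \eqref{lem:gadget_error_correction2} are both fine (indeed the symmetry argument is tidier than the paper's scale-by-scale lower bound, and your handling of the $(\e^{\gamma x}-1)^2$ expansion is more careful than the paper's written inequality $\leq \E e^{2\gamma\eta_{\max}}+1$, which as stated loses the needed cancellation of the constant term).

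However, the step where you bound $\E\eta_{n,\ell,\max}$ via a \emph{single} application of Lemma~\ref{lem:max_expectation} to the aggregate field $\{\E(\eta_{n,\ell,v}\mid\mathcal F_\ell)\}_v$ has a genuine gap. Your increment bound is ``$O(\log\Gamma)\cdot|u-v|/a_{\ell'_{\min}}+O(1)$'' on a square of side $a_{\ell'_{\min}}$, but Lemma~\ref{lem:max_expectation} requires a genuine Lipschitz bound $\E(X_u-X_v)^2\leq|u-v|/b_j$ with no additive constant, and the additive $O(1)$ cannot be removed here: the conditional-expectation field jumps by order one across the boundaries $\partial B$ of the rectangles $B\in\B_{\ell;\ell',\principal}$ at intermediate scales $\ell'>\ell'_{\min}$ (the conditional expectation vanishes for $v\notin\cup_B\inte(B)$). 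Since the rectangles at different scales are not nested, a square of side $a_{\ell'_{\min}}$ can straddle such boundaries at several coarser scales, so no refinement of the covering by squares of a fixed mesh makes the aggregate field Lipschitz inside the squares. In other words, you cannot cover $\tilde V_\ell^\Gamma$ by $O(\Gamma^C)$ squares ``satisfying the Lipschitz-type hypothesis'' as you claim. This is precisely why the paper never chains the aggregate field: it defines $\eta_{n,\ell,\max;\ell'}$ per scale, covers each rectangle at scale $\ell'$ by $O(\Gamma)$ squares of side $\vertical_{\ell'}$ lying entirely inside it (so that the Lipschitz bound from Lemma~\ref{lem:smoothness_conditional}/Remark~\ref{remark:symmetrize} applies within each square and the boundary discontinuity is irrelevant because the max is against the value $0$ outside), gets $\E\eta_{n,\ell,\max;\ell'}\leq O(\sqrt{\log\Gamma})$, and then exploits independence of the scales to multiply the per-scale MGFs. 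Your proposal can be salvaged while keeping the single aggregate Borell step: replace the aggregate chain by the elementary bound $\eta_{n,\ell,\max}\leq\sum_{\ell'}\eta_{n,\ell,\max;\ell'}$ and sum the per-scale expected maxima to get $\E\eta_{n,\ell,\max}\leq O((\log\Gamma)^{1.5})$; combined with your (correct) $\sigma_{\max}^2=O(\log\Gamma)$ this closes the argument. But as written the aggregate chaining step does not go through.
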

\begin{proof} 
Define
$$\eta_{n, \ell, \max; \ell'} = \max_{v \in \tilde{V}_\ell^\Gamma}\E(\eta_{n, \ell, v} | \{\eta_{n, B, j, j'}\}_{j \in [2], j' \in [\Gamma_{\ell', \beta}], B \in \B_{\ell; \ell', \principal}})\,$$
and
$$\eta_{n, \ell, \min; \ell'} = \min_{v \in \tilde{V}_\ell^\Gamma}\E(\eta_{n, \ell, v} | \{\eta_{n, B, j, j'}\}_{j \in [2], j' \in [\Gamma_{\ell', \beta}], B \in \B_{\ell; \ell', \principal}})\,.$$
Note that the sequences of random variables $\{\eta_{n, B, j, j'}\}_{j \in [2], j' \in [\Gamma_{\ell', B}]}$ are independent 
for different $B$. Also the rectangles in $\B_{\ell; \ell', \principal}$ are disjoint. Thus for any $v \in \cup_{B \in \B_{\ell; \ell', \principal}} \inte(B)$ we have
$$\E (\eta_{n ,\ell, v} | \{\eta_{n, B, j, j'}\}_{j \in [2], j' \in [\Gamma_{\ell', \beta}], B \in \B_{\ell; \ell', \principal}})  = \E (\eta_{n ,\ell, v} | \{\eta_{n, B_{\ell'}(v), j, j'}\}_{j \in [2], j' \in [\Gamma_{\ell', \beta}]})\,,$$
where $B_{\ell'}(v)$ is the unique rectangle in $\B_{\ell; \ell', \principal}$ containing $v$. On the other hand this conditional expectation is 0 for all $v \notin \cup_{B \in 
\B_{\ell; \ell', \principal}} \inte(B)$. Now let $u, v \in \inte(B)$ such that $||u - v||_\infty \leq \vertical_{\ell'}$ (recall that $\vertical_{\ell'}$ is the length of the span of 
$B$). Then from Remark~\ref{remark:symmetrize} (see also Lemma~\ref{lem:smoothness_conditional}) and Lemma~\ref{lem:cond_general} we get
$$\E (\eta_{n, \ell, u} - \eta_{n, \ell, v} | \{\eta_{n, B_{\ell'}(v), j, j'}\}_{j \in [2], j' \in [\Gamma_{\ell', \beta}]})^2 \leq O(1) \frac{|u - v|}{\vertical_{\ell'}} \,.$$
From Lemma~\ref{lem:general_covar} and Lemma~\ref{lem:cond_general} it also follows that 
$$\E (\eta_{n, \ell, u}| \{\eta_{n, B_{\ell'}(v), j, j'}\}_{j \in [2], j' \in [\Gamma_{\ell', \beta}]})^2 \leq O(1)\,,$$
for all $u \in \inte(B)$. Since the number of rectangles in $\B_{\ell; \ell', \principal}$ is $2^{\ell - \ell'}$, we can apply Lemma~\ref{lem:max_expectation} in view of the last two displays to get 
\begin{equation}
\label{eq:gadget_cost_correction1_prep1}
\E \eta_{n, \ell, \max; \ell'} \leq O(\sqrt{\log \Gamma})\,.
\end{equation}
Also from Lemma~\ref{lem:Borell_ineq}, we get
\begin{equation}
\label{eq:gadget_cost_correction1_prep2}
\P(|\eta_{n, \ell, \max; \ell'} - \E \eta_{n, \ell, \max; \ell'}| \geq x) \leq 2 \e^{-\Omega(x^2)}\,,
\end{equation}
for all $x > 0$. Thus, 
\begin{equation}
\label{eq:gadget_cost_correction1}
\E \e^{c\gamma \eta_{n, \ell, \max; \ell'}} \leq \e^{O(\gamma)\sqrt{\log \Gamma}} (1 + O(\gamma))\,,
\end{equation}
where $c > 0$ is bounded. Similarly,
\begin{equation}
\label{eq:gadget_cost_correction2}
\E \e^{\gamma \eta_{n, \ell, \min; \ell'}} \geq \e^{-O(\gamma)\sqrt{\log \Gamma}}\,.
\end{equation}
Since $\eta_{n, \ell, \max} \leq \sum_{\ell'}\eta_{n, \ell, \max; \ell'}$, $\eta_{n, \ell, \min} \geq \sum_{\ell'}\eta_{n, \ell, \min; \ell'}$ and $(\eta_{n, \ell, \max; \ell'}, \eta_{n, \ell, \min; \ell'})$'s are independent, the last two displays give us
\begin{equation}
\label{eq:gadget_cost_correction3}
\E \e^{\gamma \eta_{n, \ell, \max}} \leq \e^{O(\gamma)(\log \Gamma)^{1.5}}(1 + O(\gamma \log\Gamma))\mbox{ and }\E \e^{\gamma \eta_{n, \ell, \min}} \geq \e^{-O(\gamma)(\log \Gamma)^{1.5}}\,.
\end{equation}
This proves the first part of the lemma. Now as $\E(\e^{\gamma\eta_{n, \ell, \max}} - 1)^2 \leq \E \e^{2\gamma\eta_{n, \ell, \max}} + 1$, we can again apply \eqref{eq:gadget_cost_correction1} to get
\begin{equation}
\label{eq:gadget_cost_correction4}
\E(\e^{\gamma\eta_{n, \ell, \max}} - 1)^2 \leq \e^{O(\gamma)(\log \Gamma)^{1.5}}(1 + O(\gamma \log \Gamma)) + 1\,,
\end{equation}
which proves the second part of the lemma.
\end{proof}
From \eqref{eq:expected_gadget_cost1}, \eqref{eq:expected_gadget_cost2} and Lemma~\ref{lem:gadget_cost_correction} we obtain
\begin{equation}
\label{eq:expected_gadget_cost3}
\E \sum_{j' \in [\Gamma_{\ell, \beta} - 1]}\mathbf 1_{\{k_{2, 1,j'} \neq k_{2,1, j' + 1}\}}\E(G_{\gamma, \ell, 2, 1, j'} | \mathcal F_\ell) \leq \E_1 + \E_2\,,
\end{equation}
where 
$$\E_1 = (1 + O(\delta))\E \sum_{j' \in [\Gamma_{\ell, \beta} - 1]}\mathbf 1_{\{k_{2,1,j'} \neq k_{2,1,j'+1}\}}\frac{2d_{\gamma, \ell - 1}}{\Gamma}\Delta \tilde \nu_{2, 1, j'}\,,$$  
and 
$$\E_2 = O(\gamma^{0.5})(\log \Gamma)^{0.75}\frac{2d_{\gamma, \ell - 1}}{\Gamma}\E (\mbox{number of switches})\,.$$
Using the terms in $\E_1$ as penalties, we can modify the objective function in \eqref{eq:objective_fxn1} as
\begin{equation}
\label{eq:objective_fxn2}
\frac{d_{\gamma, \ell-1}}{2\Gamma a_{\ell - 1}}\sum_{j' \in [\Gamma_{\ell, \beta}]}(-1)^{k_{2, 1, j'}}\gamma \Delta \tilde \nu_{2, 1, j'}\tilde \eta_{n, \ell, 1, j'} - (1 + O(\delta))\sum_{j' \in [\Gamma_{\ell, \beta} - 1]} \mathbf 1_{\{k_{2,1,j'} \neq k_{2,1,j'+1}\}}\frac{2d_{\gamma, \ell - 1}\Delta \tilde \nu_{2, 1, j'}}{\Gamma}\,,
\end{equation}
Call this expression $\mathcal I_\ell(k_{2, 1, 1}, 
\ldots, k_{2, 1, [\Gamma_{\ell, \beta}]})$. We will use Theorem~\ref{thm-total-variation} to devise a switching strategy $\{k_{2, 1, j'}\}_{j' \in [\Gamma_{\ell, \beta}]}$ such that $\E \mathcal I_\ell(k_{2, 1, 1}, \ldots, k_{2, 1, [\Gamma_{\ell, \beta}]})$ 
is large. But for that we need to relate this expression to regularized total variation of a Brownian motion which was defined in the beginning of 
Section~\ref{sec:total_variation}. A natural way (also used in \cite{DG15}) is to extend the discrete time process $\tfrac{d_{\gamma, \ell - 1}}{2\Gamma a_{\ell - 1}}\sum_{j'' \leq j'}\gamma\Delta\tilde \nu_{2,1 ,j''}\tilde \eta_{n, \ell, 1, j''}$ to a standard 
Brownian motion. We can do this with an additional sequence of i.i.d.\ standard Gaussians $\{Z_{\ell, 1; 
m}\}_{m \geq 1}$ using L\'{e}vy's construction. Here we assume that the variables $Z_{\ell, 1; m}$'s are elements of $\Xi_{\ell, 2}$. 
So we have a standard Brownian motion $\{S_{t, \ell}\}_{0 \leq t \leq T_{2, 1, \gamma, \ell}}$ where $T_{2, 1, \gamma, \ell} = \var \big(\tfrac{d_{\gamma, \ell - 1}}{2\Gamma a_{\ell - 1}}\sum_{j' \in [\Gamma_{\ell,\beta}]}\gamma\Delta\tilde 
\nu_{2,1 ,j'}\tilde \eta_{n, \ell,1, j'}| \mathcal 
F_{\ell - 1}\big)$. Recall that $\mathcal F_{\ell - 1}$ is the $\sigma$-field generated by the random variables $\eta_{n, B, j, j'}$ for $j \in [2], j' \in [\Gamma_{\ell' , \beta}]$, $B \in \B_{\ell - 1; \ell', \principal}$ and $\ell' \geq (a + p)200m_\Gamma$. 
The choice of the penalty function $\lambda: [0, T_{2, 1, \gamma, \ell}] \mapsto [0, \infty)$ is now obvious:
\begin{equation}
\label{penalty_function}
\lambda(t) = (1 + O(\delta))\sum_{j' \in [\Gamma_{\ell, \beta}]}\frac{2d_{\gamma, \ell - 1}}{\Gamma}\Delta \tilde\nu_{2,1, j'} \mathbf 1_{(g_{2, 1, \gamma, j'-1}, g_{2, 1, \gamma, j'}]}(t)\,,
\end{equation}
where $g_{2,1,\gamma, j'} = \var \big(\tfrac{d_{\gamma, \ell - 1}}{2\Gamma a_{\ell - 1}}\sum_{j'' \leq j'}\gamma\Delta\tilde \nu_{2,1 ,j''}\tilde \eta_{n, 
\ell, 2, 1, j''}\big)$. For this particular $\lambda$, we have that $\lambda_*$ (see Section~\ref{sec:total_variation}) is given by the following expression:
$$\lambda_*^{-2} = (1 - O(\delta))\frac{\gamma^2}{16a_{\ell - 1}^2}\sum_{j' \in [\Gamma_{\ell, \beta}]}\var (\tilde \eta_{n, \ell, 1, j'})\,.$$
From Lemma~\ref{lem:general_covar} and Remark~\ref{remark:symmetrize} we get
$$(1 - O(\delta))\frac{\gamma^2}{16a_{\ell - 1}^2}(\Gamma_{\ell, \beta} - 1)\beta a_\ell^2 \leq \lambda_*^{-2} \leq (1 + O(\delta))\frac{\gamma^2}{16a_{\ell - 1}^2}\Gamma_{\ell, \beta}\beta a_\ell^2\,.$$
Since $\alpha \leq \Gamma\gamma^2 \leq (2 + \delta)\alpha $ and $\Gamma_{\ell, \beta} \approx \Gamma / \beta$, the last inequality implies
\begin{equation}
\label{eq:lambda*_bound}
(1 - O(\delta))\alpha/4 \leq \lambda_*^{-2} \leq (1 + O(\delta))\alpha/2\,.
\end{equation}
From \eqref{hypo:bound_n_switch} and the bound on $\lambda_*$, it follows that $N_{\lambda, \star} 
\leq O(3^{101m}\alpha) = O(\delta^{-203m}) = O(\alpha^{820m})$ (see Section~\ref{sec:total_variation}). Therefore by Theorem~\ref{thm-total-variation} and Remark~\ref{remark:total-variation-general-time} we can find, for sufficiently large $\alpha$, a partition $\mathcal Q^*_{\ell, 2, 1} = (q_{0; \ell, 2, 1}^*, q_{1; \ell, 2, 1}^*, \ldots,$ $q_{k+1; \ell, 2, 1}^*)$ of $[0, T_{2, 1, \gamma, \ell}]$ such that $k \leq 2/\lambda_*^2$ and 
\begin{eqnarray}
\label{eq:optimization_BM}
\E (\Phi_{\lambda, \mathcal Q^*_{\ell, 2, 1}} (S_{.,\ell}) | \mathcal F_{\ell - 1}) \geq  0.9999\E(\int_{[0,T_{2, 1, \gamma, \ell}]} \frac{1}{\lambda(t)} d t | \mathcal F_{\ell - 1}) - O(\E(\lambda_\infty \lambda_*^{-1.5} | \mathcal F_{\ell -1}))\,.
\end{eqnarray} 

We can now describe a strategy $\{k_{2, 1, j'}^*\}_{j' \in [\Gamma_{\ell, \beta}]}$ using the partition $\mathcal Q_{\ell, 2, 1}^*$:
$$k^*_{2,1,j'} = \begin{cases}
1 &\mbox{ if } q_{0; \ell, 2, k'}^* \leq g_{2, 1, \gamma, j' - 1} < q_{0; \ell, 2, k'+1}^*\mbox{ such that }S_{q_{0; \ell, 2, k'}^*} > S_{q_{0; \ell, 2, k'+1}^*}\,, \\
2 &\mbox{ if } q_{0; \ell, 2, k'}^* \leq g_{2, 1, \gamma, j' - 1} < q_{0; \ell, 2, k'+1}^*\mbox{ such that }S_{q_{0; \ell, 2, k'}^*} < S_{q_{0; \ell, 2, k'+1}^*}\,.
\end{cases}$$
$k^*_{2, 1, j'}$'s are not necessarily uniform on $\{1, 2\}$. But we can make them uniform in the following way. 
Let $s_{\ell, 2, 1}$ be a fair Bernoulli variable that is an element of $\Xi_{\ell, 5}$ and independent of all 
the extra random variables used so far. In particular 
$s_{\ell, 2, 1}$ is independent of $\{S_{. \ell}\}$. Now if $s_{\ell, 2, 1} = 1$, we simply define $k_{2, 1, 
j'} = k^*_{2, 1, j'}$. Otherwise we reconstruct $\mathcal Q^*_{\ell, 2, 1}$ starting from a $\lambda_*$-downtick (see section~\ref{sec:total_variation}) and define $\{k_{2, 1, j'}\}_{j' \in [\Gamma_{\ell, \beta}]}$ to be the strategy obtained from this new partition similarly as 
before. 
Notice that $\E \mathcal I_\ell(k_{2, 1, 1}, \ldots, k_{2, 1, [\Gamma_{\ell, \beta}]}) = \E \mathcal I_\ell(k^*_{2; 1, 1}, \ldots, 
k^*_{2, 1, [\Gamma_{\ell, \beta}]})$. We construct the crossing $\cross^{*,  \tilde V_{\ell; 2, 1}}$ through $\tilde V_{\ell; 2, 1}$ using this strategy.

So we have built two crossings $\cross^{*, \tilde V_{\ell; 2, 1}}$ and $\cross^{*, \tilde V_{\ell; 2, 2}}$ through $\tilde V_{\ell; 2, 1}$ and $\tilde V_{\ell; 2, 
2}$ respectively. What remains is to join them into a 
crossing for $\tilde V_{\ell; 2}$. To this end we select an interval $I_{\midd, \ell, 2}$ uniformly from 
$\mathscr C_{\ell, 1, 0; m, \principal}$. Let $\tilde V_{I_{\midd, \ell, 2}}^\Gamma$ be the rectangle in $\tilde V_{\ell-1, \midd}$ corresponding to $I_{\midd, 
\ell, 2}$. We will construct a crossing $\cross^{*, \midd, \ell, 2}$ through $\tilde V_{I_{\midd, \ell, 2}}^\Gamma$ by using a modification of Strategy I 
discussed in Subsection~\ref{subsec:descrip}. Instead of going down to the rectangles in $\widetilde \descend_\ell$ along the branches descending from $\tilde V_{I_{\midd, \ell, 2}}^\Gamma$ (in $\T_n$), we stop once we meet a node of depth $\leq \ell + 1 - 
100m$. From our induction hypotheses we can construct efficient crossings through each of the 
rectangles so obtained. Now we work upwards from these crossings to $\cross^{*, \midd, \ell, 2}$ following 
Strategy I. The extra (interval valued) random variables that we need for this purpose are elements of 
$\Xi_{\ell, 4}$ (of $\Xi_{\ell, 3}$ if $i$ was 1). Finally we join $\cross^{*, \midd, \ell, 2}$ to $\cross^{*, \tilde V_{\ell; 2, 1}}$ and $\cross^{*, \tilde V_{\ell; 2, 2}}$ by simply gluing 
the corresponding junctions. It is clear from our discussions so far that our construction satisfies the hypotheses \eqref{hypo:measurabilty}, \eqref{hypo:identical_distribution}, 
\eqref{hypo:symmetry} and \eqref{hypo:irrelevance}. It also obeys \eqref{hypo:bound_n_switch} due to \eqref{eq:lambda*_bound}.

\subsection{Induction for the hard case: justifying the approximations in Subsection~\ref{subsec:induct_step_hard}}
\label{subsec:error_terms}
Here we will show that the cumulative effect of different approximations that we made in Subsection~\ref{subsec:induct_step_hard} is negligible. 
We will address all the error terms one by one and in doing so will frequently use the notations introduced in 
the previous subsection. Let us begin with $\Taylor_{n, \ell, .}$ which is the easiest candidate on our list. 
Notice that
\begin{equation}\tag{E1}
\label{eq:error_bound_Taylor}
|\Taylor_{n, \ell, v}| \leq \gamma^3 |X_{n, \ell, v}|^3(\e^{\gamma X_{n, \ell, v}} + \e^{-\gamma X_{n, \ell, v}})\,.
\end{equation}
Thus for $i, j \in [2]$,
\begin{equation}
\label{eq:err_bnd1}
\E \Big(\sum_{k \in [2], I \in \mathscr C_{\ell, j}}d_{\gamma, \ell, [v]_I}\e^{\gamma X_{n, B_{\ell, i, j; I, k}(v^*), \ell - 1, v^*}}|\Taylor_{n, \ell, v^*}|\Big) = O_\delta(\gamma^3)d_{\gamma, \ell-1}\,,
\end{equation}
where $v^* = v_I^{*, \ell - 1, k}$.

For all other error terms we will restrict ourselves to 
the ``good'' event $G_{n, \ell}$. Henceforth all the moments in this subsection involving $X_{n, \ell - 1, \ell, v}$'s should be implicitly assumed to be 
conditioned on the event $G_{n, \ell}$. An important piece of observation is that the event $G_{n, \ell}$ is independent of the random variables $X_{n, \ell - 1, 
\ell, v}$'s. Keeping this in mind we now move on to our next error term  $\quadr_{n, \ell, .}$. We will 
tackle this process in two stages. In the first stage we will approximate $X_{n, \ell - 1, \ell, v}$ by another Gaussian variable $Y_{n, \ell, v}$ such that $\E (X_{n, \ell - 1, \ell, v} - Y_{n, \ell, v})^2$ is very small. Moreover the processes $\{Y_{n, \ell, v}\}_{v \in \tilde{V}_{\ell; i, j, j', k}}$ and $\{Y_{n, \ell, v}\}_{v \in \tilde{V}_{\ell; i, j, j', k}}$ will be independent whenever $j' \neq j''$ are of same 
parity. In the second stage we will exploit the aforementioned independence of the processes $\{Y_{n, \ell, v}\}_{v \in \tilde{V}_{\ell; i, j, j', k}}$'s to argue that the maximum and minimum partial sums are 
small in magnitude. Here $\tilde{V}_{\ell; i, j, j', k}$ is the sub-rectangle of $\tilde{V}_{\ell; i, j, k}$ 
based on $I_{\ell; j, j'}$. Let us just focus on odd $j'$'s as the analysis for even $j'$'s is similar. Place vertical segments halfway between successive 
$\tilde{V}_{\ell; i, j, j', k}$'s. As a result we get, for each $j'$, two rectangles $\tilde{V}^*_{\ell; i, j, j'}$ and $\tilde{V}^{\star}_{\ell; i, j, j'}$ containing 
$\tilde V_{\ell; i, j, j', k}$ that are sub-rectangles of $\tilde{V}_{\ell; i, j, j'}$ and 
$\tilde{V}_{\ell}^\Gamma$ respectively. See Figure~\ref{fig:coarse_field_quadr} for an illustration.
\begin{figure}[!htb]
\centering
\begin{tikzpicture}[semithick, scale = 3.4]


\draw  (-2.1, 0.35) rectangle (-0.2, 0.55);

\draw  (-2.1, 0.1) rectangle (-0.2, 0.3);
\foreach \i in {-1.625, -1.150, -0.675}
{\draw [dashed] (\i, 0.1) -- (\i, 0.3);
 \draw [dashed] (\i, 0.35) -- (\i, 0.55);
 }
 
\node [scale = 0.55] at (-1.8625, 0.45) {$\tilde V_{\ell; 1, 1, 1, 1}$};
\node [scale = 0.55] at (-1.8625, 0.2) {$\tilde V_{\ell; 1, 1, 1, 2}$};

\node [scale = 0.55] at (-0.9125, 0.45) {$\tilde V_{\ell; 1, 1, 3, 1}$};
\node [scale = 0.55] at (-0.9125, 0.2) {$\tilde V_{\ell; 1, 1, 3, 2}$};

\draw [dashed, blue] (-1.3875, -0.8) -- (-1.3875, 0.8);

\draw [dashed, blue] (-0.4375, -0.8) -- (-0.4375, 0.8);

\draw [dashed, blue] (-2.15, -0.8) -- (-2.15, 0.8);

\draw[dashed] (-2.4, -0.8) rectangle (2.4, 0.8);

\draw[dashed] (-0.08, 0.05) rectangle (-2.22, 0.6);

\end{tikzpicture}
\caption{{\bf The rectangles $\tilde{V}^*_{\ell; i, j, j'}$ and $\tilde{V}^{\star}_{\ell; i, j, j'}$.} In this figure we only illustrate for $j' = 1$ and $3$. The blue and black broken lines define two rectangles containing each $\tilde V_{\ell; i, j, j', k}$. The smaller one is $\tilde{V}^*_{\ell; i, j, j'}$ and the bigger one is $\tilde{V}^\star_{\ell; i, j, j'}$.}
\label{fig:coarse_field_quadr}
\end{figure}
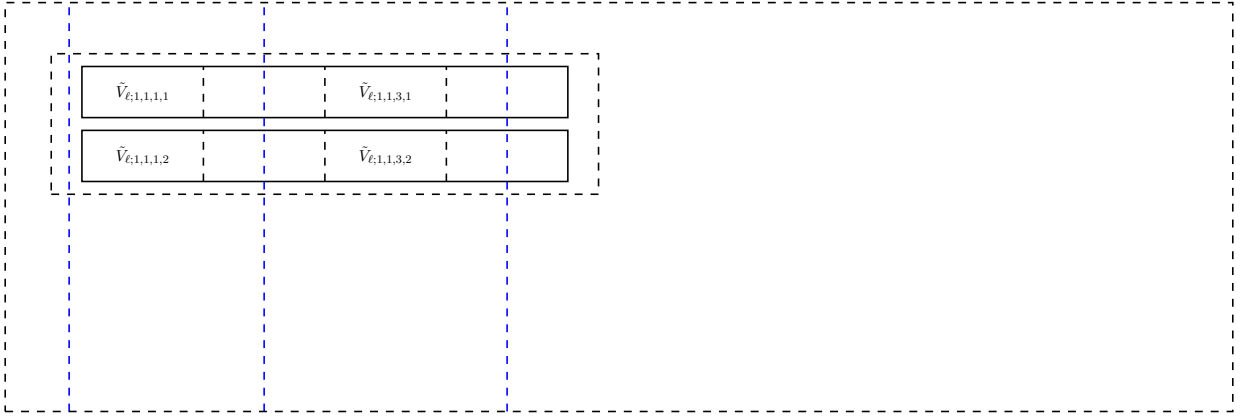
Now define $Y_{n, \ell, v}$ at $v \in \tilde{V}_{\ell; i, j, j', k}$ as
$$Y_{n, \ell, v} = \E (\eta_{n, \ell, v}| \eta_{n, \ell, \partial \tilde{V}^*_{\ell; i, j, j'}}) - \E (\eta_{n, \ell, v}| \eta_{n, \ell, \partial \tilde{V}^\star_{\ell; i, j, j'}})\,.$$
The processes $\{Y_{n, \ell, v}\}_{v \in \tilde{V}_{\ell; i, j, 1, k}}, \{Y_{n, \ell, v}\}_{v \in \tilde{V}_{\ell; i, j, 3, k}}, \ldots$ 
are independent due to Markov random field property of 
GFF. Let $\eta_{n, \ell, v}^*$ denote the random variable $\eta_{n, \ell, v} - \E (\eta_{n, \ell, v}| \eta_{n, \ell, \partial \tilde{V}^*_{\ell; 1, j', k}})$. 
By repeated applications of Markov random field property we can decompose $\eta_{n, \ell, v}$ into a sum of three independent random variables in two different ways, namely 
\begin{equation*}
\eta_{n, \ell, v} = \eta_{n, \ell, v}^* + \E (\eta_{n, \tilde V^\Gamma_{\ell; i, j}, v}| \eta_{n, V^\Gamma_{\ell; i, j}, \partial \tilde{V}^*_{\ell; i, j, j'}}) + X_{n, \ell -1, \ell, v}\,
\end{equation*}
and
\begin{equation*}
\eta_{n, \ell, v} = \eta_{n, \ell, v}^* + \E (\eta_{n, \ell, v}| \eta_{n, \ell, \partial \tilde{V}^\star_{\ell; i, j, j'}}) + Y_{n, \ell, v}\,.
\end{equation*}
The last two displays along with Lemma~\ref{lem:greens_overshoot} imply
\begin{equation*}
\E (X_{n, \ell - 1, \ell, v} - Y_{n, \ell, v})^2 
= \E (\E (\eta_{n, \ell, v}| \eta_{n, \ell, \partial \tilde{V}^\star_{\ell; i, j, j'}}) - \E (\eta_{n, \tilde V^\Gamma_{\ell; i, j}, v}| \eta_{n, V^\Gamma_{\ell; i, j}, \partial \tilde{V}^*_{\ell; i, j, j'}}))^2 \leq \e^{-\Omega(\beta)}\,.
\end{equation*}
Let $\widetilde \quadr_{n, \ell, v^*} = \tfrac{\gamma^2}{2}(Y_{n, \ell - 1, \ell, v^*}^2 - \E Y_{n, \ell - 1, \ell, v^*}^2)$. Then from the last display we get that for $i, j \in [2]$,
\begin{eqnarray}
\label{eq:quad_expec_bound1}
&&\E \Big( \sum_{k \in [2], I \in \mathscr C_{\ell, j}}d_{\gamma, \ell, [v]_I}\e^{\gamma X_{n, B_{\ell, i, j; I, k}(v^*), \ell - 1, v^*}}|\quadr_{n, \ell, v^*} - \widetilde \quadr_{n, \ell, v^*}| \Big) \nonumber \\
&\leq& O(\log (1/\delta))\e^{-\Omega(\beta)}d_{\gamma, \ell - 1}\gamma^2 = O(\delta^{20}) d_{\gamma, \ell - 1}\gamma^2\,.
\end{eqnarray}
Thus the first stage of our analysis is completed. For the second stage we will use the next lemma.
\begin{lemma}
\label{lem:gamma_chaining}
Let $G_{j,k} \in \Gamma(\mu_{j, k}, 1/2)$ for $k \in [N], j \in [n_k]$ such that processes $\{G_{j, 1}\}_{j \in [n_1]}$, $\{G_{j, 2}\}_{j \in [n_2]}, \ldots, 
\{G_{j, N}\}_{j \in [n_N]}$ are independent. Also let $\{c_{j, k}\}_{k \in [N], j \in [n_k]}$ be positive numbers with total sum 1 satisfying $\sum_{j \in 
[n_k]}c_{j, k} \leq O(N^{-1})$ for all $k \in [N]$. 
Denote the maximum and minimum partial sums of the sequence $\sum_{j \in [n_1]}c_{j, 1}(G_{j, 1} - \mu_{j, 1}), \sum_{j \in [n_2]}c_{j, 2}(G_{j, 2} - \mu_{j, 2}), \ldots, \sum_{j \in [n_N]}c_{j, N}(G_{j, N} - \mu_{j, 
N})$ by $M_N$ and $m_N$ respectively. Then we have
$$\E {M_N}^2,  \E {m_N}^2 = \frac{O(\mu^2)}{N}\,,$$
where $\mu = \sup_{j, k} \E G_{j, k} = \sup_{j, k}\mu_{j, k}$.
\end{lemma}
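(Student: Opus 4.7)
The plan is to recognize that the lemma is a martingale maximal inequality in disguise. Set $S_k = \sum_{j \in [n_k]} c_{j,k}(G_{j,k}-\mu_{j,k})$, so that by the independence assumption the $S_k$ are independent mean-zero random variables, and the partial sums $T_K = \sum_{k=1}^K S_k$ form a square-integrable martingale with respect to the natural filtration. Then $M_N = \max_{K \in [N]} T_K$ and $m_N = \min_{K \in [N]} T_K = -\max_{K \in [N]} (-T_K)$.

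First I would apply Doob's $L^2$ maximal inequality to obtain
\[
\E M_N^2 \leq 4\, \E T_N^2 = 4 \sum_{k \in [N]} \var S_k = 4 \sum_{k \in [N]} \sum_{j \in [n_k]} c_{j,k}^2 \var G_{j,k}\,,
\]
and similarly for $\E m_N^2$ applied to $-T_K$. Since $G_{j,k} \sim \Gamma(\mu_{j,k}, 1/2)$, one has $\var G_{j,k} = O(\mu_{j,k}) = O(\mu)$. Next I would use the hypothesis $\sum_{j \in [n_k]} c_{j,k} = O(1/N)$ together with the elementary bound
\[
\sum_{j \in [n_k]} c_{j,k}^2 \;\leq\; \Big(\max_{j} c_{j,k}\Big)\Big(\sum_{j} c_{j,k}\Big) \;\leq\; \Big(\sum_{j} c_{j,k}\Big)^2 \;=\; O(N^{-2})\,.
\]

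Combining these two estimates gives $\sum_{k \in [N]} \var S_k = N \cdot O(\mu)\cdot O(N^{-2}) = O(\mu/N)$, and hence $\E M_N^2, \E m_N^2 = O(\mu/N)$, which is stronger than (and in particular implies) the stated bound $O(\mu^2/N)$ whenever $\mu \gtrsim 1$, as will be the case in every application in Subsection~\ref{subsec:hard-remaining} (where $\mu$ plays the role of a coarse-field variance that is bounded below away from zero in the relevant regime). There is no serious obstacle: the only mildly delicate point is the correct bookkeeping of the variance of the gamma distribution under the paper's parametrization, and the observation that the crude Cauchy--Schwarz-free estimate $\sum c_{j,k}^2 \leq (\sum c_{j,k})^2$ is tight enough for our needs — we do not require any chaining despite the name of the lemma, because Doob already dominates the running maximum by the terminal variance.
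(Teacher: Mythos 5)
Your overall approach — viewing $T_K = \sum_{k\le K} S_k$ as an $L^2$ martingale and invoking Doob's maximal inequality — is exactly the paper's proof, which also reduces to bounding $\var(\tilde G_k)$ and citing Doob. However, there is a genuine error in the variance bookkeeping for the Gamma distribution, and it is precisely the ``mildly delicate point'' you flagged.

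In the parametrization the paper uses (consistent with the intended application to squared Gaussians $Y_{n,\ell,v}^2$: a centered Gaussian squared is $\Gamma$ with \emph{shape} $1/2$ and mean $\sigma^2$), a variable $G\sim\Gamma(\mu,1/2)$ with $\E G=\mu$ has scale $2\mu$, so $\var G = \tfrac12(2\mu)^2 = 2\mu^2 = O(\mu^2)$, \emph{not} $O(\mu)$. Your $\var G_{j,k}=O(\mu_{j,k})$ would correspond to shape $\propto\mu$ with fixed scale, which is not the relevant situation. Plugging in the correct $O(\mu^2)$ into your otherwise-correct chain of estimates,
$$\E M_N^2 \le 4\sum_{k\in[N]}\sum_{j\in[n_k]} c_{j,k}^2\,\var G_{j,k} \le O(\mu^2)\sum_{k\in[N]}\Big(\sum_j c_{j,k}\Big)^2 = O(\mu^2)\cdot N\cdot O(N^{-2}) = O\Big(\frac{\mu^2}{N}\Big)\,,$$
which is exactly the stated bound. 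Your closing remark that $O(\mu/N)$ ``implies'' $O(\mu^2/N)$ when $\mu\gtrsim 1$ does not repair this: when $\mu<1$ the implication reverses, and in any case the derivation of $O(\mu/N)$ rests on the incorrect variance, so it is not a valid (let alone stronger) conclusion. Fix the variance and the proof becomes correct and coincides with the paper's.
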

\begin{proof}
Denote 
$$\tilde G_k = \sum_{j \in [n_k]}c_{j, k}(G_{j, k} - \mu_{j, k})\,.$$
$\tilde G_k$'s are independent and $\var(\tilde G_k) \leq O(\mu^2)/N^2$ all $k \in [N]$. The bounds now follow from Doob's inequality.
\end{proof}
Applying Lemma~\ref{lem:gamma_chaining} to $Y_{n, \ell, v}$'s separately for $v \in \cup_{j'}\tilde V_{\ell; i, j, 2j' + 1, k}$ and $\cup_{j'}\tilde V_{\ell; i, j, 2j', k}$, we get
\begin{equation}\tag{E2}
\label{eq:error_bound_quad}
\E \Big(\max_{j_1', j_2' \in [\Gamma_{\ell, \beta}]}\mid \sum_{j' \in [j_1', j_2']} \sum_{I \subseteq I_{\ell, j, j'}}\e^{\gamma X_{n, B_{\ell, i, j; I, k}(v^*), \ell - 1, v^*}}\widetilde \quadr_{n, \ell, v}\mid \Big) \leq O_\delta(\gamma^3)d_{\gamma, \ell - 1}\,,
\end{equation}
where, as in Subsection~\ref{subsec:induct_step_hard}, ``$I \subseteq I_{\ell; j, j'}$'' means ``$I$ overlaps $I_{\ell; j, j'}$''.

Next we deal with $\widetilde \resid_{n, \ell, 
I, i, k}$'s for  $I \in \mathscr C_{\ell, 1} \cup 
\mathscr C_{\ell, 2}$. The following obvious lemma will be  useful.
\begin{lemma}
\label{lem:var_bound}
Let $Y_1, Y_2, \ldots, Y_N$ be random variables such that $\var Y_i \leq A_1$ and $\cov (Y_i, Y_j) \leq A_2\rho^{|i - j|}$ for some $A_1, A_2 > 0$ and  $0 < \rho < 1$. Then $\var(Y_1 + Y_2 + \ldots Y_N) \leq N (A_1 + 2A_2\tfrac{\rho}{1 - \rho})$.
\end{lemma}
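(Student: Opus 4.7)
The plan is to expand the variance of the sum in the standard way and bound the diagonal and off-diagonal contributions separately. I would write
\[
\var\Bigl(\sum_{i=1}^{N} Y_i\Bigr) \;=\; \sum_{i=1}^{N}\var Y_i \;+\; 2\sum_{1 \le i < j \le N}\cov(Y_i, Y_j)\,.
\]
The hypothesis $\var Y_i \le A_1$ immediately gives $\sum_{i}\var Y_i \le N A_1$, which accounts for the $N A_1$ piece of the bound.

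For the off-diagonal sum, I would group terms by their gap $k = j - i$. Since there are at most $N - k \le N$ pairs $(i,j)$ with $j - i = k$ and each covariance is bounded by $A_2 \rho^{k}$, the hypothesis $\cov(Y_i, Y_j) \le A_2 \rho^{|i-j|}$ yields
\[
2\sum_{1 \le i<j \le N}\cov(Y_i, Y_j) \;\le\; 2 A_2 \sum_{k=1}^{N-1}(N-k)\rho^{k} \;\le\; 2 A_2 N \sum_{k=1}^{\infty}\rho^{k} \;=\; 2 A_2 N \cdot \frac{\rho}{1 - \rho}\,,
\]
where in the second inequality I used $N - k \le N$ and extended the sum to infinity (valid since $\rho \in (0,1)$ so the series converges).

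Adding the two estimates gives exactly $\var(Y_1 + \cdots + Y_N) \le N(A_1 + 2 A_2 \frac{\rho}{1-\rho})$, as required. There is no real obstacle here: the lemma is purely a second-moment computation that uses only the bilinearity of covariance and the geometric tail of the hypothesized decay, and the $1/(1-\rho)$ factor reflects exactly the summability of the covariance kernel along any row.
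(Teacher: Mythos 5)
Your proof is correct and is exactly the standard expansion the paper has in mind; the paper states the lemma without proof, calling it "obvious," and your argument supplies precisely the routine computation being deferred.
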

From Lemma~\ref{lem:field_smoothness} we have $\E (\resid_{n, \ell, v})^2 \leq O(1 / \delta^3)\epsilon^2$. 
Denote the sum $\sum_{I \subseteq I_{\ell; j, j'}}\widetilde \resid_{n, \ell, I, i, k}$ by $\widetilde 
\resid_{n, \ell, i, j, j', k}$. We will use the following obvious bound on the variance of $\widetilde \resid_{n, \ell, i, j, j', k}$:
$$\var\big(\widetilde \resid_{n, \ell, i, j, j', k}\big) = O(\gamma^2/\delta^3)\frac{d_{\gamma, \ell - 1}^2}{\Gamma_{\ell, \beta}^2}\epsilon^2\,.$$
In order to estimate the covariance we will use Lemma~\ref{lem:free_field_cov} and the argument used in the proof of 
Lemma~\ref{lem:general_covar}. These give us,
$$|\cov\big(\widetilde \resid_{n, \ell, i, j, j', k}, \widetilde \resid_{n, \ell, i, j, j'', k}\big)| = O(\gamma^2 / \delta^3)\e^{-\Omega(\beta)(|j'' - j'|-1)} \frac{(\log \beta)^2}{\beta^2}\frac{d_{\gamma, \ell - 1}^2}{\Gamma_{\ell, \beta}^2}\epsilon^2\,.$$
The last two displays together with Lemma~\ref{lem:var_bound} imply
$$\var \Big(\sum_{j' \in [j_1', j_2']}\widetilde \resid_{n, \ell, i, j, j', k}\Big) = O(\gamma^2 / \delta^3)\frac{j_2' - j_1' + 1}{\Gamma_{\ell, \beta}}\frac{d_{\gamma, \ell - 1}^2}{\Gamma_{\ell, \beta}}\epsilon^2\,.$$
From Lemma~\ref{lem:basic_chaining}, we see that
\begin{equation}\tag{E3}
\label{eq:error_bound_resid}
\E \Big(\max_{j_1', j_2' \in [\Gamma_{\ell, \beta}]} \sum_{j' \in [j_1', j_2']}\widetilde \resid_{n, \ell, i, j, j', k}\Big) \leq O(\gamma^2 / \delta^{1.5})d_{\gamma, \ell - 1}\sqrt{\beta / \alpha}\epsilon\,.
\end{equation}

Now we will justify the approximation of $\sum_{I \subseteq I_{\ell; j, j'}}\gain^\star_{I, i, k}$ from \eqref{eq:expr_total_weight5} with $\gain^\star_{n, 
\ell, 2, j, j', k}$ in \eqref{eq:expr_total_weight6}. 
Denote their difference by $\Delta \gain^\star_{n, \ell, 
i, j, j', k}$. Since $\var X_{n, \ell - 1, \ell, v} = O(\log (1 / \delta))$ for all $v \in \tilde V_{\ell; i, j, k}^\Gamma$, it follows that
$$\var (\Delta \gain^\star_{n, \ell, i, j, j', k}) = o_{\gamma \to 0; \delta}(\gamma^2)\frac{d_{\gamma, \ell - 1}^2}{\Gamma_{\ell, \beta}^2}\,.$$
For the covariance, by
Lemma~\ref{lem:free_field_cov} and a similar argument as in the the proof of Lemma~\ref{lem:general_covar}, we obtain
$$|\cov(\Delta \gain^\star_{n, \ell, i, j, j', k}, \Delta \gain^\star_{n, \ell, i, j, j'', k})| = o_{\gamma \to 0; \delta}(\gamma^2)\e^{-\Omega(\beta)(|j'' - j'| - 1)}\frac{(\log \beta)^2}{\beta^2}\frac{d_{\gamma, \ell - 1}^2}{\Gamma_{\ell, \beta}^2}\,.$$
Combining the last two displays and Lemma~\ref{lem:var_bound}, we get that
$$\var \Big(\sum_{j' \in [j_1', j_2']}\Delta \gain^\star_{n, \ell, i, j, j', k}\Big) = o_{\gamma \to 0; \delta}(\gamma^2)\frac{j_2' - j_1' + 1}{\Gamma_{\ell, \beta}}\frac{d_{\gamma, \ell - 1}^2}{\Gamma_{\ell, \beta}}\,.$$
Hence from Lemma~\ref{lem:basic_chaining} it follows that
\begin{equation}\tag{E4}
\label{eq:error_bound_Delta_gain_star}
\E \Big(\max_{j_1', j_2' \in [\Gamma_{\ell, \beta}]} \sum_{j' \in [j_1', j_2']}\Delta \gain^\star_{n, \ell, i, j, j', k}\Big) \leq o_{\gamma \to 0; \delta}(\gamma^2)d_{\gamma, \ell - 1}\sqrt{\beta / \alpha}\,.
\end{equation}
The approximation of $\gain^\star_{n, \ell, 2, j, j', k}$ with $\gain^{\star, \star}_{n, \ell, 2, j, j', k}$ in \eqref{eq:expr_total_weight7} is rather
straightforward. Denote by $J_{\ell, 2}$ the set of all pairs $(j, j') \in [2] \times [\Gamma_{\ell, \beta}]$ such that the heights $\nu_{B_{\ell, 2, I, k}}$'s are not same for all $I \subseteq I_{\ell; j, j'}$ and $k \in [2]$ (see the discussion immediately preceding 
\eqref{eq:expr_total_weight7}). Since $|J_{\ell, 2}| = O_{\delta}(1)$, 
\begin{equation}\tag{E5}
\label{eq:error_bound_gain_star_star}
\E \Big( \sum_{(j, j') \in J_{\ell, 2}, k \in [2]}|\gain^\star_{n, \ell, 2, j, j', k} - \gain^{\star, \star}_{n, \ell, 2, j, j', k}|\Big) \leq O_{\delta}(1)\gamma\frac{d_{\gamma, \ell - 1}}{\Gamma} = O_{\delta}(1)d_{\gamma, \ell - 1}\gamma^3\,.
\end{equation}

Next we will deal with the error terms arising from the approximation of $\gain^{\star \star}_{n, \ell, i, j, j', k}$ with $\widetilde \gain^{\star \star}_{n, \ell, 
i, j, j', k}$. Let $\Delta \widetilde \gain^{\star, \star}_{n, \ell, i, j, j', k} = \widetilde\gain^{\star \star}_{n, \ell, i, j, j', k} -  \gain^{\star \star}_{n, \ell, i, j, j', k}$. Recall from Subsection~\ref{subsec:induct_step_hard} that
\begin{equation}
\label{eq:err_bnd2}
\Delta \widetilde \gain^{\star, \star}_{n, \ell, i, j, j', k} = \frac{\gamma d_{\gamma, \ell - 1}}{\Gamma a_{\ell - 1}}\sum_{v \in \partial \tilde V_{\ell; i, j}^\Gamma} (\tilde c_{v; \ell, i, j, j', k} - c_{v; \ell, i, j, j', k})\eta_{n, \ell, v}\,,
\end{equation}
where $c_{v; \ell, i, j, j', k} = \sum_{w \in I_{\ell; j, j'} \times \{\nu_{2, j, j', k}\}} H_{\tilde V_{\ell; i, j}^\Gamma}(w, v)$ and $\tilde c_{v; \ell, i, j, j', k}$ is the coefficient we substituted for $c_{v; \ell, 
i, j, j', k}$. 
In Subsection~\ref{subsec:induct_step_hard} we described 
different stages of the approximation for $i = 2$. The case for $i=1$ is similar, so its discuss is omitted.  The key part is the computation of $\var (\Delta \widetilde \gain^{\star, \star}_{n, \ell, 2, j, j', k})$ and $\cov(\Delta \widetilde \gain^{\star, \star}_{n, \ell, 2, j, j', k},$ $\Delta \widetilde \gain^{\star, \star}_{n, \ell, 2, j, j'', k})$, which we carry out in our next lemma.
\begin{lemma}
\label{lem:involved_var_covar}
For all $j \in [2], k \in [2]$ and $j' \in [\Gamma_{\ell, \beta}]$,
$$\var (\Delta \widetilde \gain^{\star, \star}_{n, \ell, 2, j, j', k}) = O(\gamma^2)\frac{d_{\gamma, \ell -1}}{\Gamma^2} \beta\delta 
\,.$$
Also for $j' < j'' \in [\Gamma_{\ell, \beta}]$,
$$\cov(\Delta \widetilde \gain^{\star, \star}_{n, \ell, 2, j, j', k}, \Delta \widetilde \gain^{\star, \star}_{n, \ell, 2, j, j'', k}) \leq  O(\gamma^2)\frac{d_{\gamma, \ell - 1}^2}{\Gamma^2}\e^{-\Omega(\beta)(|j'' - j'| - 1)}\log \beta \,.$$
\end{lemma}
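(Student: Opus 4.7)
The plan is to expand both $\var(\Delta\widetilde\gain^{\star\star}_{n,\ell,2,j,j',k})$ and the covariance as double sums over boundary vertices of $\tilde V_{\ell;2,j}^\Gamma$. Writing $\mathcal E_{j',k}(v)=\tilde c_{v;\ell,2,j,j',k}-c_{v;\ell,2,j,j',k}$ for the coefficient error, the defining expression in \eqref{eq:err_bnd2} gives
\begin{equation*}
\var\bigl(\Delta\widetilde\gain^{\star\star}_{n,\ell,2,j,j',k}\bigr)=\frac{\gamma^2d_{\gamma,\ell-1}^2}{\Gamma^2a_{\ell-1}^2}\sum_{v,v'\in\partial\tilde V_{\ell;2,j}^\Gamma}\mathcal E_{j',k}(v)\,\mathcal E_{j',k}(v')\,G_{\tilde V_\ell^\Gamma}(v,v'),
\end{equation*}
since $\cov(\eta_{n,\ell,v},\eta_{n,\ell,v'})=G_{\tilde V_\ell^\Gamma}(v,v')$, and the covariance between $j'$ and $j''$ is the analogous bilinear form in $\mathcal E_{j',k}$ and $\mathcal E_{j'',k}$. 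The problem reduces to a pointwise estimate of $\mathcal E_{j',k}$ on each side of $\partial\tilde V_{\ell;2,j}^\Gamma$, combined with the Green-function bounds from Subsection~\ref{subsec:random_walk}.

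First I would partition $\partial\tilde V_{\ell;2,j}^\Gamma$ into its top, bottom, left and right sides. On the top side inside $I_{\ell;j,j'}$ and away from the two endpoints, Lemma~\ref{lem:reversibility} together with the one-dimensional identity $G^{1,\star}_{[a,b]}(y,y)=4(b-y)(y-a)/(b-a)$ from the proof of Lemma~\ref{lem:general_covar} makes $c_v$ close to the substitute $(\nu_{2,j,j',k}+\lfloor a_{\ell-m-1}\rfloor)/\vertical_{\ell-1}$, with the per-vertex discrete-to-continuum error quantified by Lemma~\ref{lem:convergence}. Near either endpoint of $I_{\ell;j,j'}$ (within horizontal distance $O(a_\ell)$) the substitute $\tilde c_v$ has a jump while $c_v$ remains smooth, so one must instead use Lemma~\ref{lem:kernel_bound2} directly and tolerate an $O(1)$-per-vertex error on two corner regions of width $O(a_\ell)$. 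On the bottom, left and right sides, and on the portion of the top outside $I_{\ell;j,j'}$, $\tilde c_v=0$, and Lemmas~\ref{lem:kernel_bound1}, \ref{lem:kernel_bound2}, \ref{lem:kernel_bound3} make $c_v$ exponentially small in the horizontal distance to $I_{\ell;j,j'}$ measured in units of $a_\ell$.

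To bound the double sums I would combine Lemma~\ref{lem:free_field_var} ($G_{\tilde V_\ell^\Gamma}(v,v)\leq(2/\pi)\log(\Gamma a_\ell)+O(1)$) for short-range pairs with Lemma~\ref{lem:free_field_cov} for horizontally well-separated pairs. For the variance, the dominant contribution comes from pairs on the top side near $I_{\ell;j,j'}$: the corner regions contribute an $O(a_\ell)$-wide band of $O(1)$ coefficients, and combining with the $O(\log\Gamma)$ Green-function bound yields the claimed $\beta\delta$ factor (the $\delta$ arising from the ratio $a_\ell/|I_{\ell;j,j'}|\approx\delta/\beta$ entering the normalization). For the covariance with $j'\neq j''$, the main error supports are separated horizontally by at least $\Omega(\beta a_{\ell-1}(|j''-j'|-1))$, so Lemma~\ref{lem:free_field_cov} supplies the extra factor $e^{-\Omega(\beta)(|j''-j'|-1)}$, while the $\log\beta$ factor reflects Green-function growth at scale $\beta a_\ell$. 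The main obstacle will be the corner-region analysis: extracting the precise $\beta\delta$ dependence requires carefully orchestrating Lemmas~\ref{lem:kernel_bound2}, \ref{lem:convergence}, and \ref{lem:Green_function_continuity} together, rather than applying any one of them in isolation, so that the coefficient mass on the two $O(a_\ell)$-wide transition zones is neither under- nor over-counted.
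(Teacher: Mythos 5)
Your framework---writing the variance as a bilinear form in the coefficient errors $\mathcal E_{j',k}(v)=\tilde c_{v;\ell,2,j,j',k}-c_{v;\ell,2,j,j',k}$ against the Green's function of $\tilde V_\ell^\Gamma$, then classifying the boundary by proximity to $I_{\ell;j,j'}$---is a reasonable starting point, but your identification of the dominant term is wrong, and the accounting that is supposed to produce the $\beta\delta$ factor does not work out.

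You claim the $\beta\delta$ comes from two corner bands of width $O(a_\ell)$ on the top side, with coefficients $O(1)$, combined with the $O(\log\Gamma)$ diagonal Green's bound and "the ratio $a_\ell/|I_{\ell;j,j'}|\approx\delta/\beta$ entering the normalization." That ratio is actually $a_\ell/(\beta a_{\ell-1})=(2+\delta)/\beta\approx 2/\beta$ --- there is no $\delta$ in it. Moreover, a brute-force double sum over a region with $O(\log\Gamma)$ diagonal Green's values cannot yield a $\gamma$-independent bound, since $\Gamma\geq\alpha/\gamma^2$ makes $\log\Gamma$ grow with $1/\gamma$; the statement's right side $\beta\delta=\delta^{-39}$ must be uniform in $\gamma$. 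Finally, the top corner regions in the paper have width $100\log\beta\, a_{\ell-1}$, not $O(a_\ell)$, and contribute $O(\gamma^2)\tfrac{d_{\gamma,\ell-1}^2}{\Gamma^2}\log\beta$, which is strictly subdominant to $\beta\delta$.

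The piece you have missed entirely is the bottom side of $\tilde V_{\ell;2,j}^\Gamma$, namely $I^\star_{\ell;j,j'}\times\{-\lfloor a_{\ell-m-1}\rfloor\}$. Your description asserts that $c_v$ is "exponentially small in the horizontal distance to $I_{\ell;j,j'}$" there, but directly below $I_{\ell;j,j'}$ the horizontal distance is zero, and in fact $c_v=\tfrac14 G_{\tilde V_{\ell;2,j}^\Gamma}(v_{2,j},I_{\ell;j,j'}\times\{\nu\})$ is $O(1)$ across the entire interval (it is essentially $\tfrac{\lfloor a_\ell\rfloor+\lfloor a_{\ell-m-1}\rfloor-\nu_{2,j,j',k}}{\vertical_{\ell-1}}$). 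Since $\tilde c_v=0$ there, you have $\mathcal E_{j',k}(v)=O(1)$ along a segment of length $\approx\beta a_{\ell-1}$, not a thin corner band. This segment \emph{is} the dominant contribution. The reason it ends up only as $\beta\delta$ is not any thinness of the error region but rather the small variance of $\eta_{n,\ell,\cdot}$ there: the segment sits at height $-\lfloor a_{\ell-m-1}\rfloor$, which is within $O(\delta a_\ell)$ of $\partial_\down\tilde V_\ell^\Gamma$, so Lemma~\ref{lem:general_covar} gives $\var\bigl(\sum_{v\in I^\star_{\ell;j,j'}\times\{-\lfloor a_{\ell-m-1}\rfloor\}}\eta_{n,\ell,v}\bigr)=O(\beta\delta)a_{\ell-1}^2$ via the $\nu(1-\nu)$ factor. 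This is exactly the one-dimensional projection argument you would need instead of the raw $O(\log\Gamma)$ diagonal bound: for a constant-coefficient sum along a horizontal line the correct tool is the lazy-walk Green's function $G^{1,\star}_{[a,b]}(y,y)=4(b-y)(y-a)/(b-a)$, not Lemma~\ref{lem:free_field_var}. Once this is fixed, the other sides (top corners, left/right, far parts of the bottom) are genuinely lower order, and your covariance treatment via horizontal separation is in the right spirit; the paper instead bounds $\cov_{1,1}$ and $\cov_{2,2}$ directly through Lemma~\ref{lem:general_covar}, exploiting that the cross terms $\cov_{1,2},\cov_{2,1}$ can be dropped by positivity, but your route via Lemma~\ref{lem:free_field_cov} should also close.
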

\begin{proof}
First let us compute $\cov(\Delta \widetilde \gain^{\star, \star}_{n, \ell, 2, j, j', k}, \Delta\widetilde\gain^{\star, \star}_{n, \ell, 2, j, j'', k})$ which is relatively easier. We can expand this as
$$\cov(\Delta \widetilde \gain^{\star, \star}_{n, \ell, 2, j, j', k}, \Delta\widetilde\gain^{\star, \star}_{n, \ell, 2, j, j'', k}) = \cov_{1, 1} + \cov_{2, 2} - \cov_{1, 2} - \cov_{2,1}\,,$$
where $\cov_{1, 1} = \cov(\widetilde \gain^{\star, \star}_{n, \ell, 2, j, j', k}, \widetilde \gain^{\star, \star}_{n, \ell, 2, j, j'', k})$,  $\cov_{2, 2} = \cov(\gain^{\star, \star}_{n, \ell, 2, j, j', k}, \gain^{\star, \star}_{n, \ell, 2, j, j'', k})$, $\cov_{1, 2} = \cov(\widetilde \gain^{\star, \star}_{n, \ell, 2, j, j', k}, \gain^{\star, \star}_{n, \ell, 2, j, j'', k})$ and $\cov_{2, 1} = \cov(\gain^{\star, \star}_{n, \ell, 2, j, j', k}, \widetilde \gain^{\star, 
\star}_{n, \ell, 2, j, j'', k})$. Since $\widetilde \gain^{\star, \star}_{n, \ell, 2, j, j', k}$ and $\gain^{\star, \star}_{n, \ell, 2, j, j', k}$ consist entirely of positively correlated Gaussian variables it suffices 
to bound $\cov_{1, 1}$ and $\cov_{2, 2}$. Now notice that
$$\cov_{1, 1} \leq \frac{d_{\gamma, \ell - 1}^2\gamma^2}{\Gamma^2 a_{\ell - 1}^2}\cov\big(\sum_{v \in I_{\ell; j, j'} \times \{\nu_{2, j, j', k}\}}\eta_{n, \ell, v}, \sum_{w \in I_{\ell; j, j''} \times \{\nu_{2, j, j'', k}\}}\eta_{n, \ell, w}\big)\,.$$
Lemma~\ref{lem:general_covar} gives that the last covariance is bounded by $O(\gamma^2)\log 
\beta\e^{-\Omega(\beta)(|j'' - j'| - 1)}a_{\ell - 1}^2$. Thus,
$$\cov_{1, 1} \leq O(\gamma^2)\frac{d_{\gamma, \ell - 1}^2}{\Gamma^2}\e^{-\Omega(\beta)(|j'' - j'| - 1)}\log 
\beta \,.$$
One can bound $\cov_{2, 2}$ in exactly similar way. Hence,
$$\cov(\Delta \widetilde \gain^{\star, \star}_{n, \ell, 2, j, j', k}, \Delta\widetilde\gain^{\star, \star}_{n, \ell, 2, j, j'', k}) \leq O(\gamma^2)\frac{d_{\gamma, \ell - 1}^2}{\Gamma^2}\e^{-\Omega(\beta)(|j'' - j'| - 1)}\log \beta \,.$$

The variance computation requires some extra effort. 
To this end divide the base of $\tilde V_{\ell; i, j}^\Gamma$ into several disjoint intervals as shown in 
Figure~\ref{fig:covariance_computation}. Let us also define an additional interval $I_{j, j', 0; \beta}$ which is obtained by adding a segment of length $50 \log \beta a_{\ell - 1}$ to each end of $I_{j, j', 0}$.
\begin{figure}[!htb]
\centering
\begin{tikzpicture}[semithick, scale = 4]
\draw (-2.2, 0) rectangle (-0.5, 0.3);

\draw [dashed] (-1.9, 0) -- (-1.9, 0.3);
\node [scale = 0.6] at (-2.05, 0.15) {$I_{j, j', -1}$};
\draw [<->, thin] (-2.17, 0.35) -- (-1.93, 0.35);
\node [scale = 0.6, above] at (-2.05, 0.35) {$100\log \beta a_{\ell - 1}$};

\draw [dashed] (-0.8, 0) -- (-0.8, 0.3);
\node [scale = 0.6] at (-0.65, 0.15) {$I_{j, j', 1}$};
\draw [<->, thin] (-0.77, 0.35) -- (-0.53, 0.35);
\node [scale = 0.6, above] at (-0.65, 0.35) {$100\log \beta a_{\ell - 1}$};

\node [scale = 0.6] at (-1.35, 0.15) {$I_{j, j', 0}$};


\draw [dashed] (-2.5, 0) rectangle (-2.2, 0.3);
\node [scale = 0.6] at (-2.35, 0.15) {$I_{j, j', -2}$};
\draw [<->, thin] (-2.47, -0.05) -- (-2.23, -0.05);
\node [scale = 0.6, below] at (-2.35, -0.05) {$100\log \beta a_{\ell - 1}$};

\draw [dashed] (-0.2, 0) rectangle (-0.5, 0.3);
\node [scale = 0.6] at (-0.35, 0.15) {$I_{j, j', 2}$};
\draw [<->, thin] (-0.47, -0.05) -- (-0.23, -0.05);
\node [scale = 0.6, below] at (-0.35, -0.05) {$100\log \beta a_{\ell - 1}$};

\draw [dashed] (-0.2, 0) rectangle (0.4, 0.3);
\node [scale = 0.6] at (0.1, 0.15) {$I_{j, j',3}$};

\draw [dashed] (-2.5, 0) rectangle (-3.1, 0.3);
\node [scale = 0.6] at (-2.8, 0.15) {$I_{j, j',-3}$};




\draw [decorate,decoration={brace,amplitude=10pt, mirror, raise=4pt},yshift=0pt](-2.2,-0.1) -- (-0.5,-0.1); 

\node [scale = 0.6, below] at (-1.35, -0.3) {$I_{\ell;j,j'}$};
\end{tikzpicture}
\caption{{\bf Partitioning the base of $\tilde V^\Gamma_{\ell; i, j}$ based on the distance from $I_{\ell; j, j'}$.} The largest rectangle indicates $\tilde V^\Gamma_{\ell; i, j}$. Each sub-rectangle (one which has no further subdivisions) is based on $I_{j, j', q}$ for some $-3 \leq q \leq 3$ which is indicated 
by the label at its center. The numbers next to the arrows indicate the lengths of corresponding intervals.}
\label{fig:covariance_computation}
\end{figure}
From Lemma~\ref{lem:reversibility} we know that $c_{v; \ell, 2, j, j', k} = \tfrac{1}{4}G_{\tilde V^\Gamma_{\ell; 2, j}}(v_{2, j}, I_{\ell; j, j'} \times \{\nu_{2, j, j', k}\})$ where $v_{2, j'}$ is the unique neighbor of $v$ in $\inte(\tilde V^\Gamma_{\ell; 2, 
j})$. We chose $\tilde c_{v; \ell, 2, j, j', k}$ to be $\tfrac{\nu_{2, j, j', k} + \lfloor a_{\ell - m - 1} \rfloor}{\vertical_{\ell - 1}}$ for $v \in I_{\ell; j, j'} \times \{\lfloor a_\ell\rfloor + \lfloor a_{\ell - m 
- 1} \rfloor \}$ and 0 for all remaining $v$. As already discussed in the previous subsection, the first estimate is essentially $\tfrac{1}{4}G^\star(v_y - 1, \nu_{2, j, j', k}) \equiv \tfrac{1}{4}G^{1, \star}_{[-a_{\ell - m - 1}, \lfloor a_\ell \rfloor + 
a_{\ell - m - 1}]\cap \Z}(v_y - 1, \nu_{2, j, j', k})$.
This approximation is rather good when $v_x \in I_{j, j', 
0}$. To see this we first write $\tilde c_{v; \ell, i, j, j', k}$ for any such point $v$ as 
\begin{equation}
\label{eq:involved_var_covar1}
\tilde c_{v; \ell, 2, j, j', k} = \frac{1}{4}\big(\P^{v_{2, j}}(\tau_{\tilde V_{\ell; 2, j}^\Gamma} > \tau_{A_{\ell, \down}}) + \P^{v_{2, j}}(\tau_{\tilde V_{\ell; 2, j}^\Gamma} \leq  \tau_{A_{\ell, \down}}) \big)G^{\star}(\nu_{2, j, j', k}, \nu_{2, j, j', k})\,,
\end{equation}
where $A_{\ell, \down}$ is the strip $\{(x, y): x \in \Z, y \in [-a_{\ell - m - 1}, \lfloor a_\ell \rfloor + 
a_{\ell - m - 1}]\cap \Z\}$. From Lemma~\ref{lem:kernel_bound2} we know that $\P^{v_{2, j}}(\tau_{\tilde V_{\ell; 2, j}^\Gamma} \leq \tau_{A_{\ell, \down}}) \leq O(a_{\ell - 1}^{-1})\e^{-\Theta(\Gamma \delta)}$. Thus,
\begin{equation}
\label{eq:involved_var_covar2}
\tilde c_{v; \ell, 2, j, j', k} \leq \frac{1}{4}\P^{v_{2, j}}(\tau_{\tilde V_{\ell; 2, j}^\Gamma} > \tau_{A_{\ell, \down}})G^{\star}(\nu_{2, j, j', k}, \nu_{2, j, j', k}) + O(1)\e^{-\Theta(\Gamma \delta)}\frac{\nu_{2, j, j', k} + \lfloor a_{\ell - m - 1}\rfloor}{\vertical_{\ell - 1}}\,.
\end{equation}
We can further split the first summation into two parts as follows:
\begin{equation}
\label{eq:involved_var_covar3}
\frac{1}{4}\sum_{u \in \Hl_{\nu_{2, j, j', k}}}\P^{v_{2, j}}(S_{\tau_{A_{\ell, \down}}} = u, \tau_{\tilde V_{\ell; 2, j}^\Gamma} > \tau_{A_{\ell, \down}})G^{\star}(\nu_{2, j, j', k}, \nu_{2, j, j', k}) = \Sigma_1 + \Sigma_2\,.
\end{equation} 
where $\Sigma_1, \Sigma_2$ contain the terms corresponding to $u_x \in I_{j, j', 0; \beta}$ and 
$I_{j, j', 0; \beta}^c$ respectively. Similarly we can write $c_{v; \ell, i, j, j', k}$ as
\begin{eqnarray}
\label{eq:involved_var_covar4}
c_{v; \ell, 2, j, j', k} &=& \frac{1}{4}\sum_{u \in \Hl_{\nu_{2, j, j', k}}}\P^{v_{2, j}}(S_{\tau_{A_{\ell, \down}}} = u, \tau_{\tilde V_{\ell; 2, j}^\Gamma} > \tau_{A_{\ell, \down}})G_{\tilde V_{\ell; 2, j}^\Gamma}(u, I_{\ell; j, j'} \times \{\nu_{2, j, j', k}\}) \nonumber \\
&=& \Sigma_1' + \Sigma_2'\,,
\end{eqnarray} 
where $\Sigma_1', \Sigma_2'$ respectively contain the terms corresponding to $u_x \in I_{j, j', 0; \beta}$ and 
$u_x \in I_{j, j', 0; \beta}^c$. Since $G_{\tilde V_{\ell; 2, j}^\Gamma}(u, I_{\ell; j, j'} \times \{\nu_{2, j, j', k}\})$ is bounded by $G^{\star}(\nu_{2, j, j', k}, \nu_{2, j, j', k})$ for all $u \in \Z^2$, it follows that $\Sigma_1 \geq \Sigma_1'$ and $\Sigma_2 
\geq \Sigma_2'$. In addition by \eqref{eq:general_covar3}, we have that
$$G^{\star}(\nu_{2, j, j', k}, \nu_{2, j, j', k}) - G_{\tilde V_{\ell; 2, j}^\Gamma}(u, I_{\ell; j, j'} \times \{\nu_{2, j, j', k}\}) = 
O(\beta^{-20})G^{\star}(\nu_{2, j, j', k}, \nu_{2, j, j', k})\,,$$
for all $u \in I_{j, j', 0, \beta} \times \{\nu_{2, j, 
j', k}\}$. Thus $\Sigma_1 - \Sigma_1' = 
O(\beta^{-20}) \tfrac{\nu_{2, j, j', k} + \lfloor 
a_{\ell - m - 1}\rfloor}{\vertical_{\ell - 1}}$. Also Lemma~\ref{lem:kernel_bound2} implies 
$$\P^{v_{2, j}}(S_{\tau_{A_{\ell, \down}}} \in I_{j, j', 0; \beta} \times \{\nu_{2, j, j', k}\}) = O(\beta^{-20})a_{\ell - 1}^{-1}\,.$$
This implies $\Sigma_2 - \Sigma_2' \leq O(\beta^{-20})\tfrac{\nu_{2, j, j', k} + \lfloor 
a_{\ell - m - 1}\rfloor}{\vertical_{\ell - 1}}$. Altogether, we get 
\begin{equation}
\label{eq:involved_var_covar5}
|\Delta \tilde c_{v; \ell, 2, j, j', k}| \equiv |\tilde c_{v; \ell, 2, j, j', k} - c_{v; \ell, 2, j, j', k}| = O(\beta^{-20})\,,
\end{equation}
for all $v \in I_{j, j', 0} \times \{\lfloor a_\ell 
\rfloor + \lfloor a_{\ell - m - 1}\rfloor \}$. From what we discussed it is also clear that $\tilde c_{v; \ell, 2,  j, j', k} \geq c_{v; \ell, 2,  j, j', k}$ for all $v \in I_{\ell; j, j'} \times \{\lfloor a_\ell \rfloor + 
\lfloor a_{\ell - m - 1}\rfloor\}$ and $c_{v; \ell, 2, j, j', k} \leq \tfrac{1}{4}G^{\star}(\lfloor a_{\ell}\rfloor + \lfloor a_{\ell - m - 1}\rfloor - 1, \nu_{2, j, j', k})$ for all $v \in \partial_\up \tilde 
V_{\ell; 2, j}^\Gamma$. Hence 
\begin{equation}
\label{eq:involved_var_covar6}
|\Delta \tilde c_{v; \ell, 2, j, j', k}| = O(1)\,,
\end{equation}
for $v \in  \cup_{q \in \{-1, -2, 1, 2\}}I_{j, j', q} \times \{\lfloor a_\ell \rfloor + \lfloor a_{\ell - m - 
1} \rfloor\}$. The same bound obviously holds for $v \in I_{\ell; j, j'}^\star \times \{-\lfloor a_{\ell - m - 
1}\rfloor\}$, where $I_{\ell; j, j'}^\star = I_{\ell; j, 
j'} \cup I_{j, j', -2} \cup I_{j, j', 2}$. Now suppose that $v_x$ lies outside the interval $I_{\ell; j, 
j'}^\star$. Drawing upon our previous discussion again we get,
$$c_{v; \ell, 2, j, j', k} \leq \frac{1}{4}\P^{v_{2, j}}(S_t\mbox{ reaches $I_{\ell; j, j'} \times \{\nu_{2, j, j', k}\}$ before $\tau_{\tilde V_{\ell; 2, j}^\Gamma}$})G^\star(\nu_{2, j, j', k}, \nu_{2, j, j', k})\,.$$
Let $p^\star_{j, j'}(v)$ be the endpoint of $I_{\ell; j, 
j'}$ that is nearest to $v_x$. Applying Lemma~\ref{lem:kernel_bound2} to bound the probability 
in the last expression, we obtain
\begin{equation}
\label{eq:involved_var_covar7}
|\Delta \tilde c_{v; \ell, 2, j, j', k}| = O(1)\e^{-\Theta\big(\frac{|v_x - p^\star_{j, j'}(v)|}{a_{\ell - 1}}\big)}\,,
\end{equation}
for all $v \in  (\partial_\up\tilde V^\Gamma_{\ell; 2, j} \cup \partial_\down \tilde V^\Gamma_{\ell; 2, j}) \setminus (I^\star_{\ell; j, j'} \times \{\lfloor a_\ell \rfloor + \lfloor a_{\ell - m - 1}\rfloor,  -\lfloor a_{\ell - m - 1}\rfloor\})$.
Finally let $v \in \partial_\mathrm{left}\tilde V^\Gamma_{\ell; 2, j}$ $\cup 
\partial_\mathrm{right}\tilde V^\Gamma_{\ell; 2, j}$. In this case we can directly apply Lemma~\ref{lem:kernel_bound3} to get
\begin{equation}
\label{eq:involved_var_covar8}
|\Delta \tilde c_{v; \ell, 2, j, j', k}| = O(1)\e^{-\Theta(\Gamma \delta)}\,.
\end{equation}

Using these bounds on $|\Delta\tilde c_{v; \ell, 2, j, 
j', k}|$ we can now estimate $\var(\Delta \widetilde 
\gain^{\star, \star}_{n, \ell, 2, j, j', k})$. We will split the terms in $\Delta \widetilde \gain^{\star, \star}_{n, \ell, 2, j, j', k}$ (see \eqref{eq:err_bnd2}) into several groups based on the particular segment of $\partial \tilde V^\Gamma_{\ell; 2, j}$ they correspond 
to and deal separately with each group. To this end,  for any $A \subseteq \partial \tilde V^\Gamma_{\ell; 2, j}$ we define the random variable $\Delta \widetilde \gain^{\star, \star}_A = \sum_{v \in A}\Delta \tilde c_{v; \ell, 2, j, j', k}\eta_{n, \ell, 
v}$. The first segment (or group) we consider is $I_{j, j', 0} \times \{\lfloor a_\ell \rfloor + \lfloor a_{\ell    
- m - 1}\rfloor \}$. From the definition of $\Delta \widetilde \gain^{\star, \star}_A$, we get that
\begin{equation}
\label{eq:involved_var_covar9}
\var(\Delta \widetilde \gain^{\star, \star}_{I_{j, j', 0} \times \{\lfloor a_\ell \rfloor + \lfloor a_{\ell - m - 1}\rfloor \}} ) \leq \frac{\gamma^2 d_{\gamma, \ell - 1}^2}{\Gamma^2 a_{\ell - 1}^2}\Delta \tilde C_{\ell, 2, j, j', k, 0}^2\var\big(\sum_{v \in I_{j, j', 0}\times \{\lfloor a_\ell \rfloor + \lfloor a_{\ell - m - 1}\rfloor \}}\eta_{n, \ell, v}\big)\,.
\end{equation}
Here $\Delta \tilde C_{\ell, 2, j, j', k, q} = \max_{v \in I_{j, j', q} \times \{\lfloor a_\ell \rfloor + \lfloor a_{\ell - m - 1} \rfloor\}}|\Delta \tilde c_{v; 
\ell, 2, j, j', k}|$. We will refer to this expression 
for some other cases as well. By \eqref{eq:involved_var_covar5}, we have that $\Delta \tilde C_{\ell, 2, j, j', k, 0}^2 = O(\beta^{-40})$.
Also Lemma~\ref{lem:general_covar} gives 
$$\var\big(\sum_{v \in I_{j, j', 0}\times \{\lfloor a_\ell \rfloor + \lfloor a_{\ell - m - 1}\rfloor \}}\eta_{n, \ell, v}\big) = O(\beta)a_{\ell - 1}^2\,.$$
Thus, we get that
\begin{equation*}
\var(\Delta \widetilde \gain^{\star, \star}_{I_{j, j', 0} \times \{\lfloor a_\ell \rfloor + \lfloor a_{\ell - m - 1}\rfloor \}}) = O(\gamma^2)\frac{d_{\gamma, \ell - 1}^2}{\Gamma^2}\beta^{-39}\,.
\end{equation*}
Following a similar derivation we obtain
\begin{equation*}
\var(\Delta \widetilde \gain^{\star, \star}_{I_{j, j', q} \times \{\lfloor a_\ell \rfloor + \lfloor a_{\ell - m - 1}\rfloor \}}) = O(\gamma^2)\frac{d_{\gamma, \ell - 1}^2}{\Gamma^2}\log \beta\,,
\end{equation*}
for $q \in \{-2, -1, 1, 2\}$ and
\begin{equation*}
\var(\Delta \widetilde \gain^{\star, \star}_{\partial_{\mathrm{left}}\tilde V^\Gamma_{\ell; 2, j} \cup \partial_{\mathrm{right}}\tilde V^\Gamma_{\ell; 2, j}}) = O(\gamma^2)\frac{d_{\gamma, \ell - 1}^2}{\Gamma^2}\e^{-\Theta(\Gamma \delta)}\,.
\end{equation*}
The next segment we are going to consider is $I_{j, j', 3} \times \{\lfloor a_\ell \rfloor + \lfloor a_{\ell - m 
- 1}\rfloor \}$. In this case we need a slightly more refined bound on the variance than 
\eqref{eq:involved_var_covar9}. By \eqref{eq:involved_var_covar7}, we can write
\begin{equation*}
\var(\Delta \widetilde \gain^{\star, \star}_{I_{j, j', 3} \times \{\lfloor a_\ell \rfloor + \lfloor a_{\ell - m - 1}\rfloor \}} ) \leq \frac{\gamma^2 d_{\gamma, \ell - 1}^2}{\Gamma^2 a_{\ell - 1}^2}\e^{-\Theta\big(\frac{|v_x - p^\star_{j, j'}(v)|}{a_{\ell - 1}}\big)}O(\beta^{-20})\sum_{v \in I_{j, j', 3}\times \{\lfloor a_\ell \rfloor + \lfloor a_{\ell - m - 1}\rfloor \}}\cov_v\,,
\end{equation*}
where
$$\cov_v = \cov(\eta_{n, \ell, v}, \mbox{$\sum_{w \in I_{j, j', 3}\times \{\lfloor a_\ell \rfloor + \lfloor a_{\ell - m - 1}\rfloor \}}$}\eta_{n, \ell, w})\,.$$
Since $\cov_v = O(a_{\ell - 1})$ by Lemma~\ref{lem:general_covar}, we get that
\begin{equation*}
\var(\Delta \widetilde \gain^{\star, \star}_{I_{j, j', 3} \times \{\lfloor a_\ell \rfloor + \lfloor a_{\ell - m - 1}\rfloor \}} ) = O(\gamma^2)\frac{d_{\gamma, \ell - 1}^2}{\Gamma^2}\beta^{-40}\,.
\end{equation*}
Similarly we can bound the variances of $\Delta \widetilde \gain^{\star, \star}_{I_{j, j', -3} \times \{\lfloor a_\ell \rfloor + \lfloor a_{\ell - m - 1}\rfloor \}}$, $\Delta \widetilde \gain^{\star, \star}_{I_{j, j', 3} \times \{-\lfloor a_{\ell - m - 1}\rfloor \}}$ and $\Delta \widetilde \gain^{\star, \star}_{I_{j, j', -3} \times \{-\lfloor a_{\ell - m - 
1}\rfloor \}}$. The last segment we will deal with is $I^\star_{\ell; j, j'} \times \{-\lfloor a_{\ell - m - 
1}\rfloor \}$. From \eqref{eq:involved_var_covar9} we have
\begin{equation*}
\var(\Delta \widetilde \gain^{\star, \star}_{I^\star_{\ell; j, j'} \times \{\lfloor a_\ell \rfloor + \lfloor a_{\ell - m - 1}\rfloor \}} ) \leq \frac{\gamma^2d_{\gamma, \ell - 1}^2}{\Gamma^2a_{\ell - 1}^2}\var\big(\sum_{v \in I^\star_{j, j', 0} \times \{-\lfloor a_{\ell - m - 1}\rfloor\}}\eta_{n, \ell, v}\big)\,,
\end{equation*}
where we have used the fact that $|\Delta\tilde c_{v; \ell, 2, j, j', k}| = O(1)$ for all $v \in I^\star_{\ell; j, j'} \times \{\lfloor a_\ell \rfloor + 
\lfloor a_{\ell - m - 1}\rfloor \}$. Now Lemma~\ref{lem:general_covar} gives us
$$\var\big(\sum_{v \in I^\star_{j, j', 0} \times \{-\lfloor a_{\ell - m - 1}\rfloor\}}\eta_{n, \ell, v}\big) = O(\beta \delta)a_{\ell - 1}^2\,.$$
Combining the last two displays we get
\begin{equation*}
\var(\Delta \widetilde \gain^{\star, \star}_{I_{j, j', 3} \times \{-\lfloor a_{\ell - m - 1}\rfloor \}} ) = O(\gamma^2)\frac{d_{\gamma, \ell - 1}^2}{\Gamma^2}\beta \delta\,.
\end{equation*}
Since the segments we discussed form a partition of $\partial \tilde V^\Gamma_{\ell; 2, j}$, we can conclude
\begin{equation*}
\var(\Delta \widetilde \gain^{\star, \star}_{n, \ell, 2, j, j', k} ) = O(\gamma^2)\frac{d_{\gamma, \ell - 1}^2}{\Gamma^2}(\beta \delta + \log \beta)\,. \qedhere
\end{equation*}
\end{proof}
By Lemma~\ref{lem:involved_var_covar}, we get that
$$\var \Big(\sum_{j' \in [j_1', j_2']}\Delta \widetilde \gain^{\star, \star}_{n, \ell, i, j, j', k}\Big) = O(\gamma^2)\frac{d_{\gamma, \ell - 1}^2}{\Gamma^2}(j_2' - j_1' + 1)\beta \delta 
\,.$$
From Lemma~\ref{lem:basic_chaining} it then follows
\begin{equation}\tag{E6}
\label{eq:error_bound_Delta_widetilde_gain_star2}
\E \Big(\max_{j_1', j_2' \in [\Gamma_{\ell, \beta}]} \sum_{j' \in [j_1', j_2']}\Delta \widetilde \gain^{\star, \star}_{n, \ell, i, j, j', k}\Big) \leq  O(\gamma^2)d_{\gamma, \ell - 1}\sqrt{\frac{\delta}{\alpha}} 
\,.
\end{equation}

Now let us analyze the approximation of $\widetilde \gain^{\star, \star}_{n, \ell, i, j, j', k}$ with 
$\widetilde \gain_{n, \ell, i, j, j', k}$. Define $$\Delta\widetilde \gain_{n, \ell, i, j, j', k} = \widetilde \gain_{n, \ell, i, j, j', k} - \widetilde \gain^{\star, \star}_{n, \ell, i, j, j', k}\,.$$
To avoid clumsy notations we will discuss the case $i = 
2$ only. Notice that
$$\Delta\widetilde \gain_{n, \ell, 2, j, j', k} = \frac{d_{\gamma, \ell - 1}}{\Gamma a_{\ell - 1}}\sum_{j \in [2], j' \in [\Gamma_{\ell, \beta}]}\gamma\frac{\nu_{2, j, j', k} + \lfloor a_{\ell - m - 1}\rfloor}{\vertical_{\ell - 1}}\Delta \eta_{n, \ell, j, j'}\,,$$
where $\Delta \eta_{n, \ell, j, j'} = \sum_{v \in I_{\ell; j, j'} \times \{\lfloor a_{\ell}\rfloor + \lfloor a_{\ell - m - 1}\rfloor\}}\tfrac{\eta_{n, \ell, 
\overline{v}} - \eta_{n, \ell, v}}{2}$. Recall from Subsection~\ref{subsec:induct_step_hard} that $\overline v = (v_x, p_\ell)$ where $p_\ell$ left endpoint of the span 
of $\tilde V^\Gamma_{\ell; 1, 1}$. Our next lemma provides upper bounds on the variance and covariance of $\Delta\widetilde \gain_{n, \ell, 2, j, j', k}$'s.
\begin{lemma}
\label{lem:symmetrization}
For all $j \in [2], k \in [2]$ and $j' \in [\Gamma_{\ell, \beta}]$,
$$\var (\Delta \widetilde \gain_{n, \ell, 2, j, j', k}) = O(\gamma^2)\frac{d_{\gamma, \ell -1}^2}{\Gamma^2}\beta\delta\,.$$
Also for $j' < j'' \in [\Gamma_{\ell, \beta}]$,
$$\cov(\Delta \widetilde \gain_{n, \ell, 2, j, j', k}, \Delta \widetilde \gain_{n, \ell, 2, j, j'', k}) \leq  O(\gamma^2)\frac{d_{\gamma, \ell - 1}^2}{\Gamma^2}\e^{-\Omega(\beta)(j'' - j' - 1)}\log \beta \,.$$
\end{lemma}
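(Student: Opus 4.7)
The plan is to reduce the bounds on $\var(\Delta \widetilde\gain_{n,\ell,2,j,j',k})$ and its covariances to corresponding estimates for $\Delta \eta_{n,\ell,j,j'}$, which is in turn a difference of two ``line sums'' of the type $Z^\Sigma_{I,\nu}$ introduced in Lemma~\ref{lem:general_covar}. Concretely, from the expression in the paragraph preceding the lemma,
$$\Delta \widetilde \gain_{n,\ell,2,j,j',k} = \frac{d_{\gamma,\ell-1}\gamma}{\Gamma a_{\ell-1}}\cdot \frac{\nu_{2,j,j',k}+\lfloor a_{\ell-m-1}\rfloor}{\vertical_{\ell-1}}\cdot \Delta \eta_{n,\ell,j,j'},$$
where the coefficient sits in $[0,1]$, so the proof reduces to bounding $\var(\Delta \eta_{n,\ell,j,j'})$ and $\cov(\Delta \eta_{n,\ell,j,j'},\Delta\eta_{n,\ell,j,j''})$. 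Writing $\Delta \eta_{n,\ell,j,j'} = \tfrac12 (Z^\Sigma_{I_{\ell;j,j'},\nu_{\overline v}} - Z^\Sigma_{I_{\ell;j,j'},\nu_v})$ with $\nu_{\overline v},\nu_v$ the normalized vertical coordinates of $\overline v,v$ inside $\tilde V^\Gamma_\ell$, a direct computation using the placement scheme of Subsection~\ref{subsec:GFF_represent} yields $0 < \nu_{\overline v}-\nu_v = \Theta(\delta)$ and $\nu_v,\nu_{\overline v} \in (\tfrac13,\tfrac23)$.

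For the variance I would follow the 1D lazy-walk reduction used in the proof of Lemma~\ref{lem:general_covar}. The same argument that gave $G_{\tilde V^\Gamma_\ell}(v, I\times \{\nu N\}) \approx G^{1,\star}_{[0,N]\cap \Z}(v_y,\nu N)$ for $v$ in the bulk (with an $(|I|/N)^{-70}$-type error at the endpoints, cf.\ \eqref{eq:general_covar3}) gives
$$\cov\bigl(Z^\Sigma_{I,\nu_1},Z^\Sigma_{I,\nu_2}\bigr) \;=\; 4|I|\,(\nu_1\wedge \nu_2)(1-\nu_1\vee \nu_2)\,N + \nu_2(1-\nu_2)N\cdot O\bigl((|I|/N)^{-70}\bigr),$$
because the sum $\sum_{w \in I\times\{\nu_2 N\}} G(v,w)$ equals $G^{1,\star}_{[0,N]}(\nu_1 N,\nu_2 N)=4(\nu_1\wedge\nu_2)(1-\nu_1\vee\nu_2)N$ in the interior, with the same Poisson-kernel tail estimate from Lemma~\ref{lem:kernel_bound1} controlling the contribution near the ends of $I$. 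Combining this with the two variance identities from Lemma~\ref{lem:general_covar} yields
$$\var\bigl(Z^\Sigma_{I,\nu_1}-Z^\Sigma_{I,\nu_2}\bigr) = 4|I|N\,|\nu_2-\nu_1|\bigl(1-|\nu_2-\nu_1|\bigr) + O(1),$$
and therefore $\var(\Delta \eta_{n,\ell,j,j'}) = O(|I_{\ell;j,j'}|\cdot a_{\ell-1}\cdot \delta) = O(\beta\,\delta\, a_{\ell-1}^2)$, since $|I_{\ell;j,j'}|=\Theta(\beta a_{\ell-1})$ and the height of $\tilde V^\Gamma_\ell$ is $\Theta(a_{\ell-1})$. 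Substituting back gives the stated variance bound.

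For the covariance bound, the strategy is the same but with horizontal decay. Expanding $\cov(\Delta \eta_{n,\ell,j,j'},\Delta\eta_{n,\ell,j,j''})$ as a sum of four terms of the form $\cov(Z^\Sigma_{I_{\ell;j,j'},\nu},Z^\Sigma_{I_{\ell;j,j''},\nu'})$, I would again run the 1D reduction but now insert the Poisson-kernel decay in the horizontal direction from Lemma~\ref{lem:kernel_bound1} (cf.\ \eqref{eq:general_covar4}--\eqref{eq:general_covar5}), using $d(I_{\ell;j,j'},I_{\ell;j,j''}) \ge (|j''-j'|-1)\,\beta\, a_{\ell-1}/2$ to get $e^{-\Omega(\beta)(|j''-j'|-1)}$. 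The refinement from $\log\beta$ rather than a $\beta$-factor upfront comes from the observation that, after summing over the horizontal variable in $I_2$, the effective prefactor is $(|I_2|\wedge N)$ as in the second bound of Lemma~\ref{lem:general_covar}, while summing only the $\log$-like contribution of the boundary strip (via Lemma~\ref{lem:kernel_bound3}) produces the $\log\beta$ rather than a polynomial multiplier.

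The main technical obstacle will be to confirm that the boundary/endpoint error terms in the 1D reduction do not swamp the $\Theta(\delta)$ cancellation in the variance: in Lemma~\ref{lem:general_covar} these errors were $O((|I|/N)^{-70})\,\nu(1-\nu)N$, which is negligible compared to the main term $4|I|\nu(1-\nu)N$, but here the main term is only $\Theta(|I|N\delta)$, so one must check the same error bound remains dominated. Since $|I_{\ell;j,j'}|/N = \Theta(\beta) = \Theta(\delta^{-40})$, the error is of order $\delta^{2800}$ relative to $|I|N$, which is still negligible compared to $\delta$, so the estimate goes through without additional difficulty.
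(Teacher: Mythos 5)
Your approach is essentially the one the paper takes: you express $\Delta\eta_{n,\ell,j,j'}$ as half the difference of two line sums at nearby heights $\nu_v, \nu_{\overline v}$ with $|\nu_{\overline v}-\nu_v|=\Theta(\delta)$, and then extract the $\delta$-factor cancellation between the two variance terms and twice the covariance via the reduction to the one-dimensional lazy walk Green's function $G^{1,\star}$. The only cosmetic difference is bookkeeping: the paper routes the covariance lower bound through the Poisson-kernel estimates already set up for Lemma~\ref{lem:involved_var_covar} (the $c_v$ vs.\ $\tilde c_v$ analysis), while you derive it directly by extending the argument of Lemma~\ref{lem:general_covar} to two different heights over the same interval. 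Both are valid; they hinge on the same underlying estimate.

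One inaccuracy worth noting: the error term in your identity
$$\var\bigl(Z^\Sigma_{I,\nu_1}-Z^\Sigma_{I,\nu_2}\bigr) = 4|I|N\,|\nu_2-\nu_1|\bigl(1-|\nu_2-\nu_1|\bigr) + O(1)$$
is wrong, and the source of error you then worry about, $O((|I|/N)^{-70})$, is not the dominant one. The lower bound for the variance (and for the covariance at two heights) in Lemma~\ref{lem:general_covar} comes with an \emph{additive} deficit of order $N\log(|I|/N)$ from discarding the $O(N\log(|I|/N))$ points of $I$ nearest its endpoints, giving a total additive error of order $N^2\log(|I|/N) = a_{\ell-1}^2\log\beta$ rather than $O(1)$ or $O((|I|/N)^{-70})|I|N$. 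The conclusion you need still survives because this error is $\log\beta / (\beta\delta)\sim \delta^{39}\log(1/\delta)$ relative to the leading term $\Theta(\beta\delta\,a_{\ell-1}^2)$, which is indeed negligible --- but you should trace through the right bound. For the covariance part, your decomposition into four cross-terms and an appeal to the second bound of Lemma~\ref{lem:general_covar} is exactly what the paper intends (and in fact gives an estimate without the $\log\beta$ multiplier, i.e., slightly stronger than what is stated); the speculation about where the $\log\beta$ comes from can simply be dropped.
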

\begin{proof}
For convenience we will denote $\lfloor a_\ell \rfloor + \lfloor a_{\ell - m - 1}\rfloor$ by $r_\ell$. In order to bound the variance we need a lower bound on the covariance between $\overline \eta_{n, \ell, j, j', 1}$ and $\overline \eta_{n, \ell, j, j', 2}$ where 
$$\overline \eta_{n, \ell, j, j', 1} = \sum_{v \in I_{\ell; j, j'} \times \{r_\ell\}}\frac{\eta_{n, \ell, 
v}}{2}\,,\mbox{ and \hspace{0.2cm}}\overline \eta_{n, \ell, j, j', 2} = \sum_{w \in I_{\ell; j, j'} \times \{p_\ell\}}\frac{\eta_{n, \ell, w}}{2}\,.$$
Notice that $\cov(\eta_{n, \ell, w}, \overline \eta_{n, \ell, j, j', 1}) = G_{\tilde V^\Gamma_{\ell; 2, j'}}(\overline{v},  I_{\ell; j, j'} \times \{r_\ell\})$ for all $w \in 
I_{\ell; j, j'} \times \{p_\ell\}$. Since $\beta$ and $\Gamma$ are large, a good estimate for this covariance is the one dimensional lazy random walk Green's function $G^\star(p_\ell, r_\ell) = G^{1, \star}_{[-a_{\ell - m}, \lfloor a_{\ell+1} \rfloor + a_{\ell - m}] \cap \Z}(p_\ell, r_\ell)$. 
Observing $p_\ell - r_\ell = O(\delta)$, we can then see that this is $(1 - O(\delta))G^\star(r_\ell, r_\ell)$. 
Now recall that we did a similar thing while approximating $\gain^{\star, \star}_{n, \ell, 1, j, j', k}$ by $\widetilde \gain^{\star, \star}_{n, \ell, 1, j, j', 
k}$. In fact an analogous derivation for \eqref{eq:involved_var_covar5}  (with minor modifications) gives us
$$\cov(\eta_{n, \ell, w}, \overline \eta_{n, \ell, j, j', 1}) \geq (1 - O(\delta) - O(\beta^{-20}))G^\star(r_\ell, r_\ell)\,.$$
Exchanging the roles of $r_\ell$ and $p_\ell$, we also get 
$$\cov(\eta_{n, \ell, v}, \overline \eta_{n, \ell, j, j', 2}) \geq (1 - O(\delta) - O(\beta^{-20}))G^\star(p_\ell, p_\ell)\,.$$
for all $v \in I_{\ell; j, j'} \times \{r_\ell\}$. On the hand we know that
$\var (\overline \eta_{n, \ell, j, j', 1}) \leq |I_{\ell;j, j'}|G^\star(p_\ell, p_\ell)$ and $\var (\overline \eta_{n, \ell, j, j', 2}) \leq |I_{\ell;j, j'}|G^\star(r_\ell, r_\ell)$. Thus
$$\var (\Delta\eta_{n, \ell, j, j'}) \leq (O(\delta) + O(\beta^{-20}))|I_{\ell; j, j'}|(G^\star(p_\ell, p_\ell) + G^\star(r_\ell, r_\ell)) = (O(\delta) + O(\beta^{-20}))\beta a_{\ell - 1}^2\,.$$
This proves the variance part. Notice that this argument and Lemma~\ref{lem:general_covar} together imply
\begin{equation}
\label{eq:symmetrization1}
\var(\eta_{n, \ell, j, j'}) \geq (1 - O(\delta))\beta a_{\ell - 1}\beta a_{\ell - 1}a_{\ell + 1}\,.
\end{equation}
The covariance part follows from Lemma~\ref{lem:general_covar}.
\end{proof}
From Lemma~\ref{lem:symmetrization} we get
$$\var \Big(\sum_{j' \in [j_1', j_2']}\Delta \widetilde \gain_{n, \ell, i, j, j', k}\Big) = O(\gamma^2)\frac{d_{\gamma, \ell - 1}^2}{\Gamma^2}(j_2' - j_1' + 1)\beta \delta 
\,.$$
From Lemma~\ref{lem:basic_chaining} it then follows
\begin{equation}\tag{E7}
\label{eq:error_bound_Delta_widetilde_gain}
\E \Big(\max_{j_1', j_2' \in [\Gamma_{\ell, \beta}]} \sum_{j' \in [j_1', j_2']}\Delta \widetilde \gain_{n, \ell, i, j, j', k}\Big) \leq  O(\gamma^2)d_{\gamma, \ell - 1}\sqrt{\frac{\delta}{\alpha}} 
\,.
\end{equation}
Finally we need to address the error terms arising from the diagonalization of $\eta_{n, 
\ell, i, j, j'}$'s (see the paragraph following \eqref{eq:expr_total_weight10}). We discuss the case for $i = 2$ only and the other case is similar. Define
$$\Delta\gram_{n, \ell, 2, j, j', k} = \frac{d_{\gamma, \ell-1}}{\Gamma a_{\ell - 1}}\sum_{j \in [2], j' \in [\Gamma_{\ell, \beta}]}\gamma \frac{\nu_{2, j, j', k} + \lfloor a_{\ell - m - 1}\rfloor}{\vertical_{\ell - 1}}(\tilde \eta_{n, \ell, j, j'} - \eta_{n, \ell, j, j'})\,.$$
From Lemma~\ref{lem:Gram_Schimidt} and Lemma~\ref{lem:general_covar} we immediately get that
$$\var (\Delta\gram_{n, \ell, 2, j, j', k}) = O(\gamma^2)\frac{d_{\gamma, \ell - 1}^2}{\Gamma^2}\frac{(\log \beta)^2}{\beta}\mbox{ for }j' \in [\Gamma_{\ell, \beta}]\,,$$
and 
$$\cov (\Delta\gram_{n, \ell, 2, j, j', k}, \Delta\gram_{n, \ell, 2, j, j'', k}) \leq  O(\gamma^2)\frac{d_{\gamma, \ell - 1}^2}{\Gamma^2}\e^{-\Omega(\beta)(j'' - j')}\frac{(\log \beta)^2}{\beta}\mbox{ for }j' < j''\in [\Gamma_{\ell, \beta}]\,.$$
These two displays imply that
$$\var \Big(\sum_{j' \in [j_1', j_2']}\Delta\gram^{\star, \star}_{n, \ell, i, j, j', k}\Big) = O(\gamma^2)\frac{d_{\gamma, \ell - 1}^2}{\Gamma^2}(j_2' - j_1' + 1)\frac{(\log \beta)^2}{\beta}\,.$$
From Lemma~\ref{lem:basic_chaining} it then follows
\begin{equation}\tag{E8}
\label{eq:error_bound_diagonalize}
\E \Big(\max_{j_1', j_2' \in [\Gamma_{\ell, \beta}]} \sum_{j' \in [j_1', j_2']}\Delta \gram_{n, \ell, i, j, j', k}\Big) \leq  O(\gamma^2)d_{\gamma, \ell - 1}\frac{\log \beta}{\beta \sqrt{\alpha}}\,.
\end{equation}


\subsection{Induction step for the hard case: verifying the remaining hypotheses} \label{subsec:hard-remaining}
In this subsection we will verify the hypotheses \eqref{hypo:expected_weight}, \eqref{hypo:short_range_ratio} and \eqref{hypo:gadget_negligible}.

Let us begin with \eqref{hypo:expected_weight}. We will first estimate the expected gain from switchings as 
given by \eqref{eq:objective_fxn2}. From definition of $\lambda$ in Subsection~\ref{subsec:induct_step_hard} it follows:
\begin{equation*}
\E(\int_{[0,T_{2, 1, \gamma, \ell}]} \frac{1}{\lambda(t)} d t | \mathcal F_{\ell - 1}) \geq (1 - O(\delta)) \frac{d_{\gamma, \ell -1}\gamma^2}{8\Gamma a_{\ell - 1}^2}\sum_{j' \in [\Gamma_{\ell, \beta}]}\Delta \tilde \nu_{2, 1, j'}\var(\tilde \eta_{n, \ell, j'})\,.
\end{equation*}
We saw in \eqref{eq:symmetrization1} that $\var(\eta_{n, \ell, j'}) \geq (1 -  
O(\delta))\beta a_{\ell - 1} a_{\ell + 1}$. Since $\var(\tilde \eta_{n, \ell, j'}) \geq (1 - O(\beta^{-2}))\var(\eta_{n, \ell, j'})$, we therefore have
\begin{equation*}
\E(\int_{[0,T_{2, 1, \gamma, \ell}]} \frac{1}{\lambda(t)} d t | \mathcal F_{\ell - 1}) \geq 0.9998\frac{d_{\gamma, \ell -1}\beta \gamma^2 a_{\ell + 1}}{8\Gamma a_{\ell - 1}}\sum_{j' \in [\Gamma_{\ell, \beta}]}\Delta \tilde \nu_{2, 1, j'}\,.
\end{equation*}
On the other hand \eqref{eq:lambda*_bound} and the definition of $\lambda$ imply that
\begin{equation*}
\lambda_\infty \lambda_*^{-1.5} \geq d_{\gamma, \ell - 1}\gamma^2O(\alpha^{-0.25})\,.
\end{equation*}
Plugging the last two bounds into \eqref{eq:optimization_BM} and taking expectations on both sides we get
\begin{equation*}
\E (\Phi_{\lambda, \mathcal Q^*_{\ell, 2, 1}} (S_{.,\ell})) \geq 0.9998\frac{d_{\gamma, \ell -1}\beta \gamma^2 a_{\ell + 1}}{8\Gamma a_{\ell - 1}}\sum_{j' \in [\Gamma_{\ell, \beta}]}\E \Delta \tilde \nu_{2, 1, j'} - d_{\gamma, \ell - 1}\gamma^2 O(\alpha^{-0.25})\,.
\end{equation*}
Now from \eqref{hypo:symmetry} we have that $\E \Delta \tilde \nu_{2, 1, j'} \geq (1 - O(\delta))\tfrac{1}{2}$. 
Thus
\begin{equation}
\label{eq:optimization_BM2}
\E (\Phi_{\lambda, \mathcal Q^*_{\ell, 2, 1}} (S_{.,\ell})) \geq 0.9997d_{\gamma, \ell -1}\frac{\gamma^2}{4}\,.
\end{equation} 

Due to our particular choice of strategy we have the following bound on the difference between $\E \mathcal I_\ell(k_{2,1,1}, \ldots, k_{2,1, [\Gamma_{\ell, \beta}]})$ and $\E \Phi_{\lambda, \mathcal Q^*_{\ell, 2, 1}}(S_{., \ell})$:
$$\E \mathcal I_\ell(k_{2,1,1}, \ldots, k_{2,1, [\Gamma_{\ell, \beta}]} | \mathcal F_{\ell - 1}) - \E \Phi_{\lambda, \mathcal Q^*_{\ell, 2, 1}}(S_{., \ell} | \mathcal F_{\ell - 1}) \leq 2\E (M_\mathrm{dis}| \mathcal F_{\ell - 1}) / \lambda_*^2\,,$$
where $M_{\mathrm{dis}} = \max_{j' \in [\Gamma_{\ell, \beta}]}\max_{s, t \in [g_{2, 1, \gamma, j' - 1}, g_{2, 
1, \gamma, j'}]}|S_{t; \ell} - S_{s; \ell}|$. From Lemma~\ref{lem:general_covar} and \eqref{eq:lambda*_bound} we then get that
\begin{equation*}
\E \mathcal I_\ell(k_{2,1,1}, \ldots, k_{2,1, [\Gamma_{\ell, \beta}]}) - \E \Phi_{\lambda, \mathcal Q^*_{\ell, 2, 1}}(S_{., \ell}) \leq O(\gamma (\beta\log \Gamma)^{0.5})d_{\gamma, \ell - 1}\gamma^2 \leq  0.0001d_{\gamma, \ell - 1}\gamma^2\,.
\end{equation*}
Combined with \eqref{eq:optimization_BM2} and \eqref{eq:expected_gadget_cost3}, this yields
\begin{equation}
\label{eq:optimization_BM3}
\E \mathcal I_\ell(k_{2,1,1}, \ldots, k_{2,1, [\Gamma_{\ell, \beta}]}) \geq 0.9995 d_{\gamma, \ell -1}\frac{\gamma^2}{4}\,.
\end{equation}
Having estimated the expected gain from switchings let us now turn our attention to the increment term from 
Claim~\ref{claim:invariant_expect}. The key tools that we will use for this purpose are the self-similar nature of the coverings $\mathscr C_{n, k, x;r}$'s and the 
limit result given in Lemma~\ref{lem:Brownian_scaling}. 
But first we need to get rid of the coefficients $\e^{X_{n, B, \ell-1, v^*}}$'s in the expression of $\increment_{[v]_I, 2, k}$'s. Notice that:
$$\increment_{[v]_I, 2, k} \leq \increment_{[v]_I, 2, k, \star} + \increment_{[v]_I, 2, k, *}\,,$$
where 
$$\increment_{[v]_I, 2, k, \star} = \e^{C_\delta\gamma \log\Gamma}\frac{\gamma^2}{2}d_{\gamma, \ell, [v]_I}\E X_{n, \ell - 1, \ell, v^*}^2\,,$$ 
and 
$$\increment_{[v]_I, 2, k, *} = \frac{\gamma^2}{2}d_{\gamma, \ell, [v]_I}\e^{\gamma X_{n, B_{\ell, 2, j; I, k}(v^*), \ell - 1, v^*}}\mathbf 1_{\{X_{n, B_{\ell, 2, j; I, k}(v^*), \ell - 1, v^*} \geq C_\delta\log \Gamma\}}\E X_{n, \ell - 1, \ell, v^*}^2\,.$$
The constant $C_\delta$ is same as in 
Lemma~\ref{lem:error_control}. From that lemma we also get
\begin{equation}
\label{eq:increment*}
\E (\increment_{[v]_I, 2, k, *}) = O(\gamma^{5.9})d_{\gamma, \ell, [v]_I}\,.
\end{equation}
Thus we only need to bound $\E(\increment_{[v]_I, 2, k, 
\star})$. By induction hypothesis \eqref{hypo:symmetry} we can write
$$\E (\increment_{[v]_I, 2, k, \star}) = \e^{C_\delta\gamma\log\Gamma}\frac{\gamma^2}{2}\frac{d_{\gamma, \ell, [v]_I}}{|[v]_I^{2, k}|}\sum_{w \in [v]_I^{2, k}}\E X_{n, \ell - 1, \ell, w}^2\,,$$
where $[v]_I^{2, k} = \cup_{B \in \descend_{\ell - 1, I, 
k}}([v]_I \cap B)$. From our construction cardinality of the spans of rectangles in $\descend_{\ell - 1, I, k}$ is at most $O(\epsilon a_\ell / \Gamma)$. Let $s_{I, B}$ denote the span of a rectangle $B$ in $\descend_{\ell, I}$. Then Lemma~\ref{lem:field_smoothness} tells us
$$\Big |\E X_{n, \ell - 1, \ell, w_B}^2 - \frac{1}{|s_{I, B}|}\sum_{u \in \{v_x\} \times s_{I, B}}\E X_{n, \ell - 1, \ell, u}^2 \Big| \leq O(\epsilon / \delta^{1.25})\sqrt{\log (1 / \delta)}\gamma^2\,,$$
where $w_B$ is the unique vertex from $B$ in $[v]_I^{2, 
k}$. Notice that $v_x$ is the common horizontal coordinate of the points in $[v]_I^{2, k}$. Denote by $s_{I, \ell, 2, k}$ the union of spans of rectangles in $\descend_{\ell - 1, I, 
k}$. The last display and the fact that rectangles in $\descend_{\ell, I}$ have identical dimensions imply
\begin{equation}
\label{eq:increment_star}
\E (\increment_{[v]_I, 2, k, \star}) \leq \e^{C_\delta\gamma\log\Gamma}\frac{\gamma^2}{2}\frac{d_{\gamma, \ell, [v]_I}}{|s_{I, \ell, 2, k}|}\sum_{u \in \{v_x\} \times s_{I, \ell, 2, k}}\E X_{n, \ell - 1, \ell, u}^2 + O(\delta^{98})d_{\gamma, \ell, [v]_I}O(\gamma^2)\,,
\end{equation}
Now define a new quantity $A_{[v]_I, 2}$ by
\begin{equation}
\label{eq:increment1}
A_{[v]_I, 2} = \e^{C_\delta\gamma\log\Gamma}\frac{\gamma^2}{2}\frac{d_{\gamma, \ell, [v]_I}}{|s_{I,\ell, 2}|}\sum_{u \in \{v_x\} \times s_{I, \ell, 2}}\E X_{n, \ell - 1, \ell, u}^2\,,
\end{equation}
where $s_{I, \ell, 2} = s_{I, \ell, 2, 1} \cup s_{I, \ell, 2, 
2}$. Recall that $\eta_{n , \ell, v} = X_{n, \ell - 1, \ell, u} + \eta_{n , \ell - 1, v}$ and $X_{n, \ell - 1, \ell, u}, \eta_{n , \ell - 1, v}$ are independent. 
Thus, 
$$\E X_{n, \ell - 1, \ell, u}^2 = G_{\tilde V^\Gamma_\ell}(u, u) - G_{\tilde V^\Gamma_{\ell - 1}}(u, u)\,.$$
Since the distance between any vertex in $[v]_I$ and the left and right boundaries of $\tilde V_\ell^\Gamma$ is at least $\Gamma \delta a_\ell /8$, we have from Lemma~\ref{lem:greens_overshoot}
$$|G_{\tilde V^\Gamma_\ell}(u, u) - G_{\tilde V^\Gamma_{\ell, v_x}}(u, u)| \leq O(1)\e^{-\Theta(\Gamma \delta)}\,,$$
where $V^\Gamma_{\ell, v_x}$ is the sub-rectangle of $V^\Gamma_\ell$ formed between the vertical lines $x = v_x - \vertical_\ell\times \lfloor \tfrac{\Gamma\delta}{16}\rfloor$ and $x = v_x + \vertical_\ell\times \lfloor 
\tfrac{\Gamma\delta}{16}\rfloor$. Similarly we can define the sub-rectangle $V^\Gamma_{\ell, 2, j, v_x}$ of $V^\Gamma_{\ell, 2, j}$ (where $I \in \mathscr C_{\ell, j}$) and the following is true:
$$|G_{\tilde V^\Gamma_{\ell, 2, j}}(u, u) - G_{\tilde V^\Gamma_{\ell, 2, j, v_x}}(u, u)| \leq O(1)\e^{-\Theta(\Gamma \delta)}\,.$$
Plugging these bounds into the right hand side of \eqref{eq:increment1} we get
\begin{equation}
\label{eq:increment2}
A_{[v]_I, 2} \leq \e^{C_\delta \gamma \log \Gamma}\frac{\gamma^2}{2}\frac{d_{\gamma, \ell, [v]_I}}{|s_{I, \ell, 2}|}\sum_{u \in \{v_x\} \times s_{I, \ell, 2}}(G_{\tilde V^\Gamma_{\ell,v_x}}(u, u) - G_{\tilde V^\Gamma_{\ell, 2, j, v_x}}(u, u)) + O(\gamma^2)d_{\gamma, \ell, [v]_I}\e^{-\Theta(\Gamma \delta)}\,.
\end{equation}
As we mentioned in Subsection~\ref{subsec:descrip}, the spans of the rectangles in $\descend_{\ell, I}$ form the collection $\mathscr C_{\ell + 1, \Gamma, 0; d_I, \principal}$ for some $d_I$ between $\ell \% 200m_\Gamma + 2$ and $\ell \% 
200m_\Gamma + m + 2$. Hence from symmetry and Lemma~\ref{lem:Green_function_continuity} we have
\begin{equation}
\label{eq:increment3}
\Big |\frac{1}{|s_{I, \ell, 2}|}\sum_{u \in \{v_x\} \times s_{I, \ell, 2}} G_{\tilde V^\Gamma_{\ell,v_x}}(u, u) -\frac{1}{|s_{I, \ell, *}|}\sum_{u \in \{v_x\} \times \{s_{I, \ell, *}\}} G_{\tilde V^\Gamma_{\ell,v_x}}(u, u) \Big | \leq O\big(\frac{\epsilon \log \Gamma}{\delta^2\Gamma}\big)\,,
\end{equation}
where $s_{I, \ell, *}$ is the union of intervals in 
$\mathscr C_{\ell + 1, \Gamma, 0; d_I, \principal}$. 
Therefore, we can bound the first summand in \eqref{eq:increment2} as follows:
\begin{equation}
\label{eq:increment2*}
\e^{C_\delta \gamma \log \Gamma}\frac{\gamma^2}{2}d_{\gamma, \ell, [v]_I}\Big(\frac{1}{|s_{I, \ell, *}|}\sum_{u \in \{v_x\} \times s_{I, \ell, *}}G_{\tilde V^\Gamma_{\ell,v_x}}(u, u) - \frac{1}{|s_{I, \ell, 2}|}\sum_{u \in \{v_x\} \times s_{I, \ell, 2}}G_{\tilde V^\Gamma_{\ell, 2, j, v_x}}(u, u) \Big)\,.
\end{equation}
Since $\ell \geq 200a'm_\Gamma$ (see Subsection~\ref{subsec:induct_hypo}), from Lemma~\ref{lem:Brownian_scaling} we get
\begin{equation}
\label{eq:increment4}
A_{[v]_I, 2} \leq \big(\frac{\log 2}{\pi} + 10^{-6}\big)\gamma^2d_{\gamma, \ell, [v]_I} + o_{\gamma \to 0, \delta}(\gamma^2)d_{\gamma, \ell, [v]_I}\,.
\end{equation}
Combined with \eqref{eq:increment*}, \eqref{eq:increment_star} and Claim~\ref{claim:invariant_expect}, it yields that
\begin{equation}
\label{eq:increment4*}
\sum_{I \in \mathscr C_{\ell, 1} \cup \mathscr C_{\ell, 2}}\sum_{[v]_I \in \descend_{\ell, I}}\E (\increment_{[v]_I, 2, k_I}) \leq 2d_{\gamma, \ell - 1}(\frac{\log 2}{\pi} + 10^{-6})\gamma^2\,.
\end{equation}
Combined with \eqref{hypo:short_range_ratio} and \eqref{hypo:gadget_negligible}, the same argument implies \eqref{hypo:expected_weight} for the easy case.

Now let us estimate the expected value of the sum of maximum possible errors that we made in every stage of approximation described in 
Subsection~\ref{subsec:induct_step_hard}. Since we make at most $(2 + O(\delta))\alpha$ many switches, from the bounds \eqref{eq:error_bound_Taylor} to \eqref{eq:error_bound_diagonalize} given the last subsection we find this expectation to be bounded by 
$$O(\gamma^2)d_{\gamma, \ell - 1}\sqrt{\frac{\delta}{\alpha}} \alpha\leq 0.0001d_{\gamma, \ell - 1}\gamma^2\,.$$
Combined with \eqref{eq:optimization_BM3}, \eqref{eq:increment4*} and Lemma~\ref{lem:error_control}, it gives that
\begin{equation}
\label{eq:increment_tentative}
\E \tilde D^\star_{\gamma, \ell, 2, \mathscr C_{\ell, 1}} + \E \tilde D^\star_{\gamma, \ell, 2, \mathscr C_{\ell, 2}} \leq 2d_{\gamma, \ell - 1}' + d_{\gamma, \ell - 1}(\frac{2\log 2}{\pi}\gamma^2 - 0.499\gamma^2) \leq 2d_{\gamma, \ell - 1}' - 0.05d_{\gamma, \ell - 1}\gamma^2\,,
\end{equation}
where $d_{\gamma, \ell - 1}' = \E \tilde D_{\gamma, \ell 
- 1, 1} = \E \tilde D_{\gamma, \ell - 1, 2}$. 
In addition, from the construction of $\cross^{*, \midd, \ell, 2}$ and \eqref{hypo:short_range_ratio}  it follows that,
\begin{equation}
\label{eq:bridge_tentative}
\E \tilde D^\star_{\gamma, \ell, 2, \mathscr C_{\ell, 3}} \leq d_{\gamma, \ell - 1}\delta(1 + \delta^{-1/8}O(\log (\delta/ \epsilon))\gamma^2) = d_{\gamma, \ell - 1}(\delta + 0.004\gamma^2)\,.
\end{equation}
Finally, by \eqref{hypo:gadget_negligible} we get that the expected total weight of gadgets which we have used to construct $\cross^{*, \ell-1}_i$ from the rectangles in $\widetilde \descend_{\ell}$ is at most 
$d_{\gamma, \ell -1}O(\gamma^2\log \Gamma)$. Altogether, we obtain
$$d_{\gamma, \ell} \leq \sum_{j \in [3]}\E \tilde D_{\gamma, \ell, j} + d_{\gamma, \ell - 1}O(\gamma^2 \log \Gamma) \leq d_{\gamma, \ell - 1}(2 + \delta - 0.045\gamma^2)\,,$$
which verifies \eqref{hypo:expected_weight}.

Next we verify \eqref{hypo:short_range_ratio}. We just need to show that the expected gain from switchings can not be 
too big. To this end, note that for any strategy $\{k_{2, 1, j'}\}_{j' \in [\Gamma_{\ell, \beta}]}$ such that the total number of switches is bounded by $(2 + O(\delta))\alpha$, we have
\begin{eqnarray}
\label{eq:short_range_ratio1}
\E \Big(\frac{d_{\gamma, \ell-1}}{2\Gamma a_{\ell - 1}}\sum_{j' \in [\Gamma_{\ell, \beta}]}(-1)^{k_{2, 1, j'}}\gamma \Delta \tilde \nu_{2, 1, j'}\tilde \eta_{n, \ell, 1, j'}\Big) &\leq& O(\alpha)\E (T_{i, j, \gamma, \ell}^{0.5}) = O(\alpha)d_{\gamma, \ell - 1}O(\gamma^2/\alpha^{0.5}) \nonumber \\
&=& d_{\gamma, \ell - 1} O(\delta^{-1/8})\,.
\end{eqnarray}

The remaining task is to verify \eqref{hypo:gadget_negligible}. But this follows from the bounds given in \eqref{eq:expected_gadget_cost3}.

\subsection{Proof of Theorem~\ref{theo:main}}
Since $d_{\gamma, \ell} / d_{\gamma, \ell - 1} \leq (2 + \delta + \gamma^2)$ when $\ell \% 200m_\Gamma < m_\Gamma + 100m$ and $\leq (2 + \delta - 0.045\gamma^2)$ when $\ell \% 200m_\Gamma > m_\Gamma + 100m$ (for all $\ell$ larger $200a'm_\Gamma$), it immediately follows that $$d_{\gamma, n} \leq C_\gamma'(2 + \delta -0.01\gamma^2)^n\,,$$
for some $C_\gamma' > 0$. As a result one can show that the expected weight for the geodesic connecting two fixed vertices within $\lfloor \mathcal I_{n, \Gamma, 0} \rfloor \times \lfloor \mathcal I_{n, 1, 0} \rfloor$
has exponent strictly less than 1, by constructing a sequence of ($O(n)$ many) rectangles with geometrically growing size that connect these two vertices. This completes the proof of Theorem~\ref{theo:main} (see Remark~\ref{remark:main_theorem}).


\end{document}